\documentclass{amsart}
\usepackage{amsmath,bm,xcolor}
\usepackage{amsthm}
\usepackage{amssymb}
\usepackage{amsfonts}
\usepackage{graphicx}
\usepackage{mathrsfs,bm,amscd}
\usepackage{latexsym}
\usepackage{color}
\usepackage{marginnote}
\usepackage{hyperref}
\usepackage{mathtools}
\usepackage{enumerate}
\usepackage{MnSymbol}
\usepackage{float}

\usepackage[all]{xy}

\numberwithin{equation}{section}
\theoremstyle{plain}
\newtheorem{thm}{Theorem}[section]
\newtheorem{prop}[thm]{Proposition}
\newtheorem{cor}[thm]{Corollary}
\newtheorem{lem}[thm]{Lemma}

\theoremstyle{definition}
\newtheorem{definition}[thm]{Definition}
\newtheorem{rem}[thm]{Remark}
\newtheorem{quest}[thm]{Question}

\newtheorem*{bprob*}{Bonus problem}

\def\R{\mathbb{R}}

\def\N{\mathbb{N}}

\def\W{\mathcal{W}}

\def\e{\varepsilon}
\def\g{\gamma}
\def\d{\delta}
\def\D{\Delta}

\def\G{\Gamma}

\def\V{\mathbb{V}}
\def\t{{\bf T}}

\DeclareMathOperator{\dist}{dist}

\DeclareMathOperator{\post}{post}
\DeclareMathOperator{\gen}{gen}
\DeclareMathOperator{\diam}{diam}

\DeclareMathOperator{\val}{Val}

\DeclareMathOperator{\level}{\mathtt{L}}

\newcommand{\cW}{\mathcal{W}}

\newcommand{\cT}{\mathcal{T}}

\newcommand{\card}{\operatorname{card}}

\newcommand{\interior}{\mathrm{int}}

\title{Quasiconformal uniformization and extension for planar trees}

\author{Fotis Ioannidis}
\author{Vyron Vellis}

\thanks{V.V. was partially supported by NSF DMS grant 2154918.}
\date{\today}
\subjclass[2020]{Primary 30L05; Secondary 30D05, 30L10, 05C05, 28A80}
\keywords{Quasiconformal tree, geodesic tree, quasisymmetric uniformization, quasisymmetric extension, Julia sets, Vicsek fractal}

\address{Department of Mathematics\\ The University of Tennessee\\ Knoxville, TN 37966}
\email{fioannid@vols.utk.edu}
\address{Department of Mathematics\\ The University of Tennessee\\ Knoxville, TN 37966}
\email{vvellis@utk.edu}

\begin{document}

\begin{abstract}
A quasiconformal uniformization theorem of Rickman \cite{Rickman} asserts that every bounded turning planar arc is the image of the unit segment under a quasiconformal self-map of the plane. We prove an analogous uniformization theorem for planar trees by showing that every planar quasiconformal tree with uniform branch separation is quasiconformally equivalent to a quasiconvex planar tree with uniform branch separation and Hausdorff dimension arbitrarily close to 1. Along the way, we show that every quasisymmetric (resp. bi-Lipschitz) homeomorphism between planar quasiconformal trees that respects the counterclockwise orientation of the branches at every branch point can be extended quasisymmetrically (resp. bi-Lipschitz) to the whole plane. As a corollary of the extension theorem we construct a class of quadrivalent quasiconformal trees, which includes the Vicsek fractal, so that each tree in this class is the Julia set of a uniformly quasiregular map of $\R^2$. 
\end{abstract}

\maketitle


\section{Introduction}
Given a set $E\subset \R^2$ (potentially without rectifiable curves), when can we apply a ``nearly-conformal'' deformation of $\R^2$ that turns it into a convex, or almost convex set? Questions of this nature fall under the umbrella of the \emph{uniformization problem} and originate from the Riemann mapping theorem, which states that every simply connected proper subdomain of $\R^2$, no matter how complicated boundary it may have, is always conformally equivalent to the unit disk.

In order to obtain a global homeomorphism for non-smooth sets, one has to give up conformality and replace it with a weaker notion. Roughly speaking, a homeomorphism of $\R^2$ is \emph{quasiconformal} if it maps infinitesimal disks to infinitesimal ellipses of bounded eccentricity. While quasiconformal maps are not $C^1$, they preserve many geometric properties, so any set which is globally quasiconformally homeomorphic to a convex set must itself possess some nice geometry. Ahlfors \cite{Ahl63} and later Tukia and V\"ais\"al\"a \cite{TV80} introduced the concept of bounded turning sets. Formally, a set $X\subset \mathbb R^n$ (or a general metric space) is called \emph{bounded turning} if for every pair of points $x,y\in X$ there exists a continuum $K\subset X$ containing $x$ and $y$ with diameter at most a fixed multiple of $|x-y|$. Bounded turning sets do not have cusps but can exhibit fractal behavior (e.g. the von Koch curve). Building on a uniformization theorem of Ahlfors \cite{Ahl63} for Jordan curves, Rickman \cite{Rickman} proved the following.

\begin{thm}[{\cite[Theorem 1]{Rickman}}]\label{thm:Rick}
A set $X\subset\R^2$ is the image of a line segment under a quasiconformal homeomorphism of the plane if and only if it is a bounded turning arc.
\end{thm}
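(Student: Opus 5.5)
The statement has two directions, and I would treat them separately.

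For necessity, suppose $X=F([0,1])$ with $F$ a quasiconformal homeomorphism of $\R^2$. A quasiconformal self-map of the plane is $\eta$-quasisymmetric for some $\eta$. Given $x=F(s)$ and $y=F(t)$, I would take the continuum $K=F([s,t])$. For any $z=F(u)\in K$ we have $|u-s|\le|t-s|$, so $|z-x|\le\eta(1)|y-x|$. Hence $\diam K\le 2\eta(1)|x-y|$, and $X$ is a bounded turning arc.

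For sufficiency, let $X$ be a bounded turning arc. My plan is to reduce to Ahlfors' theorem for quasicircles: a Jordan curve in the plane is a quasicircle (the image of a circle under a quasiconformal map of $\R^2$) if and only if it satisfies the three-point (bounded turning) condition.

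\textbf{Step 1: build a quasicircle containing $X$.} Normalize $\diam X=1$ with endpoints $a$ and $b$. I would construct a Jordan curve $J\supset X$ by attaching to $X$ a second arc $\beta$ from $b$ back to $a$ in $\R^2\setminus X$, arranged so that $J=X\cup\beta$ is bounded turning. Then I would choose $\beta$ to be quasisymmetrically equivalent to a circular arc, and arrange that $X$ and $\beta$ approach each other only near $a$ and $b$, and only in a controlled (cone-like) fashion.

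To get such a $\beta$, I would use a Whitney-type construction in the complement. Near an endpoint, say $a$, the bounded turning condition says that $X\cap B(a,r)$ is a small continuum at each scale. I would therefore look for a sequence of points $p_k$ with $|p_k-a|\approx 2^{-k}$ and $\dist(p_k,X)\gtrsim 2^{-k}$. Such points exist because a bounded turning arc is porous at its endpoint: it cannot surround $a$ without violating the turning bound. I would then join consecutive $p_k$ by paths staying at distance comparable to $2^{-k}$ from $X$. Finally, away from the endpoints, $\beta$ can stay at distance $\gtrsim 1$ from $X$, say a large detour.

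\textbf{Step 2: check bounded turning of $J$.} Pairs of points both on $X$ are covered by the hypothesis. Pairs both on $\beta$ are handled by its construction. For mixed pairs $x\in X$ and $y\in\beta$, the distance $|x-y|$ is bounded below by the scale of $y$, which is comparable to $|y-a|$ or $|y-b|$. The connecting continuum can then go through the nearby endpoint, with diameter comparable to that scale.

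\textbf{Step 3: straighten.} Ahlfors gives a quasiconformal $G$ with $G(J)=S^1$. Then $G(X)$ is a subarc of $S^1$, and a Möbius map followed by a quasiconformal map sending a circular arc onto a segment finishes the proof. For the last map, note that the slit-plane map taking an arc to a segment can be made bi-Lipschitz away from the endpoints and is explicitly quasiconformal, for instance piecewise linear in angular coordinates.

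I expect Step 1, specifically producing the arc $\beta$ near the endpoints and verifying the three-point condition for mixed pairs, to be the main obstacle. The arc $X$ may spiral (a logarithmic spiral is bounded turning), so the points $p_k$ must rotate along with $X$. Here I would use the fact that at each dyadic annulus the complement of $X$ contains a ball of comparable size, connected to the next one inside a slightly larger annulus. This follows from the turning bound applied to $X\cap B(a,2^{-k+2})$.
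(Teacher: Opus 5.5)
Your proposal is correct in outline, but it takes a genuinely different route from the paper. The paper does not reprove this theorem. It obtains sufficiency as the arc case of Theorem~\ref{thm:main thm}: a bounded turning arc is quasisymmetric to a segment (segments are the only semi-geodesic arcs). Theorem~\ref{thm: intro 2 quant.} then extends such a quasisymmetry to $\R^2$ in three steps. It builds combinatorially matched Whitney-type polygonal decompositions of the two complements (branch-orientation is vacuous for arcs). It maps corresponding polygons by uniform quasi-similarities via Lemma~\ref{lem:Tukia quasi-similarity}. Finally it removes $X$ using Lemmas~\ref{lem: Nakki vaisalla} and~\ref{lem:qc removable}. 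Your necessity argument is the standard quasisymmetry argument that the paper takes for granted.

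For sufficiency you instead close $X$ up to a bounded turning Jordan curve and quote Ahlfors. This is much shorter and confines all the geometry to the two endpoints. It needs no quasisymmetric parametrization, Whitney decomposition or removability. But it is intrinsically an arc argument: a tree with a branch point lies in no Jordan curve. That is why the paper builds the extension directly, and in return it covers trees and gives a bi-Lipschitz version.

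Be aware that your Step~1 carries all the real content and is only sketched. Its claims are true; let $C$ be the turning constant.
\begin{itemize}
\item Existence of the $p_k$: suppose $X$ came within $\epsilon\rho$ of every point of $\partial B(w,\rho)$, for some $w\notin X$ close to $a$ and $\epsilon\ll 1/C$. Chaining subarcs of $X$ of diameter at most $3C\epsilon\rho$ would give a loop in $X$ winding once around $w$. This is impossible, since every loop in an arc is null-homotopic in it.
\item Connecting $p_k$ to $p_{k+1}$: the same chaining rules out loops near $X$ that separate $p_k$ from $p_{k+1}$. Cross-cuts of $B(a,M2^{-k})$, for $M\gg C$, are excluded because $X\cap\overline{B}(a,2^{-k+1})$ lies on an initial subarc contained in $\overline{B}(a,C2^{-k+1})$. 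Alternatively, you can take John arcs landing at $a$ and $b$, as in Lemma~\ref{lem:arzela curves}.
\item $\beta$ must have bounded complexity at each scale so that it is itself bounded turning. For example, use simple grid paths of mesh comparable to $2^{-k}$, loop-erased at the junctions.
\item The two halves of $\beta$ must be kept apart. This is possible because $|a-b|\geq \diam X/C$.
\end{itemize}
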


Recall that line segments are the only convex arcs. Later, Tukia and V\"ais\"al\"a \cite{TV80} gave a metric version of Rickman's theorem: a metric space is quasisymmetrically equivalent to a line segment if and only if it is doubling and bounded turning arc. Quasisymmetry is considered the analogue of quasiconformality in the metric space setting; see Section \ref{sec:prelim} for definitions.

Beyond arcs, the next most topologically simple class is that of trees. A metric space $X$ is called a \emph{metric tree} (or \emph{dendrite}) if it is a Peano continuum (i.e., compact, connected, and locally  connected) that does not contain simple closed curves. On the one hand, trees are among the most simple 1-dimensional connected metric spaces, but on the other hand they have infinitely many topological equivalence classes. Bounded turning and doubling trees are known as \textit{quasiconformal trees} (in the planar case they are also known as \emph{Gehring trees \cite{RohdeLin}}). These trees were introduced by Kinneberg \cite{Kinneberg} and have been the focus of extensive research in the last decade \cite{BM20,BM22,FreeGart1,FreeGart2,CGIV}. Quasiconformal trees have appeared in complex dynamics \cite{CJY,CG-dynamics}, dimension theory \cite{BishopTyson,Azzam}, geometric group theory \cite{McMullen}, harmonic analysis \cite{Bishop}, random processes \cite{RohdeLin}, and geometric function theory \cite{NV91,CGV}. 

Bonk and Meyer \cite{BM20} recently proved the analogue of Tukia-V\"ais\"al\"a theorem for metric trees: a metric space is quasisymmetrically equivalent to a doubling geodesic tree of Hausdorff dimension arbitrarily close to 1 if and only if it is a quasiconformal tree. Note that geodesicity is the analogue of convexity in the metric setting: a set is geodesic if every two points can be joined by an arc of length equal to the distance of the two points. In this paper, we are interested in the quasiconformal version of the Bonk-Meyer theorem and the generalization of Theorem \ref{thm:Rick}. Note that there exist no convex trees in $\R^2$ (or any Euclidean space) other than line segments. Therefore, we need to consider a slightly weaker notion. 

\begin{definition}\label{def:semigeodesic}
We say that a metric tree $T$ is \emph{semi-geodesic} if 
\begin{enumerate}
\item there exists $C\geq 1$ such that for every two points $x,y \in T$ there exists an arc that joins them of length at most $C d(x,y)$,
\item for each $x\in T\setminus \mathcal{L}(T)$ there exists a line segment $I\subset T$ that contains $x$ in its interior.
\end{enumerate}
\end{definition}
Here and for the rest of the paper, $\mathcal{B}(T)$ and $\mathcal{L}(T)$ denote the set of branch points and leaves, respectively, of $T$. Trees that satisfy the first property of Definition \ref{def:semigeodesic} are known in literature as \emph{quasiconvex} trees. Note that if an arc is semi-geodesic, then it is a line segment.

\begin{quest}\label{quest1}
Is every planar quasiconformal tree $T$ quasiconformally homeomorphic to a semi-geodesic $T'$?
\end{quest}

The motivation behind Question \ref{quest1} comes from two well-known questions in analysis. The first is to understand which planar trees are Julia sets of uniformly quasiregular mappings; see \textsection\ref{sec:intro-UQR}. Having a classification as in Question \ref{quest1} means that only semi-geodesic trees need to be studied. The second question is to classify those planar sets that arise as hulls of the \emph{Lowner equation}. Based on Theorem \ref{thm:Rick}, Marshall and Rohde \cite{MR} found an intimate connection between bounded turning arcs and solutions of the Loewner equation with $1/2$-H\"older driving functions; see also \cite{LR}. Recently, the Loewner equation has been generalized to Loewner processes where the driving function is replaced by driving measures which, instead of arcs, their solutions yield metric trees \cite{HeaM}. Currently, there is no theory between quasiconformal trees and regularity of the driving measures as in \cite{MR}, but a classification as in Question \ref{quest1} would allow us to study this problem only for semi-geodesic trees. 

Question \ref{quest1} requires overcoming two major challenges, both having to do with how one constructs $T'$. The result of Bonk and Meyer \cite{BM20} guarantees the existence of an abstract geodesic tree $T''$ that is quasisymmetric to $T$, but whether $T''$ (or any doubling geodesic tree) can be quasisymmetrically embedded in $\R^2$ is the context of an open question of Bonk and Meyer \cite[Section 10]{BM20}. Furthermore, even if such an embedding of $T''$ exists, another topological obstacle arises: is the embedded image of $T''$ ambiently homeomorphic to $T$? (See \textsection\ref{sec:intro-ext} for further discussion.) To overcome the first obstacle, we consider quasiconformal trees with branch separation introduced and studied by Bonk and Meyer \cite{BM22} which are known to admit quasisymmetric embeddings into $\R^2$ \cite{CGIV}.

\begin{definition}[{Uniform relative separation of branch points}]\label{def:unifsep}
A metric tree $(T,d)$ is said to have \textit{uniformly (relatively) separated branch points} if there exists $C\geq 1$ with the property: if $p,q\in T$ are distinct branch points, and if $B_{T}(p)$ (resp.~$B_{T}(q)$) is the third largest component of $T\setminus \{p\}$ (resp.~$T\setminus\{q\}$) in diameter, then
$$d(p,q)\geq C^{-1}\min\{\diam{B_{T}(p)},\diam{B_{T}(q)}\}.$$ 
\end{definition}

Most examples of quasiconformal trees appearing in analysis \cite{CG-dynamics,BishopTyson, Bishop} are of uniform branch separation making this class of trees a very natural class. Note that uniform branch separation is invariant under global quasiconformal maps (\cite[Lemma 4.3]{BM22}). The following is the first result of this paper.

\begin{thm}\label{thm:main thm}
Let $T$ be a planar tree with bounded valence, and let $\epsilon>0$.
\begin{enumerate}
\item There exists a homeomorphism $f:\R^2\to\R^2$ such that $f(T)$ is a semi-geodesic tree with $\dim_H{f(T)}<1+\epsilon$.
\item If $T$ is bounded turning and has uniform branch separation, then there exists a quasiconformal homeomorphism $f:\R^2\to\R^2$ such that $f(T)$ is a semi-geodesic tree with $\dim_H{f(T)}<1+\epsilon$.
\end{enumerate}
\end{thm}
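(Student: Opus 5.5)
The plan is to build an explicit semi-geodesic model tree $T'$ with the same combinatorics and planar (cyclic) structure as $T$, and then transport $T$ onto $T'$. In part (2) the transport is a quasisymmetric map that will be extended to the plane by the orientation-preserving extension theorem announced in the abstract.

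\emph{Combinatorial decomposition and model.} Since $T$ has bounded valence, I will fix a nested exhaustion of $T$ by finite subtrees $T_1\subset T_2\subset\cdots$. Each $T_{n+1}$ is obtained from $T_n$ by attaching finitely many arcs at points of $T_n$, and $\bigcup_n T_n$ is dense in $T$; for (2) the points and arcs are chosen in a ``dyadic'' way, as in the Bonk--Meyer decomposition of quasiconformal trees \cite{BM20}, so that diameters of the complementary pieces at level $n$ are comparable to $\delta^n$. I will record the counterclockwise cyclic order of the branches at each attaching point, which is determined by the planar embedding. The model $T'$ is then built inductively as a union of line segments. Each attached arc of level $n$ is replaced by a straight segment of length $\lambda^n$ (with $\lambda$ small), emanating from the corresponding point of $T'_n$ at an angle bounded away from the existing segments and in the prescribed cyclic order. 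Bounded valence is what makes a uniform angular gap possible. Choosing $\lambda$ small keeps the new segments inside thin cones, so the construction stays embedded and no cycles form. Every non-leaf point lies on some segment of $T'$, so condition (2) of Definition \ref{def:semigeodesic} holds. Quasiconvexity, condition (1), follows because the segments meet at angles bounded below and have geometrically decreasing lengths, so the path through the tree is comparable to the chord. A covering count, with $N$ children of size $\lambda$ per piece, gives $\dim_H T'\le \log N/\log(1/\lambda)$, which is below $1+\epsilon$ once $\lambda$ is small. I expect this dimension estimate to need care: the branching count must really be bounded, so I will subdivide long segments into many short pieces that carry no branching, and so keep the effective dimension near 1.

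\emph{Part (1).} The correspondence $T_n\to T'_n$ respects the cyclic orders, so the plane complements of $T_n$ and $T'_n$ are combinatorially identical. I will therefore extend it to a homeomorphism $F_n$ of $\R^2$, doing so consistently as $n$ grows (Schoenflies on each step, modifying only in small neighborhoods of the new arcs). The resulting limit map on $T$ is a homeomorphism onto $T'$ because diameters shrink on both sides. The limit homeomorphism of the plane is then obtained by making the modifications supported in neighborhoods whose diameters tend to zero.

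\emph{Part (2).} With bounded turning and uniform branch separation, the level-$n$ pieces of $T$ have diameter comparable to $\delta^n$ and are relatively separated. The same holds for $T'$ with $\lambda^n$. Hence the map $g:T\to T'$ is quasisymmetric, by the standard argument comparing the levels of the smallest piece containing two points, as in \cite{BM20,BM22}. Moreover $g$ respects the counterclockwise orientation at every branch point by construction. The extension theorem for orientation-respecting quasisymmetries between planar quasiconformal trees then gives a quasiconformal $f:\R^2\to\R^2$ with $f|_T=g$. The main obstacle is that extension theorem itself, which I take as proved later in the paper. It presumably proceeds by Whitney-type decomposition of the complementary domain, which is a single simply connected domain; one would uniformize it, check that the induced boundary correspondence is quasisymmetric using the branch separation, and apply Beurling--Ahlfors.
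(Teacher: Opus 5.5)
\textbf{The gap is in the model for part (2).} A model whose pieces scale uniformly by level cannot be both quasisymmetric to $T$ and of dimension close to $1$, so $\lambda$ is not a free parameter. Your quasisymmetry argument needs $g$ to send the level-$n$ pieces of $T$ to sets of diameter $\simeq\lambda^n$, uniformly in $n$.

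Test this on $T=\V$ with the pieces $\V_w$, $|w|=n$, which have diameter $\simeq 3^{-n}$. A Bonk--Meyer decomposition with pieces $\simeq\delta^n$ gives the same conclusion with $\delta$ in place of $1/3$.
\begin{itemize}
\item A diagonal of $\V$ is covered by $3^n$ level-$n$ pieces, and its image is a continuum of diameter $\simeq 1$. So $3^n\lambda^n\gtrsim 1$ for all $n$, that is, $\lambda\geq 1/3$.
\item The $5^n$ image pieces have diameter $\simeq\lambda^n$ and are relatively separated in the doubling space $T'$. The mass distribution principle then gives $\dim_H T'\geq \log 5/\log(1/\lambda)\geq\log 5/\log 3>1.4$.
\end{itemize}
Subdividing segments into short non-branching pieces does not help. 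The obstruction concerns only the images of the pieces of $T$, and once those images stop having diameter $\simeq\lambda^n$, your level-comparison argument no longer applies. Your thin-cone argument for embeddedness also relied on $\lambda$ being small. Dimension near $1$ forces non-uniform contraction, as in the paper's maps $\psi_j$: ratio $1/2$ for the two children along the main arc, and a tiny ratio for the side branches, with $2^{-\epsilon}+(2m-4)a^{1+\epsilon}=1$.

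\textbf{Where uniform branch separation must enter.} Mapping $T$ quasisymmetrically into such a non-uniform model is the real content, and this is exactly where Definition \ref{def:unifsep} is needed. Each branch point of $T$ must become the subdivision point of some arc at a scale set by the size of its side branches. Branch points with comparable side branches must then be far apart relative to that size. In fact the hypothesis is necessary for this step: the paper's models $\mathcal{T}^{\mathcal{W}}$ have uniform branch separation, and this property is quasisymmetrically invariant and inherited by subtrees.

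In your proposal the hypothesis is invoked only to say that the pieces are $\simeq\delta^n$ and separated, which holds for every quasiconformal tree. The extension theorem (Theorem \ref{thm:Schoenflies-trees}) needs only bounded turning. So your argument never actually uses branch separation, which signals that the hard step is missing.

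The same issue undermines your quasiconvexity claim. Two long side segments attached on the same side at nearby points of an arc are nearly parallel and close, yet are joined only through that arc. Angles bounded below therefore do not suffice.

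\textbf{What the paper does instead.} The paper does not build the model from $T$. It takes the quasisymmetric embedding $T\to\mathcal{T}^{A_m^*}$ of \cite{CGIV}, which is where branch separation is consumed, and identifies the image with $\mathcal{T}^{\mathcal{V}}$ for an $m$-dictionary $\mathcal{V}$. It then permutes children via a dictionary function (Lemma \ref{lem:dictqs}) so that the composite map is branch-orientation-preserving, and only then applies the extension theorem.

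For part (1), where attachment points can be spaced freely, your inductive Schoenflies scheme is a plausible but incomplete reproof of \cite{ADMaclane}. You would also need uniform convergence of the inverses $F_n^{-1}$, not only shrinking supports. The paper instead combines the embedding of \cite{Charatonik} with \cite{ADMaclane}.
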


The theorem is quantitative in that the quasiconformality constant and the semi-geodesicity constant depend at most on the bounded turning constant of $T$, the valence of $T$, the constant of branch separation, and $\epsilon$; see Theorem \ref{thm:quantitative version}. Moreover, if $T$ is an arc, Theorem \ref{thm:main thm} immediately yields Rickman’s Theorem \ref{thm:Rick}.

Closely related to the uniformization problem, is the notion of \emph{conformal dimension} which measures the extent to which a set can be dimensionally simplified by quasisymmetric deformations. Specifically, the conformal dimension of a metric space $X$ is defined by
$$\operatorname{Cdim}(X)= \inf \{\dim_H(Y) : F:X\to Y \text{ is a quasisymmetric homeomorphism}\}.$$
We refer the reader to \cite{MT10} for a general overview and to \cite{Pan89,Bourdon97} for foundational results in conformal dimension theory. Kinneberg \cite{Kinneberg} showed that the conformal dimension of every quasiconformal tree is 1, though it is known that the dimension 1 is not attained in general; see \cite{BishopTyson}, \cite{Azzam}.

Mackay and Tyson \cite[Chapter 8]{MT10} introduced the notion of \textit{global quasiconformal dimension}, which measures the extent to which a set can be dimensionally simplified by quasiconformal deformations. For a set $S\subset \mathbb R^n$, its global quasiconformal dimension is defined by
$$\operatorname{GQC^{n}dim}(S)= \inf \{ \dim_H(F(S)) : F:\mathbb R^n\to\mathbb R^n \text{ is quasiconformal}\}.$$
The following corollary is an immediate consequence of our theorem. 

\begin{cor}
If $T\subset \R^2$ is a quasiconformal tree with uniform branch separation, then $\operatorname{GQC^{2}dim}(T)=1$.
\end{cor}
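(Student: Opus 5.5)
The plan is to get the two inequalities separately, with the upper bound coming from Theorem \ref{thm:main thm}(2) and the lower bound from a topological fact that does not involve $T$'s geometry. I treat the degenerate case first: if $T$ is a single point, both sides are interpreted trivially. From now on, $T$ is a nondegenerate planar quasiconformal tree with uniform branch separation.

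\textbf{Upper bound.} I first need bounded valence, since Theorem \ref{thm:main thm} assumes it. A doubling, bounded turning tree has valence bounded in terms of the doubling and bounded turning constants. The reason is as follows. Take a branch point $p$ with $k$ complementary components, and pick in each component a point $x_i$ at distance about $r$ from $p$, for small $r$. Any two of these points are joined in $T$ only through $p$. Bounded turning therefore forces $|x_i-x_j|\gtrsim r$: a continuum joining $x_i$ and $x_j$ passes through $p$, so its diameter is at least $r$, and bounded turning turns this into a lower bound on $|x_i-x_j|$. Thus $\{x_i\}$ is an $r$-separated set in a ball of radius $r$, and doubling bounds $k$. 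Planarity gives doubling automatically, so only bounded turning is needed here. Now fix $\epsilon>0$. By Theorem \ref{thm:main thm}(2) there is a quasiconformal $F=f:\R^2\to\R^2$ with $\dim_H F(T)<1+\epsilon$. Hence $\operatorname{GQC^{2}dim}(T)\le 1+\epsilon$ for every $\epsilon>0$, so $\operatorname{GQC^{2}dim}(T)\le 1$.

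\textbf{Lower bound.} For any quasiconformal $F$ of the plane, $F(T)$ is a nondegenerate continuum, since $F$ is a homeomorphism. A nondegenerate connected metric space $Y$ satisfies $\mathcal H^1(Y)\ge \diam Y>0$. One way to see this is that the $1$-Lipschitz map $y\mapsto |y-y_0|$ sends $Y$ onto an interval of length comparable to $\diam Y$, and Lipschitz maps do not increase $\mathcal H^1$. It follows that $\dim_H F(T)\ge 1$, and taking the infimum gives $\operatorname{GQC^{2}dim}(T)\ge1$. Combining the two bounds gives the corollary.

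The only step needing care is the bounded valence reduction in the upper bound. If the quantitative version, Theorem \ref{thm:quantitative version}, is already stated in terms of quasiconformal trees, that reduction is absorbed into it. Otherwise the short doubling argument above suffices.
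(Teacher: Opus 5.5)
Your proof is correct and follows the paper's route. The upper bound is Theorem \ref{thm:main thm}(2) applied for every $\epsilon>0$, and the lower bound is that a nondegenerate continuum has Hausdorff dimension at least $1$. Your check that bounded turning forces bounded valence supplies a hypothesis of Theorem \ref{thm:main thm} that the paper leaves implicit; it is cleanest to run that argument on an arbitrary finite set of branches, so that a common small $r$ exists. The one slip is the aside on degenerate $T$: a one-point set would have $\operatorname{GQC^{2}dim}=0$, but the case never arises because metric trees have at least two points by definition.
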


\subsection{Extension properties of planar trees}\label{sec:intro-ext}
At the heart of the proof of Theorem \ref{thm:main thm} lies an extension result which is concerned with the second obstacle mentioned above. The classical Sch\"onflies Theorem states that any embedding of a line segment into $\R^2$ can be extended to a homeomorphism of $\R^2$. The quasiconformal version of this result comes from a combination of Theorem \ref{thm:Rick} and a celebrated extension result of Beurling and Ahlfors \cite{BA56}. The bi-Lipschitz version of the Sch\"onflies Theorem is due to Tukia \cite{Tukia}. 


The extension problem for trees is more complicated. For example, let $T_1$ be the interval $[-1,1]$ together with vertical segments of length $2^{-n}$ attached at the points $\pm 2^{-n}$, all lying in the upper half-plane, and let $T_2$ be obtained by alternating the vertical segments lying in the upper and lower half-planes; see Figure \ref{fig:not globally homeo trees}. Trees $T_1$ and $T_2$ are homeomorphic (even bi-Lipschitz homeomorphic) but not ambiently homeomorphic as every sufficiently small neighborhood of the origin in $T_1$ contains a topological half-ball, whereas no sufficiently small neighborhood of the origin in $T_2$ has this property. 

\begin{figure}[H]
\centering
\includegraphics[width=1\linewidth]{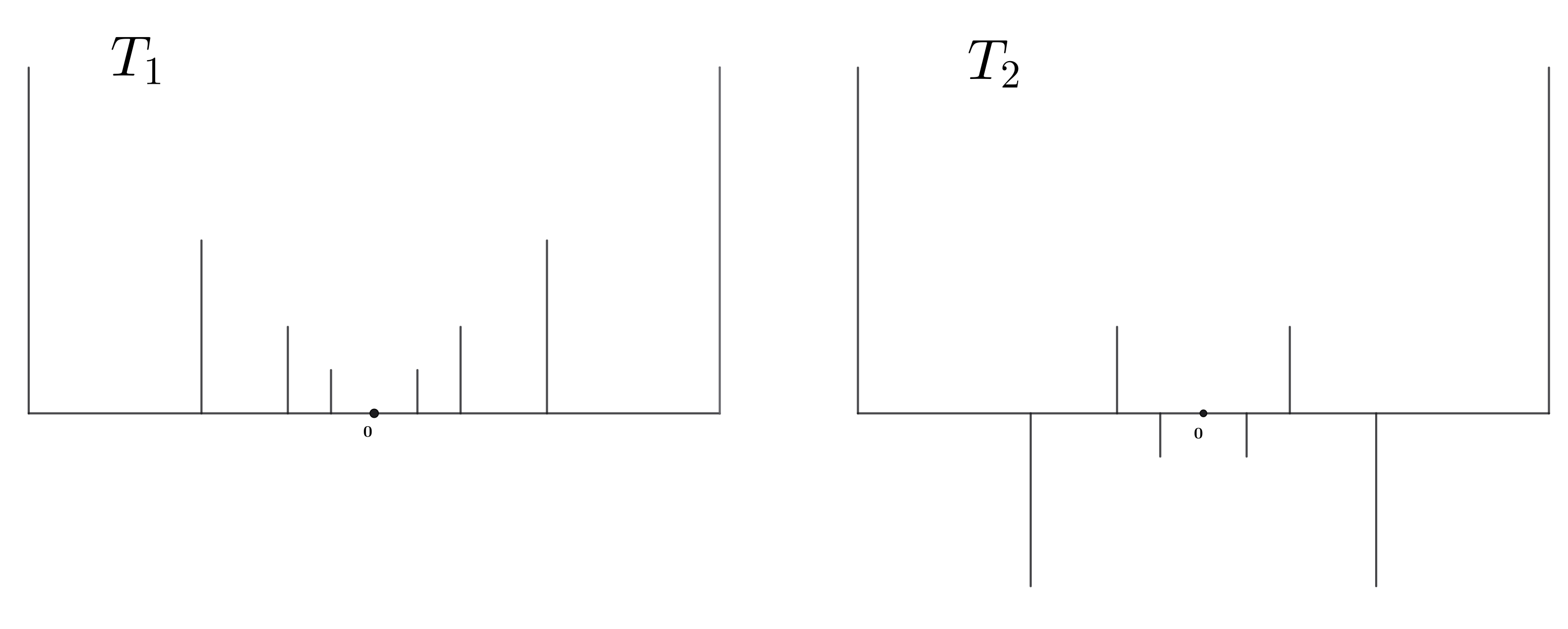}
\caption{The trees $T_1,T_2$ are not globally homeomorphic.}
\label{fig:not globally homeo trees}
\end{figure}

An extension theorem of Adkisson and MacLane \cite{ADMaclane} gives the precise conditions under which a homeomorphism between two planar trees extends to a homeomorphism of $\R^2$. To state their result, we need to first discuss a notion of tree orientability. Avoiding a rigorous definition for now, we only mention that given a planar tree $T$ and a branch point $p\in T$, there is a reasonable way to define counterclockwise enumeration for its branches at $p$. A homeomorphism $f:T\to T'$ between planar trees is \emph{branch-orientation-preserving} if, at every branch point, it preserves the counterclockwise cyclic order of the incident branches; see Definition \ref{defn:BOP} for the rigorous definition. Using this notion, the main theorem in \cite{ADMaclane} states that branch-orientation-preserving homeomorphisms are exactly those homeomorphisms that admit orientation preserving homeomorphic extensions on $\R^2$. Our second main theorem gives a geometric version of this theorem.

\begin{thm}\label{thm:Schoenflies-trees}
Let $T$ be a planar quasiconformal tree and let $f: T \to \R^2$ be a branch-orientation-preserving embedding. 
\begin{enumerate}
\item If $f$ is quasisymmetric, then there exists a quasiconformal orientation preserving extension $F:\R^2\to\R^2$.
\item If $f$ is bi-Lipschitz, then there exists a bi-Lipschitz orientation preserving extension $F:\R^2\to\R^2$.
\end{enumerate}
\end{thm}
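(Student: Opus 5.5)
The plan is to reduce the extension problem to a family of Jordan-domain extension problems, using the fact that a planar tree is a ``degenerate Jordan curve'' traversed twice. First, by the Adkisson--MacLane theorem, $f$ extends to an orientation preserving homeomorphism of $\R^2$. So the complementary structure of $T$ and $f(T)$ correspond, and the only task is to upgrade the extension quantitatively. I will approximate $T$ from outside by a nested sequence of Jordan curves. Concretely, I fix a dyadic-type decomposition of $T$ into subtrees $T_w$ of comparable diameter $\approx \delta^n$ at level $n$, as in the Bonk--Meyer combinatorics for quasiconformal trees. The doubling and bounded turning properties give bounded combinatorics: each tile has a bounded number of children and boundedly many boundary points.

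Around $T$ at scale $\delta^n$ I then build a ``thickening'' $U_n$, a Jordan domain containing $T$ whose boundary $\partial U_n$ stays within distance $\approx \delta^n$ of $T$. Its boundary should be a quasicircle (uniformly in $n$), traversing $T$ in counterclockwise order around each branch. The bounded turning of $T$ (Ahlfors' three-point condition) is what makes $\partial U_n$ a uniform quasicircle. I make the analogous construction for $f(T)$, with $\partial U_n'$ chosen to be the image, under the topological extension, of a curve combinatorially equivalent to $\partial U_n$. Branch-orientation preservation is exactly what guarantees that the cyclic order of the tiles met by $\partial U_n$ matches that met by $\partial U_n'$. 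The quasisymmetry of $f$ then lets me define boundary maps $\partial U_n\to\partial U'_n$, piecewise on arcs corresponding to tiles, that are quasisymmetric with uniform constants. In the bi-Lipschitz case they are bi-Lipschitz, comparing tile diameters.

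Next I extend in the annuli $A_n=U_{n}\setminus \overline{U_{n+1}}$. Each $A_n$ is decomposed into boundedly many ``quasi-squares'' $Q$ of diameter $\approx\delta^n$, each bounded by a quasicircle made of an arc of $\partial U_n$, an arc of $\partial U_{n+1}$ and two crosscuts. On each $Q$ I apply the Beurling--Ahlfors/Ahlfors quasicircle extension, or Tukia's bi-Lipschitz Schönflies theorem in case (2), after normalizing by similarities to unit scale. Bounded combinatorics and uniform constants then give uniform distortion. Outside $U_0$ I extend by a standard Beurling--Ahlfors type extension of the quasisymmetric map $\partial U_0\to\partial U_0'$, noting that the exterior of a quasidisk is again a quasidisk. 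Gluing gives a homeomorphism $F$ of $\R^2\setminus T$ that is locally uniformly quasiconformal (resp.\ locally bi-Lipschitz with uniform constants). It extends continuously by $f$ on $T$, since the pieces shrink to $T$ with diameters $\to 0$ and their images shrink accordingly. Finally $T$ is a quasiconformal tree, hence porous and of zero area, and thus removable for quasiconformality: $F$ is ACL off a closed set of $\sigma$-finite length, which suffices. Alternatively one can check the metric definition of quasiconformality directly at points of $T$ using the comparability of scales. In the bi-Lipschitz case, uniform local bi-Lipschitz bounds together with quasiconvexity of the complement pieces yield a global bi-Lipschitz bound.

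I expect the main obstacle to be the construction of the thickenings $U_n$ and $U_n'$ and the quasi-square decompositions in a compatible way. Two things must hold together. First, the boundary maps at consecutive levels must be compatible across the crosscuts. Second, all pieces must be uniform quasidisks even near branch points of high-order accumulation and near leaves. I would first try to build $\partial U_n$ from the boundaries of $\delta^n$-neighborhoods of the level-$n$ tiles and then straighten them by hand. I would fall back on Whitney-cube decompositions of $\R^2\setminus T$ if this fails.
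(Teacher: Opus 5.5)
Your architecture is the paper's: a tile decomposition, nested thickenings, and Whitney-type pieces bounded by arcs of $\partial U_n$, $\partial U_{n+1}$ and two crosscuts landing at double points. Then come matching via branch orientation, piecewise extension, gluing, and removability. But the step you defer as the ``main obstacle'' is the whole content of Section \ref{sec: Decomposition of R^2 minus T} and \textsection\ref{sec:compatibility of qc trees}, and two of the claims you do make are false.

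First, $\partial U_n$ is never a uniform quasicircle. For $T=[-1,1]$, the points $(0,\pm\delta^n)$ on the boundary of a $\delta^n$-thickening are $2\delta^n$ apart, while both complementary arcs have diameter at least $1$. So the three-point constant is of order $\delta^{-n}$; bounded turning of $T$ does not help, and uniformity can only come from the local pieces. The paper gets it as follows:
\begin{itemize}
\item It thickens each tile $X$ at its own dyadic scale $\delta_X\simeq\diam X$.
\item It takes John arcs from both sides of each double point and replaces them by polygonal arcs on dyadic grids. These cross each $\Gamma_n$ exactly once and stay separated at the right scale (Lemmas \ref{lem:arzela curves}, \ref{lem:polygonal curves}).
\item After rescaling, every piece then belongs to a finite family of polygons. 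Edge maps defined by normalized arclength automatically agree on shared edges, and Lemma \ref{lem:Tukia quasi-similarity} extends them as uniform quasi-similarities.
\end{itemize}

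Second, taking $\partial U_n'$ to be the image of a curve under the Adkisson--MacLane homeomorphism gives no geometric control at all. The thickenings of $f(T)$ must be built directly, at the scales $\diam f(X)$ of the image tiles. Under a quasisymmetry these are only locally comparable, so no single scale $\delta'^n$ works. Only after that can one prove the two decompositions are indexed by the same words, which is Proposition \ref{prop:combinatorial connection T, T^W^T}. Your topological extension could plausibly shortcut that matching argument, since it transports accesses to $T$ together with their cyclic order, but it cannot supply the geometry.

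The removability step is also wrong as justified. A quasiconformal tree need not have $\sigma$-finite length: a von Koch arc is a bounded turning arc of dimension $\log 4/\log 3>1$, and Theorem \ref{thm:Schoenflies-trees} covers such trees. Zero area plus porosity is not a removability criterion either. The correct argument is that bounded turning of $T$ makes $\overline{\R^2}\setminus T$ a John domain (Lemma \ref{lem: Nakki vaisalla}), and boundaries of John domains are removable by Jones--Smirnov (Lemma \ref{lem:qc removable}). Your fallback, checking $H_F\le H$ at every point of $T$, could also work, but it needs exactly the local geometric control of the pieces near $T$ that the proposal leaves open. The bi-Lipschitz conclusion is fine once uniform local bounds off $T$ are available, for example by splitting a segment $[x,y]$ at its first and last points on $T$.
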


Theorem \ref{thm:Schoenflies-trees} is quantitative in the sense that the quasiconformality (resp. bi-Lipschitz) constant of $F$ depends only on the bounded turning constant of $T$, and on the distortion function (resp. bi-Lipschitz constant) of $f$; see Theorem \ref{thm: intro 2 quant.}

It is worth noting that Theorem \ref{thm:Schoenflies-trees} is purely planar and cannot be generalized to higher dimensions for topological reasons. In particular, the Fox-Artin arc \cite{FA} in $\R^3$ is bounded turning (in fact, quasiconvex) but also \emph{wild}, i.e., there exists no homeomorphism of $\R^3$ (let alone quasiconformal homeomorphism) that maps it to a line segment.

\subsection{The Vicsek tree and UQR dynamics}\label{sec:intro-UQR}
Theorem \ref{thm:Schoenflies-trees} has some interesting applications to complex dynamics which may be of independent interest. Given a map $f:\R^2 \to \R^2$ and $n\in\N$, we denote by $f^n = f\circ\cdots\circ f$ the $n$-th iterate of $f$.

A central problem in complex dynamics is the classification of Julia sets of rational maps of $\mathbb{S}^2$. Given a rational map $f$ of $\mathbb{S}^2$, the \emph{Fatou set} of $f$ is defined as
\begin{align*}
F(f) = \{x\in \mathbb{S}^2 : \text{there exists }&\text{neighborhood $x\in U$}\\ 
&\text{such that $(f^{n}|U)$ is a normal family}\}
\end{align*}
and the \emph{Julia set} $J(f)$ is defined as the complement of the Fatou set. 

Uniformly quasiregular (UQR) maps can be thought as the generalizations of rational maps (as quasiconformality is the generalization of conformality). By Stoilow's Factorization Theorem \cite[\textsection 2.2]{MaPe}, one way of defining UQR maps on $\mathbb{S}^2$ are as the quasiconformal conjugates of rational maps. For an introduction to UQR dynamics see \cite{Berg} and \cite[Ch. 21]{IM}. 

By Theorem \ref{thm:Rick}, every bounded turning arc is a Julia set of a UQR map of $\mathbb{S}^2$. The situation for metric trees is far less understood. The Julia set of $z^2+i$ is a quasiconformal tree and, more generally, if the Julia set of a semihyperbolic polynomial is a tree, then it has to be a quasiconformal tree \cite{CJY}. On the other hand, it is not very difficult to show that a quasiconformal tree with exactly one (or finitely many) branch points can not be the Julia set of a UQR map. It is unknown which quasiconformal trees can be realized as Julia sets of rational or UQR maps.

As an application of Theorem \ref{thm:Schoenflies-trees}, we show that the \emph{Vicsek tree} $\mathbb{V}$, one of the most well-known quasiconformal trees, is a Julia set of a UQR map. The Vicsek tree is a 4-valent self-similar semi-geodesic tree of great importance in analysis, probability, and physics; see for instance \cite{Vicsek,Metz,HamblyMetz,Zhou,CSW,BaudoinChen1,BaudoinChen2}. Interestingly, the Vicsek tree is not the Julia set of any holomorphic function (see Lemma \ref{lem:VicseknotJulia}), but it is the Julia set of a UQR map.

\begin{figure}[ht]
    \centering
    \begin{minipage}{0.3\textwidth}
        \centering
        \includegraphics[width=0.9\textwidth]{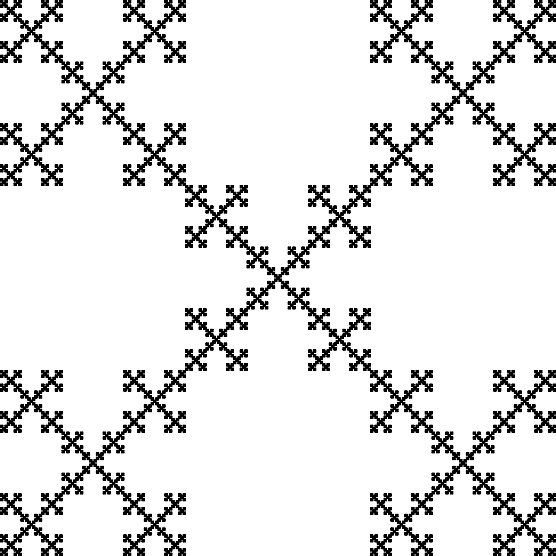} 
    \end{minipage}\hfill
    \begin{minipage}{0.7\textwidth}
        \centering
        \includegraphics[width=0.9\textwidth]{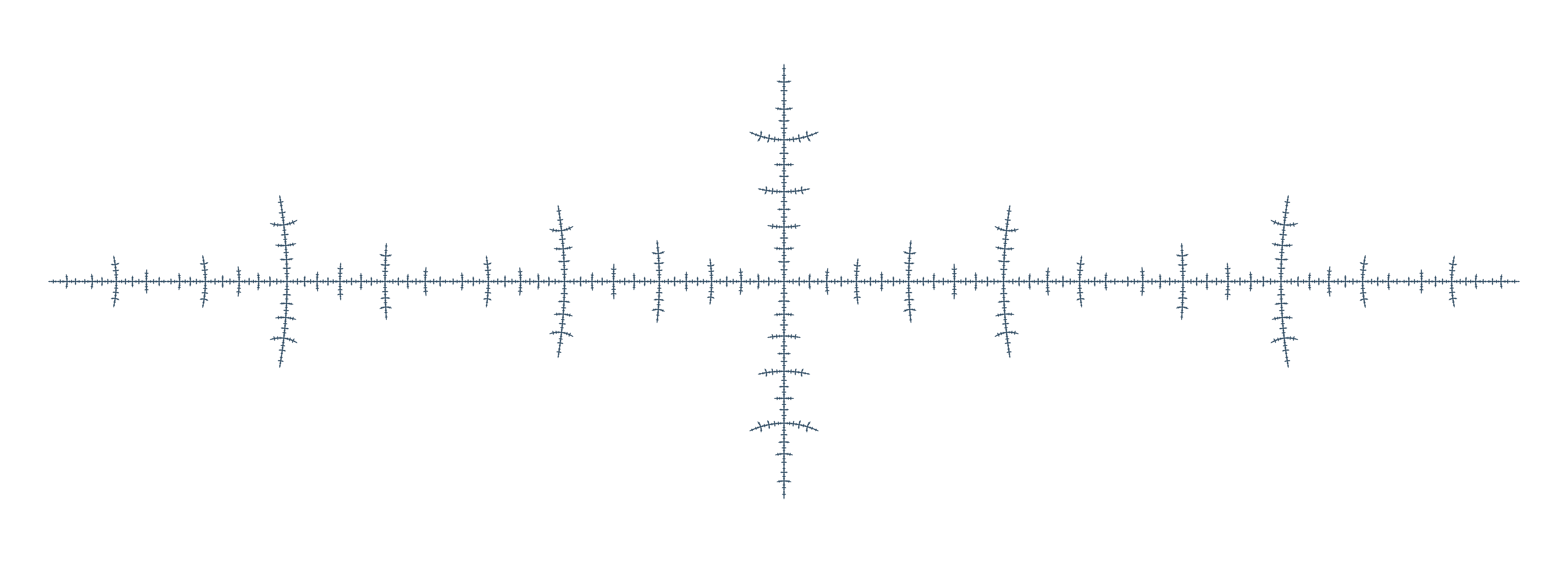} 
    \end{minipage}
    \caption{The Vicsek tree (left) and the Julia set of a quadratic polynomial (right) that is globally quasiconformally homeomorphic to the Vicsek tree.}\label{fig:intro2}
\end{figure}

\begin{thm}\label{thm:Vicsek}
There exists a quadratic UQR map $f$ of $\mathbb{S}^2$ whose Julia set is $\mathbb{V}$.
\end{thm}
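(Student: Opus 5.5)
The idea is to find a quadratic polynomial $P(z)=z^2+c$ whose Julia set $J(P)$ is a quasiconformal tree combinatorially equivalent to $\mathbb{V}$. We then transfer the dynamics to $\mathbb{V}$ by conjugating $P$ with a quasiconformal map of the plane; Theorem \ref{thm:Schoenflies-trees} supplies that map. Any quasiconformal conjugate of a rational map is UQR, has degree 2, and its Julia set is the image of $J(P)$. So it suffices to produce a quasiconformal homeomorphism $\Phi:\R^2\to\R^2$ with $\Phi(J(P))=\mathbb{V}$, and then set $f=\Phi\circ P\circ\Phi^{-1}$, extended to $\mathbb{S}^2$ by fixing $\infty$.

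\textbf{Choosing $P$.} Take $c$ a Misiurewicz (postcritically finite, strictly preperiodic) parameter for which $J(P)$ is a dendrite. Its branch points should all have valence 4, with the critical value landing on a fixed or periodic branch point of valence 4 (this is the polynomial in Figure \ref{fig:intro2}). Since $P$ is semihyperbolic, \cite{CJY} gives that $J(P)$ is a quasiconformal tree. We then verify that $J(P)$ has the same combinatorics as $\mathbb{V}$: every branch point has valence exactly 4, branch points are dense, and the cyclic order of the four branches at each branch point matches that of $\mathbb{V}$. This holds up to orientation, and we can compose with a reflection if needed.

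\textbf{Constructing a branch-orientation-preserving quasisymmetry $h:J(P)\to\mathbb{V}$.} Both trees carry natural self-similar decompositions. For $\mathbb{V}$ these are the five subsquares at each level. For $J(P)$ they are the pieces cut out by preimages of the critical orbit, i.e. the Hubbard tree and its iterated pullbacks. Expansion of $P$ near $J(P)$, via Koebe distortion applied to inverse branches, shows that these pieces form a quasi-self-similar decomposition. Each level-$n$ piece has diameter comparable to $\lambda^{-n}$ up to bounded distortion, and adjacent pieces have comparable diameters. We match pieces level by level, respecting cyclic orders at junction points, and define $h$ as the limit map. Bounded geometry at every scale, together with bounded turning, gives the quasisymmetry of $h$ by the standard argument comparing combinatorial distance in the nested decompositions. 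An alternative route is to invoke the characterization of Bonk--Meyer for trees with quadrivalent branch points and uniform branch separation \cite{BM22}: every such quasiconformal tree is quasisymmetric to the continuum self-similar CSST-type model. One then checks that the resulting map can be chosen branch-orientation-preserving by matching planar orders in the construction. Finally we apply Theorem \ref{thm:Schoenflies-trees}(1) to $h$ to get the quasiconformal extension $\Phi$.

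\textbf{Main obstacle.} The delicate step is the second one. We need a single $P$ whose decomposition of $J(P)$ really matches $\mathbb{V}$'s subdivision combinatorially, including the planar cyclic ordering at every branch point. Uniform comparability of piece sizes also has to be proved, which requires controlling distortion near the postcritical set where the inverse branches are not univalent. I would handle this by using pieces of level $n$ defined by the external-ray structure at preperiodic points. There the local degree is bounded, by 2, and quasi-self-similarity follows from the map $z\mapsto z^2$ near the critical point.
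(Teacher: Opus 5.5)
Your outer frame matches the paper's: you conjugate $z^2+c$ by a quasiconformal map obtained by extending a branch-orientation-preserving quasisymmetry $J(P)\to\V$ via Theorem \ref{thm:Schoenflies-trees}, with \cite{CJY} giving that $J(P)$ is a quasiconformal tree. However, the choice of $P$ as you state it fails. Suppose the critical value eventually lands on a periodic branch point of valence $4$. Its forward orbit never meets $0$, so $P$ is a local homeomorphism along that orbit and $\val(c)=4$. Lemma \ref{lem:valence in tree Julia set} then gives $\val(0)=2\val(c)=8$, so $0$ and all its preimages are $8$-valent, and $J(P)$ is not even homeomorphic to $\V$. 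The paper's choice works because the critical value is a \emph{double} point. It takes real $c\in(-2,-1)$ whose critical orbit lands after $N\geq 3$ steps on the negative fixed point, which has valence $2$, and obtains such $c$ by an intermediate value argument (Lemma \ref{lem: choice of c}). It then shows that the branch points are exactly $\bigcup_n f_c^{-n}(0)$, all $4$-valent (Lemma \ref{lem:valence of J(f)}).

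The more serious gap is the branch-orientation-preserving quasisymmetry. This step carries all the content, and you defer it as the ``main obstacle'' without a mechanism. A level-by-level matching of the five-subsquare pieces of $\V$ with pullback pieces of $J(P)$ is not available, because the two schemes have different combinatorics: already the number of level-$n$ pieces is $5^n$ for $\V$ against roughly $2^n$ for pullbacks under a quadratic map. An abstract quasisymmetry from \cite{BM22} ignores the planar structure, so making it branch-orientation-preserving is not a check but the whole problem. Such a map must send opposite branches at a branch point to opposite branches. Whether an arc uses opposite or adjacent branches at a branch point is therefore a planar invariant that must be established for $J(P)$ (compare Figure \ref{fig:not globally homeo trees}).

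The missing idea is the criterion of Proposition \ref{prop: uniformization of T^4}. It asks for nested covers by subtrees with at most two boundary points, each split at an interior branch point into exactly four subtrees, with the two boundary points of an arc tile lying in \emph{opposite} branches, together with the quasivisual bounds (5)--(8). The paper verifies this for $\V$ using a new decomposition into $\dagger$-, $\ddagger$- and $\times$-tiles rather than the five subsquares (Lemma \ref{lem: Vicsek is QC with T^4}). For $J(f_c)$ it does so in Lemmas \ref{lem:julia-subdivision}--\ref{lem:julia-metric-properties}. There the opposite-branch condition comes from Lemma \ref{lem: properties of branches of J(f)}(ii): when a precritical point $b$ first becomes a branch point of $T_m$, its two new branches are lifts under the local homeomorphism $f_c^{m-1}$ of the two imaginary arcs at $0$. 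They are therefore opposite, so the older arc of $T_{m-1}$ passes straight through $b$. With two such schemes the tiles can be matched preserving cyclic order, \cite[Proposition 2.7]{BM22} gives quasisymmetry, and Theorem \ref{thm: intro 2 quant.} gives the extension.

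Your sketched distortion control near the postcritical set corresponds to the finitely many normalized models with square-root coordinates in Lemma \ref{lem:julia-cores} and the semihyperbolic shrinking of Lemma \ref{lem:koebe2}. It points in the right direction but is also left unproved.
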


Here, by quadratic UQR map $f$, we mean that $f = F^{-1}\circ P \circ F$ for some quasiconformal map $F$ of $\mathbb{S}^2$ and some quadratic polynomial $P$. In fact we show in Proposition \ref{prop: J(f) is QC with T^4} that there exist infinitely many distinct $c\in (-2,-1)$ such that $P$ can be taken to be $z^2+c$. The proof of Theorem \ref{thm:Vicsek} relies on a quasiconformal uniformization of $\V$ (see Proposition \ref{prop: uniformization of T^4}) and the construction of a quadratic polynomial whose Julia set is quasiconformally equivalent to $\mathbb{V}$ (see Proposition \ref{prop: J(f) is QC with T^4}).

\subsection{Overview of the paper}

Section \ref{sec:prelim} is devoted to preliminary definitions and background results and Section \ref{sec: metric trees} gives useful results on trees and quasiconformal trees.

The proof of Theorem \ref{thm:Schoenflies-trees} relies on Carleson’s decomposition technique, introduced in his work on the corona problem \cite{Car62,Car67}, which has been used in several extension results \cite{Ca,Tukia, TukVais}. Specifically, using MacManus' square thickenings \cite{Mac99} we create in Section \ref{sec: Decomposition of R^2 minus T} a \emph{Whitney-like decomposition} of $\R^2\setminus T$. This decomposition consists of countably many mutually disjoint (except their boundaries) simply connected polygons, which are finitely many up to similarities. Each of these polygons has diameter comparable to its distance from $T_1$. In a similar fashion, we construct a Whitney-type decomposition for $\R^2\setminus f(T)$. This is the most technical part of the proof. 

Then, using the fact that $f$ is branch-orientation-preserving, we show in \textsection\ref{sec:compatibility of qc trees} that the two decompositions are combinatorially equivalent, that is, there exists a homeomorphism of the plane that maps each element of one decomposition onto an element of the other. Having this equivalence, in Section \ref{sec: proof of main theorem} we define a map from the skeletons of the decomposition of $\R^2\setminus T$ to the skeleton of the decomposition of $\R^2\setminus f(T)$, extend to the polygons, and then extend to $T$, and then prove that the map is quasiconformal. The finiteness of polygons up to similarities plays the most crucial role.

The proof of Theorem \ref{thm:main thm} involves building a short catalog of planar quasiconvex trees of Hausdorff dimensions arbitrarily close to 1 that uniformizes all planar quasiconformal trees with uniform branch separation. For simplicity, we describe the construction for trees of valence at most 4. Start with a rhombus as in the left of Figure \ref{fig:intropic}. Replace the rhombus by a combination of at most 4 rhombuses as in the right of Figure \ref{fig:intropic}. Each of the new rhombuses is replaced by a combination of at most 4 rhombuses as in the right of Figure \ref{fig:intropic}, and so on so forth, until we obtain a ``quasi-self-similar'' fractal. We make sure that at every step of this process, the union of rhombuses is connected. Let $\mathfrak{T}_4$ be the collection of all such fractal sets produced through this process. Arranging the sizes of the small rhombuses to be very small, we can make the Hausdorff dimension of the final fractals as close to 1 as we want. Similarly, one builds $\mathfrak{T}_m$ for any $m\geq 2$.

\begin{figure}[h]
\centering
\includegraphics[width=\textwidth]{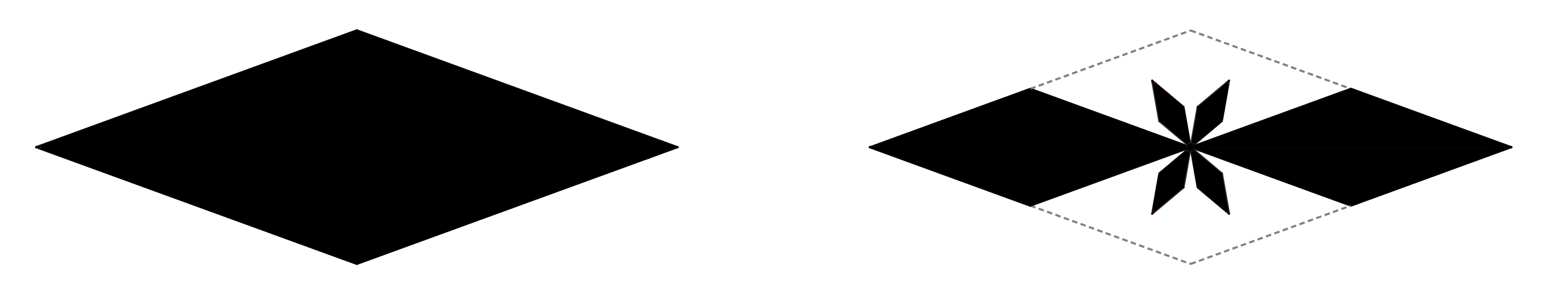}
\caption{}
\label{fig:intropic}
\end{figure}

We show in Proposition \ref{prop: enumeration of T=enumeration of Tmeta} that each element in $\mathfrak{T}_m$ is a quasiconvex tree of uniform branch separation. Conversely, we also show that if $T$ is a planar quasiconformal tree with uniform branch separation and valence at most $m$, then there exists a semi-geodesic $T'\in \mathfrak{T}_m$ and a quasisymmetric branch-orientation-preserving homeomorphism $f:T \to T'$. The rigorous construction of $\mathfrak{T}_m$ and the proof of Proposition \ref{prop: enumeration of T=enumeration of Tmeta} are given in Section \ref{sec: model space}. Then applying Theorem \ref{thm:Schoenflies-trees}, one obtains the desired global map $f$.

In Section \ref{sec:Vicsekunif} we classify all planar trees that are quasiconformally equivalent to the Vicsek tree $\V$. Finally, in Section \ref{sec:Julia} we find a value $c\in\R$ such that the Julia set of the polynomial $z^2+c$ is quasiconformally equivalent to $\V$. These two sections together give the proof of Theorem \ref{thm:Vicsek}.

\subsection*{AI Declaration} No AI has been used during the work and preparation of this manuscript other than Figure \ref{fig:intro2} and Figure \ref{fig:iterations of T_n} which were done using GPT-6 Astra.

\subsection*{Acknowledgments} The authors would like to thank Alastair Fletcher, Dimitrios Ntalampekos, and Efstathios-Konstantinos Chrontsios-Garitsis for helpful discussions and valuable insights related to this project. Lemma \ref{lem:VicseknotJulia} and its proof were suggested by Alastair Fletcher.

\section{Preliminaries}\label{sec:prelim}
We write $\N_0=\N\cup \{0\}.$
Given two non-negative quantities $A$ and $B$, we write $A\lesssim_C B$ if there is a constant $C'>0$ depending on $C$ such that $A\leq C' B$. If $A\lesssim_C B$ and $A\gtrsim_C B$ we write $A\simeq_C B$. We also denote by $\overline{\R^n}=\R^n\cup\{\infty\}$ the one point compactification of $\R^n$. We call $\Omega\subset \R^n$ \textit{domain} if it is open and connected.

A continuous embedding $\g:[a,b]\to \R^2$ is called \textit{curve}. If $\g$ is a homeomorphic embedding, then $\g$ is called \textit{arc}. By a slight abuse of notation we conflate $\g$ with its image $\g([a,b])$. If $x,y\in \g$, then we denote the unique arc between them along $\g$ as $\g[x,y]$. If we want to exclude one or two enpdoints, then we write $\g[x,y),\g(x,y],\g(x,y)$. Last, we denote by $[x,y]\subset \R^2$ the line segment that connects $x$ to $y$.

A simple closed curve $\G\subset \R^2$ is called \textit{Jordan curve}. The domain enclosed by it is called \textit{Jordan domain}. Given a finite set $V\subset \G$ with $\card(V)\geq 3$ and given $x,y\in V$ distinct we denote by $\G[x,y]$ the arc in $\G$ that connects $x$ to $y$ and $\G[x,y]\cap V=\{x,y\}$. Accordingly we say that $x,y$ are \textit{adjacent in $\G$}. If we want to exclude one or two endpoints, then we write $\G[x,y),\G(x,y],\G(x,y)$.

We show the following elementary lemma for future reference. 
\begin{lem}\label{lem:Jordan domain prerequisite}
Let $O\subset \R^2$ be a Jordan domain with boundary $\G$. Let $x\in \G$ and $r>0$ such that $\partial B(x,r)\cap O\neq\emptyset$. Let $p,q\in \partial B(x,r)\cap \G$ such that $x\in \G(p,q)\subset B(x,r)$. Denote by $W_1,W_2$ the connected components of $B(x,r)\setminus\G[p,q]$. There exist arcs $\g_1,\g_2$ with one common endpoint $p'\in \G[p,x)$, the other common endpoint $q'\in \G[q,x)$ such that $\g_1\setminus\{p',q'\}\subset W_1\cap O$ and $\g_2\setminus\{p',q'\}\subset W_2\cap (\R^2\setminus \overline{O})$.
\end{lem}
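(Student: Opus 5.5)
We use the Jordan–Schoenflies theorem to reduce to a model configuration in which the statement is essentially obvious, and then transport the arcs back.

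\emph{Reduction.} By Schoenflies there is a homeomorphism $\phi:\R^2\to\R^2$ with $\phi(\G)=\partial\mathbb{D}$ and $\phi(O)=\mathbb{D}$. The difficulty is that $\phi$ does not map $B(x,r)$ to a nice set, so I would not transport the ball. Instead I work directly with the topology of $B(x,r)\setminus\G[p,q]$, where $\G[p,q]$ denotes the subarc through $x$, so that $\G(p,q)\subset B(x,r)$.

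\emph{Which side is which.} The arc $\G[p,q]$ has endpoints on $\partial B(x,r)$ and interior in $B(x,r)$. Hence it is a crosscut of the open disk $B(x,r)$ and splits it into exactly two Jordan domains $W_1,W_2$ (the $\theta$-curve theorem). Their boundaries are $\G[p,q]\cup A_i$, where $A_1,A_2$ are the two arcs of $\partial B(x,r)$ between $p$ and $q$.

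Near any point $z\in\G(p,q)$, the two local sides of $\G$ are $O$ and $\R^2\setminus\overline O$, and likewise the two local sides of the crosscut are $W_1$ and $W_2$. These sidedness assignments are locally constant along the open arc $\G(p,q)$, and the arc is connected. So after relabeling, $W_1\cap O$ and $W_2\setminus\overline O$ contain all points sufficiently close to $\G(p,q)$ on the respective sides. The hypothesis $\partial B(x,r)\cap O\ne\emptyset$ is not needed for this step.

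\emph{Building $\g_1$.} Choose points $p'\in\G(p,x)$ and $q'\in\G(q,x)$. The component $U$ of $W_1\cap O$ adjacent to $\G(p,q)$ is a domain, and $\G(p,q)$ lies in $\partial U$. Points of $\G$ are accessible from $O$ (Schoenflies: preimages of radii under $\phi$), and the accessing path can be taken inside $W_1$ near the point. So $p'$ and $q'$ are accessible from $U$. Join $p'$ to $q'$ by the union of two accessing arcs plus an arc in $U$ connecting their other endpoints. This gives a path in $U\cup\{p',q'\}$, and we extract an arc from it.

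\emph{Building $\g_2$.} Do the same in $W_2\setminus\overline O$, using accessibility from the exterior. This follows from Schoenflies applied on the sphere, so that the exterior is also a Jordan domain.

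\emph{Main obstacle.} The delicate step is proving rigorously that the side of the crosscut facing $O$ is constant along $\G(p,q)$. I would argue by local Schoenflies charts. Around each $z\in\G(p,q)$, the homeomorphism $\phi$ gives a small neighborhood $N$ in which $\G$ separates $N$ into exactly two parts, $N\cap O$ and $N\setminus\overline O$. Each part is connected and is contained in $W_1$ or in $W_2$. If both parts lay in the same $W_i$, then $z$ would not be a boundary point of the other $W_j$, contradicting $\G(p,q)\subset\partial W_j$. The resulting labeling is locally constant, so by connectedness of $\G(p,q)$ it is constant.
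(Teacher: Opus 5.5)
Your argument is correct, but it is organized differently from the paper's proof.

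\textbf{The paper's route.} It applies Schoenflies to the arc $\Gamma[p,q]$ rather than to $\Gamma$. A homeomorphism $h$ of $\R^2$ straightens $\Gamma[p,q]$ to a segment $l$. A small round disk about $h(x)$, lying inside $h(B(x,r))$ and meeting $h(\Gamma)$ only in $l$, is cut by $l$ into two half-disks. Arcs in these half-disks joining points of $l$ on either side of $h(x)$, pulled back by $h^{-1}$, give $\gamma_1,\gamma_2$, with $p',q'$ close to $x$.

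\textbf{Your route.} You use the Schoenflies map of the whole curve only to obtain local one-sided charts. You propagate the side labeling along all of $\Gamma(p,q)$ by connectedness, and then build the arcs by accessibility inside a component $U$ of $W_1\cap O$.

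\textbf{What each buys.} Your version proves a slightly stronger statement: $p'\in\Gamma(p,x)$ and $q'\in\Gamma(q,x)$ may be prescribed arbitrarily. Your ``main obstacle'' paragraph also makes explicit a step the paper compresses into ``by taking inverse images''. That step is that the two half-disks lie on opposite sides both of $\Gamma$ and of $\Gamma[p,q]$, which uses exactly your observation that $x\in\partial W_1\cap\partial W_2\cap\partial O$. The paper's version is shorter because the lemma only needs the local picture at the single point $x$. One chart at $x$ suffices, with no propagation along the arc and no accessibility argument.

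\textbf{One point to state explicitly.} If you keep your version, justify that there is a single component $U$ of $W_1\cap O$ adjacent to all of $\Gamma(p,q)$. The sets $N_z\cap O$ are connected, and they overlap whenever $z'\in N_z$, since $N_z\cap N_{z'}$ is a neighborhood of $z'\in\partial O$ and therefore meets $O$. Hence the component of $W_1\cap O$ containing $N_z\cap O$ is locally constant in $z$, by the same argument you used for the labeling.
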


\begin{proof}
Let $p,q,B(x,r)$ as in the statement. Let $h:\R^2\to \R^2$ be a homeomorphism that sends $\G[p,q]$ to a line segment $l$. Let $r'>0$ such that $B(h(x),r')\subset h(B(x,r))$. Then $B(h(x),r')\setminus l$ has two connected components that lie inside $h(B(x,r))$ and do not intersect $h(\G)$ other than $l$. By taking inverse images and by the fact that $h$ is a homeomorphism the result follows. 
\end{proof}

\subsection{Bounded turning spaces and John domains} A \textit{continuum} $K$ is a non-empty compact connected set. A non-degenerate continuum is a continuum that is not a singleton. 

\begin{definition}\label{def:bounded turning}
A set $X\subset \R^n$ is of $C$-\textit{bounded turning}, for some $C\geq 1$, if for any $x,y\in X$ there exists a continuum $K\subset X$ containing $x,y$ such that $\diam K\leq C|x-y|$.
\end{definition}

\begin{definition}\label{def:John domain}
A domain $\Omega\subset \R^n$ is $C$-\textit{John} if there exists a constant $C\geq 1$ such that for all $x,y\in\Omega$ there exists an arc $\g\subset \Omega$ connecting $x$ to $y$ and for all $z\in\g[x,y]$
\begin{align}\label{eq:John}
\dist(z,\partial\Omega)\geq C^{-1}\min\{\ell(\g[z,x]),\ell(\g[z,y])\}.
\end{align}
\end{definition}
We also say that a curve or an arc is $C$-\textit{John} if it satisfies \eqref{eq:John} for some uniform constant $C$. 

The following theorem of N\"{a}kki and V\"{a}is\"{a}l\"{a} connects the notions of bounded turning sets and John domains in the plane.

\begin{lem}[{\cite[Theorem 4.5]{NV91}}]\label{lem: Nakki vaisalla}
Let $X\subset \overline{\R^2}$ be a non-degenerate continuum. Then, $X$ is $C$-bounded turning if and only if $\overline{\R^2}\setminus X$ is a $C$-John domain.
\end{lem}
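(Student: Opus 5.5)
The statement is the Näkki–Väisälä theorem, cited from \cite{NV91}. Its proof runs in two directions, and I plan to treat each separately, working on the sphere $\overline{\R^2}$ with the chordal metric (or, after a Möbius normalization, with a point of $X$ at $\infty$). Each complementary component $\Omega$ of $X$ is simply connected, since its complement $X$ is connected. So the plan is to reduce both implications to statements about crosscuts of simply connected domains. Note that for the equivalence to hold literally, one must read ``$\overline{\R^2}\setminus X$ is $C$-John'' as ``each component of the complement is John with constant depending only on $C$''. This is because bounded turning sets (a tree, say) have connected complement, whereas continua in general need not. Both directions will be quantitative with constants depending only on $C$, which is what the lemma's notation intends.

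\emph{John implies bounded turning.} Take $x,y\in X$ and suppose no continuum of diameter $\le C'|x-y|$ in $X$ joins them, where $C'$ is large. Let $r=|x-y|$. The components of $X\cap \overline{B}(x,C'r/2)$ containing $x$ and $y$ are then disjoint. By a standard plane-topology separation argument (the Janiszewski/Zoretti theorem), there is an arc in $\overline{\R^2}\setminus X$ inside $B(x,C'r/2)$ that separates $x$ from $y$ within the ball. It has two endpoints on $\partial B(x,C'r/2)$ and passes between $x$ and $y$, hence within distance $r$ of $X$. Its endpoints lie at distance about $C'r$ from the segment $[x,y]$. Pick points $z_1,z_2$ in the same complementary component $\Omega$, on either side of this ``neck'', at distance $\gtrsim C'r$ from $X$. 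Any John arc from $z_1$ to $z_2$ must pass within distance $\lesssim r$ of $X$, at a point that is $\gtrsim C'r$ along the arc from both ends. This contradicts \eqref{eq:John} once $C'\gg C$.

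\emph{Bounded turning implies John.} This is the harder direction. I would use the characterization of John domains in simply connected planar domains via crosscuts (Gehring–Hag–Martio type). A simply connected $\Omega$ is John if and only if, for every crosscut $\gamma$ of $\Omega$, the component of $\Omega\setminus\gamma$ not containing a fixed center $z_0$ has diameter $\lesssim \diam\gamma$. It suffices to verify this for hyperbolic-geodesic crosscuts (or for straight segment crosscuts). So, given a crosscut $\gamma$ with endpoints $a,b\in X$, bounded turning gives a continuum $K\subset X$ joining $a$ and $b$ with $\diam K\le C|a-b|\le C\diam\gamma$. Since $X$ is connected and $\gamma\cup K$ contains a Jordan curve, the cut-off piece of $\Omega$ lies inside the bounded region enclosed by $\gamma\cup K$. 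Hence its diameter is $\le (1+C)\diam \gamma$.

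The main obstacle is making this last step rigorous: one must check that the cut-off component really sits inside a complementary component of $\gamma\cup K$ of small diameter, rather than the one containing $z_0$. One must also invoke or reprove the crosscut characterization of John domains with controlled constants, and handle the point $\infty$ on the sphere. If the crosscut characterization is unavailable, I would instead construct John arcs directly from Whitney chains along hyperbolic geodesics, using Gehring–Hayman.
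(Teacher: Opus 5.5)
The paper gives no proof of this lemma; it is quoted from \cite[Theorem 4.5]{NV91} and only applied to trees, whose complement is connected. Your caveats are right: the statement must be read componentwise (e.g.\ a round circle is bounded turning with disconnected complement), and the constants are only quantitatively related. Your overall strategy is the standard one, but the first direction has a genuine gap.

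The points $z_1,z_2$ ``on either side of the neck at distance $\gtrsim C'r$ from $X$'' are neither constructed nor needed. More seriously, you never justify that a John arc from $z_1$ to $z_2$ must pass through the neck: a priori it can leave $B(x,C'r/2)$ and go around. This is exactly where connectedness of $X$ must enter, and your argument never uses it.

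Here is a repair. Set $R=C'r/2$, and let $z_1,z_2\in\partial B(x,R)\cap\Omega$ be the endpoints of a single crosscut $\alpha\subset B(x,R)\setminus X$ separating $x$ from $y$ in the ball. (Take the Zoretti separating curve polygonal, so that it meets $B(x,R)$ in finitely many crosscuts, one of which separates.) Suppose an arc $\beta\subset\Omega$ from $z_1$ to $z_2$ missed $[x,y]$, and pick an arc $\sigma\subset\partial B(x,R)$ from $z_1$ to $z_2$. The closed curve $\beta\cup\sigma$ misses $[x,y]$, so its winding numbers about $x$ and $y$ agree. Those of the Jordan curve $\alpha\cup\sigma$ differ by $\pm1$. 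Hence the loop $\alpha\cup\beta$, the difference of these two cycles, has different winding numbers about $x$ and $y$, so it separates them. This is impossible, because the loop lies in $\Omega$ while $x,y$ lie in the connected set $X$.

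Thus every arc in $\Omega$ from $z_1$ to $z_2$, in particular a John arc, meets $[x,y]$ at some $z$. This $z$ satisfies $\dist(z,\partial\Omega)\le r/2$ and $\ell(\beta[z,z_i])\ge |z-z_i|\ge R-r$ for $i=1,2$. That contradicts \eqref{eq:John} as soon as $R>(1+C/2)r$, and a Hausdorff limit of continua then gives bounded turning with constant $C+2$.

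Your second direction is, as you acknowledge, an outline. The crosscut characterization you invoke is itself the substantive half of the N\"akki--V\"ais\"al\"a theorem, so citing it essentially cites the result.

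The ``which side'' issue you flag can be settled, and more cleanly than through your Jordan-curve claim. That claim fails for a general continuum $K$ unless you first replace $K$ by an arc, which is possible up to an arbitrarily small loss because bounded turning continua are locally connected. Instead, apply Janiszewski's theorem to the compact sets $\overline{\gamma}\cup K$ and $\overline{\R^2}\setminus\Omega$. Their intersection is $K$, which is connected, and the complement $\Omega$ of the second set is connected. So $\overline{\gamma}\cup K$ separates the two components $H,H'$ of $\Omega\setminus\gamma$. Each of $H,H'$ lies in a single component of $\R^2\setminus(\overline{\gamma}\cup K)$, and they lie in different ones, so one of them lies in a bounded component. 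Hence $\min\{\diam H,\diam H'\}\le(1+C)\diam\gamma$.

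This bound is center-free. Upgrading it to the centered condition you quote needs a point $z_0$ with $\dist(z_0,\partial\Omega)\gtrsim_C\diam\Omega$, which you do not produce. Such a point also makes no sense in the Euclidean metric for the unbounded domain $\R^2\setminus T$ to which the paper applies the lemma. The natural route is to derive the two-point condition \eqref{eq:John} directly from the center-free crosscut bound, and that implication is exactly what remains to be proved or cited.
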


\subsection{Words}\label{sec:words} For each $n\in \N$, denote by $A_n=\{1,\dots,n\}$. We call $A_n$ \textit{alphabet of order $n$}. Denote by $\e$ the empty word and by $|w|$ the length of a word, i.e., the number of letters with the convention that $|\e|=0$. Denote also by $w^{\uparrow}$ the \textit{predecessor} of $w$, i.e., if $w=i_1\cdots i_n$, then $w^{\uparrow}=i_1\cdots i_{n-1}$. If $w\in A_n$, then we set $w^{\uparrow}=\e$. Let $A_n^0=\{\e\}$ and for each $m\geq 1$, let $A_n^m$ be the set of all words made from the alphabet $A_n$ of length exactly $m$. Define the set of finite words 
$$A_n^*=\bigcup_{m=0}^{\infty}A_n^m.$$
Denote also by $A^{\N}_n$ the set of infinite words formed by the alphabet $A_n$. Given two finite words $w = i_1\cdots i_k$ and $v =j_1\cdots j_m$ in $A_n^{*}$, we denote by $wu=i_1\cdots i_kj_1\cdots j_m$ their concatenation. Given $w\in A^{\N}_n$ denote by $w\{m\}$ the unique word $u \in A^m_n$ such that $w=uw'$
for some $w'\in A^{\N}_n$. Similarly, if $w\in A_n^*$, $w\{m\}$ denotes the initial subword of
$w$ of length $m$, and we set $w\{m\} = w$ if $m\geq |w|$. Given $i\in A_n$ and $k\in\N$, we denote by $i^{(k)}$ the word $i\cdots i\in A_n^k$ and by $i^{(\infty)}$ the infinite word $ii\cdots \in A^{\N}$. 

For each $n,m\in\N$ let $\emptyset\neq W\subset A_n^m$. Let $w,u\in W$ such that $w=vi_1\cdots i_{m-|v|}$, $u=vj_1\cdots j_{m-|v|}$ where $v\in A_n^*$, $|i_k|=|j_k|$ for $k\in \{1,\dots,m-|v|\}$ and $i_1\neq j_1$. If $i_1<j_1$ we say that $w$ is \textit{to the right} of $u$ (or $u$ is \textit{to the left} of $w$). Accordingly we say that $W$ is oriented according to the \textit{cyclic lexicographic order}. 
If $W=\{w_1,\dots,w_M\}$, then we make the convention that $w_1$ is to the right of $w_M$, and $w_M$ is to the left of $w_1$. Last, if $w_i=w_M$, then we make the convention that $w_{i+1}=w_1$ and if $w_i=w_1$, then we make the convention that $w_{i-1}=w_M$.

\subsection{Maps}
A homeomorphism $f:(X,d_X) \to (Y,d_Y)$ between metric spaces is said to be \emph{quasisymmetric} (or $\eta$-quasisymmetric) if there exists a homeomorphism $\eta \colon [0,\infty) \to [0,\infty)$ such that for all $x,a,b \in X$ with $x\neq b$ 
\[ \frac{d_Y(f(x),f(a))}{d_Y(f(x),f(b))} \leq \eta \left ( \frac{d_X(x,a)}{d_X(x,b)} \right ). \]

The composition of two quasisymmetric maps and the inverse of a quasisymmetric map are quasisymmetric. Two spaces $X,Y$ are quasisymmetrically equivalent if there exists a quasisymmetric map between them. 

Quasisymmetric maps were introduced by Tukia and V\"ais\"al\"a \cite{TV80} as the analogue of quasiconformal maps in the abstract metric setting. For more background see \cite[Chapters 10-12]{Heinonen}. It is well know that the properties ``doubling'' and ``bounded turning'' are preserved under quasisymmetric maps quantitatively. Bonk and Meyer showed that the property of ``uniformly branch separation'' is also preserved \cite[Lemma 4.3]{BM22}. 

A homeomorphism $f:(X,d_X)\to(Y,d_Y)$ is said to be \emph{weakly quasisymmetric} (or weakly $H$-quasisymmetric) if there exists $H\geq 1$ such that for all $x,a,b\in X$,
\[ d_X(x,a)\leq d_X(x,b)\quad\text{implies}\quad d_Y(f(x),f(a)) \leq H\,d_Y(f(x),f(b)).\]
It follows from \cite[Theorem 10.19]{Heinonen} that if $X,Y$ are connected doubling spaces, then quasisymmetry is equivalent to weak quasisymmetry quantitatively.

A homeomorphism $f:(X,d_X)\to (Y,d_Y)$ is called \emph{(metrically) $H$-quasiconformal} if there exists $H\geq 1$ such that for all $x\in X$,
\[H_f(x):=\limsup_{r\to 0}\frac{\sup_{d_X(x,y)\leq r} d_Y(f(x),f(y))}{\inf_{d_X(x,y)\geq r} d_Y(f(x),f(y))}\leq H.\]

It is not hard to see that quasisymmetric maps are quasiconformal. On the other hand, if $\Omega\subset \R^n$ is a domain and $f:\Omega\to \R^n$ is quasiconformal, then it is quasisymmetric on any ball $B(x,\frac{1}{2}\dist(x,\partial\Omega))$ \cite[Theorem 11.14]{Heinonen}. In particular, if $f:\R^n \to \R^n$ is quasiconformal, then it is quasisymmetric.

The following result of Jones and Smirnov gives a quasiconformal removability criterion
in terms of John domains.

\begin{lem}[{\cite[Corollary 1]{JonesPeter}}]\label{lem:qc removable}
Let $X\subset \R^2$ be a closed set and let $F:\R^2\to\R^2$ be a homeomorphism. If $F|\R^2\setminus X$ is quasiconformal and $\R^2\setminus X$ is a John domain, then $F$ is quasiconformal.
\end{lem}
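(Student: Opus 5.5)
The plan is to show that $F$ belongs to $W^{1,2}_{\mathrm{loc}}(\R^2)$. Once that is known, $F$ is a homeomorphism in $W^{1,2}_{\mathrm{loc}}$ whose distortion is bounded almost everywhere, provided $X$ has Lebesgue measure zero. The analytic definition of quasiconformality then gives the result. Throughout, write $\Omega=\R^2\setminus X$, which is a $C$-John domain by hypothesis, and work locally in a large disk $D$. Points at infinity can be handled by a Möbius change of variables, which preserves the John condition up to constants.

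First, $|X|=0$. The John condition implies that $\partial\Omega$ is porous: every disk $B(x,r)$ centered on $\partial\Omega$ contains a disk of radius $\gtrsim_C r$ inside $\Omega$. This comes from taking a point of the John arc at distance about $r$ from $x$. Porous sets have no Lebesgue density points, hence measure zero.

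Second, fix a Whitney decomposition $\mathcal{W}$ of $\Omega\cap D$. Also fix a John center $x_0$. For each $Q\in\mathcal{W}$, define its \emph{shadow} $S(Q)\subset\partial\Omega$ as the set of boundary points $z$ whose John arc to $x_0$ passes through $Q$. Two consequences of the John condition are needed.

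(a) For a given $z\in\partial\Omega$, the Whitney squares $Q_0(z),Q_1(z),\dots$ met by the John arc from $z$ have sizes decaying geometrically toward $z$. Also $z\in S(Q)$ forces $\dist(z,Q)\lesssim_C \ell(Q)$, so $\diam S(Q)\lesssim_C\ell(Q)$.

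(b) For each $Q$, the oscillation of $F$ on $S(Q)$ is controlled by a chain sum. Precisely, $\diam F(S(Q)\cup Q)\le \sum_{Q'\prec Q}\diam F(Q')$, where $Q'\prec Q$ runs over squares lying on John arcs below $Q$. Here continuity of $F$ up to $X$ is used: it is a homeomorphism of the plane.

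Since $F$ is quasiconformal on $\Omega$, it is quasisymmetric on each slightly enlarged Whitney square, by \cite[Theorem 11.14]{Heinonen}. Hence $\diam F(Q)^2\simeq |F(Q)|$.

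Third, and this is the key estimate: ACL with $L^2$ gradient across $X$. Take a.e.\ horizontal line $L$ and a segment $I\subset L\cap D$. Bound the variation of $F$ on $I$ by covering $I\cap X$ with shadows $S(Q)$ and $I\setminus X$ with Whitney squares. This gives
\[ \operatorname{var}(F,I)\lesssim \sum_{Q\cap \widehat{I}\neq\emptyset}\diam F(Q), \]
where $\widehat I$ is a neighbourhood of $I$ of comparable size, and the sum is organized through the tree structure of shadows.

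Integrating over lines and applying Cauchy–Schwarz with the geometric weights from (a) should give
\[ \sum_Q \ell(Q)\,\diam F(Q)\lesssim \Big(\sum_Q\ell(Q)^{2}\Big)^{1/2}\Big(\sum_Q\diam F(Q)^2\Big)^{1/2}\lesssim |D|^{1/2}\,|F(D)|^{1/2}. \]
The point is that the combinatorial multiplicity (how many shadows contain a given point at a given scale) is uniformly bounded. This is the content of the Jones–Smirnov "shadow" lemma for John/Hölder domains.

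From this one gets that $F$ is absolutely continuous on a.e.\ line. Since $|X|=0$, the distributional gradient coincides with the pointwise gradient on $\Omega$, which lies in $L^2$ because $\int_D|DF|^2\lesssim K|F(D)|$. Combined with the bounded distortion on $\Omega$, this yields quasiconformality.

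The main obstacle is the third step. One must show that crossing $X$ costs nothing in the line integral: the variation of $F$ across boundary points has to be fully absorbed by the Whitney squares whose shadows contain them, with bounded overlap. That bounded-overlap and geometric-decay bookkeeping is where the John hypothesis is essential. I would first try to prove it via the quantitative bound $\sum_{Q: S(Q)\ni z,\,\ell(Q)\ge 2^{-k}} 1\lesssim_C k$ together with the geometric decay of chain sums.
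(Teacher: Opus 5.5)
\textbf{There is a genuine gap in Step 3.} Your framework is the Jones--Smirnov shadow argument that the paper cites without proof: porosity giving $|\partial\Omega|=0$, John shadows with $\diam S(Q)\lesssim_C\ell(Q)$, ACL on lines, and Cauchy--Schwarz against $\sum\ell(Q)^2$ and $\sum\diam F(Q^*)^2\lesssim|F(D)|$. But Step 3, which you flag yourself, does not work as written.

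\emph{Both of your sums are typically infinite.} Take $F=\mathrm{id}$ and $X=\{0\}\times[0,1]$, with horizontal John arcs. The squares ``below'' a square $Q$ of side $2^{-k}$ include about $2^{j-k}$ squares of side $2^{-j}$ for every $j\geq k$. So the right side of (b) is $+\infty$. Likewise $\sum_{Q\cap\widehat I\neq\emptyset}\ell(Q)=\infty$ for a fat neighbourhood $\widehat I$ of a horizontal segment crossing $X$.

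What is true, and what you need, is a pointwise, line-restricted chain bound. Suppose $z\in S(Q)\cap L$ and $w\in\gamma_z\cap Q$. Then every Whitney square met by $\gamma_z[w,z]$ has $z$ in its shadow, so it lies in $\mathcal Q_L:=\{Q':S(Q')\cap L\neq\emptyset\}$. By the carrot condition $\ell(\gamma_z[w,z])\leq C\dist(w,\partial\Omega)$, such a square has side $\lesssim\ell(Q)$ and lies within distance $\lesssim\ell(Q)$ of $Q$. This restriction to $\mathcal Q_L$ is also what justifies your later weight $\ell(Q)$: the set of parallel lines meeting $S(Q)$ has measure at most $\diam S(Q)$.

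\emph{A variation bound would not give absolute continuity anyway}, since BV is not AC. The missing idea is a Lusin (N) statement on almost every line. Cover $L\cap X$ by the sets $S(Q)\cap L$ with $\ell(Q)\simeq 2^{-k}$; every John arc crosses every small scale, so these cover. Combine the restricted chain bound with an area count: a fixed $Q'$ lies within $\lesssim 2^{-k}$ of only $O(1)$ squares of side $\simeq 2^{-k}$. This, not the number of shadows containing a point, is the multiplicity that matters. Together they give
\[ \mathcal H^1_\infty\bigl(F(L\cap X)\bigr)\lesssim\sum_{Q'\in\mathcal Q_L,\ \ell(Q')\lesssim 2^{-k}}\diam F(Q'). \]
The right side tends to $0$ as $k\to\infty$ as soon as $\sum_{Q'\in\mathcal Q_L}\diam F(Q')<\infty$. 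That holds for a.e.\ $L$ exactly by your Cauchy--Schwarz computation.

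Now fix a coordinate $F_j$ and points $a<b$ on $L$. The interval $F_j([a,b])$ is covered by the null set $F_j([a,b]\cap X)$ and the images of the components of $[a,b]\setminus X$. On those components $F$ is absolutely continuous for a.e.\ $L$, since $F$ is ACL on $\Omega$. Hence
\[ |F_j(b)-F_j(a)|\leq\int_{[a,b]\setminus X}|\partial_L F_j|, \]
and the integrand is integrable on $L\cap D$ for a.e.\ $L$ by Fubini. This is the absolute continuity on lines that yields $F\in W^{1,2}_{\mathrm{loc}}$.

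\emph{A smaller point.} Step 1 silently assumes $X=\partial\Omega$. As literally stated the lemma fails if $X$ has interior: take $X$ a closed disk and $F$ a non-quasiconformal homeomorphism supported in it. Record that hypothesis; it holds in the paper's application, where $X=T$.
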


A map $f:(X,d_X)\to (Y,d_Y)$ is an $(L,\lambda)$-\textit{quasi-similarity}, for some $\lambda>0$, $L\geq 1$, if for all $x_1,x_2\in X$ 
$$L^{-1}\lambda d_X(x_1,x_2)\leq d_Y(f(x_1),f(x_2))\leq L\lambda d_X(x_1,x_2).$$
An $(L,1)$-quasisimilarity is \emph{$L$-bi-Lipschitz}, a $(1,\lambda)$-quasisimilarity is a \emph{similarity} with scaling factor $\lambda$, and a $(1,1)$-quasisimilarity is an \emph{isometry}. 

The next remark can be easily verified from definitions.

\begin{rem}
If $f:(X,d_X)\to (Y,d_Y)$ is an $(L,\lambda)$-quasi-similarity, then it is $L^2$-quasiconformal and $K$-bi-Lipschitz with $K=L\max\{\lambda ,\lambda^{-1}\}$. Moreover, if $f$ is $L$-bi-Lipschitz, then $f$ is $\eta(L^2t)$-quasisymmetric.
\end{rem}

A map $f:(X,d_X)\to(Y,d_Y)$ is an $L$-\emph{bounded length distortion} map, (abbv. $L$-\emph{BLD}), if
\[ L^{-1}\ell(\gamma) \leq \ell(f\circ\gamma) \leq L\ell(\gamma)\]
for every rectifiable path $\gamma$ in $X$.

Every $L$-bi-Lipschitz map is $L$-BLD. Conversely, it is not hard to show that if $f:\mathbb R^2\to\mathbb R^2$ is a homeomorphism and is $L$-BLD, then $f$ is $L$-bi-Lipschitz.

The following extension theorem of Tukia allows us to extend bi-Lipschitz maps between boundaries of bi-Lipschitz disks to the whole plane.



\begin{lem}\label{lem:Tukia quasi-similarity}
Let $\Gamma,\Gamma'\subset\mathbb R^2$ be Jordan curves such that $\Gamma$ is $(L_1,\lambda_1)$-quasi-similar to $\mathbb{S}^1$. If $f:\Gamma\to\Gamma'$ is an $(L_2,\lambda_2)$ quasi-similarity, then $f$ extends to an $(L,\lambda_2)$-quasi-similarity $F:\mathbb R^2\to\mathbb R^2$, where $L$ depends only on $L_1,L_2$.
\end{lem}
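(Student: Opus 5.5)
The plan is to reduce to Tukia's bi-Lipschitz Schönflies theorem for the unit circle by normalizing both the curves and the map with similarities.

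First, normalize $\Gamma$. Let $\phi:\mathbb{S}^1\to\Gamma$ be the $(L_1,\lambda_1)$-quasi-similarity. Composing with the dilation $x\mapsto \lambda_1^{-1}x$, we may assume $\lambda_1=1$, so that $\Gamma$ is $L_1$-bi-Lipschitz equivalent to $\mathbb{S}^1$. Tukia's theorem \cite{Tukia} then extends $\phi$ to an $L_1'$-bi-Lipschitz homeomorphism $\Phi:\R^2\to\R^2$, with $L_1'$ depending only on $L_1$.

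Second, normalize $f$. Replace $f$ by $g=\lambda_2^{-1}f$ (a dilation about the origin), which is $L_2$-bi-Lipschitz from $\Gamma$ onto $\Gamma''=\lambda_2^{-1}\Gamma'$. Then $g\circ\phi:\mathbb{S}^1\to\Gamma''$ is $L_1L_2$-bi-Lipschitz. So $\Gamma''$ is itself a bi-Lipschitz circle with constant depending only on $L_1,L_2$, and Tukia's theorem again gives an $L_2'$-bi-Lipschitz extension $\Psi:\R^2\to\R^2$ of $g\circ\phi$.

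Third, assemble the extension. Set $G=\Psi\circ\Phi^{-1}$. This is an $L_1'L_2'$-bi-Lipschitz homeomorphism of $\R^2$, and on $\Gamma$ it equals $(g\circ\phi)\circ\phi^{-1}=g$. Finally set $F=\lambda_2 G$. Then $F|_\Gamma=f$, and
\[
(L_1'L_2')^{-1}\lambda_2|x-y|\le|F(x)-F(y)|\le L_1'L_2'\lambda_2|x-y|.
\]
So $F$ is an $(L,\lambda_2)$-quasi-similarity with $L=L_1'L_2'$, which depends only on $L_1,L_2$.

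The only substantive input is Tukia's bi-Lipschitz extension theorem for maps from $\mathbb{S}^1$ onto a Jordan curve, with constants depending only on the bi-Lipschitz constant; the argument above is otherwise bookkeeping with scalings. One point to check is that rescaling does not affect any constants, since dilations are similarities and bi-Lipschitz constants are scale-invariant. Orientation is not an issue either, since the statement does not require $F$ to preserve it. If Tukia's theorem is only available for maps $\mathbb{S}^1\to\mathbb{S}^1$, the same structure works, but each step needs the version extending a bi-Lipschitz parametrization $\mathbb{S}^1\to\Gamma''$.
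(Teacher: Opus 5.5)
Your proof is correct and is essentially the paper's argument: normalize the parametrization $\mathbb{S}^1\to\Gamma$ and its composition with $f$ by dilations, extend both via Tukia's theorem, and compose as $F=\lambda_2\,\Psi\circ\Phi^{-1}$. The paper writes the same map as $F(z)=\lambda_1\lambda_2H_2(H_1^{-1}(z/\lambda_1))$, carrying $\lambda_1$ explicitly instead of assuming $\lambda_1=1$. Your reduction to $\lambda_1=1$ silently precomposes $f$ with $z\mapsto\lambda_1 z$, which makes it an $(L_2,\lambda_1\lambda_2)$-quasi-similarity and must be undone at the end; this is harmless because the lemma allows arbitrary $\lambda_2$.
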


\begin{proof}
Let $h:\mathbb{S}^1\to\Gamma$ be an $(L_1,\lambda_1)$-quasi-similarity. Then $h_1=\lambda_1^{-1}h$ is $L_1$-bi-Lipschitz and $h_2=(\lambda_1\lambda_2)^{-1}f\circ h$ is $L_1L_2$-bi-Lipschitz. By Tukia's extension theorem \cite[Theorem A]{Tukia}, the maps $h_1,h_2$ extend to global bi-Lipschitz homeomorphisms $H_1,H_2$ of $\mathbb R^2$, with constants depending only on $L_1,L_2$. The map
\[F(z)=\lambda_1\lambda_2 H_2\left(H_1^{-1}\left(\frac{z}{\lambda_1}\right)\right) \]
extends $f$ and is an $(L,\lambda_2)$-quasi-similarity, where $L$ depends only on $L_1,L_2$.
\end{proof}

\subsection{Square thickening} 
Given $\d>0$,  denote by
\[\Sigma_\delta=\{ [m\delta,(m+1)\delta]\times [k\delta,(k+1)\delta]:(m,k)\in\mathbb{Z}^2\}\]
the collection of all closed dyadic squares of side length $\delta$ and define the \emph{$\delta$-square grid} by
\[ \mathcal G_\delta = \bigcup_{Q\in \Sigma_{\d}} \partial Q. \]
Following MacManus \cite{Mac99}, given a bounded set $S\subset \R^2$, define
\[S^\delta=\bigcup_{\substack{Q\in \Sigma_\d \\ Q\cap S\neq\emptyset}} Q.\]
and define the \emph{square thickening} of $S$ by
\[\mathscr{T}_\delta(S)=(S^{4\delta})^{\delta} .\]

In the following lemma, we gather some properties about square thickening.  

\begin{lem}\label{lem:properties of thickening}
Let $S\subset\R^2$ be a bounded set and let $\d>0$.
\begin{enumerate}[(i)]
\item The set $\partial\mathscr{T}_{\delta}(S)$ consists of finitely many mutually disjoint Jordan curves, each of which is a subset of $\mathcal{G}_\d$. Moreover, if $S$ is connected, then there exists a Jordan domain $\Omega$ such that $\partial\Omega\subset \partial\mathscr{T}_{\delta}(S)$ and $S \subset \Omega$.
\item For all $y\in \partial\mathscr{T}_{\delta}(S)$, $\d<\dist(y,S)<8\d$. In particular, 
$$\diam\mathscr{T}_{\d}(S)\leq \diam S+16\delta.$$
\item If $S'\subset S$, then $\mathscr{T}_{\d}(S')\subset\mathscr{T}_{\d}(S)$.
\item If $\{A_i\}_{i\in I}$ are sets in $\R^2$ with $S=\bigcup_{i\in I} A_i$, then $\mathscr{T}_{\d}(S)=\bigcup_{i\in I}\mathscr{T}_{\d}(A_i)$.
\item If $\d'\in (0,\d]$, then $\mathscr{T}_{\d'}(S)\subset \mathscr{T}_{\delta}(S)$.
\item For each $z\in\mathscr{T}_{\d}(S)$ there exists $w\in S$ such that $|z-w| < 9\d$ and $[z,w]\subset \mathscr{T}_{\d}(S)$.
    \end{enumerate}
\end{lem}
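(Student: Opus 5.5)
The plan is to prove the six items essentially in the order (iii), (iv), (v), (ii), (vi), (i). The monotonicity and union properties are formal, and the metric estimates (ii) and (vi) come from bookkeeping about the two layers of squares. The topological statement (i) is the one that needs an actual argument.

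\emph{Items (iii) and (iv).} These follow directly from the definition. The operation $S\mapsto S^\delta$ is a union over squares meeting $S$, so it is monotone. A square meets $\bigcup_i A_i$ if and only if it meets some $A_i$, so the operation commutes with unions. Applying this twice, first with $4\delta$ and then with $\delta$, gives both properties for $\mathscr{T}_\delta$.

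\emph{Item (v).} I would not try to compare grids of different sizes, since $\delta'$ need not divide $\delta$. Instead I would use the metric description of $\mathscr{T}_\delta(S)$:
\[
\{z:\dist(z,S)\le \delta\}\subset \mathscr{T}_\delta(S)\subset\{z:\dist(z,S)\le 8\delta\}.
\]
This comes from the following observations.
\begin{itemize}
\item $S^{4\delta}$ contains the closed $4\delta$-neighbourhood of $S$ only up to grid effects, but it certainly contains every point within $4\delta$ of $S$ lying in a square that meets $S$.
\item A further $\delta$-layer adds at most $\sqrt2\,\delta$.
\end{itemize}
So I would first establish the sharper inclusions $N_{4\delta}(S)\subset S^{4\delta}$ (roughly) and $S^{4\delta}\subset N_{4\sqrt2\delta}(S)$, then verify directly that $\mathscr{T}_{\delta'}(S)\subset N_{4\sqrt2\delta'+\sqrt2\delta'}(S)$. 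Finally I would check that this set sits inside $\mathscr{T}_\delta(S)$. That last step needs $\mathscr{T}_\delta(S)\supset N_{c\delta}(S)$ with $c\ge 4\sqrt2+\sqrt2$, which is false with these constants. So a naive neighbourhood argument fails, and (v) presumably requires $\delta'$ to be of the form $2^{-k}\delta$ with nested grids. With nested dyadic grids, (v) follows because every $\delta'$-square meeting a set lies in a $\delta$-square meeting it.

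\emph{Items (ii) and (vi).} For (ii), a point $y\in\partial\mathscr{T}_\delta(S)$ lies on the boundary of some $\delta$-square $Q$ meeting $S^{4\delta}$, and $S^{4\delta}$ lies within $4\sqrt2\delta$ of $S$, so $\dist(y,S)\le 4\sqrt2\delta+\sqrt2\delta<8\delta$. Conversely, $y$ is adjacent to a $\delta$-square not meeting $S^{4\delta}$, hence $y$ is at distance at least $\delta$ from $S^{4\delta}$ minus boundary effects. A point at distance at most $\delta$ from $S$ lies in the interior of $S^{4\delta}$, since the $4\delta$-square containing it, together with its neighbours, is in $S^{4\delta}$. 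This gives $\dist(y,S)>\delta$, and the diameter bound follows. For (vi), given $z\in Q\subset\mathscr{T}_\delta(S)$, take $w\in S$ nearest to the square $Q'\in\Sigma_{4\delta}$ that $Q$ meets. The segment $[z,w]$ stays within the union of $Q$, $Q'$ and the squares of $S^{4\delta}$ along it; convexity of squares, and the fact that all $\delta$-squares meeting $Q'$ are included, keep the segment in the thickening, with length at most $\sqrt2\delta+4\sqrt2\delta<9\delta$.

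\emph{Item (i), the main obstacle.} The set $\mathscr{T}_\delta(S)$ is a finite union of closed grid squares. Its boundary is a finite union of grid edges in $\mathcal G_\delta$, and each boundary vertex has degree 2 or 4. The key point is that degree 4 (two squares touching only at a corner) cannot occur, because the outer $\delta$-layer around a union of $4\delta$-squares fills in diagonal pinches. Granting this, the boundary is a disjoint union of Jordan curves. If $S$ is connected, then $\mathscr{T}_\delta(S)$ is connected, so exactly one complementary component is unbounded. Its boundary curve bounds a Jordan domain $\Omega$ that contains $\mathscr{T}_\delta(S)\setminus\partial$ and hence $S$, using $\dist(S,\partial\mathscr{T}_\delta(S))>\delta$ from (ii). I expect ruling out corner pinches to be the delicate step.
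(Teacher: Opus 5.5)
The genuine gap is in (i), at exactly the step you flag and then grant. Saying the outer $\delta$-layer ``fills in diagonal pinches'' only explains why a corner contact between two $4\delta$-squares of $S^{4\delta}$ disappears. It does not rule out new corner contacts created by the layer itself, and that is what (i) needs.

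The missing idea is a congruence. Work in units of $\delta$. The $\delta$-squares meeting $Q=[4i,4i+4]\times[4j,4j+4]$ fill $\widehat Q=[4i-1,4i+5]\times[4j-1,4j+5]$. So by (iv), $\mathscr{T}_\delta(S)$ is the union of the $\widehat Q$ over the $4\delta$-squares $Q$ meeting $S$.

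Suppose that at a vertex $v$ of $\mathcal{G}_\delta$ the upper-right and lower-left $\delta$-squares lie in $\mathscr{T}_\delta(S)$, but the upper-left and lower-right ones do not. The upper-right square lies in some $\widehat Q_1$. Since $\widehat Q_1$ is a union of $\delta$-grid squares and contains neither the left nor the lower neighbour of that square, $v$ is the lower-left corner of $\widehat Q_1$. Hence both coordinates of $v$ are $\equiv 3\pmod 4$. Symmetrically, $v$ is the upper-right corner of some $\widehat Q_2$, so both coordinates are $\equiv 1\pmod 4$, a contradiction. The other diagonal is handled the same way, with the residues $(1,3)$ versus $(3,1)$.

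This is a mod-$4$ refinement of the parity remark in the proof of Lemma~\ref{lem:def of thickening of Tm,eta}(i); parity alone would not suffice. With it, every boundary vertex has degree $2$ and your argument for (i) goes through. For the last assertion you should also say why the unbounded complementary component has a single boundary curve: $\mathscr{T}_\delta(S)$ is connected and lies locally on one side of each boundary curve. The paper gives no argument for (i)--(v) and simply cites MacManus.

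Three smaller points.

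\emph{On (ii).} The sentence ``a point at distance at most $\delta$ from $S$ lies in the interior of $S^{4\delta}$'' is false. Let $S$ be a single point just left of the right side of a $4\delta$-square; points just right of that side are within $\delta$ of $S$ but outside $S^{4\delta}$. What is true is that a point within distance less than $\delta$ of some $s\in S$ lies in the interior of $\widehat Q\subset\mathscr{T}_\delta(S)$ for any $4\delta$-square $Q\ni s$.

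Your preceding sentence, made precise, already gives $\dist(y,S)\geq\delta$: $y$ lies in a $\delta$-square disjoint from $S^{4\delta}$, which is itself a union of $\delta$-grid squares, and disjoint closed squares of one grid are at distance at least $\delta$.

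The strict inequality needs $S$ closed. For $S=(0,4\delta)^2$ one has $\mathscr{T}_\delta(S)=[-\delta,5\delta]^2$ and $\dist((5\delta,2\delta),S)=\delta$. For compact $S$ strictness follows from $S\subset\operatorname{int}S^{4\delta}$, and the paper only applies (ii) to compact sets.

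\emph{On (v).} Your doubt is justified. For $\delta=1$, $\delta'=3/4$ and $S=\{(4.1,2)\}$ one gets $\mathscr{T}_1(S)=[3,9]\times[-1,5]$, but $\mathscr{T}_{3/4}(S)=[2.25,6.75]\times[-0.75,3.75]$. So (v) holds only when $\delta/\delta'\in\mathbb N$, for instance when both are dyadic, as in all applications ($\delta_X=2^{-k_X}$). In that case your nested-grid argument is correct.

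\emph{On (vi).} Your argument is the paper's: $z$ and $w\in Q\cap S$ both lie in the convex square $\widehat Q\subset\mathscr{T}_\delta(S)$. Your bound $5\sqrt2\,\delta$ is slightly sharper than the paper's $6\sqrt2\,\delta$.
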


\begin{proof}
Claims (i)-(v) follow directly from the definitions and from \cite[Lemma 2.1]{Mac99}. To show (vi), fix $z\in\mathscr{T}_{\d}(S)$. Then $z$ belongs to a square $Q_1\in \Sigma_{\d}$ that intersects a square $Q_2\in \Sigma_{4\d}$ that intersects $S$. Let $w\in Q_{2}\cap S$. Since $Q_1\cap Q_{2}\neq\emptyset$ the squares $Q_1,Q_{2}$ share at least a side or a corner. Let 
\[Q' = Q_{2}\cup \bigcup_{\substack{Q \subset \Sigma_\delta\\ Q \cap Q_{2} \neq \emptyset}} Q.\]
Note that $Q'$ is a square, $Q'\subset \mathscr{T}_{\d}(S)$, and $z,w\in Q'$. We conclude that $[z,w]\subset \mathscr{T}_{\d}(S)$ and that $|z-w|\leq \diam Q'<9\d$.
\end{proof}

\section{Metric trees}\label{sec: metric trees} 

We say that a metric space $T$ is a \textit{(metric) tree} if it is  compact, connected, and locally connected, with at least two distinct points, and such that for any $x,y\in T$  there is a unique arc in $T$ with endpoints $x, y$. We denote this unique arc by $T[x,y]$. As with arcs, we denote $T[x,y) = T[x,y]\setminus\{y\}$ and analogously we define $T(x,y]$ and $T(x,y)$.

We say that a point $p\in T$ has \textit{valence} equal to $m\in \N\cup \{\infty\}$ and write $\val(p,T)=m$ if the complement $T\setminus\{p\}$ has $m$ connected components, called \textit{branches of $T$ at $p$}. We denote the branches of $p$ in $T$ by $B_T^i(p)$ for $i\in\{1,\dots,m\}$. If $\val(p,T)=1$ we say that $p$ is a \textit{leaf} of $T$, if $\val(p,T)=2$ we say that $p$ is a \textit{double point} of $T$, and if $\val(p,T)\geq 3$ we say $p$ is a \textit{branch point} of $T$. We denote the set of branch points of a tree with $\mathcal{B}(T)$ and the set of leaves of a tree with $\mathcal{L}(T)$. We define the \textit{valence} of a tree $T$ by
\[ \val(T):=\sup \{\val(p,T): \, p \in T\}.\] 
If $\mathcal{B}(T)=\emptyset$, then $T$ is a metric arc. If $T$ has at least one branch point and there exists $m\in\{3,4,\dots\}$ such that $\val(p,T)=m$ for all branch points $p$ of $T$, then we say that $T$ is \textit{$m$-valent}. 

A subset $S$ of a tree $T$ is called \textit{subtree} of $T$ if $S$ equipped with the restricted metric of $T$ is also a tree. By \cite[Lemma 3.3]{BT21}, $S$ is a subtree of $T$ if and only if $S$ contains at least 2 points and it is closed and connected.

We also gather some properties about metric trees from \cite{BM20}.
\begin{lem}\label{lem: metric trees properties of BM}
Let $(T,d)$ be a metric tree.
\begin{enumerate}[(i)]
\item Let $V\subset T$ be a finite set. Two points $x, y \in T\setminus V$ lie in the same component of $T \setminus V$ if and only $T[x,y]\cap V=\emptyset$.
\item Let  $b, x_1, x_2, x_3\in T$ with $b \neq x_1, x_2, x_3$ and suppose that the sets $T[x_1, b)$, $T[x_2, b)$, $T[x_3, b)$ are pairwise disjoint. Then the points $x_1, x_2, x_3$ lie in different components of $T \setminus\{b\}$, and $b$ is a branch point of $T$.
    \end{enumerate}
\end{lem}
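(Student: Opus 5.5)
Both parts reduce to the defining property of a metric tree: any two points are joined by a unique arc. I will also use the standard fact that a connected, locally connected open subset of a Peano continuum is arcwise connected. Since $T$ is locally connected, every component of an open subset of $T$ is open in $T$ and locally connected. So the components of $T\setminus V$, for finite $V$, are arcwise connected.

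For (i), suppose first that $T[x,y]\cap V=\emptyset$. Then $T[x,y]$ is a connected subset of $T\setminus V$ containing $x$ and $y$, so the two points lie in the same component.

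Conversely, suppose $x,y$ lie in the same component $U$ of $T\setminus V$. By the fact above there is an arc $\gamma\subset U$ from $x$ to $y$ (if $x=y$ take $\gamma=\{x\}$). Uniqueness of arcs in $T$ forces $\gamma=T[x,y]$. Hence $T[x,y]\subset U$ and $T[x,y]\cap V=\emptyset$.

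For (ii), first suppose two of the points, say $x_1$ and $x_2$, lie in the same component of $T\setminus\{b\}$. By (i) with $V=\{b\}$, we get $b\notin T[x_1,x_2]$.
\begin{itemize}
\item The concatenation of $T[x_1,b)$ and $T[x_2,b)$, joined at $b$, is an arc, because these half-open arcs are disjoint. It contains $b$ and joins $x_1$ to $x_2$.
\item By uniqueness this arc equals $T[x_1,x_2]$, which does not contain $b$.
\end{itemize}
This contradiction shows that $x_1,x_2,x_3$ lie in three distinct components of $T\setminus\{b\}$. Therefore $\val(b,T)\ge3$, so $b$ is a branch point.

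The only delicate step is in (i): a component of $T\setminus V$ is arcwise connected and not merely connected. This is where local connectedness of $T$ is used, via the Peano-continuum fact that connected open subsets are arcwise connected. Everything else is a direct appeal to uniqueness of arcs.
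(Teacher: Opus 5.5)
Your proof is correct. For (i) you use that components of open subsets of a Peano continuum are arcwise connected, together with uniqueness of arcs; for (ii) you use that disjointness of $T[x_1,b)$ and $T[x_2,b)$ makes $T[x_1,b]\cup T[b,x_2]$ an arc through $b$, which must then equal $T[x_1,x_2]$. The paper does not argue this itself but cites \cite[Lemma 2.3(i)]{BM20} and \cite[Lemma 2.4]{BM20}, and your self-contained argument is essentially the standard proof behind those citations (it only uses that $V$ is closed, not that it is finite).
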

\begin{proof}
    Claim (i) follows from \cite[Lemma 2.3(i)]{BM20}. Claim (ii) follows from \cite[Lemma 2.4]{BM20}.
\end{proof}

\subsection{Branch-orientation-preserving maps}

The next lemma gives a way to define counterclockwise enumeration for the branches of a tree at some branch point $b\in T$.

\begin{lem}\label{lem: maximal arcs}
Let $T\subset \R^2$ be a metric tree, let $b\in T$ be a branch point of valence $k\geq 3$, and let $B_1,\dots,B_k$ be the components of $T\setminus\{b\}$. Let $D,D'$ be Jordan domains that contain $b$ and intersect $B_i$ for all $i\in \{1,\dots,k\}$.
\begin{enumerate}[(i)]
\item For each $i\in \{1,\dots,k\}$ there exists $p_i\in B_i\cap \partial D$ and $q_i\in B_i\cap \partial D'$ such that $T[b,p_i)\subset D$ and $T[b,q_i)\subset D'$.
\item If $p_{i_1},\dots,p_{i_k}$ and $q_{j_1},\dots,q_{j_k}$ are the counterclockwise enumerations along $\partial D$ and $\partial D'$, respectively, and if $i_1=j_1$, then $i_l=j_l$ for all $l\in \{2,\dots,k\}$.
\end{enumerate}
\end{lem}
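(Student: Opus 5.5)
For part (i), fix $i$ and pick a point $y_i\in B_i\setminus \overline{D}$ if possible. Such a point need not exist, since $D$ might contain all of $B_i$. I would handle this by recalling that the hypothesis only says $D$ meets $B_i$, so the intended statement is presumably that $T\not\subset D$. I would assume, as the later usage suggests, that each $B_i$ exits $\overline{D}$, or equivalently that $D$ does not contain $B_i$, and pick $y_i\in B_i\setminus D$. The arc $T[b,y_i]$ lies in $B_i\cup\{b\}$, because $B_i\cup\{b\}$ is connected and closed, and arcs in trees are unique. It starts at $b\in D$ and ends outside $D$. Let $p_i$ be the first point of $T[b,y_i]$, starting from $b$, that lies in $\partial D$; it exists because $T[b,y_i]\setminus D$ is a nonempty compact set and $D$ is open. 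Then $T[b,p_i)\subset D$, and $p_i\neq b$ gives $p_i\in B_i$. The same construction with $D'$ produces $q_i$.

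For part (ii), I would compare both cyclic orders with a third, intrinsic one. Choose $r>0$ so small that $\overline{B(b,r)}\subset D\cap D'$, and so small that each arc $T[b,p_i]$ and $T[b,q_i]$ leaves $B(b,r)$. For each $i$, let $a_i$ be the first exit point from $B(b,r)$ along $T[b,p_i]$, and $a_i'$ the first exit along $T[b,q_i]$. Both $T[b,a_i]$ and $T[b,a_i']$ lie in $B_i\cup\{b\}$. Shrinking $r$ further is not needed: I only need that the arcs $\alpha_i=T[b,p_i]$, $i=1,\dots,k$, are pairwise disjoint except at $b$, and the same for $\beta_i=T[b,q_i]$.

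The key step is then a planar topology claim. If $\alpha_1,\dots,\alpha_k$ are arcs from a common point $b$ to points $p_i\in\partial D$, pairwise meeting only at $b$, with interiors in $D$, then the counterclockwise order of the $p_i$ on $\partial D$ equals the cyclic order in which the $\alpha_i$ leave $b$. That order is measured by the first hitting points on $\partial B(b,\rho)$ for small $\rho$, made precise via the components of $D\setminus\bigcup\alpha_i$. I would prove this with the Schoenflies theorem, sending $\overline D$ to the closed unit disk. The arcs cut $D$ into $k$ Jordan subdomains, each bounded by $\alpha_i\cup\alpha_j$ and a boundary arc $\partial D[p_i,p_j]$. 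Consecutive points $p_i,p_j$ on $\partial D$ correspond exactly to $\alpha_i,\alpha_j$ bounding a common complementary sector. Near $b$ this sector picture is read off on a small circle, using Lemma \ref{lem:Jordan domain prerequisite}-type arguments to see which arcs bound a common sector.

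Applying the claim to both $D$ and $D'$ reduces (ii) to showing that the local sector structure at $b$ is the same for the $\alpha$'s and the $\beta$'s. This is where I expect the main obstacle. The $\alpha_i$ and $\beta_i$ are different arcs in $B_i$, but they share an initial segment $T[b,c_i]$ with $c_i\neq b$: both are arcs in the tree from $b$ into $B_i$, so their intersection is an arc $T[b,c_i]$ with $c_i\ne b$. Taking $\rho<\min_i|b-c_i|$ ensures that the exit points of the $\alpha$'s and $\beta$'s from $\overline{B(b,\rho)}$ agree. So the two systems have the same germs at $b$, hence the same adjacency of sectors, hence the same cyclic order. Since $i_1=j_1$ fixes the starting index, the full sequences coincide, which gives $i_l=j_l$ for all $l$.
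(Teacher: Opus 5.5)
Your part (i) is sound, including your reading of the hypothesis: what is really needed, and what the paper's proof uses, is $\partial D\cap B_i\neq\emptyset$. The reduction in (ii) is also sound. Choosing $\rho<\min_i|b-c_i|$, so that $T[b,p_i]$ and $T[b,q_i]$ have the same first exit point from $B(b,\rho)$, is cleaner than the paper's Case 2, which takes an arbitrary small ball and then needs a second sector argument around the divergence point $b'$.

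The gap is in your key planar claim: that the counterclockwise order of the $p_i$ on $\partial D$ equals the counterclockwise order of the first exit points $a_i$ on $\partial B(b,\rho)$. This is the whole content of the lemma (the paper's Case 1). Your sketch only records which pairs $\alpha_i,\alpha_j$ bound a common component of $D\setminus\bigcup_l\alpha_l$, and ends with ``same adjacency of sectors, hence the same cyclic order''. That inference fails. Adjacency data determines a cyclic order only up to reversal, and for $k=3$ it carries no information, since every pair is adjacent. But (ii) is precisely an orientation statement. For $k=3$ it says exactly that $(i_1,i_2,i_3)$ cannot be matched with $(i_1,i_3,i_2)$, which is the configuration ($j_2=i_l$ with $l\geq 3$) the paper's contradiction argument rules out. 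Nothing in your sketch excludes the reversed order.

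You need an argument that tracks orientation. One way is the following.
Let $p_j$ be the counterclockwise successor of $p_i$ on $\partial D$, and let $\partial D[p_i,p_j]$ be the counterclockwise arc between them. Let $C^+$ be the counterclockwise arc of $\partial B(b,\rho)$ from $a_i$ to $a_j$. Consider the Jordan curves
$J=\alpha_i\cup\partial D[p_i,p_j]\cup\alpha_j$ and $J^+=\alpha_i[b,a_i]\cup C^+\cup\alpha_j[b,a_j]$.
Orient both outward along $\alpha_i$, counterclockwise along the boundary arc, and back along $\alpha_j$.

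Comparing with $\partial D$, resp.\ $\partial B(b,\rho)$, near an interior point of the boundary arc shows that both curves have winding number $1$ about their interiors. The cycle $J-J^+$ is supported on $\alpha_i[a_i,p_i]\cup\alpha_j[a_j,p_j]\cup\partial D[p_i,p_j]\cup C^+$. So it misses any disk $B(b,\rho')$, $\rho'<\rho$, that avoids all tails $\alpha_l[a_l,p_l]$. Both winding functions take both values $0,1$ near $b$, so the interiors of $J$ and $J^+$ coincide in $B(b,\rho')$.

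Every $\alpha_l$ with $l\neq i,j$ lies outside $J$, since $p_l\notin\partial D[p_i,p_j]$. Hence its initial piece lies outside $J^+$, which forces $a_l\notin C^+$. Thus $a_j$ is the counterclockwise successor of $a_i$.

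Some argument of this kind is the missing idea. An explicitly orientation-tracking sector argument would also work, as in the paper's Case 1, which normalizes to concentric disks and works with the specific counterclockwise arcs $P[\cdot,\cdot]$, $Q[\cdot,\cdot]$. Adjacency bookkeeping alone cannot supply it.
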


\begin{proof}
To show (i), fix $r>0$ such that $\overline{B}(b,r)\subset D$. For each $i\in \{1,\dots,k\}$ let $y_i\in \partial D\cap B_i$ and note that  $T[b,y_i]\subset \overline{B_i}$. Let $p_i\in B_i$ be the first time that the arc $T[b,y_i]$ meets $\partial D$ as we travel from $b$ to $y_i$. It is then guaranteed that $T[b,p_i)\subset D$ since there exists a subarc of $T[b,p_i)$ lying entirely in $\overline{B}(b,r)$. The existence of $\{q_i\}$ on $\partial D'$ is similar.

To show (ii), suppose for a contradiction that $i_1=j_1$ and that $j_2=i_{l}$ for some $l\in\{3,\dots,k\}$. We start with a special case and then prove (ii) in full generality.

\emph{Case 1:} Suppose that $\overline{D}\subset D'$. By Schoenflies and Annulus theorem applying an orientation preserving homeomorphism $h$ we may assume that $b=0$, $D = B(0,1)$, and $D'=B(0,2)$. Let $\sigma_i=T[0,p_i]$, $\g_i=T[0,q_i]$. For any $x,y\in \partial B(0,2)$ (resp. $x,y\in \partial B(0,1)$) denote also by $P[x,y]$ (resp. $Q[x,y]$) the counterclockwise arc in $\partial B(0,2)$ (resp. in $\partial B(0,1)$) from $x$ to $y$. Write $B(0,2)=S_1\cup S_2\cup S_3$ where 
\begin{align*}
        \partial S_1&=P[q_{i_1},q_{i_l}]\cup  \g_{i_l} \cup \g_{i_1} \\
        \partial S_2&= P[q_{i_l},q_{i_2}]\cup  \g_{i_2} \cup \g_{i_l} \\
        \partial S_3&=P[q_{i_2},q_{i_1}]\cup  \g_k \cup \g_{i_2}.
\end{align*}
\begin{figure}[H]
\centering
\includegraphics[width=0.7\linewidth]{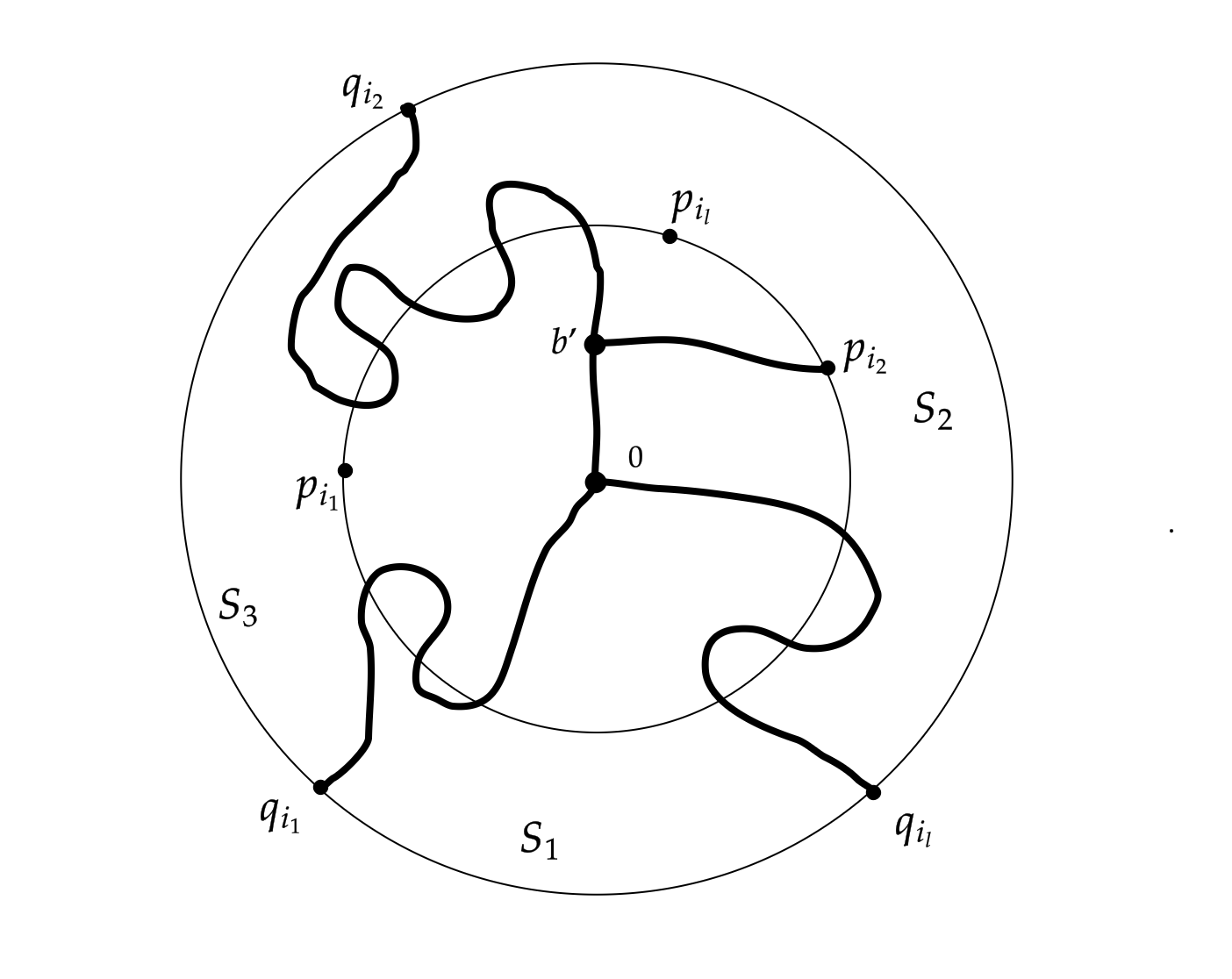}
\label{fig:maximal_arcs 1}
\end{figure}
We claim that $\sigma_{i_1}\subset S_1\cup S_3$. If not, then $\sigma_{i_1}\subset S_2$ due to the fact that $T$ is a tree, $h$ is a homeomorphism, and that $\sigma_{i_3}\subset B(0,2)$. But then $\{0\}= \gamma_{i_1}\cap \sigma_{i_1}$ which implies that $0\in T[p_{i_1},q_{i_1}]$ which is impossible since $p_{i_1},q_{i_1}\in B_{i_1}$ by Lemma \ref{lem: metric trees properties of BM}(i). Similarly, $\sigma_{i_l}\subset S_1\cup S_2$ and $\sigma_{i_2}\subset S_2\cup S_3$. Suppose that $p_{i_1}\in S_3$; if $p_{i_1}\in S_1$ similar arguments work. Since $p_{i_2}\in Q[p_{i_1},p_{i_l}]$ it follows that either $\sigma_{i_l}\cup \sigma_{i_2}\subset S_2$ or that $\sigma_{i_2}\subset S_3$ and $\sigma_{i_l}\subset S_1$. We show the former and the latter follows with similar arguments. Note that $T[0,q_{i_2}]\cap T[0,p_{i_2}]= T[0,b']$ where $b'\in B_{i_2}$ is a branch point from Lemma \ref{lem: metric trees properties of BM}(ii). Since $\sigma_{i_l}\setminus\{p_{i_l}\}\subset B(0,1)$ it follows that 
$$\emptyset\neq\sigma_{i_l}\cap (\g_{i_2}[b',q_{i_2}]\cup\sigma_{i_2}[b',p_{i_2}])\subset B_{i_l}\cap B_{i_2}$$
which is contradiction due to the fact that $T$ is a tree.

We work similarly if $\overline{D'}\subset D$.

\emph{Case 2:} Suppose that $\partial D\cap \partial D'\neq\emptyset$. Let $r>0$ so that $\overline{B}(b,r)\subset D\cap D'$. For each $t\in\{1,\dots,k\}$ let $p_{i_t}'$ (resp. $q_{j_t}'$) be the first time we meet $\partial B(b,r)$ as we travel from $b$ to $p_{i_t}$ (resp. to $q_{j_t}$). By Case 1 we have the counterclockwise enumerations $p_{i_1}',p_{i_2}',\dots,p_{i_l}'$ and $q_{i_1}',q_{j_l}',\dots,q_{j_2}'$ along $\partial B(b,r)$. Let $\g_i=T[b,p_i]$, $\sigma_i=T[b,q_i]$. For any $x,y\in\partial B(b,r)$ denote also by $P[x,y]$ the counterclockwise arc in $\partial B(b,r)$ from $x$ to $y$. 
Write $\overline{B}(b,r)=S_1\cup S_2\cup S_3$ where 
\begin{align*}
        \partial S_1&=P[p_{i_1}',p'_{i_2}]\cup  \g_{i_2} \cup \g_{i_1} \\
        \partial S_2&= P[p'_{i_2},p'_{i_l}]\cup  \g_{i_l} \cup \g_{i_2} \\
        \partial S_3&=P[p'_{i_l},p'_{i_1}]\cup  \g_{i_1} \cup \g_{i_l}.
    \end{align*}

Similar arguments with Case 1 show that $\sigma_{i_1}\subset S_1\cup S_3$, $\sigma_{i_l}\subset S_2\cup S_3$, and $\sigma_{i_2}\subset S_1\cup S_2$. Suppose that $q_{i_1}'\in \partial S_3$; if $q_{i_1}'\in \partial S_1$ similar arguments work. Since $q_{i_2}'\in P[q_{i_l}',q_{i_1}']$ it follows that either $\sigma_{i_l}\cup \sigma_{i_2}\subset S_2$ or that $\sigma_{i_l}\subset S_3$ and $\sigma_{i_2}\subset S_1$. We show the former and the latter follows with similar arguments. Note that $T[b,p_{i_l}]\cap T[b,q_{i_l}]= T[b,b']$ where $b'\in B_{i_l}$ is a branch point from Lemma \ref{lem: metric trees properties of BM}(ii). Let $S_2= R_1\cup R_2$ where 
    \begin{align*}
        &\partial R_1=P[p_{i_2}',q_{i_l}']\cup T[q_{i_l}',b]\cup T[b,p_{i_2}'] \\ &
        \partial R_2=P[q_{i_l}',p_{i_l}']\cup T[p_{i_l}',b']\cup T[b',q_{i_l}']. 
    \end{align*}
    Since $q_{i_2}'\in P[q_{i_l}',q_{i_1}']$, $T(q_{i_2}',b]\subset B(b,r)$ and $T$ is a tree it follows that $T[q_{i_2}',b]\subset R_2$ which is contradiction.
\end{proof} 

We now give the rigorous definition of branch-orientation-preserving maps.

\begin{definition}\label{defn:BOP}
A homeomorphism $f:T\to T'$ between two planar trees is \textit{branch-orientation-preserving} if it satisfies the following property. Let $b\in T$ be a branch point, let $B_1,\dots,B_k$ be the branches of $T$ at $p$, and let $D,D'$ be Jordan domains, so that
\begin{enumerate}
\item $b\in D$ and $f(b)\in D'$,
\item for each $i\in \{1,\dots,k\}$ there exist $x_i \in \partial D\cap B_i$ and $y_i \in \partial D'\cap f(B_i)$ with $T[b,x_i)\subset D$, and $T'[f(b),y_i)\subset D'$.
\end{enumerate}
If $x_{i_1},\dots, x_{i_{k}}$ is a counterclockwise enumeration along $\partial D$, then $y_{i_1},\dots, y_{i_k}$ is a counterclockwise enumeration along $\partial D'$. 
\end{definition}

Note that thanks to Lemma \ref{lem: maximal arcs}, the choice of $D$ and $D'$ in the above definition is not important.

\subsection{Tile decompositions}

We recall the definition of tile decompositions due to Bonk and Meyer \cite{BM22}.

\begin{definition}\label{def:tile decomposition}
Let $(T,d)$ be a metric tree. We say that $T$ admits a $C$-\emph{tile decomposition}
$\{\mathbf{X}^n\}_{n\in\N_0}$ if there exists an increasing sequence of finite sets
$\emptyset=\mathbf{V}^0\subset \mathbf{V}^1\subset \mathbf{V}^2\subset\cdots,$
each consisting of double points of $T$, and a constant $C>17$, such that for every $n\in\N_0$, the collection $$\mathbf{X}^n=\left\{\overline{U}: U \text{ is a connected component of } T\setminus\mathbf{V}^n\right\}$$
is called the collection of \emph{$n$-tiles}, and the following properties hold.
    \begin{enumerate}[(i)] 
    \item Each $n$-tile contains at least three $(n+1)$-tiles and at most $C$ $(n+1)$-tiles.
    \item If $X,Y\in{\bf X}^n$ with $X\cap Y\neq \emptyset$, then $\frac{1}{C}\diam Y\leq\diam X\leq C\diam Y$. 
    \item If $X,Y\in{\bf X}^n$ with $X\cap Y=\emptyset$, then $\dist(X,Y)\geq\frac{1}{C}\diam Y$. 
    \item If $X\in{\bf X}^n$, $Y\in{\bf X}^{n+1}$ with $Y\subset X$, then $16\diam Y\leq\diam X\leq C\diam Y$. 
    \item For each $k\geq C$ we have
$\diam(Y)\leq \frac12\diam(X)$ for all $X\in {\bf X}^n$ and $Y \in {\bf X}^{n+k}$ with $X\cap Y\neq\emptyset$.
\end{enumerate}
\end{definition}

\begin{prop}\label{prop:tile-decomposition}
Every quasiconformal tree admits a $C$-tile decomposition with $C$ depending on the bounded turning constant. Moreover, if $({\bf X}^n)_{n\in\N}$ is its $C$-tile decomposition of $(T,d)$ and $f:(T,d)\to (T',d')$ is $\eta$-quasisymmetric, then the set
$$\{\mathbb{X}^n\}:=\{f(X):X\in{\bf X}^n\}_{n\in\N}$$
is a $C'$-tile decomposition of $(T',d')$, where $C'$ depends only on $C$, the bounded turning constant of $T$, and $\eta$.
\end{prop}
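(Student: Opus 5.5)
The plan has two parts: existence, and transfer of the five axioms of Definition \ref{def:tile decomposition} under $f$ at every level $n$.

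\textbf{Existence.} For the first assertion I would invoke the construction of Bonk and Meyer \cite{BM22}. They show that a bounded turning, doubling tree admits a nested sequence of finite sets $\mathbf V^n$ of double points whose complementary components satisfy exactly (i)--(v), with $C$ controlled by the bounded turning constant. The doubling constant enters only through bounded turning and the doubling property that is part of being a quasiconformal tree. I would only check that their normalization matches ours, namely the number $16$ in (iv) and $C>17$.

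\textbf{Reduction to the same vertex sets.} Let $\{\mathbf X^n\}$ be a $C$-tile decomposition of $T$ and let $f$ be $\eta$-quasisymmetric. Since $f$ is a homeomorphism, $f(\mathbf V^n)$ is an increasing sequence of finite sets of double points of $T'$. The closures of the components of $T'\setminus f(\mathbf V^n)$ are exactly the sets $f(X)$, $X\in\mathbf X^n$. Hence $\{\mathbb X^n\}$ is of the required form, and it remains to verify (i)--(v) for it, level by level, for every $n$.

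\textbf{Transfer of (i), (ii), (iii), (v).} Property (i) is purely combinatorial, so it transfers with the same $C$.

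For the metric properties I would use the standard diameter-ratio estimate for quasisymmetric maps (\cite[Prop.~10.8]{Heinonen}). For $A\subset B\subset T$ it reads
\[
\frac{1}{2\eta(\diam B/\diam A)}\le \frac{\diam f(A)}{\diam f(B)}\le \eta\Big(\frac{2\diam A}{\diam B}\Big).
\]
For a point $x$ and a set $A$ there is an analogous estimate comparing $\dist(f(x),f(A))$ with $\diam f(A)$.

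\emph{(ii).} If $X\cap Y\neq\emptyset$, then $X\cup Y$ is connected and $\diam(X\cup Y)\le(1+C)\diam X$. Applying the estimate to $X\subset X\cup Y$ and to $Y\subset X\cup Y$ gives comparability of $\diam f(X)$ and $\diam f(Y)$ with a constant depending on $C,\eta$.

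\emph{(iii).} If $X\cap Y=\emptyset$, pick $x\in X$ and $y\in Y$ realizing the distance, up to a factor, between $f(X)$ and $f(Y)$. Then $|x-y|\ge\dist(X,Y)\ge C^{-1}\diam Y$. Weak quasisymmetry at the point $y$, comparing $x$ with points of $Y$, gives $\dist(f(X),f(Y))\gtrsim_{\eta,C}\diam f(Y)$.

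\emph{(v).} Take $X\in\mathbf X^n$ and $Y\in\mathbf X^{n+k}$ with $X\cap Y\ne\emptyset$. Iterating (v) gives $\diam Y\le 2^{-j}\diam X$ for $k\ge jC$. A chain argument using (ii) shows that $Y$ lies in a connected union of boundedly many $n$-tiles meeting $X$, with diameter $\lesssim\diam X$. The right-hand inequality above then yields $\diam f(Y)\le\eta(c2^{-j})\diam f(X)$. Choosing $j$ with $\eta(c2^{-j})\le 1/2$ gives (v) with $C'\ge jC$.

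\textbf{Property (iv), the main obstacle.} The upper bound $\diam f(X)\le C'\diam f(Y)$ follows from the left-hand inequality. The lower bound $16\diam f(Y)\le\diam f(X)$ carries an absolute constant, and this is the delicate point. The right-hand inequality only gives $\diam f(Y)\le\eta(1/8)\diam f(X)$, which need not be $\le 1/16$.

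What I would try first is to use the flexibility in the existence part. I would rerun the Bonk--Meyer construction directly in $T'$, with vertex sets chosen as subsets of the points $f(\mathbf V^n)$, so that the combinatorics (and hence the level-by-level correspondence $X\mapsto f(X)$) is kept while the absolute factor $16$ is restored. Failing that, I would argue that in applications only $16\le C'$-type separation between consecutive levels is used, so that (iv) holds with $16$ replaced by a constant depending on $C,\eta$. I would then check that (v), which is proved above at every level, supplies everything the later sections need from the absolute factor.
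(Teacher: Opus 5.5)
Your transfer of (i), (ii), (iii), (v) and of the upper half of (iv) is fine. The proposal stops exactly at the real issue: the lower bound $16\diam f(Y)\le\diam f(X)$ in (iv) is never established, and for an arbitrary given $C$-tile decomposition it is simply false. Take $T=[0,1]$ with the intervals of length $16^{-n}$ as $n$-tiles (an $18$-tile decomposition), $T'=([0,1],|x-y|^{1/2})$, and $f=\mathrm{id}$, which is $\eta$-quasisymmetric with $\eta(t)=t^{1/2}$. Then $\diam f(Y)=\frac14\diam f(X)$ for every child $Y\subset X$, so no choice of $C'$ helps.

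The missing idea is that the decomposition of $T$ must be chosen after $\eta$ is fixed. This is what the paper does. In the Bonk--Meyer construction \cite[Section 5]{BM20}, run in the metric $(x,y)\mapsto\diam T[x,y]$, consecutive levels satisfy $\diam Y/\diam X\le 3\beta C_T\delta$, where $\delta$ is the free scale parameter. The paper takes $\delta$ so small that $\eta(6\beta C_T\delta)<1/16$. The estimate $\diam f(Y)/\diam f(X)\le\eta(2\diam Y/\diam X)$ that you already quote then gives the factor $16$ for every $\eta$-quasisymmetric $f$ at once. (For the other axioms the paper cites \cite[Proposition 2.7]{BM22} on quasi-visual approximations instead of checking them by hand as you do; that difference is harmless.)

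Your first fallback does not work as phrased. If the vertex sets in $T'$ are proper subsets of $f(\mathbf V^n)$, the new tiles are unions of several sets $f(X)$, so the level-by-level correspondence $X\mapsto f(X)$ is lost. What does work, and is a clean alternative to tuning $\delta$, is to coarsen whole levels on the side of $T$: replace $\mathbf X^n$ by $\mathbf X^{nM}$ with $M$ such that $\eta(2\cdot 16^{-M})\le 1/16$. Iterating (i) and (iv) shows that this is again a tile decomposition of $T$, with constant $C^M$, in which children satisfy $\diam Y\le 16^{-M}\diam X$. Hence its image has the factor $16$, and your arguments give the remaining axioms. In both routes the decomposition of $T$ depends on $\eta$.

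Your second fallback changes the statement, and the absolute factor is not cosmetic. The nesting $\overline{\Omega_m}\subset\Omega_n$ in Lemma \ref{lem:def of thickening of Tm,eta}(iii) rests on $8\delta_Y<\delta_{Y'}$ for a tile $Y$ inside an $n$-tile $Y'$. That is exactly what $16\diam Y\le\diam Y'$ yields through \eqref{eq:def of delta X}, and this lemma is applied to $T'$ in the proof of Theorem \ref{thm: intro 2 quant.}.
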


\begin{proof}
Let $(T,d)$ be a quasiconformal tree and let $C_T$ be the bounded turning constant. Consider a new metric $\texttt{dd}$ on $T$ by $\texttt{dd}(x,y)=\diam_d T[x,y]$. By \cite[Lemma 2.5(iv)]{BM20} $(T,d)$ is $C_{T}$-bi-Lipschitz equivalent to $(T,\texttt{dd})$. Hence, the space $(T,\texttt{dd})$ is doubling and $1$-bounded turning. By \cite[Section 5]{BM20} there exists a sequence of finite sets ${\bf V}^1\subset {\bf V}^2\subset\dots$ consisting of double points of $(T,\texttt{dd})$ that are $\d_1^n$-separated for some constant $\d_1>0$ that depends on the doubling constant of $(T,\texttt{dd})$. Hence, for $\d_0<\d_1/C_\t^2$ the points in ${\bf V}^n$ are $\d^n/C_T$-separated in $\R^2$ for every $\d<\d_0$. The above facts and \cite[page 275 (5.3),(5.4)]{BM20} give that for $X,Y\in{\bf X}^n$ disjoint $\d^n/C_T\le\diam X\le3\beta\d^n$ for some constant $\beta$ that depends only on $C_T$ and that $\dist_d (X,Y)\geq \d^n/C_T$. After rescaling we may assume that $C_T>17$.
Assume further that 
\begin{align*}\label{eq: choice of epsilon}
    \d < \min \{ \d_0, (16\beta C_{\t}^2)^{-1}\}.
\end{align*}
Set $C=3\beta C_T$. It easy to verify now that $({\bf X}^n)_{n\in\N}$ satisfies properties (ii)-(v) of Definition \ref{def:tile decomposition}. Last, Definition \ref{def:tile decomposition}(i) follows from \cite[Lemma 5.4(i)]{BM20} and \cite[Lemma 5.7(ii)]{BM20}. It follows that $(T,d)$ has $C$-tile decomposition $({\bf X}^n)_{n\in\N}$ with $C$ depending only on $C_T$. In particular, the $C$-tile decomposition $({\bf X}^n)_{n\in\N}$ satisfies the properties of \cite[Lemma 2.4]{BM22} and is therefore a \textit{quasi-visual approximation of $T$} according to Definition \cite[Definition 2.1]{BM22}. 

Let now $f:(T,d)\to (T',d')$ be an $\eta$-quasisymmetric map and let $({\mathbb X}^n)_{n\in\N}$ be a $C$-tile decomposition of $(T,d)$. By Proposition \cite[Proposition 2.7]{BM22}, $({\mathbb X}^n)_{n\in\N}$ is a quasi-visual approximation of $T'$. Therefore, by \cite[Definition 2.1, Corollary 2.3]{BM22} and the fact that $f$ is a homeomorphism, it follows that $({\mathbb X}^n)_{n\in\N}$ satisfies the properties of Definition \ref{def:tile decomposition} for some constant $C'\geq 1$ that depends on $C_T$ and $\eta$. We do a small adjustment to show that $C'>17$. 

Note that if $X\in {\bf X}^n$, $Y\in{\bf X}^{n+1}$, then 
$$\frac{\diam Y}{\diam X}\leq 3\beta C_T\d.$$
By \cite[Proposition 10.8]{Heinonen}, 
$$\frac{\diam f(Y)}{\diam f(X)}\leq \eta (6\beta C_T\d)$$
By continuity of $\eta$ there exists $d>0$ such that $\eta(6\beta C_T\d)<1/16$ for all $\d< \frac{d}{6\beta C_T}$. Hence, taking $\d< \frac{d}{6\beta C_T}$ and setting $C'=1/\eta(6\beta C_T\d)$, we have that $({\mathbb X}^n)_{n\in\N}$ is a $C'$-tile decomposition with $C'$ depending only on $C_T$ and $\eta$.
\end{proof}

Given a tile decomposition $({\bf X}^n)_{n\geq 0}$, we denote ${\bf X}^* = \bigcup_{n\in\N}{\bf X}^n$. Moreover, if $X\in {\bf X}^*$, we set
\begin{equation}\label{eq:tileboundary}
\partial X := X \cap \overline{(T\setminus X)} \subset \bigcup_{n\in\N}{\bf V}^n.
\end{equation} 

\begin{lem}\label{lem:properties of BM}
 \begin{enumerate}[(i)]
\item If $U$ is a connected component of $T\setminus{\bf V}^n$, then $\overline{U}$ is a subtree of $T$ and $\emptyset\neq\partial U\subset {\bf V}^n$.
\item Two distinct $n$-tiles $X,Y$ have at most one point in common. Such a common point is in ${\bf V}^n$ and a boundary point of both $X$ and $Y$.
\item Each $v\in{\bf V}^n$ is contained in exactly two $n$-tiles. 
\item If $X$ is an $n$-tile with $x,y\in\partial X$ distinct, then $T(x,y)\subset X\setminus\partial X$ and it intersects at least three $(n+1)$-tiles contained in $X$.
\item  If $X$ is an $n$-tile with $x\in \partial X$, then there exists a unique $(n+1)$-tile $Y$ such that $x\in Y\subset X$. Also, each $(n+1)$-tile $Y$ is contained in a unique $n$-tile $X$.
\item  If $X$ is an $n$-tile and $v\in{\bf V}^n$, then $X$ is contained in the closure of one of the two components of $T \setminus \{v\}$ and disjoint from the other component.
\item If $X$ is an $n$-tile with $x,y\in \partial X$ distinct, then $|x-y|\geq\frac{\diam X}{C^2}$.
\item We have $\max\{\diam (X):X\in{\bf X}^n\}\to 0$ as $n\to\infty$.
\end{enumerate}
\end{lem}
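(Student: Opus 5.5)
Most of these properties are the standard facts about tile decompositions in Bonk and Meyer \cite[Lemma 5.3--5.7]{BM20}, and \cite{BM22}. Where possible I would cite them and only check that our setting meets their hypotheses. The sets ${\bf V}^n$ are finite, increasing, and consist of double points; this is exactly the input their combinatorial lemmas need. The plan is to prove the purely topological items (i)--(vi) from Lemma \ref{lem: metric trees properties of BM}, and the metric items (vii)--(viii) from Definition \ref{def:tile decomposition}.

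For (i), let $U$ be a component of $T\setminus{\bf V}^n$. Then $T$ is locally connected and ${\bf V}^n$ is finite, so $U$ is open and connected, and $\overline U$ is closed and connected. Hence $\overline U$ is a subtree by \cite[Lemma 3.3]{BT21}; it has at least two points since $U$ is open in a tree without isolated points. We have $\partial U\neq\emptyset$ because $T$ is connected and $U\neq T$ (for $n\ge1$). Also $\overline U\setminus U\subset{\bf V}^n$, since any other limit point lies in an open component of $T\setminus {\bf V}^n$ that meets $U$.

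For (iii), fix $v\in{\bf V}^n$. It is a double point, so $T\setminus\{v\}$ has two components $A_1,A_2$. The finitely many other points of ${\bf V}^n$ stay away from $v$. So a small connected neighborhood of $v$ minus $v$ meets exactly one component of $T\setminus{\bf V}^n$ inside each $A_i$, and these two components have $v$ in their closures. This gives exactly two $n$-tiles containing $v$.

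For (ii), suppose two distinct tiles $\overline U,\overline V$ share two points. Since $U\cap V=\emptyset$, both shared points lie in ${\bf V}^n$. The arcs $T[x,y]$ in $\overline U$ and in $\overline V$ coincide by uniqueness of arcs. But the interior of that arc would have to lie in both $U$ and $V$ near a non-vertex point, a contradiction. So the tiles share at most one point, and it is a boundary vertex of both.

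Item (vi) follows from Lemma \ref{lem: metric trees properties of BM}(i): $U$ is connected and misses $v$, so it lies in one component of $T\setminus\{v\}$. For (iv), if $x,y\in\partial X$ then $T[x,y]\subset X$, and its interior avoids ${\bf V}^n$. Otherwise some $v\in{\bf V}^n$ on it would separate $x$ from $y$, contradicting (vi) applied to $X$. The claim about three $(n+1)$-tiles is \cite[Lemma 5.4]{BM20}, and it uses Definition \ref{def:tile decomposition}(i) and (iv) to force new vertices on $T(x,y)$. For (v), an $(n+1)$-tile is the closure of a connected set avoiding ${\bf V}^n$, so it lies in a unique $n$-tile. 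Given $x\in\partial X$, the argument used for (iii) at level $n+1$ singles out the unique $(n+1)$-tile on the $X$-side of $x$.

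The metric items remain. For (vii), take $x,y\in\partial X$ distinct. By (iii), $x$ lies in another $n$-tile $X_x\neq X$ and $y$ lies in some $X_y\neq X$. The tile $X_x$ is distinct from $X_y$: otherwise $X$ and $X_x$ would share two points, contradicting (ii). By (ii), $X_x\cap X_y\subset{\bf V}^n$, and they can meet only if a cycle forms, which is impossible in a tree; so they are disjoint. Definition \ref{def:tile decomposition}(iii) and (ii) then give $|x-y|\ge\dist(X_x,X_y)\ge C^{-1}\diam X_y\ge C^{-2}\diam X$. I expect this disjointness step, and hence the whole constant in (vii), to be the main subtle point. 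Finally, (viii) follows by iterating Definition \ref{def:tile decomposition}(iv), which gives $\diam Y\le 16^{-k}\diam T$ for every $(k)$-tile $Y$.
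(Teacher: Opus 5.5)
Your proposal is correct, but it is organized differently from the paper's proof.

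\textbf{Items (i)--(vi).} The paper simply cites \cite[Lemmas 2.3, 5.1, 5.4, 5.5]{BM20} for these. You instead derive (i), (ii), (iii), (v), (vi) and the first half of (iv) directly from local connectedness, finiteness of ${\bf V}^n$, the double-point property and uniqueness of arcs. That argument is self-contained and fine.

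\textbf{Item (vii).} Here the routes genuinely differ.
\begin{itemize}
\item The paper works inside $X$. It takes the $(n+1)$-tiles $X_1\ni x$ and $X_2\ni y$ contained in $X$, gets $X_1\cap X_2=\emptyset$ from (ii) and (iv), and concludes with Definition \ref{def:tile decomposition}(iii),(iv).
\item You work outside $X$, with the neighbouring $n$-tiles $X_x,X_y$, and conclude with Definition \ref{def:tile decomposition}(ii),(iii). The constant $C^{-2}$ is the same.
\end{itemize}
Your route has a real advantage: (vii) then needs only the axioms of Definition \ref{def:tile decomposition} and elementary topology, not the three-tile property in (iv). In the disjointness step, state explicitly that the common point $v$ of $X_x$ and $X_y$ cannot be $x$ or $y$. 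Otherwise that point would lie in three distinct $n$-tiles, contradicting (iii). With $v\notin\{x,y\}$, the set $T[x,y]\cup T[y,v]\cup T[v,x]$ is a Jordan curve in $T$, because each arc lies in the corresponding subtree and the three tiles meet pairwise in single points.

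\textbf{Item (viii).} Iterating Definition \ref{def:tile decomposition}(iv) along the chain of parents provided by (v) gives the explicit bound $16^{-k}\diam T$. This is more direct than the paper's appeal to Definition \ref{def:tile decomposition}(v) and \cite[Corollary 2.3]{BM22}.

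\textbf{A remark to correct.} The three-tile statement in (iv) is not a consequence of Definition \ref{def:tile decomposition}(i),(iv), as your parenthetical suggests. Those two axioms do not force any point of ${\bf V}^{n+1}$ onto $T(x,y)$. For example, a tile consisting of a short arc $T[x,y]$ with a long side branch, where only the side branch is subdivided, is compatible with them. The property comes from the specific construction in \cite[Section 5]{BM20} (Lemmas 5.4(ii) and 5.5). Because it is topological, it survives the push-forward in Proposition \ref{prop:tile-decomposition}. So the citation is doing genuine work there, exactly as in the paper, and this is one more reason your version of (vii), which avoids (iv), is the more robust one.
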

\begin{proof}
Claim (i) follows from \cite[Lemma 2.3(ii)]{BM20} and \cite[Lemma 5.1(i),(iii)]{BM20}. Claims (ii),(iii) follow from \cite[Lemma 5.1(iv),(v)]{BM20}. Claim (iv) follows from \cite[Lemma 5.4(ii)]{BM20} and \cite[Lemma 5.5]{BM20}. Claim (v) follows from \cite[Lemma 5.1(vii),(ix)]{BM20}. Last, Claim (vi) follows from \cite[Lemma 5.1(ii)]{BM20}. 

We show (vii). By (v), $x\in X_1,y\in X_2$, where $X_1,X_2\subset X$ are $(n+1)$-tiles and by (ii),(iv) it follows that $X_1\cap X_2=\emptyset$. By Definition \ref{def:tile decomposition}(iii),(iv) we have
   $$|x-y|\geq\dist (X_1,X_2)\geq \frac{\diam X_1}{C}\geq \frac{\diam X}{C^2}.$$

Finally, (viii) follows from Definition \ref{def:tile decomposition}(v) and \cite[Corollary 2.3]{BM22}. 
\end{proof}

\section{A catalogue of quasiconvex trees}\label{sec: model space}
Fix for the rest of this section an integer $m\geq 3$ and a constant $\epsilon\in (0,1]$. Let $\theta = \frac{\pi}{3m-3}$, let $a\in (0,1)$ such that 
\begin{equation}\label{eq:Hausdim}
2^{-\epsilon} + (2m-4)a^{1+\epsilon} = 1,
\end{equation}
and let $R\subset \mathbb{R}^2$ be the convex hull of the 4 points $\{\pm\frac{1}{2}, \pm\frac{i}{2}\tan\theta\}$. Define the following contracting similarities given (in complex notation) by
\begin{equation}\label{eq:psi}
\psi_{j}(z) = \lambda_je^{3(j-1)\theta \textbf{i}}\left(z + \tfrac{1}{2}\right), \quad \text{with}\quad \lambda_j = 
\begin{cases}
1/2, &\text{if $j=1,m$}\\
(a/2)\sin{\theta}, &\text{otherwise.}
\end{cases}  
\end{equation}

\begin{figure}[h]
\centering
\includegraphics[width=\textwidth]{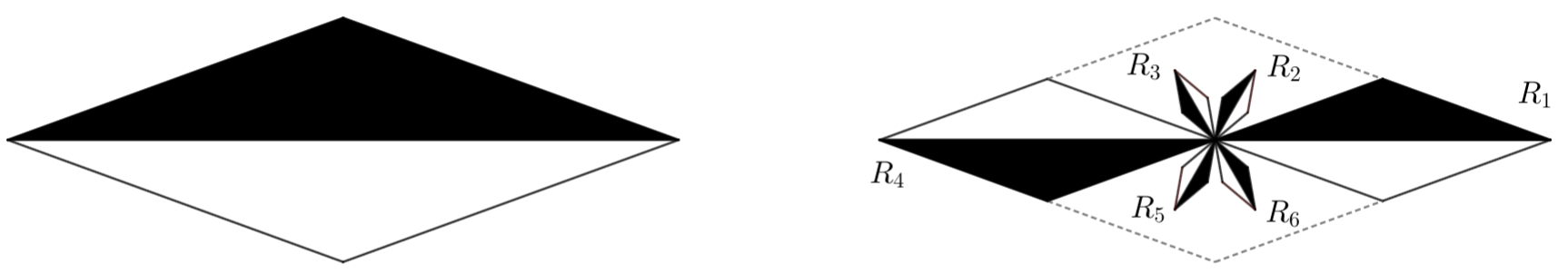}
\caption{The images of $R$ under similarities $\psi_{1},\dots,\psi_{6}$. Here we have chosen $m=4$.}
\label{fig:IFS-m=4}
\end{figure}

For each $w=j_1\cdots j_n\in A_{2m-2}^{*}$, define 
\begin{equation}\label{eq:psi2}
\psi_w:=\psi_{j_1}\circ \psi_{j_2}\circ \cdots \circ \psi_{j_n} \quad\text{and}\quad R_w=\psi_w(R)
\end{equation}
with the convention that $\psi_{\e}$ is the identity map on $\mathbb{C}$.

The following lemma is elementary and the proof is omitted. 
\begin{lem}\label{lem: geometry of rhombuses}
For each $w\in A_{2m-2}^*$ and distinct $i,j\in A_{2m-2}$ we have that $R_{wi}\subset R_w$ and that $R_{wi}\cap R_{wj}=\{\psi_w(0)\}$. Moreover, there exists a constant $c>0$ that depends only on $m$ such that for each $x\in R_{wi}$ and $y\in R_{wj}$ distinct, we have
$$|x-y|\geq c(|x-\psi_w(0)|+|y-\psi_w(0)|).$$
\end{lem}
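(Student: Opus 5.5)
The plan is to verify everything at the level $w=\e$ and then transport it by the similarity $\psi_w$. Since $\psi_w$ is a similarity, it preserves inclusions, intersections and ratios of distances, and $\psi_w(R_i)=R_{wi}$. So it suffices to prove three things: $R_i\subset R$; $R_i\cap R_j=\{0\}$ for $i\neq j$; and $|x-y|\geq c(|x|+|y|)$ for $x\in R_i$, $y\in R_j$. The constant $c$ will depend only on $\theta$, hence only on $m$.

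First I will describe the geometry of $R_j$. The map $\psi_j$ sends $-\tfrac12$ to $0$ and $\tfrac12$ to $\lambda_je^{3(j-1)\theta\textbf{i}}$. Hence $R_j$ is a rhombus with one vertex at $0$ and axis in the direction $\phi_j=3(j-1)\theta$. Its angle at $0$ equals the angle of $R$ at $-\tfrac12$, namely $2\theta$. So $R_j\setminus\{0\}$ lies in the open-at-the-vertex closed cone $\{z\neq 0:|\arg z-\phi_j|\le\theta\}$. Since $3(m-1)\theta=\pi$, the directions $\phi_1,\dots,\phi_{2m-2}$ are equally spaced by $3\theta$ around the circle. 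The cones of half-angle $\theta$ about distinct directions are therefore separated by an angular gap of at least $\theta$. This gives $R_i\cap R_j=\{0\}$ immediately, since $0$ lies in every $R_j$.

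Next, the inclusion $R_j\subset R$. For $j=1$ we have $R_1=\tfrac12(R+\tfrac12)=\tfrac12 R+\tfrac12\cdot\tfrac12$, which is a convex combination of points of $R$ and the vertex $\tfrac12\in R$; so $R_1\subset R$ by convexity. The case $j=m$ is symmetric, using the vertex $-\tfrac12$. For the remaining $j$, note that $\diam R_j=\lambda_j=(a/2)\sin\theta$ and $0\in R_j$, so $R_j\subset \overline{B}(0,(a/2)\sin\theta)$. The distance from the center $0$ of $R$ to each side is $\tfrac12\sin\theta$; this is a quick computation using the side through $\tfrac12$ and $\tfrac{\textbf{i}}2\tan\theta$. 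Since $a<1$, we get $\overline{B}(0,(a/2)\sin\theta)\subset R$.

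Finally, the distance estimate. Take $x\in R_i$ and $y\in R_j$, distinct, with $i\neq j$. If one of them is $0$ the inequality is trivial for any $c\le 1$. Otherwise the angle $\alpha\in[0,\pi]$ between the vectors $x$ and $y$ satisfies $\alpha\ge\theta$ by the cone separation. The law of cosines then gives
\[
|x-y|^2=|x|^2+|y|^2-2|x||y|\cos\alpha\ge(1-\cos\theta)(|x|^2+|y|^2)\ge\tfrac{1-\cos\theta}{2}(|x|+|y|)^2 .
\]
So we may take $c=\sqrt{(1-\cos\theta)/2}=\sin(\theta/2)$. None of these steps is a real obstacle. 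The only point needing care is checking that the angular gap between cones is at least $\theta$, which comes from the spacing $3\theta$ and the half-angle $\theta$.
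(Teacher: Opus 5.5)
Your proof is correct. It supplies exactly the elementary verification the paper omits ("the proof is omitted"): reduce to $w=\varepsilon$ through the similarity $\psi_w$, use cone separation (half-angle $\theta$, spacing $3\theta$, hence an angular gap of at least $\theta$), prove $R_j\subset R$ by convexity for $j\in\{1,m\}$ and by the inradius $\tfrac12\sin\theta$ otherwise, and obtain the estimate from the law of cosines with $c=\sin(\theta/2)$.

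The one implicit point is $\operatorname{diam} R_j=\lambda_j$ for $j\notin\{1,m\}$, which relies on $\tan\theta<1$. This holds because $m\ge 3$ gives $\theta\le\pi/6$. Alternatively, note that the vertex of $R_j$ farthest from $0$ lies at distance $\max\{\lambda_j,\lambda_j/(2\cos\theta)\}=\lambda_j$.
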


The next lemma identifies the intersection pattern of any two rhombuses $R_w$.

\begin{lem}\label{lem: properties of R_m}
For all $n\in\N$ and $w\in A_{2m-2}^n$, define the set 
$$R_w^*= R_w \cap \bigcup_{u\in A_{2m-2}^n\setminus\{w\}} R_u.$$
Then $\card(R_w^*)\in\{1,2\}$ for each $w\in A_{2m-2}^*\setminus\{\e\}$. In particular, if $w=w'j$ for some $j\in A_{2m-2}\setminus \{1,m\}$, then $\card(R_w^*)=1$. Moreover, if $R_u\cap R_v\neq\emptyset$ for some $u,v\in A_{2m-2}^n$ distinct, then there exist $i,j\in\{1,m\}$, and $w'\in A_{2m-2}^*$ such that $R_u\cap R_v=R_{ui}\cap R_{vj}=\{\psi_{w'}(0)\}$.
\end{lem}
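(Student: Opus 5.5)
The plan is induction on $n$, proving the following stronger statement: for every $w\in A_{2m-2}^n$ with $n\geq 1$,
$$\{\psi_{w^{\uparrow}}(0)\}\subset R_w^*\subset \{\psi_{w^{\uparrow}}(0)\}\cup\bigl(\{\psi_w(\tfrac12),\psi_w(-\tfrac12)\}\cap R^*_{w^{\uparrow}}\bigr),$$
where for $w^{\uparrow}=\e$ we interpret $R^*_{\e}=\emptyset$. In words, $R_w$ meets its siblings exactly at the centre $\psi_{w^{\uparrow}}(0)$ of its parent. It meets non-siblings only at one of its two \emph{tips} $\psi_w(\pm\frac12)$, and only at a point that was already a contact point of its parent.

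Two elementary geometric facts drive the induction.

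\emph{Fact 1 (where the tips go).} Since $\psi_j(-\frac12)=0$, we have $\psi_{wj}(-\frac12)=\psi_w(0)$. Moreover $\psi_1(\frac12)=\frac12$, and since $3(m-1)\theta=\pi$, also $\psi_m(\frac12)=-\frac12$. Hence the tip $\psi_w(\frac12)$ of $R_w$ is the tip $\psi_{w1}(\frac12)$ of $R_{w1}$, and $\psi_w(-\frac12)=\psi_{wm}(\frac12)$ is a tip of $R_{wm}$.

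\emph{Fact 2 (non-tip children are interior).} For $j\notin\{1,m\}$, $R_j$ is contained in $B(0,\lambda_j)$ with $\lambda_j=\frac a2\sin\theta$. The distance from $0$ to $\partial R$ equals $\frac12\sin\theta$, and $a<1$. Therefore $R_j\subset\interior R$, so $R_{wj}$ does not contain $\psi_w(\pm\frac12)$. Also, $R_1$ and $R_m$ each contain exactly one of $\pm\frac12$.

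For the base case $n=1$, Lemma \ref{lem: geometry of rhombuses} gives $R_i\cap R_j=\{0\}$ for distinct $i,j$. Since $2m-2\geq 4$, each $R_j$ has siblings, so $R_j^*=\{0\}$ and $\card=1$.

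For the inductive step, take $w=w'j$ and a distinct $u=u'k$ of the same length. If $u'=w'$, Lemma \ref{lem: geometry of rhombuses} gives $R_w\cap R_u=\{\psi_{w'}(0)\}$, and at least one sibling exists. If $u'\neq w'$, then $R_w\cap R_u\subset R_{w'}\cap R_{u'}\subset R^*_{w'}$. By the induction hypothesis this set consists of tips $\psi_{w'}(\pm\frac12)$ of $R_{w'}$ (the centre $\psi_{w'^{\uparrow}}(0)$ is itself the tip $\psi_{w'}(-\frac12)$). By Facts 1 and 2, such a point lies in $R_{w'j}$ only if $j=1$ (tip $\frac12$) or $j=m$ (tip $-\frac12$), and then it is a tip of $R_w$.

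The counting now follows. If $j\notin\{1,m\}$, then $R_w^*=\{\psi_{w'}(0)\}$. If $j\in\{1,m\}$, at most one inherited tip is added, so $\card(R_w^*)\in\{1,2\}$. One point to check is that $\psi_{w'}(0)$ is not itself the inherited tip. This holds because $0$ is interior to $R$ while the tips $\pm\frac12$ lie on $\partial R$.

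For the final claim, suppose $R_u\cap R_v\neq\emptyset$ with $u\neq v$ of length $n$. Tracing the recursion back to the last generation $w'$ at which the ancestors of $u$ and $v$ were siblings, the common point is $\psi_{w'}(0)$ and it is a single point. At every later generation this point is a tip of the current rhombus, namely $\psi(\frac12)$ or $\psi(-\frac12)$. By Fact 1 it then lies in the child with index $1$ or $m$. This holds in particular at generation $n+1$, so $R_u\cap R_v=R_{ui}\cap R_{vj}=\{\psi_{w'}(0)\}$ for some $i,j\in\{1,m\}$.

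The main obstacle is the bookkeeping that the intersection point, once it is a tip, stays a tip under descent. The subtle geometric point is that $R_1$ and $R_m$ do touch $\partial R$ along edges. This is harmless, because non-sibling contacts are a priori confined to the finite set $R^*_{w'}$ of tips, not to all of $\partial R_{w'}$.
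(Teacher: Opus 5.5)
Your argument is correct. For the final assertion it is the paper's proof in different words. The paper also passes to the longest common prefix $w'$ of $u$ and $v$ and obtains $R_u\cap R_v\subset R_{w'i_1}\cap R_{w'j_1}=\{\psi_{w'}(0)\}$. It then uses that $\frac12\in R_k$ only for $k=1$ and $-\frac12\in R_k$ only for $k=m$, which forces the tails of $u,v$ to be $i_1m1\cdots1$ and $j_1m1\cdots1$. Your tip-tracking is exactly this observation. It also handles in one pass the two cases the paper treats separately: $|w'|=n-1$, where $i=j=m$, and $|w'|<n-1$, where $i=j=1$.

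The genuine difference is the cardinality statement. The paper does not prove it; it only says it follows as in \cite[Lemma 5.2(i)]{BM22}. Your strengthened induction hypothesis says the contact set of $R_w$ is the parent's centre $\psi_{w^{\uparrow}}(0)=\psi_w(-\frac12)$, plus possibly the inherited tip $\psi_w(\frac12)$, and the latter only if it already lay in $R^*_{w^{\uparrow}}$. This makes the lemma self-contained. It gives both $\card(R_w^*)\in\{1,2\}$ and $\card(R^*_{w'j})=1$ for $j\notin\{1,m\}$ at once. It also shows why the edge contact of $R_1,R_m$ with $\partial R$ is harmless: non-sibling contacts are confined to the finite set $R^*_{w^{\uparrow}}$.

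One small correction: $R_j\subset\overline{B}(0,\lambda_j)$, not the open ball, because the far tip $\psi_j(\frac12)$ is at distance exactly $\lambda_j$. This inclusion also uses $R\subset\overline{B}(-\frac12,1)$, which holds since $\theta\le\pi/6$. The strict inequality $\lambda_j<\frac12\sin\theta$ still gives $R_j\subset\interior R$, so nothing else changes.
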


\begin{proof}
That $\card(R_w^*)\in\{1,2\}$ for any $w\in A_{2m-2}^*\setminus\{\e\}$, and that $\card(R_{w'j})=1$ for $j\in A_{2m-2} \setminus \{1,m\}$ follow as in \cite[Lemma 5.2(i)]{BM22}.

Fix now distinct $u,v\in A_{2m-2}^n$ and assume that $R_u\cap R_v\neq\emptyset$. Let $w\in A_{2m-2}^*$ be the longest word such that $u=wi_1\cdots i_{n-|w|}$ and $v=wj_1\cdots j_{n-|w|}$. By Lemma \ref{lem: geometry of rhombuses}, $R_{wi_1}\cap R_{wj_1}=\{\psi_w(0)\}$, $R_u\subset R_{wi_1}$, and $R_v\subset R_{wj_1}$. Hence,     
$$\emptyset \neq \psi_w^{-1}(R_u\cap R_v) = R_{i_1\dots i_{n-|w|}}\cap R_{j_1\dots j_{n-|w|}}\subset \psi_w^{-1}(\{\psi_{w}(0)\})=\{0\}.$$

If $|w|=n-1$, then $u=wi_1$, and $v=wj_1$. The above equation and the fact that $\psi_m(\frac{1}{2})=-\frac{1}{2}$, $\psi_{i_1}(-\frac{1}{2})=\psi_{j_1}(-\frac{1}{2})=0$ shows that $R_{i_1m}\cap R_{j_1m}=\{0\}.$ Thus, 
$$R_{wi_1m}\cap R_{wj_1m}=\{\psi_w(0)\}.$$
    
If $|w|<n-1$, then we claim that $i_1\cdots i_{n-|w|}=i_1m1^{(n-|w|-2)}$ and that $j_1\cdots j_{n-|w|}=j_1m1^{(n-|w|-2)}$. First, note that $0\in R_{i_1m1^{(n-|w|-2)}}\cap R_{j_1m1^{(n-|w|-2)}}$ since $\psi_{i_1}(-\frac{1}{2})=\psi_{j_1}(-\frac{1}{2})=0$, $\psi_{1}(\frac{1}{2})=\frac{1}{2}$, and $\psi_{m}(\frac{1}{2})=-\frac{1}{2}$. Moreover, a simple geometric observation shows that $\frac{1}{2}\in R_i$ exactly when $i=1$, and that $-\frac{1}{2}\in R_j$ exactly when $j=m$. Hence, $0\notin R_{i_1k}\cup R_{j_1k}$, for $k\in A_{2m-1}\setminus\{m\}$, and $0\notin R_{i_1m1^{(l)}k'}\cup R_{j_1m1^{(l)}k'}$ for any integer $l\geq 0$, and $k'\in A_{2m-2}\setminus\{1\}$. This completes the proof of the claim.

Setting now $i=j=1$, we get 
    \[ R_{ui}\cap R_{vj}=R_{wi_1m1^{(n-|w|+1)}}\cap R_{wj_1m1^{(n-|w|+1)}}=\{\psi_w(0)\}. \qedhere \]
\end{proof}

Let $\mathcal{T}^{2m-2}$ be the attractor of $\{\psi_1,\dots,\psi_{2m-2}\}$, that is,
\begin{equation}\label{eq:def of T^2m-2}
    \mathcal{T}^{2m-2}=\bigcap_{n\in\N}\bigcup_{w\in A_{2m-2}^n}R_w.
\end{equation}
Set $\mathcal{T}^{2m-2}_w=\psi_w(\mathcal{T}^{2m-2})$ for each $w\in A_{2m-2}^*$. Arguments similar to those in \cite[Proposition 6.1, Corollary 10.2, Lemma 10.6]{CGIV}, show that $\mathcal{T}^{2m-2}$ is a quasiconvex tree with uniform branch separation and valence exactly $2m-2$. Moreover, by \cite{Hu81} and by \eqref{eq:Hausdim}, the Hausdorff dimension of $\mathcal{T}^{2m-2}$ is at most $1+\epsilon$.

By Lemma \ref{lem: geometry of rhombuses} and the fact that $\psi_w(0)\in \mathcal{T}^{2m-2}_w$ it follows that for $i,j\in A_{2m-2}$ distinct, $\mathcal{T}^{2m-2}_{wi}\cap \mathcal{T}^{2m-2}_{wj}=\{\psi_w(0)\}$. Thus, $\mathcal{T}_w^{2m-2}\setminus\{\psi_w(0)\}=\bigcup_{i\in A_{2m-2}}\mathcal{T}^{2m-2}_{wi}$.

\subsection{Dictionaries} Here we construct a combinatorial catalog of quasi-self-similar fractals.

\begin{definition}\label{def: choice function}
An \emph{$m$-dictionary} is a set $\mathcal{W}\subset A_{2m-2}^*$, so that the following properties are satisfied.
\begin{enumerate}
\item[(D1)] We have that $\{1, m\} \subset \mathcal{W}$. 
\item[(D2)] If $w\in \mathcal{W}\setminus\{\e\}$, then $w^{\uparrow} \in \W$. 
\item[(D3)]  If $w \in \W$, then $\card{\{j\in A_{2m-2}:wj\in \mathcal{W}\}} \leq m$.
\item[(D4)] If $v,u\in \W$ are distinct with $|v|=|u|$ and $R_{vi}\cap R_{uj}\neq\emptyset$ for some $i,j\in \{1,m\}$, then $vi,uj \in\W$.
\end{enumerate}
\end{definition} 

\begin{lem}\label{lem: m-dictionary non-empty}
Given an $m$-dictionary $\mathcal{W}$ and an integer $n\ge 0$ define 
\[\mathcal{W}_n = \{ w\in \mathcal{W} : |w|=n\} \quad\text{and}\quad \mathcal{W}_{\infty} :=\{w\in A_{2m-2}^{\N}: w\{n\}\in\W_n \text{ for all }n\in\N\}. \]
For all integers $n\geq 0$, and $i\in \W_1$, $im1^{(n)}\in \W_{n+2}$, and $\bigcap_{n\in\N}R_{im1^{(n)}}=\{0\}$. In particular, $\W_n\neq\emptyset$ and $\W_{\infty}\neq\emptyset$.
\end{lem}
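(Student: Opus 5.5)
We argue by induction on $n$, with the dictionary axioms (D1), (D2) and (D4) doing all the work; for the intersection claim we compute the maps $\psi_i,\psi_m,\psi_1$ explicitly. Fix $i\in\W_1$. By (D1) we have $1,m\in\W_1$, so $\W_1\neq\emptyset$, and by (D2) also $\e\in\W$.

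For the base case $n=0$ we need $im\in\W_2$. Take $j\in\{1,m\}$ with $j\neq i$; one exists because $\{1,m\}$ has two elements. From the proof of Lemma \ref{lem: properties of R_m} we know $\psi_{k}(-\tfrac12)=0$ for every $k$, and that $\psi_m(\tfrac12)=-\tfrac12$. Hence
$$0=\psi_i(\psi_m(\tfrac12))\in R_{im}\quad\text{and}\quad 0\in R_{jm},$$
so $R_{im}\cap R_{jm}\neq\emptyset$. Since $i,j\in\W_1$ are distinct words of equal length, (D4) applied with letters $m,m$ gives $im,jm\in\W$.

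For the inductive step, suppose $im1^{(n)}\in\W$ and $jm1^{(n)}\in\W$ for both choices of $i$, with $j$ as above. Since $\psi_1(\tfrac12)=\tfrac12$, we get $\psi_{im1^{(n)}}(\tfrac12)=\psi_{im}(\tfrac12)=0$. Hence $0\in R_{im1^{(n+1)}}\cap R_{jm1^{(n+1)}}$. The words $im1^{(n)}$ and $jm1^{(n)}$ are distinct and of the same length, so (D4) with letters $1,1$ yields $im1^{(n+1)}\in\W_{n+3}$. This completes the induction. In particular $\W_n\neq\emptyset$ for all $n$: for $n=0,1$ we use $\e$ and $1$, and for $n\ge2$ we use $im1^{(n-2)}$.

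For the intersection, $0$ lies in every $R_{im1^{(n)}}$, as shown above. The sets $R_{im1^{(n)}}$ are nested by Lemma \ref{lem: geometry of rhombuses}. Their diameters are $\lambda_i\lambda_m2^{-n}\diam R\to0$. Therefore the intersection is exactly $\{0\}$.

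Finally, the infinite word $w=im1^{(\infty)}$ has all its prefixes in $\W$: the prefixes of length $\ge2$ are the words $im1^{(n)}$, and the shorter prefixes are $i$ and $\e$. Hence $w\in\W_\infty$, so $\W_\infty\neq\emptyset$.

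The main point to check is the application of (D4). It requires the two base words to be distinct and of equal length, and the appended letters to lie in $\{1,m\}$; both conditions hold here. This is why we run the induction on the pair $i\neq j$ simultaneously.
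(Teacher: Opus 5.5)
Your proof is correct and follows essentially the same approach as the paper. You use (D1)/(D2) for the short words, feed the common point $0=\psi_{im1^{(n)}}(\tfrac12)=\psi_{jm1^{(n)}}(\tfrac12)$ into (D4), and get the intersection from nestedness and $\diam R_{im1^{(n)}}\to 0$; the one addition is that you make explicit the induction carrying the pair $im1^{(n)},jm1^{(n)}\in\W$, which is needed so the base words of (D4) lie in $\W$ and which the paper leaves implicit.
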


\begin{proof}
By (D1),(D2) we have $\e\in \W_0, \{1,m\}\subset\W_1$, and hence $\W_0,\W_1\neq\emptyset$. Moreover, note that for all integers $n\geq 0$, and distinct $i,j\in \W_1$, $0\in R_{im1^{(n)}}\cap R_{jm1^{(n)}}$ since $\psi_1(\frac{1}{2})=\frac{1}{2}$, $\psi_m(\frac{1}{2})=-\frac{1}{2}$, and $\psi_i(-\frac{1}{2})=\psi_j(-\frac{1}{2})=0$. Hence, by (D4), for all integers $n\geq 0$, $im1^{(n)}\in \W_{n+2}$. We conclude that $im1^{(\infty)}\in \W_{\infty}$. Last, since the sets $R_{im1^{(n)}}$ are compact, nested, and $\diam R_{im1^{(n)}}\to 0$, we have that $\{0\}=\bigcap_{n\in\N}R_{im1^{(n)}}$.
\end{proof}

\begin{lem}\label{lem:consistency of eta}
Let $\mathcal{W}$ be an $m$-dictionary. For all $w\in \W$ there exists $j\in\{1,m\}$ such that $wj \in \W$.
\end{lem}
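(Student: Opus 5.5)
The plan is to get the child $wj$ from axiom (D4). That requires a word $u\in\W$ of the same length as $w$, distinct from $w$, whose rhombus meets $R_w$. So I first prove, by induction, that every word of $\W$ of positive length has such a ``neighbour'' in $\W$.

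The case $w=\e$ is immediate from (D1), since $1,m\in\W$.

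For $n\geq 1$, let $P(n)$ be the statement: for every $w\in\W_n$ there exists $u\in\W_n$ with $u\neq w$ and $R_w\cap R_u\neq\emptyset$.

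\emph{Base case $n=1$.} By (D1), $\W_1\supset\{1,m\}$. As noted in the proof of Lemma \ref{lem: m-dictionary non-empty}, $\psi_i(-\tfrac12)=0$ for every $i$, so $0\in R_1\cap R_m$. Hence $1$ and $m$ are neighbours of each other, and any other $w\in\W_1$ has $1$ (or $m$) as a neighbour, because all the $R_i$ contain $0$.

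\emph{Inductive step.} Assume $P(n)$ and let $w\in\W_{n+1}$. Write $w=w'k$; by (D2), $w'\in\W_n$. By $P(n)$ there is $u\in\W_n\setminus\{w'\}$ with $R_{w'}\cap R_u\neq\emptyset$. By the last assertion of Lemma \ref{lem: properties of R_m}, there are $i,j\in\{1,m\}$ with $R_{w'i}\cap R_{uj}=R_{w'}\cap R_u\neq\emptyset$. Axiom (D4) then gives $w'i,uj\in\W_{n+1}$. There are two cases.
\begin{enumerate}
\item If $k\neq i$, then $w'i$ is a neighbour of $w$. Indeed, $w'i\neq w$, and by Lemma \ref{lem: geometry of rhombuses} we have $R_{w'k}\cap R_{w'i}=\{\psi_{w'}(0)\}\neq\emptyset$.
\item If $k=i$, then $w=w'i$ and $uj$ is a neighbour of $w$. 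Here $uj\neq w$ because $u\neq w'$.
\end{enumerate}
This proves $P(n+1)$.

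\emph{Conclusion.} Now fix $w\in\W$ with $|w|=n\geq1$. Take its neighbour $u$ from $P(n)$ and apply Lemma \ref{lem: properties of R_m} once more. This gives $i,j\in\{1,m\}$ with $R_{wi}\cap R_{uj}\neq\emptyset$. Since $u,w\in\W$ are distinct of the same length, (D4) yields $wi\in\W$ with $i\in\{1,m\}$, as required.

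The only delicate point is the inductive step. A neighbour of $w'$ at level $n$ need not directly produce a neighbour of $w$ at level $n+1$, and this is why the two cases $k\neq i$ and $k=i$ are needed. Both cases rest on the two facts supplied by Lemmas \ref{lem: geometry of rhombuses} and \ref{lem: properties of R_m}: sibling rhombuses always meet at $\psi_{w'}(0)$, and intersecting rhombuses of the same generation meet through children indexed by $\{1,m\}$.
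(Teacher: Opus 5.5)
Your proof is correct and follows the paper's argument essentially step for step. The paper uses the same induction to show that every $w\in\W_n$ has a distinct neighbour $u\in\W_n$ with $R_w\cap R_u\neq\emptyset$, with the same case split on whether $w=w^{\uparrow}i$. It then finishes with one more application of Lemma \ref{lem: properties of R_m} and (D4).
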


\begin{proof}
We first show that for each $n\in\N$ and each $w\in\W_n$ there exists $u\in\W_n\setminus\{w\}$ such that $R_{w}\cap R_{u}\neq\emptyset$. The proof is by induction on $n$. Suppose first that $w\in\W_1$. By (D1), there exists $i\in (\{1,m\} \setminus \{w\})\cap\W$. By Lemma \ref{lem: geometry of rhombuses}, $R_{w}\cap R_{i} \neq \emptyset$. Suppose now that the claim is true for some $n\in\N$. Fix $w\in \mathcal{W}_{n+1}$. By the induction hypothesis, there exists $u \in \mathcal{W}_n\setminus \{w^{\uparrow}\}$ such that $R_{w^{\uparrow}}\cap R_{u}\neq\emptyset$. By Lemma \ref{lem: properties of R_m} there exist $i,j\in \{1,m\}$ such that $R_{w^{\uparrow}i}\cap R_{uj}\neq\emptyset$. Therefore, by (D4) we have $w^{\uparrow}i,uj\in \W$. If $w=w^{\uparrow}i$, then the induction is complete. If $w=w^{\uparrow}k$ for some $k\in A_{2m-2}\setminus\{i\}$, then by Lemma \ref{lem: geometry of rhombuses} we have $R_{w^{\uparrow}i}\cap R_{w}\neq\emptyset$ and the proof of the claim is complete.

For the proof of the lemma, fix $w\in\W$.  If $w=\e$, then by (D1) we have $\{1,m\}\subset \W$. If $w\neq\e$, then by the claim there exists $u\in\W\setminus\{w\}$ such that $|w|=|u|$ and $R_{u}\cap R_{w}\neq\emptyset$. By Lemma \ref{lem: properties of R_m} there exist $i,j\in\{1,m\}$ such that $R_{wj}\cap R_{ui}\neq\emptyset$. By (D4) we conclude that $wj\in \W$.
\end{proof}

Given an $m$-dictionary $\mathcal{W}$, define the set
\begin{align}\label{eq:def of T^W}
    \mathcal{T}^{\mathcal{W}} := \bigcap_{n \in \mathbb{N}} 
\bigcup_{w\in \W_n}R_{w}.
\end{align}
Note that $\mathcal{T}^\W$ implicitly depends on $\epsilon$ and $m$, although we omit these dependencies from the notation for simplicity. Among all $m$-dictionaries, a key role for the rest of the paper plays the $m$-dictionary $\mathcal{W} = A^*_{m}$ and the associated generated set
\begin{equation}\label{eq: T^m rhombus def}
\mathcal{T}^{A^*_{m}}=\bigcap_{n\in\N}\bigcup_{w\in A_m^n}R_w.
\end{equation}

In the next lemma, we show some topological properties of  $\mathcal{T}^{\W}$. 
\begin{lem}\label{lem: Tm,eta connected}
For every $m$-dictionary $\mathcal{W}$, the set $\mathcal{T}^{\W}$ is non-empty, compact and connected.
\end{lem}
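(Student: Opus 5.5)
The plan is to write $\mathcal{T}^{\W}=\bigcap_{n}K_n$ with $K_n:=\bigcup_{w\in\W_n}R_w$ and invoke the standard fact that the intersection of a nested sequence of non-empty compact connected sets is a non-empty compact connected set. So it suffices to show three things for every $n\geq 0$: $K_n$ is non-empty and compact, $K_{n+1}\subset K_n$, and $K_n$ is connected.

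Non-emptiness follows from Lemma \ref{lem: m-dictionary non-empty}, since $\W_n\neq\emptyset$. Compactness holds because $\W_n\subset A_{2m-2}^n$ is finite and each $R_w$ is compact, being a similar image of the rhombus $R$. For nestedness, take $w\in\W_{n+1}$. By (D2), $w^{\uparrow}\in\W_n$, and by Lemma \ref{lem: geometry of rhombuses}, $R_w\subset R_{w^{\uparrow}}$, so $K_{n+1}\subset K_n$.

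Connectedness is the main step, and I would prove it by induction on $n$. The set $K_0=R$ is convex. For the inductive step, assume $K_n$ is connected. Each $R_w$ is convex and hence connected. For fixed $v\in\W_n$, the children $R_{vj}$ with $vj\in\W_{n+1}$ all contain the point $\psi_v(0)$ by Lemma \ref{lem: geometry of rhombuses}. By Lemma \ref{lem:consistency of eta} there is at least one such child. Hence the set $C_v:=\bigcup_{vj\in\W_{n+1}}R_{vj}$ is a non-empty connected set.

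It remains to show that the clusters $C_v$, $v\in\W_n$, are glued together whenever the parents touch. Suppose $u,v\in\W_n$ are distinct with $R_u\cap R_v\neq\emptyset$. Lemma \ref{lem: properties of R_m} gives $i,j\in\{1,m\}$ with $R_{ui}\cap R_{vj}\neq\emptyset$, and (D4) then forces $ui,vj\in\W_{n+1}$. Thus $C_u\cap C_v\neq\emptyset$. Now form the graph on $\W_n$ in which $u\sim v$ when $R_u\cap R_v\neq\emptyset$. Since $K_n$ is connected and is a finite union of closed sets, this graph is connected: otherwise a nontrivial partition of the vertices would split $K_n$ into two disjoint non-empty closed sets. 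Each edge of the graph yields an intersecting pair $C_u,C_v$, so $K_{n+1}=\bigcup_{v}C_v$ is connected.

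The only point needing care is this last transfer of connectivity from level $n$ to level $n+1$. It rests entirely on combining Lemma \ref{lem: properties of R_m} with axiom (D4), which together ensure that touching rhombi at level $n$ retain touching children in the dictionary.
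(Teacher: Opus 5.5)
Your proof is correct and follows essentially the same route as the paper: nestedness via (D2) and Lemma \ref{lem: geometry of rhombuses}, then connectedness of $\bigcup_{w\in\W_n}R_w$ by induction on $n$, where the children of a parent $v$ all meet at $\psi_v(0)$, and touching parents keep touching children by Lemma \ref{lem: properties of R_m} combined with (D4). Your explicit adjacency-graph argument is a cleaner version of the paper's reduction ``it is enough to show there exists $ui'\in\W_{n+1}$ with $R_{ui'}\cap R_u^*\neq\emptyset$.''
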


\begin{proof}
Fix an $m$-dictionary $\mathcal{W}$. For each $n\in\N$, let 
\[ J_{n}=\bigcup_{w\in \W_n}R_{w}.\]
    
By Lemma \ref{lem: geometry of rhombuses}, if $w,wi\in \mathcal{W}$, then $R_{wi}\subset R_{w}$. Hence, by (D2),
$$J_{n+1} = \bigcup_{w\in \W_{n+1}}R_{w}\subset \bigcup_{w\in\W_n}R_{w}=J_{n}.$$
Since $\mathcal{W}_n\neq\emptyset$ for each $n\in\N$, the set $J_{n}$ is nonempty and compact as a finite union of compact nonempty sets. Therefore, $\mathcal{T}^{\W}$ is compact and by the nestedness of the sets $J_n$ it follows that $\mathcal{T}^{\W}$ is non-empty. 

We show now by induction on $n$ that each $J_{n}$ is connected. Assuming connectedness, by compactness and nestedness of the sets $J_{n}$, it follows that $\mathcal{T}^{\W}$ is connected. For $n=1$, by Lemma \ref{lem: geometry of rhombuses}, $J_{1}$ is the union of a finite number of rhombuses $R_i$ that all intersect at the same point, so $J_{1}$ is connected. 

Assume now that $J_{n}$ is connected for some $n\in\N$ and let $u\in\W_n$. By Lemma \ref{lem: properties of R_m} and the induction hypothesis,
$$\card (\{v\in\W_n\setminus\{u\}: R_{u}\cap R_{v}\neq\emptyset\})\in\{1,2\}.$$
Moreover, by Lemma \ref{lem: geometry of rhombuses}, if $ui,ui'\in\W_{n+1}$, then $R_{ui}\cap R_{ui'}\neq\emptyset$. Therefore, since $J_{n+1}\subset J_{n}$ and $J_{n}$ is connected, it is enough to show that there exists $ui'\in\W_{n+1}$ such that $R_{ui'}\cap R_{u}^*\neq\emptyset$, where $R_{u}^*$ is as in Lemma \ref{lem: properties of R_m}. Fix $v\in\W_n$ distinct from $u$ such that $R_{v}\cap R_{u}\neq\emptyset$. By Lemma \ref{lem: properties of R_m} there exist $j,j'\in \{1,m\}$ such that 
$$R_{u}\cap R_{v}=R_{uj}\cap R_{vj'}\neq\emptyset.$$ 
By (D4) it follows that $uj,vj'\in\W_{n+1}$ and we conclude that $R_{uj}\cap R_{u}^*\neq\emptyset$.
\end{proof}

\begin{lem}\label{lem: pi functions}
Let $\mathcal{W}$ be an $m$-dictionary. The ``projection'' map
\begin{equation}\label{eq:proj} 
\pi_{\W} : \W_{\infty} \to \mathcal{T}^{\W},\quad \text{by}\quad 
\bigcap_{n\in\N}R_{w\{n\}} = \{\pi_{\W}(w)\} \quad \text{for each $w\in \W_{\infty}$}
\end{equation}
is a well-defined surjection. Moreover, $\pi_{\W}(v)=\pi_{\W}(w)$ for distinct $v,w\in \W_{\infty}$ if and only if there exists $u\in\W$ and distinct $i,j \in A_{2m-2}$ such that $v=uim1^{(\infty)}$ and $w=ujm1^{(\infty)}$.
\end{lem}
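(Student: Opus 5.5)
Well-definedness comes first. For $w\in\W_\infty$ the sets $R_{w\{n\}}$ are nonempty and compact. They are nested by Lemma \ref{lem: geometry of rhombuses}, since $R_{ui}\subset R_u$. Their diameters are $\diam R\prod_{k\le n}\lambda_{j_k}\le 2^{-n}\diam R\to 0$. Hence the intersection is a single point $\pi_\W(w)$. That point lies in $R_{w\{n\}}\subset\bigcup_{u\in\W_n}R_u$ for every $n$, because $w\{n\}\in\W_n$. Therefore $\pi_\W(w)\in\mathcal{T}^\W$.

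For surjectivity, fix $x\in\mathcal{T}^\W$ and build $w$ by a König-type argument. For each $n$, let $S_n=\{u\in\W_n: x\in R_u\}$; this set is nonempty and finite. If $u\in S_{n+1}$, then $u^\uparrow\in\W_n$ by (D2), and $x\in R_u\subset R_{u^\uparrow}$, so $u^\uparrow\in S_n$. The sets $S_n$ therefore form a finitely branching tree of words, infinite and closed under predecessors. König's lemma yields an infinite word $w$ with $w\{n\}\in S_n$ for all $n$. In particular $w\in\W_\infty$, and $x\in\bigcap_n R_{w\{n\}}$, so $\pi_\W(w)=x$.

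For the characterization, first take the ``if'' direction. Suppose $v=uim1^{(\infty)}$ and $w=ujm1^{(\infty)}$ with $i\neq j$, and both lie in $\W_\infty$. By the computation in Lemma \ref{lem: m-dictionary non-empty}, pulled back by $\psi_u$, we have $\bigcap_n R_{im1^{(n)}}=\{0\}$. Hence $\pi_\W(v)=\psi_u(0)=\pi_\W(w)$.

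Conversely, suppose $v\neq w$ and $\pi_\W(v)=\pi_\W(w)=x$. Let $u$ be their longest common prefix, and write $v=ui v'$ and $w=ujw'$ with $i\neq j$. Then $x\in R_{ui}\cap R_{uj}=\{\psi_u(0)\}$ by Lemma \ref{lem: geometry of rhombuses}, so after applying $\psi_u^{-1}$ we get $0\in\bigcap_n R_{i v'\{n\}}$. The remaining step, which I expect to be the main obstacle, is to show that this forces $v'=m1^{(\infty)}$. I would reuse the geometric observation in the proof of Lemma \ref{lem: properties of R_m}. We have $0=\psi_i(-\tfrac12)$, and $\psi_i$ is injective, so $-\tfrac12\in R_{v'\{n\}}$ for all $n$. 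The only letter $k$ with $-\tfrac12\in R_k$ is $k=m$, so $v'_1=m$. Next, $\psi_m(\tfrac12)=-\tfrac12$, so the tail after $m$ must keep $\tfrac12$ in all its rhombi. The only letter $k$ with $\tfrac12\in R_k$ is $k=1$, and $\psi_1$ fixes $\tfrac12$. Inductively, every subsequent letter is $1$. The same argument gives $w'=m1^{(\infty)}$, which completes the proof.

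The care needed is in verifying the two facts $\tfrac12\in R_k\iff k=1$ and $-\tfrac12\in R_k\iff k=m$ from the explicit similarities \eqref{eq:psi}. The first holds because $R_1=\psi_1(R)$ is the subrhombus of $R$ with vertex $\tfrac12$. The second holds because the rotation angle $3(m-1)\theta=\pi$ places the vertex $\tfrac12$ of $R_m$ at $-\tfrac12$. Every other $R_k$ is small, with $\lambda_k=(a/2)\sin\theta$, and rotated away, so it stays near $0$ and misses $\pm\tfrac12$.
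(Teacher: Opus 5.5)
Your proof is correct. Well-definedness and the characterization of coincidences follow the paper's route. The paper first proves the identity $\pi_{\W}^{-1}(0)=\{im1^{(\infty)}: i\in\W_1\}$, using exactly your facts: $-\tfrac12\in R_k$ only for $k=m$, $\tfrac12\in R_k$ only for $k=1$, and $\psi_1(\tfrac12)=\tfrac12$. It then reduces the general case by $\psi_u^{-1}$. You run the same computation directly on the tails. This is if anything cleaner, since the pulled-back tail words need not lie in $\W_\infty$, and only the geometry is really used there.

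The genuine difference is surjectivity. The paper chooses, for each $n$, one word $w_n\in\W_n$ with $y\in R_{w_n}$, and splits into two cases. If the $R_{w_n}$ are nested, it reads off the infinite word. If nesting fails, it shows $y$ is a junction point $\psi_v(0)$ and exhibits a word ending in $m1^{(\infty)}$ that projects to it. Your K\"onig argument on the finite, nonempty, predecessor-closed sets $S_n$ removes the case split and uses only (D2) and the inclusion $R_{ui}\subset R_u$. In particular, you get membership of the limiting word in $\W_\infty$ for free. In the paper's second case, one still has to check that the exhibited word belongs to $\W_\infty$, which needs (D4) applied repeatedly at the junction point; the paper leaves this step implicit.
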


\begin{proof}
To show that $\pi_{\W}$ is well-defined, fix $w\in \W_{\infty}$. Since $w\{n\}\in \W_n$ for each $n\in\N$, the sets $R_{w\{n\}}$ are compact and nested and $\diam R_{w\{n\}}\to 0$. Hence, $\card(\bigcap_{n\in\N} R_{w\{n\}})=1$, so $\pi_{\W}$ is well-defined. 

Before proving the surjectivity of $\pi_\W$, we show that 
\begin{equation}\label{eq:pi-1(0)}
\pi_{\W}^{-1}(0)=\{im1^{(\infty)} : i\in \W_1\}.
\end{equation}
By Lemma \ref{lem: m-dictionary non-empty} $0\in \mathcal{T}^{\W}$ and $\pi_{\W}(im1^{(\infty)})=0$.  Suppose now that $\pi_{\W}(w)=0$ for some $w=i_1i_2\cdots\in\W_{\infty}$ where $i_j\in A_{2m-2}$. Then, $\psi_{i_1}(\bigcap_{n\in\N}R_{i_2\cdots i_n}) = 0$, which gives that $\bigcap_{n\in\N}R_{i_2\cdots i_n} = -1/2$. A simple geometric observation shows now that $i_2=m$. Hence, $\bigcap_{n\in\N}R_{i_3\cdots i_n} = 1/2$. Notice that $1/2\in R_i$ exactly when $i=1$, and that $1/2$ is a fix point for $\psi_1$. Thus, $i_3=i_4=\cdots=1$. Therefore, $w=i_1m1^{(\infty)}$ which shows \eqref{eq:pi-1(0)}.

To show surjectivity, fix $y\in \mathcal{T}^{\W}$. For each $n\in\N$, there exists $w_n \in \W_n$ such that $y\in R_{w_n}$. If for all $n\in\N$, $R_{w_{n+1}}\subset R_{w_n}$, then there exists $i_{n+1} \in A_{2m-2}$ such that $w_{n+1} = w_ni_{n+1}$. Setting $w = w_1i_2i_3\cdots$, we have that $\pi_{\W}(w)=y$. Suppose now that $n\in\N$ is maximal such that $R_{w_{n+1}}\not\subset R_{w_n}$. By (D2) we may assume $w_{n+1}=ui$ for some $u\in\W_{n}$ and $i\in A_{2m-2}$. Let $v\in \W$ be the longest word such that $w_n=vi_1\cdots i_{n-|v|}$ and $u=vj_1\cdots j_{n-|v|}$. By Lemma \ref{lem: geometry of rhombuses}, $R_{vi_1}\cap R_{vj_1}=\{\psi_{v}(0)\}$, $R_{w_n}\subset R_{vi_1}$, and $R_{u}\subset R_{vj_1}$. Hence, 
$$y\in R_{w_n}\cap R_{w_{n+1}}\subset  R_{w_n}\cap R_{u}=R_{vi_1\dots i_{n-|v|}}\cap R_{vj_1\dots j_{n-|v|}}\subset\{\psi_{v}(0)\}.$$
We conclude that $y=\psi_{v}(0)$. Thus, by \eqref{eq:pi-1(0)}, $y=\pi_{\W}(v1m1^{(\infty)})$.

To prove the second claim, suppose that $\pi_{\W}(v)=\pi_{\W}(w)$ for some distinct $v,w\in\W_{\infty}$. Let $u\in\mathcal{\W}_n$ be the longest word such that $v=ui_1i_2\cdots$ and $w=uj_1j_2\cdots$ with $i_1,i_2,\dots\in A_{2m-2}$ and $j_1,j_2,\dots \in A_{2m-2}$. Then, 
$$\{\pi_{\W}(v)\}=\psi_{u}(\bigcap_{n\in\N}R_{i_1\cdots i_n})=\psi_{u}(\bigcap_{n\in\N}R_{j_1\cdots j_n})=\{\pi_{\W}(w)\}.$$
By maximality of $u$, $j_1\neq i_1$. Moreover, $\psi_{u}^{-1}(\pi_{\W}(v))$ and $\psi_{u}^{-1}(\pi_{\W}(w))$ are both in $R_{i_1}\cap R_{j_1}$. Therefore, by \eqref{eq:pi-1(0)} and Lemma \ref{lem: geometry of rhombuses} the words $i_1i_2\cdots$ and $j_1j_2\cdots$ are in 
$$\{im1^{(\infty)}: i\in A_{2m-2}\}.$$
The reverse implication follows easily by \eqref{eq:pi-1(0)}.
\end{proof}

\subsection{Dictionary functions} We show here that sets $\cT^{\cW}$ are quasiconvex trees with branch separation. For the proof we require a notion of function between dictionaries. To that end, if $\cW$ is an $m$-dictionary and $w\in\mathcal W\setminus\{\varepsilon\}$, set
\[F_w:=\left\{ i\in\{1,m\}: R_{wi}\cap \bigcup_{{u\in\mathcal W_{|w|}\setminus\{w\}}}R_u \neq\emptyset \right\},\]
and set $F_\varepsilon:=\{1,m\}$.

\begin{definition}\label{def:dictfun}
Given an $m$-dictionary $\mathcal{W}$, a function $\eta: \mathcal{W}\cup\W_{\infty} \to A_{2m-2}^*\cup A_{2m-2}^{\N}$ is a \emph{dictionary function} if it is injective and satisfies the following rules.
\begin{enumerate}
\item We have that {$\eta(\mathcal{W})\subset A_{2m-2}^*$} and $\eta(\W_{\infty}) \subset A_{2m-2}^{\N}$.
\item We have that $\eta(\varepsilon) = \varepsilon$.
\item For every $w\in\mathcal W$, there exists a permutation $\sigma_w$ of $A_{2m-2}$ such that $\sigma_w(i)=i$ for all $i\in F_w$, and $\eta(wi)=\eta(w)\sigma_w(i)$ for all $wi\in\mathcal W$. 
\item If $w\in \W_{\infty}$ and $n\in\N$, then $\eta(w)\{n\}  = \eta(w\{n\})$.
\end{enumerate}
\end{definition}

\begin{lem}\label{lem:dictfun2}
Let $\eta: \mathcal{W}\cup\W_{\infty} \to A_{2m-2}^*\cup A_{2m-2}^{\N}$ be a dictionary function.
\begin{enumerate}[(i)]
\item For each $w\in \mathcal{W}$, $|\eta(w)|=|w|$. 
\item The image $\eta(\mathcal{W})$ is an $m$-dictionary.
\end{enumerate}
\end{lem}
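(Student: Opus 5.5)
Part (i) goes by induction on $|w|$. The base case is rule (2): $|\eta(\varepsilon)|=0$. For the inductive step take $wi\in\W$ with $w\in\W$, which is allowed by (D2). Rule (3) gives $\eta(wi)=\eta(w)\sigma_w(i)$, and $\sigma_w(i)$ is a single letter, so $|\eta(wi)|=|\eta(w)|+1=|w|+1$. Iterating the same computation shows that $\eta(w)=\sigma_{\varepsilon}(i_1)\sigma_{i_1}(i_2)\cdots$ letter by letter. In particular $\eta(w)^{\uparrow}=\eta(w^{\uparrow})$, and $\eta(\W_n)\subset A_{2m-2}^n$.

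For part (ii) we check (D1)--(D4) for $\eta(\W)$.

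(D1): $1,m\in\W$ by (D1), and $F_\varepsilon=\{1,m\}$, so $\sigma_\varepsilon$ fixes $1$ and $m$. Hence $\eta(1)=1$ and $\eta(m)=m$.

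(D2): this is $\eta(w)^{\uparrow}=\eta(w^{\uparrow})$, and $\eta(w)\neq\varepsilon$ forces $w\neq\varepsilon$ by (i).

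(D3): the children of $\eta(w)$ in $\eta(\W)$ are exactly the words $\eta(w)\sigma_w(i)$ with $wi\in\W$. To see there are no others, suppose $\eta(v)=\eta(w)j$. Then $|v|=|w|+1$ and $\eta(v^{\uparrow})=\eta(w)$, so injectivity gives $v^{\uparrow}=w$. Since $\sigma_w$ is a bijection, the number of children is $\card\{i:wi\in\W\}\le m$.

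(D4) is the main step. Take distinct $\eta(v),\eta(u)$ of equal length, with $v\neq u$ and $|v|=|u|=n$, and suppose $R_{\eta(v)i}\cap R_{\eta(u)j}\neq\emptyset$ for some $i,j\in\{1,m\}$. We need $\eta(v)i,\eta(u)j\in\eta(\W)$. Fixing $i,j\in\{1,m\}$, it suffices to show $R_{vi}\cap R_{uj}\neq\emptyset$. Once we have that, (D4) for $\W$ gives $vi,uj\in\W$. Moreover $i\in F_v$ (since $R_{vi}$ meets $R_u$ with $u\in\W_n\setminus\{v\}$) and $j\in F_u$, so $\sigma_v(i)=i$ and $\sigma_u(j)=j$. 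Then $\eta(vi)=\eta(v)i$ and $\eta(uj)=\eta(u)j$, which is what we want.

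To show the intersection condition transfers, I would prove the following by induction on $n$: for $v,u\in\W_n$ and $i,j\in\{1,m\}$,
\[ R_{vi}\cap R_{uj}\neq\emptyset \iff R_{\eta(v)i}\cap R_{\eta(u)j}\neq\emptyset. \]
The proof uses the explicit description from Lemma \ref{lem: properties of R_m}. Let $w$ be the longest common prefix of $v,u$, so that $v=wa\cdots$ and $u=wb\cdots$ with $a\ne b$. Lemma \ref{lem: properties of R_m} then shows that $R_v\cap R_u\neq\emptyset$ exactly when $v=wam1^{(k)}$ and $u=wbm1^{(k)}$. In that case the intersection point is $\psi_w(0)$, and it lies in $R_{vi}\cap R_{uj}$ precisely for $i=j=1$, or $i=j=m$ if $k=0$.

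Under $\eta$, the longest common prefix of $\eta(v),\eta(u)$ is $\eta(w)$, because $\sigma_w$ is injective. The letters $m$ and $1$ after the first divergence are preserved: along such a chain the relevant letter lies in $F$ at each step, since the rhombus touches the neighbouring rhombus, which is in $\W$ by (D4) and Lemma \ref{lem: m-dictionary non-empty}-type arguments. Hence $\eta(v)=\eta(w)a'm1^{(k)}$ and $\eta(u)=\eta(w)b'm1^{(k)}$ with $a'\neq b'$. The same characterization applied to $A_{2m-2}^*$ then gives the forward direction. For the reverse direction, the same analysis shows that $\eta(v)$ has the form $\eta(w)a'm1^{(k)}$ if and only if $v$ has the form $wam1^{(k)}$, using $\sigma^{-1}$ fixing $F$.

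I expect the main obstacle to be the bookkeeping showing that the letters $m$, $1$ in $wam1^{(k)}$ lie in the relevant fixed sets $F$. This requires first verifying that $wbm1^{(l)}\in\W$, which I would obtain inductively from (D4) as in the proof of Lemma \ref{lem: m-dictionary non-empty}.
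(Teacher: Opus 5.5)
Your proposal is correct and follows essentially the paper's proof. Part (i) and (D1)--(D3) come straight from rules (2)--(3) and injectivity. For (D4) you, like the paper, show that the letters $m1^{(k)}$ after the first divergence are forced, because they lie in the sets $F$ fixed by the permutations $\sigma$; the needed neighbours $wbm1^{(l)}\in\W$ come from (D4) as you say, or from (D2) if you treat $v$ and $u$ simultaneously, as the paper does. You then conclude with $i\in F_v$, $j\in F_u$, (D4) for $\W$, and rule (3).

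Two small points. Only the implication $R_{\eta(v)i}\cap R_{\eta(u)j}\neq\emptyset\Rightarrow R_{vi}\cap R_{uj}\neq\emptyset$ is actually needed. Your case list also has an off-by-one slip: the pair $v=wa$, $u=wb$ must be included, and there exactly $i=j=m$ works, whereas for $v=wam1^{(k)}$ with any $k\ge 0$ only $i=j=1$ works.
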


\begin{proof}\label{lem:dictfun}
The first claim follows easily from (2) and (3) of the definition.

We show now that $\eta(\W)$ is an $m$-dictionary. Property (D1) follows from (2) and (3) of the definition along with (D1) for $\W$. For (D2) we simply observe that by (3) of the definition and the injectivity of $\eta$, if $w\in \W\setminus\{\e\}$, then $\eta(w)^{\uparrow} = \eta(w^{\uparrow})$. For property (D3), we observe that, by injectivity of $\eta$, we have $\eta(w)j =\eta(u)$ if and only if $u=wi$ for some $i$. Therefore, since $\W$ satisfies (D3),
\[\card\{j\in A_{2m-2} : \eta(w)j \in \eta(\W)\} = \card\{i\in A_{2m-2} : wi \in \W\} \leq m.\]

To show (D4), we first show that if $R_{\eta(u)}\cap R_{\eta(w)}\neq\emptyset$ for distinct $u,w\in\mathcal W_n$, then $R_u\cap R_w\neq\emptyset$. Suppose that $w,u\in\cW_n$, and let $v\in\mathcal W$ be their longest common initial word. Write $w=vi_1\cdots i_{n-|v|}$ and $u=vj_1\cdots j_{n-|v|}$ with $i_1\neq j_1$. By (3) and injectivity of $\eta$, the longest common initial word of $\eta(w)$ and $\eta(u)$ is $\eta(v)$. Set $d=n-|v|$. If $d=1$, then Lemma \ref{lem: geometry of rhombuses} gives
\[R_w\cap R_u=R_{vi_1}\cap R_{vj_1}=\{\psi_v(0)\}.\]
Suppose now that $d\ge2$. Since $R_{\eta(w)}\cap R_{\eta(u)}\neq\emptyset$, following the proof of Lemma \ref{lem: properties of R_m} we have $\eta(w)=\eta(v)i_1'm1^{(d-2)}$ and $\eta(u)=\eta(v)j_1'm1^{(d-2)}$ where $i_1',j_1'\in A_{2m-2}$. We claim that $i_2\cdots i_{d}=j_2\cdots j_{d}=m1^{(d-2)}$. Since $vi_1,vj_1\in\mathcal W$ are distinct, since $R_{vi_1}\cap R_{vj_1}=\{\psi_v(0)\}$, and since 
\[ \psi_v(0)=\psi_{vi_1}(-1/2)=\psi_{vj_1}(-1/2),\]
we have $m\in F_{vi_1}\cap F_{vj_1}$. By (3), we have $i_2=j_2=m$. If $d=2$, this proves the claim. Suppose that $d\geq3$ and note that $R_{vi_1m}\cap R_{vj_1m}=\{\psi_v(0)\}$ and
\[\psi_v(0)=\psi_{vi_1m}(1/2) =\psi_{vj_1m}(1/2).\]
Hence, $1\in F_{vi_1m}\cap F_{vj_1m}.$ Again, by (3), it follows that $i_3=j_3=1$. Continuing in similar fashion, we obtain $w=vi_1m1^{(d-2)}$ and $u=vj_1m1^{(d-2)}$. A simple inspection of the similarities now gives $R_w\cap R_u=\{\psi_v(0)\}$ which proves the claim.

Assume now that $R_{\eta(w)i}\cap R_{\eta(u)j}\neq\emptyset$ for some $i,j\in\{1,m\}$. Since 
\[ R_{\eta(w)i}\cap R_{\eta(u)j}\subset R_{\eta(w)}\cap R_{\eta(u)},\] 
the claim gives $R_w\cap R_u\neq\emptyset$. Let $v$ be the longest common initial word of $w,u$. The preceding argument shows that there exists $t\in\{-1/2,1/2\}$ such that
\[ \psi_w(t)=\psi_u(t)=\psi_v(0), \qquad \psi_{\eta(w)}(t)=\psi_{\eta(u)}(t) =\psi_{\eta(v)}(0).\]
Since $R_{\eta(w)i}\cap R_{\eta(u)j}\neq\emptyset$, it follows that $t\in R_i\cap R_j$. Applying $\psi_w$ and $\psi_u$, respectively, gives $\psi_v(0)\in R_{wi}\cap R_{uj}.$ Thus, $i\in F_w$ and $j\in F_u$. By (D4) for $\W$, $wi,uj\in\mathcal W$, and by (3), $\eta(wi)=\eta(w)i$, $\eta(uj)=\eta(u)j.$ Consequently, $\eta(w)i,\eta(u)j\in\eta(\mathcal W)$, which proves (D4).
\end{proof}

\begin{lem}\label{lem: global homeo}
For any $m$-dictionary $\W$, the space $\mathcal{T}^{\W}$ $L$-bi-Lipschitz embeds into $\mathcal{T}^{A^*_{m}}$ with $L\geq 1$ depending only on $m$ and $\epsilon$. In particular, there exists a constant $C\geq 1$ depending only on $m$ such that $\mathcal{T}^{\W}$ is a $c_m$-quasiconvex tree with $C_m$-uniform branch separation, valence at most $m$, and Hausdorff dimension at most $1+\epsilon$. Here $C_m,c_m$ depend only on $m$.
\end{lem}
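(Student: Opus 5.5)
The plan is to build a dictionary function $\eta:\W\cup\W_\infty\to A_{2m-2}^*\cup A_{2m-2}^{\N}$ with $\eta(\W)\subset A_m^*$. We then define the embedding $h:\mathcal{T}^{\W}\to\mathcal{T}^{A_m^*}$ by $h(\pi_\W(w))=\pi_{A_m^*}(\eta(w))$ and show that it is bi-Lipschitz.

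\emph{Construction of $\eta$.} We define $\eta$ inductively on the length of words. Put $\eta(\e)=\e$. Given $\eta(w)\in A_m^*$, the set of children $\{i: wi\in\W\}$ has at most $m$ elements by (D3). By Definition \ref{def:dictfun}(3), the children with labels in $F_w\subset\{1,m\}$ must keep their labels. The remaining children are sent injectively into $A_m\setminus F_w$ through a permutation $\sigma_w$ of $A_{2m-2}$; this is possible because there are at most $m-\card F_w$ such children. We extend $\eta$ to $\W_\infty$ letterwise, which gives (4), and injectivity follows from injectivity at each step. By Lemma \ref{lem:dictfun2}, $\eta(\W)$ is an $m$-dictionary contained in $A_m^*$.

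\emph{Well-definedness and bi-Lipschitz estimate.} By Lemma \ref{lem: pi functions}, $\pi_\W(v)=\pi_\W(w)$ holds exactly when $v=uim1^{(\infty)}$ and $w=ujm1^{(\infty)}$. Since $\sigma$ fixes $m$ and $1$ whenever they lie in the relevant $F$-sets, $\eta$ maps these pairs to pairs of the same form. Here one needs $m\in F_{ui}$ and $1\in F_{uim1^{(k)}}$, which holds because these rhombi meet at $\psi_u(0)$, exactly as in the proof of Lemma \ref{lem:dictfun2}. Hence $h$ is well defined and injective; surjectivity onto $\mathcal{T}^{\eta(\W)}\subset\mathcal{T}^{A_m^*}$ follows from the surjectivity of $\pi$.

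For distances, take $x=\pi(v)$ and $y=\pi(w)$ with longest common prefix $u$, and split into two cases.
\begin{enumerate}
\item If $x,y$ lie in rhombi $R_{ui},R_{uj}$ with $i\ne j$, then Lemma \ref{lem: geometry of rhombuses} gives $|x-y|\simeq|x-\psi_u(0)|+|y-\psi_u(0)|$, and the same holds for the images since $\eta(u)$ is the common prefix there.
\item Reduce to the distance of a point to the vertex $\psi_u(0)$ along a chain of rhombi. Since $\psi_{\eta(w)}$ and $\psi_w$ have the same scaling factors $\lambda_j$ letterwise, up to the two values $1/2$ and $(a/2)\sin\theta$ (note that $\sigma$ preserves membership in $\{1,m\}$ only for the fixed labels), the rhombi $R_{w\{n\}}$ and $R_{\eta(w)\{n\}}$ have comparable diameters.
\end{enumerate}

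\emph{Main obstacle.} The difficulty is that the scaling factors may differ, since a label in $\{1,m\}\setminus F_w$ can be moved to a small rhombus. To handle this I would first try to choose $\sigma_w$ so that it preserves the set $\{1,m\}$: labels $1,m$ go to $1,m$, and the others go to $\{2,\dots,m-1\}$. This is possible by counting, since (D3) gives at most $m$ children, of which at most $m-2$ are non-$\{1,m\}$ children. With this choice $\psi_w$ and $\psi_{\eta(w)}$ are similarities with identical ratios and rotations differing by multiples of $3\theta$. The distances $|x-y|$ are then comparable to the diameters of the rhombi at the separating level, via Lemma \ref{lem: geometry of rhombuses} applied iteratively, and this gives constants depending only on $m$ and $\epsilon$, the latter through $a$.

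\emph{Remaining properties.} Quasiconvexity, branch separation, and the dimension bound pass from $\mathcal{T}^{A_m^*}$ to $\mathcal{T}^\W$ once we also know that $h$ preserves arcs: $\mathcal{T}^\W$ is a subtree of $\mathcal{T}^{A_m^*}$ image, so arcs between its points lie in it. The corresponding properties of $\mathcal{T}^{A_m^*}$ themselves are proved as for $\mathcal{T}^{2m-2}$, following \cite{CGIV}. The valence bound comes from (D3).
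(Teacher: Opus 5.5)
\textbf{Verdict: genuine gap.} Your route is the paper's. You build a dictionary function $\eta$ into $A_m^*$ that keeps the labels $1,m$ and sends every other child into $\{2,\dots,m-1\}$, so that $\diam R_{\eta(w)}=\diam R_w$ and $\pi_{\mathcal W}(w)\mapsto\pi_{A_m^*}(\eta(w))$ is bi-Lipschitz.

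\emph{Where it fails.} The count in your ``main obstacle'' paragraph is false. (D3) bounds only the \emph{total} number of children by $m$, and nothing forces both $w1$ and $wm$ into $\mathcal W$; Lemma~\ref{lem:consistency of eta} guarantees only one of them. If $w=w'j$ with $j\notin\{1,m\}$, Lemma~\ref{lem: properties of R_m} gives $R_w^*=\{\psi_w(-1/2)\}$, and $-1/2$ lies only in $R_m$. Hence $F_w\subseteq\{m\}$, and (D4) never forces $w1$.

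\emph{A concrete counterexample.} Take $m=3$, with big labels $1,3$ and small labels $2,4$. A $3$-dictionary may give every word $w$ ending in $2$ or $4$ the children $w2,w3,w4$ and not $w1$. Such a node has two small children, but $A_3$ contains only one small label, so no $\{1,m\}$-preserving $\sigma_w$ exists.

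\emph{Your fallback does not help.} Your first construction (fix $F_w$, send the remaining children into $A_m\setminus F_w$) is a legitimate dictionary function. At such a node, however, it must send one small child to the big label $1$, which multiplies $\diam R_{\eta(v)}/\diam R_v$ by $(a\sin\theta)^{-1}=2/a$. Follow, at each node, the child sent to $1$, starting from $v_1=2$. This produces words $v_k$ with $\diam R_{\eta(v_k)}/\diam R_{v_k}=(2/a)^{k-1}$. The points $\psi_{v_k}(0)$ and $\psi_{v_k}(-1/2)=\psi_{v_{k-1}}(0)$ lie in $\mathcal T^{\mathcal W}$, via the forced words $v_k31^{(j)}$. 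The map $h$ sends them to $\psi_{\eta(v_k)}(0)$ and $\psi_{\eta(v_k)}(-1/2)$. So $h$ is not even Lipschitz, and the comparability you assert in item (2) fails.

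\emph{The paper has the same gap.} Its construction assigns ``$j_r\in A_m\setminus\{1,m\}$ distinct'', which is impossible in exactly this situation. Moreover, at a valence-$m$ node with one big and $m-1$ small branches, any embedding must put some small branch into one of the two big branches at the image point. So the embedding claim, if true, needs a map that is not a letter-for-letter relabeling, with the estimates redone.

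\emph{The ``in particular'' clause can be saved without the embedding.} Since $\mathcal W_n\subset A_{2m-2}^n$, $\mathcal T^{\mathcal W}$ is a closed connected subset, hence a subtree, of $\mathcal T^{2m-2}$. Then:
\begin{itemize}
\item Arcs of a subtree are arcs of the ambient tree, so quasiconvexity passes down.
\item Distinct branches of $\mathcal T^{\mathcal W}$ at a point lie in distinct branches of $\mathcal T^{2m-2}$. The third-largest branch can therefore only shrink, so uniform branch separation passes down.
\item $\dim_H$ is monotone, so the bound $1+\epsilon$ for $\mathcal T^{2m-2}$ passes down.
\item The valence bound comes from (D3).
\end{itemize}
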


\begin{proof}
Fix an $m$-dictionary $\W$. We construct a dictionary function $\eta$ such that $\eta(\W) \subset A_m^*$ and $\eta(\W_{\infty}) \subset A^{\N}_{2m-2}$. The construction of $\eta$ is in an inductive fashion. Set $\eta(\varepsilon)=\varepsilon$. Assume now that for some integer $n\geq 0$ we have defined an injective function
\[ \eta : \bigcup_{k=0}^n\W_k \to \bigcup_{k=0}^nA_{m}^k\]
so that property (2) holds. Fix $w\in \W_n$ and write 
\[ \{i_1,\dots,i_l\} = \{i\in A_{2m-2} : wi \in \W\}.\]
By (D3) we have $l\leq m$. If $i_r\in \{1,m\}$, then set $\eta(wi_r)=\eta(w)i_r$. Otherwise, assign $\eta(wi_r)=\eta(w)j_r$ with $j_r \in A_{m}\setminus \{1,m\}$ so that $j_1,\dots, j_l$ are distinct. 
Proceeding inductively, we define $\eta$ on all of $\W$. Finally, define $\eta$ on $\W_{\infty}$ by $\eta(i_1i_2\cdots) = j_1j_2\cdots$ so that for all $n\in\N$, $\eta(i_1\cdots i_n) = j_1\cdots j_n$. Applying an induction, it is easy to see that $\eta$ is a dictionary function. By properties (1),(2), and (3) of Definition \ref{def:dictfun}, it is easy to see that $\diam{R_w} = \diam{R_{\eta(w)}}$ for all $w\in\W$.

Let $\pi_{\W},\pi_{A^*_{m}}$ be as in Lemma \ref{lem: pi functions}. Define 
\[ g:\mathcal{T}^{\W}\to \mathcal{T}^{A^*_{m}}, \quad\text{by}\quad g\circ \pi_{\W} = \pi_{A^*_{m}}\circ (\eta| \mathcal{W}_{\infty}).\]
We first show that $g$ is a well-defined injection. Let $x\in\mathcal{T}^{\W}$ and let $w,v\in \mathcal{W}_{\infty}$ such that $w\neq v$ and $x=\pi_{\W}(w)=\pi_{\W}(v)$. By Lemma \ref{lem: pi functions} there exists $u\in\W$ such that $v,w\in\{uim1^{(\infty)}:i\in A_{2m-2}\}$ which, by design of $\eta$, gives 
\[\eta(v),\eta(w) \in \{\eta(ui)m1^{(\infty)}: i\in A_m\}\]
and it follows again from Lemma \ref{lem: pi functions} that $\pi_{A^*_{m}}(\eta(v))=\pi_{A^*_{m}}(\eta(w))$. Hence, $g$ is well-defined. Interchanging $\pi_{\W}$ and $\pi_{A^*_{m}}$, we get that $g^{-1}$ is well-defined so $g$ is injective.

We show now that $g$ is bi-Lipschitz. Let $x,y\in\mathcal{T}^{\W}$ distinct. Let $u,v\in\W_{\infty}$ such that $x=\pi_{\W}(u)$ and $y=\pi_{\W}(v)$. Since $x\neq y$, there exists $w\in\W$ such that $u=wi_1i_2\cdots$, $v=w j_1 j_2\cdots$, and $i_1\neq j_1$. Moreover, by Lemma \ref{lem: geometry of rhombuses} there exists maximal $n\in\N$ such that $R_{wi_1\cdots i_n} \cap R_{wj_1\cdots j_n} \neq \emptyset$. We may assume that $n\geq 2$; the case where $n=1$ is similar. Following the proof of Lemma \ref{lem: properties of R_m} we have $wi_1\cdots i_n = wi_1m1^{(n-2)}$ and $wj_1\cdots j_n = wj_1m1^{(n-2)}$ with either $i_{n+1} \neq 1$ or $j_{n+1} \neq 1$. Thus,
\[ |x-y| \simeq \diam{R_{wi_1m1^{(n-2)}}} \simeq (1/2)^{n}\diam{R_w}\]
with the implied constant depending only on $m$ and $\epsilon$.

By design of $\eta$, $\eta(u) = \eta(w)i_1'm1^{(n-2)}i_{n+1}'\cdots$ and $\eta(v) = \eta(w)j_1'm1^{(n-2)}j_{n+1}'\cdots$ with either $i_{n+1}' \neq 1$ or $j_{n+1}' \neq 1$. Therefore,
\[ |g(x)-g(y)| \simeq \diam{R_{\eta(wi_1m1^{(n-2)})}} \simeq (1/2)^{n}\diam{R_{\eta(w)}} = (1/2)^{n}\diam{R_{w}}\]
with the implied constant depending only on $m$ and $\epsilon$.

By Lemma \ref{lem: Tm,eta connected} $\mathcal{T}^{\W}$ is compact and connected. Arguments similar to those of \cite[Proposition 6.1, Corollary 10.2, Lemma 10.6]{CGIV} show that $\mathcal{T}^{A^*_{m}}$ is a quasiconvex tree with uniform branch separation and valence exactly $m$. Therefore, $g(\mathcal{T}^{\W})$ is a quasiconvex tree with uniform branch separation and valence at most $m$ as a subtree of $\mathcal{T}^{A^*_{m}}$. Since these properties are bi-Lipschitz invariant, $\mathcal{T}^{\W}$ is also a quasiconvex tree with uniform branch separation and valence at most $m$. Finally, the Hausdorff dimension of $\mathcal{T}^{\W}$ is at most that of $\mathcal{T}^{A^*_{m}}$ which by \eqref{eq:Hausdim} and by Hutchinson \cite{Hu81}, is at most $1+\epsilon$.
\end{proof}

\begin{lem}\label{lem:treearc}
If $n\in\N$, and $w,u\in\mathcal{W}_n$ are distinct with $R_u\cap R_w\neq\emptyset$, then $\mathcal{T}^{\mathcal{W}}[x,y]\subset R_u\cup R_w$ for each $x\in \mathcal{T}^{\mathcal{W}}\cap R_w$ and $y\in \mathcal{T}^{\mathcal{W}}\cap R_u$.
\end{lem}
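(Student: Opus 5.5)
The plan is to show that the set
\[ S:=\mathcal{T}^{\mathcal{W}}\cap(R_u\cup R_w) \]
is connected and closed. It is a tree by Lemma \ref{lem: global homeo}, and $S$ will then be a subtree by \cite[Lemma 3.3]{BT21} (or reduce to a point/arc trivially). It contains $x$ and $y$, and uniqueness of arcs in the tree $\mathcal{T}^{\mathcal{W}}$ then gives $\mathcal{T}^{\mathcal{W}}[x,y]\subset S\subset R_u\cup R_w$. Closedness is immediate since $R_u,R_w$ are compact. So the whole work is connectedness of $S$.

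\textbf{Step 1: each piece is connected.} For $v\in\mathcal{W}_n$ set $\mathcal{W}^v:=\{v'\in A_{2m-2}^*: vv'\in\mathcal{W}\}$. I claim $\mathcal{T}^{\mathcal{W}}\cap R_v=\psi_v(K_v)$, where
\[ K_v:=\bigcap_k\bigcup_{v'\in\mathcal{W}^v_k}R_{v'}. \]
The inclusion ``$\supset$'' is clear. For ``$\subset$'', a point of $\mathcal{T}^{\mathcal{W}}\cap R_v$ is $\pi_{\mathcal{W}}(z)$ for some $z\in\mathcal{W}_\infty$. If $z\{n\}\neq v$, then the point lies in $R_v\cap R_{z\{n\}}$, which by Lemma \ref{lem: properties of R_m} is a single point $\psi_{w'}(0)$ attained as $R_{vi}\cap R_{z\{n\}j}$ with $i,j\in\{1,m\}$. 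Then (D4), iterated along the words $m1^{(k)}$ as in the proof of Lemma \ref{lem: m-dictionary non-empty}, shows that this point is also reached by an infinite word through $v$ in $\mathcal{W}_\infty$. Next I show $K_v$ is connected by repeating the proof of Lemma \ref{lem: Tm,eta connected} for the nested sets $J^v_k=\bigcup_{v'\in\mathcal{W}^v_k}R_{v'}$. The delicate point is that $\mathcal{W}^v$ need not satisfy (D1). However, the induction only uses that children of a common parent meet at $\psi_{\cdot}(0)$ (Lemma \ref{lem: geometry of rhombuses}), together with (D4) and Lemma \ref{lem:consistency of eta} for the existence of a child in $\{1,m\}$ adjacent to a neighbor. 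For a single child the connectedness step is vacuous. Pairs of siblings that are adjacent inside $R_v$ are handled by (D4) for $\mathcal{W}$ exactly as before, because adjacency of $R_{vv_1}$ and $R_{vv_2}$ is intrinsic to $\mathcal{W}$.

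\textbf{Step 2: the two pieces meet.} By Lemma \ref{lem: properties of R_m} we have $R_u\cap R_w=R_{ui}\cap R_{wj}=\{\psi_{w'}(0)\}$ for some $i,j\in\{1,m\}$. By (D4), $ui,wj\in\mathcal{W}$. Iterating the same observation at deeper levels (the common point is again an intersection of $R_{ui\,1^{(k)}}$-type rhombi, and (D4) applies at each level) yields an infinite word in $\mathcal{W}_\infty$ whose projection is $\psi_{w'}(0)$. Hence $\psi_{w'}(0)\in\mathcal{T}^{\mathcal{W}}\cap R_u\cap R_w$. The union of two connected sets sharing a point is connected, so $S$ is connected.

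\textbf{Main obstacle.} I expect the main difficulty to be Step 1, specifically checking that the inductive connectedness argument of Lemma \ref{lem: Tm,eta connected} genuinely localizes to descendants of $v$. The worry is a configuration where $J^v_{k+1}$ might be disconnected inside $R_v$ while $J_{k+1}$ stays connected only through rhombi outside $R_v$. I would rule this out using the tree-like intersection pattern of Lemma \ref{lem: properties of R_m}. Two level-$(k+1)$ descendants of $v$ that meet do so at a point $\psi_{v\cdot}(0)$ inside $R_v$. Moreover, any chain connecting them through rhombi outside $R_v$ would have to pass through the at most two points of $R_v^*$ twice, which would produce a cycle in the adjacency graph. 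That is impossible, since this graph is a tree by the intersection structure.
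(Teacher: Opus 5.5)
Your proof is correct, but it is organized differently from the paper's. The paper inducts on $n$. Writing $w=w'i$ and $u=u'j$, it uses $R_w\cap R_u\subset R_{w'}\cap R_{u'}$ and the inductive hypothesis to confine $\mathcal{T}^{\mathcal{W}}[x,y]$ to $R_{w'}\cup R_{u'}$. It then passes to $R_w\cup R_u$ using the intersection pattern of Lemma \ref{lem: properties of R_m} and the tree property.

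You avoid induction on $n$ altogether. You show that each $\mathcal{T}^{\mathcal{W}}\cap R_v=\psi_v(K_v)$ is a continuum, and that the contact point $\psi_{w'}(0)$ lies in both pieces, by iterating Lemma \ref{lem: properties of R_m} and (D4). You then use that a subcontinuum of a tree containing $x,y$ contains $\mathcal{T}^{\mathcal{W}}[x,y]$.

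Your route costs the localization of Lemma \ref{lem: Tm,eta connected} and the identification of $\mathcal{T}^{\mathcal{W}}\cap R_v$. In return it is uniform in $n$ and gives the byproduct that every $\mathcal{T}^{\mathcal{W}}\cap R_v$ is connected. It also handles sibling pairs $w^{\uparrow}=u^{\uparrow}$ with no extra work. The paper's inductive hypothesis is stated only for distinct words, so it does not literally apply to that case.

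One correction: the worry in your last paragraph does not arise, and the reason you give for dismissing it is false. The localized induction never leaves $R_v$, for three reasons:
\begin{enumerate}
\item If $a\neq b$ are level-$k$ descendants of $v$ in $\mathcal{W}$ with $R_a\cap R_b\neq\emptyset$, Lemma \ref{lem: properties of R_m} and (D4) give children $ai,bj\in\mathcal{W}$, with $i,j\in\{1,m\}$, such that $R_{ai}\cap R_{bj}=R_a\cap R_b$.
\item Children of a common parent meet pairwise at its center, by Lemma \ref{lem: geometry of rhombuses}.
\item Every node has a child, by Lemma \ref{lem:consistency of eta}.
\end{enumerate}
Hence connectedness of the intersection graph of level-$k$ descendants of $v$ passes to level $k+1$, starting from the single vertex $v$.

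By contrast, the intersection graph of same-level rhombi is not a tree. Three or more children of one parent pairwise intersect at its center, which gives triangles. Simply drop that paragraph.
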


\begin{proof}
We show the claim by induction on $n$. If $n=1$, then the claim follows from Lemma \ref{lem: geometry of rhombuses} and the fact that $\mathcal{T}^{\mathcal{W}}$ is a tree. Suppose now that the claim is true for some $n\in\N$ and let $w,u\in \mathcal{W}_{n+1}$ with $R_w\cap R_u\neq\emptyset$, and let $x\in R_w$ and $y\in R_u$. Write $w=w'i$, $u=u'j$ where $w',u'\in \mathcal{W}_n$ and $i,j\in A_{2m-2}$. By Lemma \ref{lem: geometry of rhombuses}, $\emptyset\neq R_{w}\cap R_u\subset R_{w'}\cap R_{u'}$. By the induction hypothesis, $\mathcal{T}^{\mathcal{W}}[x,y]\subset R_{w'}\cup R_{u'}$. Since $x\in R_{w'i}$ and $y\in R_{u'j}$, by Lemma \ref{lem: properties of R_m} and the fact that $\mathcal{T}^{\mathcal{W}}$ is a tree, we get $\mathcal{T}^{\mathcal{W}}[x,y]\subset R_w\cup R_u$ and the proof of the claim is complete.
\end{proof}

\subsection{A uniformization of planar trees}\label{sec:orientation of T} 
The following proposition is the main result of Section \ref{sec: model space}.

\begin{prop}\label{prop: enumeration of T=enumeration of Tmeta}
Let $m\in\{2,3,\dots,\}$ and $\epsilon>0$.
\begin{enumerate}[(i)]
\item There exist $C_m,c_m \geq$ depending only on $m$ such that if $\mathcal{W}$ is an $m$-dictionary, then $\mathcal{T}^{\mathcal{W}}$ is a $c_m$-quasiconvex tree, with $C_m$-uniform branch separation, Hausdorff dimension at most $1+\epsilon$, and valence at most $m$.
\item If $T\subset \R^2$ is a tree with valence at most $m$, then there exist an $m$-dictionary $\W$ and a branch-orientation-preserving homeomorphism $h:T\to \mathcal{T}^\W$ such that $\mathcal{T}^\W$ is semi-geodesic.
\item If $T\subset \R^2$ is a quasiconformal tree with uniform branch separation and valence at most $m$, then there exist an $m$-dictionary $\W$ and a branch-orientation-preserving $\theta$-quasisymmetric homeomorphism $g:T\to\mathcal{T}^{\W}$ such that $\mathcal{T}^{\mathcal{W}}$ is  semi-geodesic. The distortion $\theta$ depends only on $m$, $\epsilon$, the bounded turning constant of $T$, and the uniform branch separation constant of $T$.
\end{enumerate}
\end{prop}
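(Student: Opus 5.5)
Part (i) is Lemma \ref{lem: global homeo}. It remains to show that $\mathcal{T}^\W$ is semi-geodesic. Each interior non-leaf point $x$ lies in the interior of a segment $[\psi_w(-\tfrac12),\psi_w(\tfrac12)]$ contained in $\mathcal{T}^\W$. The reason is that the words $w1^{(k)}$ and $wm1^{(k)}$ stay in $\W$ by Lemma \ref{lem: m-dictionary non-empty}, applied after rescaling via (D4) and Lemma \ref{lem:consistency of eta}. Consequently the long diagonal of every $R_w$ with $w$ admissible belongs to the tree.

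For (ii) and (iii) the plan is to encode $T$ combinatorially and build $\W$ and the homeomorphism simultaneously, level by level. In case (iii), use Proposition \ref{prop:tile-decomposition} to get a $C$-tile decomposition $\{\mathbf{X}^n\}$ of $T$. Then refine it so that each tile $X$ has exactly two boundary points (``arc-like'' tiles with two endpoints) and its children are arranged as in the model. That is, the tile contains a distinguished point $p_X$, which is the unique branch point of largest third branch in $X$ or a midpoint if $X$ contains no such branch point. The children of $X$ are the closures of the components of $X\setminus\{p_X\}$, further subdivided along the arc joining the two boundary points.

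Uniform branch separation is what makes this refinement possible with bounded geometry. It guarantees that a tile of diameter $r$ contains boundedly many branch points with third branch of size $\gtrsim r$, and that these are $\gtrsim r$ apart. Hence each tile splits into at most $m$ children at a chosen center, with diameters comparable to the parent. In case (ii), where no metric control is available, the same recursion is run with only topological choices, using Lemma \ref{lem:properties of BM}-type arguments for the subtree structure.

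Label the children of $X$ by letters of $A_{2m-2}$. The two children containing the endpoints of the spine through $p_X$ get the letters $1$ and $m$, mimicking $\psi_1,\psi_m$, which carry the endpoints $\pm\tfrac12$. The remaining branches at $p_X$ get letters from $\{2,\dots,m-1\}$ or $\{m+1,\dots,2m-2\}$. These two ranges correspond to rhombi on the two sides of the spine, and the choice is made in the cyclic order given by the counterclockwise enumeration of Lemma \ref{lem: maximal arcs}. This is exactly where Definition \ref{defn:BOP} enters.

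Then check (D1)--(D4). Here (D3) comes from valence at most $m$. For (D4), adjacent tiles share a boundary point, which forces the corresponding $1$/$m$-children into $\W$. Define $h$ via $h(\bigcap_n X_{w\{n\}})=\pi_\W(w)$ using Lemma \ref{lem: pi functions}. Well-definedness and injectivity follow by matching the identifications $uim1^{(\infty)}\sim ujm1^{(\infty)}$ with the points $p_X$, and continuity follows from Lemma \ref{lem:properties of BM}(viii). Branch-orientation preservation holds by the labeling convention, checked at each $p_X$ via Lemma \ref{lem: maximal arcs}. Every branch point of $T$ is eventually some $p_X$, because tiles shrink.

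For the quasisymmetry in (iii), verify that $h$ maps the tiles $X_w$ to the pieces $\mathcal{T}^\W\cap R_w$, and that both families are quasi-visual approximations with comparable combinatorics. For the model this follows from Lemmas \ref{lem: geometry of rhombuses}, \ref{lem: properties of R_m} and \ref{lem:treearc}. By the Bonk--Meyer criterion \cite[Proposition 2.7]{BM22}, a homeomorphism carrying one quasi-visual approximation to another is quasisymmetric. The main obstacle is the refinement step: producing tiles with the one-center, at most $m$ children structure while keeping constants uniform. This is where uniform branch separation must be used carefully. I would try first to choose $p_X$ as the branch point maximizing $\diam B_T(p)$ within $X$, and iterate until the tile size drops by a fixed factor.
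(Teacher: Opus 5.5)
There is a genuine gap, and it sits in the step you yourself flag as the main obstacle. Your semi-geodesicity paragraph also contains a false claim.

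\textbf{Semi-geodesicity.} Lemma \ref{lem: m-dictionary non-empty} only gives $im1^{(n)}\in\mathcal{W}$, i.e.\ that the centers $\psi_w(0)$ belong to the tree. Nothing in (D1)--(D4) forces $w1^{(k)}\in\mathcal{W}$ when $\psi_w(\tfrac12)$ is not shared with another rhombus of the same level. So the diagonal of $R_w$ need not lie in $\mathcal{T}^{\mathcal{W}}$, and semi-geodesicity genuinely fails for general dictionaries.

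For example, let $m\ge 3$ and take $\mathcal{W}_1=\{1,2,m\}$. Keep all words of $1A_m^*$ and $2A_m^*$, and below $m$ keep only $m$ and $mm1^{(k)}$ for $k\ge 0$. This is an $m$-dictionary with $\mathcal{T}^{\mathcal{W}}\cap R_m=\{0\}$. The point $0$ is a non-leaf point, and the whole tree lies in $R_1\cup R_2$, a sector at $0$ of opening $5\theta<\pi$. Hence no segment of the tree contains $0$ in its interior.

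Semi-geodesicity therefore has to come from the particular $\mathcal{W}$ built from $T$. The paper gets it from the fact that at every corner $\psi_u(0)$ of a polygonal arc, its construction guarantees $u1,um\in\mathcal{W}$ with nondegenerate pieces.

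\textbf{The subdivision of $T$.} More seriously, you never produce the subdivision of $T$ that $h$ is supposed to carry onto the rhombus tiles, and the local recipe you sketch cannot work. To invoke \cite[Proposition 2.7]{BM22}, you need a quasi-visual approximation of $T$ that is combinatorially isomorphic, level by level, to $\{\mathcal{T}^{\mathcal{W}}\cap R_w\}$. The model is rigid in four ways:
\begin{enumerate}
\item each tile is cut at a single point, since the children of $R_w$ are exactly the branches at $\psi_w(0)$; your ``further subdivision along the arc'' already breaks the coding;
\item the two spine children are exactly half size;
\item the side children are a fixed fraction $\lambda_j$;
\item touching tiles of the same level are comparable.
\end{enumerate}
Choosing $p_X$ locally gives none of this. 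A branch point whose third branch is small relative to the current tile cannot be its only cut point with all children comparable. And by the time the tiles reach that scale, you have no control over where the point sits on the spine.

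Worse, scales must be synchronized across different tiles of the same level. Cutting at diameter midpoints gives ratio $1/2$ per level on a straight subarc but $1/\sqrt3$ on a von Koch subarc. So two level-$n$ tiles sharing an endpoint $b$ have diameters $\simeq 2^{-n}$ and $\simeq 3^{-n/2}$, while their images are comparable rhombus pieces. Then $h$ violates the three-point condition at $b$.

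Building a subdivision with the right shape and synchronized scales is essentially the content of the quasisymmetric embedding theorem \cite[Theorem 1.3]{CGIV} into $\mathcal{T}^{A_m^*}$, which the paper takes as input. With $T'=f(T)$, the paper shows that $\mathcal{V}=\{w:\diam(T'\cap R_w)>0\}$ is an $m$-dictionary with $T'=\mathcal{T}^{\mathcal{V}}$. It then only permutes side letters by a dictionary function $\eta$ that fixes $F_w$ and matches the counterclockwise order; the induced map is quasisymmetric by Lemma \ref{lem:dictqs}.

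Part (ii) is handled the same way using Charatonik's topological embedding \cite{Charatonik}. Your purely topological recursion is more plausible there, but it is not carried out either.
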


Before we continue with the proof of Proposition \ref{prop: enumeration of T=enumeration of Tmeta}, we record an immediate corollary which states that $m$-dictionaries essentially classify all metric trees of valence at most $m$ and all quasiconformal trees with uniform branch separation.

\begin{cor}\label{thm:f(T) is Tmeta} 
Let $(T,d)$ be a metric tree with valence at most $m$. There exists an $m$-dictionary $\W$ and a homeomorphism $f:T\to \mathcal{T}^\W$.
Moreover, if $T$ is a quasiconformal tree with uniform branch separation, then $f$ can be assumed quasisymmetric.
\end{cor}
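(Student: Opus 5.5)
The corollary is read off from Proposition \ref{prop: enumeration of T=enumeration of Tmeta}. The only real issue is that the corollary concerns an abstract metric tree $(T,d)$, while parts (ii) and (iii) of the proposition are stated for planar trees $T\subset\R^2$. So the plan is to first embed $T$ in the plane in a way that keeps the relevant structure, then apply the proposition, and then discard the orientation information, which the corollary does not ask for.

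For the topological statement, I would use the classical fact that every dendrite embeds topologically in $\R^2$ (Nadler's theory of dendrites). Let $\iota:T\to\R^2$ be such an embedding. Then $\iota(T)$ is a planar tree whose valence at every point is unchanged, so it is at most $m$. Proposition \ref{prop: enumeration of T=enumeration of Tmeta}(ii) gives an $m$-dictionary $\W$ and a homeomorphism $h:\iota(T)\to\mathcal{T}^\W$, and we set $f=h\circ\iota$.

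For the quasisymmetric statement, I would use the embedding theorem of \cite{CGIV}, already quoted in the introduction: a quasiconformal tree with uniform branch separation admits a quasisymmetric embedding $\iota:T\to\R^2$. Its image is then a planar quasiconformal tree with valence at most $m$, because bounded turning and doubling are quasisymmetric invariants. It also has uniform branch separation, by \cite[Lemma 4.3]{BM22}. Proposition \ref{prop: enumeration of T=enumeration of Tmeta}(iii) then supplies an $m$-dictionary $\W$ and a quasisymmetric $g:\iota(T)\to\mathcal{T}^\W$. Since compositions of quasisymmetric maps are quasisymmetric, $f=g\circ\iota$ works.

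The main point to check is the planar embedding step, above all that the quasisymmetric embedding of \cite{CGIV} really applies to the whole class of trees considered here: bounded turning, doubling, uniformly separated branch points, valence at most $m$. Everything after that step is bookkeeping of invariants.
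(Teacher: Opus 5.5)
Your argument is correct and essentially matches the paper's: both embed $T$ in the plane and then apply Proposition \ref{prop: enumeration of T=enumeration of Tmeta}(ii) or (iii). The only difference is that the paper embeds $T$ directly into the universal tree $\mathcal{T}^{A_m^*}\subset\R^2$, via \cite{Charatonik} topologically and \cite[Theorem 1.3(i),(iii)]{CGIV} quasisymmetrically, so valence, bounded turning and branch separation of the image follow from its being a closed connected subset of $\mathcal{T}^{A_m^*}$; you instead use an arbitrary planar embedding plus quasisymmetric invariance (\cite[Lemma 4.3]{BM22}), and the embedding step you flag is exactly the theorem of \cite{CGIV} that the paper cites.
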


\begin{proof}
By \cite{Charatonik} there exists an embedding $f:T\to \mathcal{T}^{A_m^*}$. Moreover, if $T$ is a quasiconformal tree with uniform branch separation, one can take $f$ to be quasisymmetric \cite[Theorem 1.3(i),(iii)]{CGIV}. Clearly $f(T)$ is a planar quasiconformal tree with uniform branch separation and valence at most $m$ as a connected closed subset of $\mathcal{T}^{A_m^*}$. Proposition \ref{prop: enumeration of T=enumeration of Tmeta} now gives the result.
\end{proof}


For the proof of Proposition \ref{prop: enumeration of T=enumeration of Tmeta}, we require two lemmas. For the first lemma, recall the functions $\psi_i$ from \eqref{eq:psi} and their compositions $\psi_w$ from \eqref{eq:psi2}.

\begin{lem}\label{lem:orientation of T2m-2}
Let $w\in A_{2m-2}^*$ and let $D$ be a Jordan domain with $\psi_w(0)\in D$ and $\partial D\cap \mathcal{T}^{2m-2}_{wi}\neq\emptyset$ for all $i\in A_{2m-2}$. Then for each $i\in A_{2m-2}$ there exists $p_{wi}\in \mathcal{T}^{2m-2}_{wi}\cap\partial D$ such that $\mathcal{T}^{2m-2}_{wi}[\psi_w(0),p_{wi})\subset D$. The counterclockwise enumeration of the points $\{p_{wi}\}_{i\in A_{2m-2}}$ along $\partial D$, starting from $p_{w1}$, is $p_{w1},\dots, p_{w(2m-2)}.$
\end{lem}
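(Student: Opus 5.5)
The plan is to reduce to a single explicit Jordan domain around the branch point and then transfer the cyclic order to an arbitrary $D$ via Lemma \ref{lem: maximal arcs}. First, the set-up. The point $b=\psi_w(0)$ is a branch point of $\mathcal{T}^{2m-2}$. Its branches are the sets $\mathcal{T}^{2m-2}_{wi}\setminus\{b\}$, together with the complement of $\mathcal{T}^{2m-2}_w$ if $w\neq\e$. The existence of the points $p_{wi}$ follows exactly as in Lemma \ref{lem: maximal arcs}(i): travel along an arc of the subtree $\mathcal{T}^{2m-2}_{wi}$ from $b$ to a point of $\partial D\cap\mathcal{T}^{2m-2}_{wi}$, and take the first hitting time of $\partial D$. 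Such an arc exists because $\mathcal{T}^{2m-2}_{wi}$ is itself a tree, being a similar copy of $\mathcal{T}^{2m-2}$, and it contains $b=\psi_{wi}(-1/2)$.

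Second, the model domain. Since $\psi_w$ is an orientation-preserving similarity, I will apply $\psi_w^{-1}$ and reduce to $w=\e$ and $b=0$. Take $D_0=B(0,r)$ with $r$ small. By \eqref{eq:psi}, $R_i=\psi_i(R)$ is a rhombus with a vertex at $0$ and axis in direction $e^{3(i-1)\theta\mathbf{i}}$. Its half-angle at $0$ is $\theta$. Because $\theta=\pi/(3m-3)$, these directions occupy disjoint sectors
\[ S_i=\{\arg z\in(3(i-1)\theta-\theta,\,3(i-1)\theta+\theta)\}. \]
These sectors are separated by gaps of angle $\theta$ and appear in counterclockwise order $S_1,\dots,S_{2m-2}$ around the full circle, since $3(2m-2)\theta=2\pi$. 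The arc $\mathcal{T}_i[0,p_i)$ lies in $R_i\cap B(0,r)$, which is contained in $\overline{S_i}$. Hence $p_i\in\partial B(0,r)\cap\overline{S_i}$, and the counterclockwise order of $p_1,\dots,p_{2m-2}$ on the circle is the order of the sectors. The only thing to check here is that $R_i\cap B(0,r)$ really stays in the closed sector, which is immediate from convexity of $R_i$ with vertex $0$. Lemma \ref{lem: geometry of rhombuses} confirms that distinct $R_i$ meet only at $0$.

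Third, the transfer to an arbitrary $D$. Now let $D$ be arbitrary. I would apply Lemma \ref{lem: maximal arcs}(ii) to the tree $\mathcal{T}^{2m-2}$ with the domains $D$ and $\psi_w(D_0)$. This is the step that needs care, because that lemma is phrased for branches $B_i$ of $T$ at $b$ that are all met by $D$. The statement only guarantees that $D$ meets each $\mathcal{T}_{wi}$, and the extra branch outside $\mathcal{T}_w$ may not meet $\partial D$. I would handle this by rerunning the proof of Lemma \ref{lem: maximal arcs}(ii) with the subtree $\mathcal{T}^{2m-2}_w$ in place of $T$. In $\mathcal{T}^{2m-2}_w$ the point $b$ has exactly the branches $\mathcal{T}_{wi}\setminus\{b\}$, and $\mathcal{T}^{2m-2}_w$ is a planar tree, so the lemma applies verbatim to it. The arcs $T[b,p_{wi})$ are the same whether computed in $\mathcal{T}^{2m-2}_w$ or in $\mathcal{T}^{2m-2}$. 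Matching the starting index $i_1=1$, the lemma gives the same cyclic order on $\partial D$ as on $\partial\psi_w(D_0)$, namely $p_{w1},\dots,p_{w(2m-2)}$.

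I expect the main obstacle to be this last adaptation, that is, making sure the branch structure used in Lemma \ref{lem: maximal arcs} is that of the subtree. The sector computation itself is routine.
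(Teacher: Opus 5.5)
Your proposal is correct and follows the paper's proof. It obtains the $p_{wi}$ by the first-hitting argument of Lemma \ref{lem: maximal arcs}(i) and the independence from $D$ by Lemma \ref{lem: maximal arcs}(ii). It then reduces to $w=\varepsilon$ via the orientation-preserving similarity $\psi_w$ and uses the fact that $R_1,\dots,R_{2m-2}$ lie in disjoint, counterclockwise-ordered sectors at $0$.

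One claim needs correcting: the complement of $\mathcal{T}^{2m-2}_w$ is not an extra branch at $\psi_w(0)$, because it attaches at $\psi_w(\pm\tfrac12)\in\mathcal{T}^{2m-2}_{w1}\cup\mathcal{T}^{2m-2}_{wm}$. So $\psi_w(0)$ has exactly $2m-2$ branches in $\mathcal{T}^{2m-2}$, and Lemma \ref{lem: maximal arcs} applies to it directly, as in the paper. Your detour through the subtree $\mathcal{T}^{2m-2}_w$ is unnecessary but still valid.
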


\begin{proof}
For each $i\in A_{2m-2}$ denote by $B_{i}$ the unique component of $\mathcal{T}^{2m-2}\setminus \{\psi_w(0)\}$ such that $\mathcal{T}^{2m-2}_{wi} \subset \overline{B_i}$. Then for each $i\in A_{2m-2}$, $\mathcal{T}^{2m-2}_{wi}$ is a subtree of $\overline{B_{i}}$. By Lemma \ref{lem: maximal arcs}(i) for each $i\in A_{2m-2}$ there exists $p_{wi}\in \mathcal{T}^{2m-2}_{wi}\cap\partial D$ such that $\mathcal{T}^{2m-2}_{wi}[\psi_w(0),p_{wi})\subset D$. It remains to find the counterclockwise enumeration starting from $p_{w1}$. By Lemma \ref{lem: maximal arcs}(ii), we may assume that $D=B(\psi_w(0),r)$ as the enumeration is independent of the Jordan domain. Moreover, since $D\cap \mathcal{T}^{2m-2} = D\cap \mathcal{T}^{2m-2}_w$, we may assume that $w=\varepsilon$. Recall that $\mathcal{T}^{2m-2}_i \subset R_i$ for each $i=1,\dots,2m-2$. The proof now follows from the simple geometric observation that rhombuses are counterclockwise enumerated as $R_1,R_2,\dots,R_{2m-2}$.
\end{proof}

The second lemma states that each dictionary function induces a quasisymmetric homeomorphism.

\begin{lem}\label{lem:dictqs}
Let $\eta: \mathcal{W}\cup\W_{\infty} \to A_{2m-2}^*\cup A_{2m-2}^{\N}$ be a dictionary function. There exists a quasisymmetric homeomorphism $h:\mathcal{T}^{\mathcal{W}}\to \mathcal{T}^{\eta(\mathcal{W})}$ such that $h(\mathcal{T}^{\mathcal{W}}_w) = \mathcal{T}^{\eta(\mathcal{W})}_{\eta(w)}$ for all $w\in \mathcal{W}$. The distortion of $h$ depends only on $m$ and $\epsilon$.
\end{lem}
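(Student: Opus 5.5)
We define $h$ through the coding maps of Lemma \ref{lem: pi functions}, imitating the map $g$ in the proof of Lemma \ref{lem: global homeo}. By Lemma \ref{lem:dictfun2}(ii), $\eta(\W)$ is an $m$-dictionary. Property (4) of Definition \ref{def:dictfun} gives $\eta(\W_\infty)\subset \eta(\W)_\infty$. Conversely, any infinite word all of whose prefixes lie in $\eta(\W)$ is the image of a unique element of $\W_\infty$, by injectivity and property (3). So $\eta|\W_\infty:\W_\infty\to\eta(\W)_\infty$ is a bijection. Set
\[ h\circ\pi_{\W}=\pi_{\eta(\W)}\circ(\eta|\W_\infty). \]

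\emph{Well-definedness and bijectivity.} Suppose $\pi_\W(v)=\pi_\W(w)$ with $v\ne w$. By Lemma \ref{lem: pi functions}, $v=uim1^{(\infty)}$ and $w=ujm1^{(\infty)}$ with $i\neq j$. By the computation in the proof of Lemma \ref{lem:dictfun2} (D4), the point $\psi_{ui}(-1/2)=\psi_u(0)$ lies in $R_{ui}\cap R_{uj}$. Hence $m\in F_{ui}\cap F_{uj}$, and then $1\in F_{uim1^{(k)}}$ for all $k$. Property (3) therefore forces
\[ \eta(v)=\eta(u)\sigma_u(i)m1^{(\infty)},\qquad \eta(w)=\eta(u)\sigma_u(j)m1^{(\infty)}. \]
Lemma \ref{lem: pi functions} then gives $\pi_{\eta(\W)}(\eta(v))=\pi_{\eta(\W)}(\eta(w))$. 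The reverse implication holds by the same argument, since $\eta^{-1}$ on $\eta(\W)$ satisfies the analogous rules with permutations $\sigma_u^{-1}$; note that $F$ computed in $\eta(\W)$ corresponds to $F$ in $\W$ by the claim proved in Lemma \ref{lem:dictfun2}. So $h$ is a well-defined bijection. The identity $h(\mathcal T^\W_w)=\mathcal T^{\eta(\W)}_{\eta(w)}$ is immediate, because $\mathcal T^\W_w\cap\mathcal T^\W=\pi_\W(\{v\in\W_\infty: v\{|w|\}=w\})$ and $\eta$ preserves prefixes.

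\emph{Quasisymmetry.} It suffices to prove weak quasisymmetry, since both spaces are doubling and connected (Lemma \ref{lem: global homeo}) and \cite[Theorem 10.19]{Heinonen} then upgrades it quantitatively. Continuity of $h$ will also follow from the distance estimate below. Because $\eta$ is length preserving and uses only the maps $\psi_j$, we have $\diam R_{\eta(w)}$ comparable to $\diam R_w$, up to a factor depending only on $m,\epsilon$. This is not equality in general, since $\sigma_w$ may swap a ratio $1/2$ index with a ratio $(a/2)\sin\theta$ index. It is the step I expect to be the main obstacle.

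The plan is to reproduce the distance estimate from Lemma \ref{lem: global homeo}. For $x=\pi_\W(u)$ and $y=\pi_\W(v)$ with longest common prefix $w$ and maximal contact depth $n$, that estimate gives
\[ |x-y|\simeq \diam R_{wi_1m1^{(n-2)}}. \]
The same structure is present for $h(x),h(y)$, namely $\eta(w)\sigma_w(i_1)m1^{(n-2)}$, with the same $n$. However, the ratio $\diam R_{\eta(z)}/\diam R_z$ can be a product of many factors $a\sin\theta$ or their inverses, so $h$ need not be bi-Lipschitz.

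Instead, I will prove that distances are comparable to the diameters of the rhombi at the appropriate combinatorial level, on both sides. Write $N(x,y)$ for the length of the longest word $z$ such that $x,y$ lie in two adjacent or equal level-$|z|$ pieces, one of which is $R_z$. Then $|x-y|\simeq\diam R_{z}$ by Lemma \ref{lem: geometry of rhombuses}, Lemma \ref{lem: properties of R_m}, and Lemma \ref{lem:treearc}. The combinatorial data is preserved by $\eta$.

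Weak quasisymmetry then reduces to the following. Suppose $|x-a|\le|x-b|$. Then the word coding $(x,a)$ extends the word coding $(x,b)$ up to a bounded number of letters. The corresponding image pieces are nested, so
\[ |h(x)-h(a)|\lesssim|h(x)-h(b)|. \]
This uses that a child is always smaller than its parent by at least the factor $1/2$, and that adjacent same-level pieces along a contact chain $m1^{(k)}$ have ratios $1/2$ preserved by $\eta$, since $\sigma$ fixes $F_w$. The delicate point is controlling adjacent pieces of equal level in the image; these are exactly the pairs in $F_w$, where $\sigma_w$ acts trivially.
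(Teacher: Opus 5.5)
Your argument is correct in substance, but it takes a different route from the paper.

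\emph{The paper's route.} It first proves that intersecting, non-nested rhombi satisfy $\diam R_u/\diam R_v\simeq\diam R_{\eta(u)}/\diam R_{\eta(v)}$. This holds because $u=wi\alpha$ and $v=wj\beta$ with $\alpha,\beta\in\{\varepsilon\}\cup\{m1^{(l)}\}$. Then, for $|x-y|\le|x-z|$, it fixes the contact scale $s\simeq|x-z|$ and uses quasiconvexity of $\mathcal T^{\eta(\W)}$ to get $\diam R_{\eta(u'e)}\lesssim|h(x)-h(z)|$. It covers $\mathcal T^{\W}[x,y]\cup\mathcal T^{\W}[x,z]$ by boundedly many rhombi of size $\simeq s$, using an area count. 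Finally it propagates the ratio comparison along this connected cover to bound $|h(x)-h(y)|$.

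\emph{Your route.} You use the two-sided formula $|x-y|\simeq\diam R_{u\{n\}}$, where $n$ is the contact depth of codings $u,v$, and note that $\eta$ preserves contact depth. You then compare two levels along one coding of $x$. On the source side, diameters drop by a factor of at least $2$ per level, so $|x-a|\le|x-b|$ forces $n_a\ge n_b-K$. On the target side, diameters are monotone along $\eta(u)$ and drop by at most $r=\min_j\lambda_j$ per level, so $|h(x)-h(a)|\lesssim r^{-K}|h(x)-h(b)|$.

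\emph{Comparison.} Your approach dispenses with quasiconvexity, the counting, and the chaining, and it gives continuity of $h^{\pm1}$ at once. It is the Bonk--Meyer quasi-visual mechanism made explicit. The paper's covering argument, in turn, needs only quasiconvexity and comparability of adjacent tiles, and never a lower bound for $|x-y|$.

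To make your version airtight, make three changes.

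\begin{enumerate}
\item Delete the sentence claiming $\diam R_{\eta(w)}\simeq\diam R_w$. It is false, as you yourself note, and it is not used.
\item Define the level through a fixed coding $u$ of $x$, namely $n(u,v):=\max\{n:R_{u\{n\}}\cap R_{v\{n\}}\neq\emptyset\}$, instead of ``one of which is $R_z$''. Otherwise ``extends'' has no meaning. With this definition, $n$ exceeds the length of the common prefix $w$. So $u\{n\}$ and $v\{n\}$ are $wi_1m1^{(l)}$ and $wj_1m1^{(l)}$ (or $wi_1$ and $wj_1$), meeting at the vertex $\psi_w(0)$.
\begin{itemize}
\item The upper bound follows from $\diam R_{u\{n\}}/\diam R_{v\{n\}}=\lambda_{i_1}/\lambda_{j_1}$.
\item The lower bound follows from Lemma \ref{lem: geometry of rhombuses} applied at $w$ and at $u\{n\}$ (or $v\{n\}$). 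At level $n+1$ the point $\psi_w(0)$ leaves one of the two pieces. Lemma \ref{lem:treearc} is not needed.
\end{itemize}
\item The point you call delicate is automatic. Adjacent same-level image pieces have diameter ratio $\lambda_{\sigma_w(i_1)}/\lambda_{\sigma_w(j_1)}$, which lies in a fixed finite set whatever $\sigma_w$ is. The condition $\sigma_w|_{F_w}=\mathrm{id}$ is needed only to keep the tails $m1^{(l)}$, and hence adjacency and contact depth, intact.
\end{enumerate}
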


\begin{proof}
Define $h:\mathcal{T}^{\mathcal{W}}\to \mathcal{T}^{\eta(\mathcal{W})}$ by $$h\circ\pi_{\mathcal{W}}=\pi_{\eta(\mathcal{W})}\circ(\eta| \mathcal{W}_{\infty})$$
where $\pi_{\mathcal{W}}$ and $\pi_{\eta(\mathcal{W})}$ are as in Lemma \ref{lem: pi functions}. Since $\mathcal{T}^{\mathcal W}$ and $\mathcal{T}^{\eta(\mathcal W)}$ are connected doubling spaces, by \cite[Theorem 10.19]{Heinonen} it suffices to show that $h$ is $H$-weakly quasisymmetric with $H$ depending only on $m,\epsilon$. Consequently, $g:=h\circ f$ is a $\theta$-quasisymmetric map with $\theta$ depending on $m,\epsilon$, the bounded turning constant of $T$, and the uniform branch separation constant.

We first show that if $u,v\in\mathcal W$ with $R_u\cap R_v\neq\emptyset$ and sets $R_u,R_v$ are not subsets of each other, then
\begin{equation}\label{eq:comparability of intersecting rhombuses}
\frac{\diam R_u}{\diam R_v}\simeq_{m,\epsilon}\frac{\diam R_{\eta(u)}}{\diam R_{\eta(v)}}.
\end{equation}
Recall the contraction ratios $\lambda_i$ from \eqref{eq:psi} and set $r:=\min_{i\in \{1,\dots,m\}} \lambda_i$. Let $u,v\in\mathcal W$ such that $R_u\cap R_v\neq\emptyset$ and neither $u$ nor $v$ is initial subword of each other. Write $u=wi\alpha$ and $v=wj\beta$, where $i\neq j$ and $\alpha,\beta\in A_{2m-2}^*$, and write $\eta(wi)=\eta(w)i'$ and $\eta(wj)=\eta(w)j'$. By the geometry of the rhombuses, it follows that each of $\alpha,\beta$ is either $\e$ or of the form $m1^{(l)}$, for some $l\geq 0$. Consequently, by construction of $\eta$,
\[\frac{\diam R_u}{\diam R_v} =\frac{\lambda_i}{\lambda_j}\,2^{|\beta|-|\alpha|} \simeq_{m,\epsilon} \frac{\lambda_{i'}}{\lambda_{j'}}\,2^{|\beta|-|\alpha|} = \frac{\diam R_{\eta(u)}}{\diam R_{\eta(v)}}.\]

Fix now $x,y,z\in \mathcal{T}^{\mathcal{W}}$ such that $|x-y|\leq|x-z|$. We may assume that $x,y,z$ are distinct. Choose $u,v\in\mathcal W_\infty$ with $x=\pi_{\mathcal W}(u)$ and $z=\pi_{\mathcal W}(v)$. Write $u=wi_1i_2\cdots$ and $v=wj_1j_2\cdots$, where $i_1\neq j_1$. Let $n$ be maximal such that $R_{wi_1\cdots i_n}\cap R_{wj_1\cdots j_n}\neq\emptyset$, and set $u':=wi_1\cdots i_n$, $v':=wj_1\cdots j_n$. Write $R_{u'}\cap R_{v'}=\{b\}$, and by Lemma \ref{lem: properties of R_m} there exist $e,f\in\{1,m\}$ such that $b\in R_{u'e}\cap R_{v'f}$. By Lemma \ref{lem:treearc}, $\mathcal{T}^{\mathcal{W}}[x,z]\subset R_{u'}\cup R_{v'}$. More precisely,
\begin{equation*}
\mathcal{T}^{\mathcal{W}}[x,z] \subset R_{u'i_{n+1}}\cup R_{u'e}\cup R_{v'f} \cup R_{v'j_{n+1}}.
\end{equation*}
By maximality of $n$ either $i_{n+1}\neq e$ or $j_{n+1}\neq f$. Without loss of generality, assume the former and set $s=\diam R_{u'e}$. It follows that $\ell(\mathcal{T}^{\mathcal{W}}[x,z])\geq s$. In conjuction with quasiconvexity of $\mathcal{T}^{\mathcal{{W}}}$ and the above equation we have $|x-z|\simeq_{m,\epsilon}s$. Moreover, by definition of $h$ and quasiconvexity of $\mathcal{T}^{{\eta (\mathcal{W})}}$
\begin{equation}\label{eq: quasiconvexity of T^eta(W)}
\diam R_{\eta(u'e)} \leq \ell\bigl(\mathcal{T}^{\eta(\mathcal W)}[h(x),h(z)]\bigr) \lesssim_{m} |h(x)-h(z)|.
\end{equation}

Consider now the connected set $\Gamma:=\mathcal{T}^{\mathcal W}[x,y]\cup \mathcal{T}^{\mathcal W}[x,z]$. By quasiconvexity and the assumption $|x-y|\leq|x-z|$, there is a constant $C$, depending only on $m$, such that $\Gamma\subset\overline{B}(x,C|x-z|).$ Define
\[ \mathcal M_s:= \left\{ v\in\mathcal W\setminus\{\varepsilon\}: \diam R_v\leq s < \diam R_{v^\uparrow}\text{ and }R_v\cap\Gamma\neq\emptyset \right\}.\]
The rhombuses $\{R_v\}_{v\in\mathcal{M}_s}$ cover $\G$, have pairwise disjoint interiors, satisfy $rs<\diam R_v\leq s$, lie in $\overline{B}(x,C|x-z|+s)$, and satisfy $\operatorname{area}(R_v)\simeq_{m,\epsilon} s^2$. Hence,
\[\card(\mathcal{M}_s)\lesssim_{m,\epsilon}s^{-2}\sum_{v\in\mathcal{M}_s}\operatorname{area}(R_v) \lesssim s^{-2}(C|x-z|+s)^2\lesssim_{m,\epsilon} 1.\]

The sets $\{R_v\cap\Gamma\}_{v\in\mathcal{M}_s}$, form a finite closed cover of the connected set $\Gamma$. Their intersection is therefore connected. It follows from \eqref{eq:comparability of intersecting rhombuses} that there exists $L>0$ depending only on $m,\epsilon$ such that $\diam R_{\eta(v)} \leq L^{\card(\mathcal{M}_s)}\diam R_{\eta(u'e)}.$ By definition of $h$ we have that $h(\Gamma)\subset \bigcup_{v\in\mathcal{M}_s}R_{\eta(v)}.$ This union is connected, because intersections along $\Gamma$ are preserved by $h$. It follows that
\begin{align*} 
|h(x)-h(y)| \leq \diam \left(\bigcup_{v\in\mathcal{M}_s}R_{\eta(v)}\right) \leq \sum_{v\in\mathcal{M}_s}\diam R_{\eta(v)}&\leq NL^{\card(\mathcal{M}_s)}\diam R_{\eta(u'e)}\\
&\lesssim_{m,\epsilon} |h(x)-h(z)|,
\end{align*}
where the last inequality follows from \eqref{eq: quasiconvexity of T^eta(W)}. Thus $h$ is $H$-quasisymmetric with $H$ depending only on $m,\epsilon$.
\end{proof}

We are now ready to show Proposition \ref{prop: enumeration of T=enumeration of Tmeta}.

\begin{proof}[{Proof of Proposition \ref{prop: enumeration of T=enumeration of Tmeta}}]
Fix $m\in \{2,3,\dots\}$ and $\epsilon>0$. Claim (i) follows from Lemma \ref{lem: global homeo}. 

We show Claim (iii). Let $T$ as in the statement and let $f:T\to \mathcal{T}^{A_m^*}$ be a quasisymmetric embedding given from \cite[Theorem 1.3(i),(iii)]{CGIV}. The distortion of $f$ depends on the bounded turning constant of $T$, its uniform branch separation and $\epsilon$. Clearly, $T':=f(T)$ is a quasiconformal tree with uniform branch separation and valence at most $m$ as a connected closed subset of $\mathcal{T}^{A^*_m}$. 

There exists a longest word $w\in A_m^*$ such that $T' \subset \psi_w(\mathcal{T}^{A^*_m})$. Replacing $f$ by $f\circ \psi_w$, we may assume that there exist two distinct $i_1,i_2 \in A_m$ such that $\diam(T'\cap R_{i_1})>0$ and $\diam(T'\cap R_{i_2})>0$.
Let $h:\mathcal{T}^{A_m^*}\to \mathcal{T}^{A^*_m}$ be the bi-Lipschitz homeomorphism defined by
\begin{align*}
&h|\psi_{i_1}(\mathcal{T}^{A^*_m}) = \psi_1\circ\psi_{i_1}^{-1},\quad h|\psi_{1}(\mathcal{T}^{A^*_m}) = \psi_{i_1}\circ\psi_{1}^{-1},\\
&h|\psi_{i_2}(\mathcal{T}^{A^*_m}) = \psi_m\circ\psi_{i_2}^{-1},\quad h|\psi_{m}(\mathcal{T}^{A^*_m}) = \psi_{i_2}\circ\psi_{m}^{-1}\\
&h|\psi_i(\mathcal{T}^{A^*_m}) = \text{Id}\quad\text{for $i\not\in\{1,m,i_1,i_2\}$}.
\end{align*}
Composing $f$ with $h$, we may assume that $i_1=1$ and $i_2=m$.

Define now the set
\begin{equation}\label{eq:def of V}
    \mathcal{V}=\{w\in A_m^*: \diam(T'\cap R_w)>0\}\subset A^*_m.
\end{equation}
We show that $\mathcal{V}$ is an $m$-dictionary. 

By design of $f$, $\diam(T'\cap R_{1})>0$ and $\diam(T'\cap R_{m})>0$. Also, if $\diam(T'\cap R_w)>0$, then clearly $\diam(T'\cap R_{w^{\uparrow}})>0$. Therefore, properties (D1) and (D2) hold. Property (D3) is clear since $\mathcal{V} \subset A^*_m$. 

We now show (D4). Suppose now that $n\in\N$, that $w,u\in\mathcal{W}_n$ are distinct, and let $i,j\in \{1,m\}$ such that $R_{wi}\cap R_{uj}\neq \emptyset$. Then there exist $x\in (R_w\cap T')\setminus R_w^*$ and $y\in (R_u\cap T')\setminus R_u^*$. Moreover, by Lemma \ref{lem: properties of R_m},
\[ \emptyset \neq R_{wi}\cap R_{uj} \subset R_w \cap R_u \subset R_w^*\cap R_u^* \]
with the latter set containing only a single point $z$. In conjunction with Lemma \ref{lem:treearc}, we have that $z\in T'[x,y]\subset R_w\cup R_u$. Since $z\notin\{x,y\}$ there exists a subarc of $T'[x,y]$ that lies in $R_{wi}\cup R_{uj}$. We conclude that $wi,uj\in \mathcal{V}$ which gives (D4). 

We show that $T' =\mathcal{T}^{\mathcal{V}}$. Let $x,y\in T'$ distinct. Since $T'\subset \mathcal{T}^{A_m^*}$, \eqref{eq: T^m rhombus def} and the design of $f$ give $x\in(R_{wi}\cap T')\setminus R_{wi}^*$ or $y\in (R_{wj}\cap T')\setminus R_{wj}^*$ for some $i,j\in A_m$ distinct. By Lemma \ref{lem: geometry of rhombuses} and the claim above, we have $T'[x,y]\subset R_{wi}\cup R_{wj}$. Thus, $wi,wj\in \mathcal{V}$ and $x,y\in \mathcal{T}^{\mathcal{V}}$ which gives $T'\subset\mathcal{T}^\mathcal{V}$. For the reverse inclusion, let $y\in \mathcal{T}^\mathcal{V}$. By Lemma \ref{lem: pi functions}, $y=\bigcap_{n\in\N}R_{w\{n\}}$ where $w\{n\}\in\mathcal{V}_n$ for all $n\in\N$. Let $z_n\in T'\cap R_{w\{n\}}$. It follows that $z_n\to y$ and since $T'$ is closed, $y\in T'$. Hence, $T'=\mathcal{T}^{\mathcal{V}}$.

We now construct a dictionary function $\eta$ on $\mathcal{V}\cup\mathcal{V}_{\infty}$ in an inductive manner. Let $\eta(\e)=\e$. Assume now that for some integer $n\geq 0$ we have defined an injective function
\[ \eta : \bigcup_{k=0}^n\mathcal{V}_k \to \bigcup_{k=0}^nA_{m}^k\]
so that property (3) of Definition \ref{def:dictfun} holds. Fix $w\in \mathcal{V}_{n}$, let $x_w=f^{-1}(\psi_w(0))$ and let $\val(x_w)=p$. By Lemma \ref{lem:consistency of eta}, one of $w1,wm$ is in $\mathcal{V}_{n+1}$. 

\emph{Case 1}. Suppose first that both $w1,wm\in\mathcal{V}_{n+1}$. Let $D$ be a Jordan domain such that $D\cap f^{-1}(R_{wi})\neq\emptyset$ for all $i\in A_m$ such that $wi\in \mathcal{V}$. Recalling that $T'=\mathcal{T}^{\mathcal{V}}$ we have that $\bigcup_{i\in A_m}f^{-1}(R_{wi})$ is a tree. Hence, Lemma \ref{lem: maximal arcs}(i) asserts the existence of $y_{wi}\in f^{-1}(R_{wi})$ such that $f^{-1}(R_{wi})[x_w,y_{wi})\subset D$. Let now $y_{w{i_1}},\dots, y_{w{i_p}}$ be a counterclockwise enumeration along $\partial D$, starting from $y_{wi_1}$. By Lemma \ref{lem: maximal arcs}(ii) this enumeration is independent of the domain $D$. Without loss of generality, we may assume that $i_1=1$. Let $j\in \{1,\dots,p\}$ such that $i_{j}=m$. Let $i_{l_1}<\cdots<i_{l_{j}}$ and $i_{l_{j+1}}<\cdots<i_{l_p}$ be rearrangements of the indices $i_1,\dots,i_{j}$ and $i_{j+1},\dots,i_p$ in increasing order, respectively. For each $k\in \{1,\dots, j\}$ set $\eta(w{i_k})=\eta(w)i_{l_k}$ and for each $k\in \{j+1,\dots, p\}$ set $\eta(w{i_k})=\eta(w)({i_{l_k}}+m-2)$.

\emph{Case 2}. Suppose now that $w1\in\mathcal V_{n+1}$ but $wm\notin\mathcal V_{n+1}$. The construction is similar to Case 1, with the following changes. The counterclockwise enumeration starts with $i_1=1$, but choose $j$ so that $i_j=\max\{i:wi\in\mathcal V_{n+1}\}.$ We make the same assignments as above, except that we set $\eta(wi_j)=\eta(w)m.$

\emph{Case 3}. Suppose that $wm\in\mathcal V_{n+1}$ but $w1\notin\mathcal V_{n+1}$. The construction is similar to Case 1, except that the counterclockwise enumeration starts with $i_1=\min\{i:wi\in\mathcal V_{n+1}\}$. Choose $j$ so that $i_j=m$, and make the same assignments as above, except that $\eta(wi_1)=\eta(w)1.$

In both Case 2 and Case 3, these modifications fix every letter in $F_w$. Proceeding inductively, we define $\eta$ on all of $\mathcal{V}$. Finally, define $\eta$ on $\mathcal{V}_{\infty}$ by $\eta(i_1i_2\cdots) = j_1j_2\cdots$ so that for all $n\in\N$, $\eta(i_1\cdots i_n) = j_1\cdots j_n$. Applying an induction, it is easy to see that $\eta$ is a dictionary function. By Lemma \ref{lem:dictfun}(ii) $\eta(\mathcal{V})$ is an $m$-dictionary and by Lemma \ref{lem:dictqs}, there exists a quasisymmetric map $h:\mathcal{T}^{\mathcal{V}} \to \mathcal{T}^{\eta(\mathcal{V})}$ such that $h(\mathcal{T}^{\mathcal{V}}_w) = \mathcal{T}^{\eta(\mathcal{V})}_{\eta(w)}$ for each $w\in \mathcal{V}$. Set 
\[ g= h\circ f : T \to \mathcal{T}^{\eta(\mathcal{V})}.\]

For each $w\in \mathcal{V}$, let $T_w=g^{-1}(\cT^{\eta(\mathcal{V})}_{\eta(w)})$. Let $y_{wi_1},\dots,y_{wi_{\val(x_w)}}$ as in the statement. Without loss of generality, we may assume that $i_1=1$. Note that, by the construction of $\eta$, the words $\{\eta(wi)\}_{i\in A_{2m-2},\,wi\in\mathcal W}$ are enumerated according to increasing values of $i$, beginning with $\eta(w1)$. On the other hand, by Lemma \ref{lem:orientation of T2m-2}, the points $\{p_{wi}\}_{i\in A_{2m-2}}\subset \mathcal{T}^{2m-2}$ are also enumerated according to increasing values of $i$, beginning with $p_{w1}$. Hence, the corresponding points $\{z_{\eta(wi)}\}_{wi\in\mathcal W}$ inherit the same ordering: $z_{\eta(w1)},\dots, z_{\eta(wi_{\val(x_w)})}$. We conclude that $g$ is branch-orientation-preserving map. 

Finally, let $\mathcal W=\eta(\mathcal V)$, and define
\[V_n:=\bigcup_{k=0}^n\bigcup_{u\in A_{2m-2}^k}\psi_u([-1/2,1/2])\quad\text{and}\quad V:=\bigcup_{n\geq1}V_n.\]
Note that each $V_n$ is a tree consisting of finitely many line segments and that $\mathcal{T}^{2m-2}=\overline{V}$. 

We observe first that $\mathcal T^{2m-2}\setminus \mathcal{L}(\mathcal T^{2m-2})\subset V$. To see this fix $p\in\mathcal T^{2m-2}\setminus \mathcal{L}(\mathcal T^{2m-2})$. Two distinct components of $\mathcal T^{2m-2}\setminus\{p\}$ are open in $\mathcal T^{2m-2}$ and therefore contain points of $V$. The arc joining these points lies in some $V_n$ and contains $p$. 

Since $\mathcal{T}^\W\subset \mathcal{T}^{2m-2}$, we conclude that $\mathcal T^{\mathcal W}\setminus \mathcal L(\mathcal T^{\mathcal W})\subset V$. Let now $b\in \mathcal T^{\mathcal W}\setminus \mathcal L(\mathcal T^{\mathcal W})$ and let $B_1,B_2$ be the biggest (in diameter) component of $\mathcal T^{\mathcal W} \setminus\{b\}$. Then for $i\in \{1,2\}$ there exist $x_i\in B_{i}\cap V$. By the uniqueness of arcs and Lemma \ref{lem: metric trees properties of BM}(i), $b\in \mathcal{T}^\W(x_1,x_2)=V_n(x_1,x_2)$ for some $n\in\N$. In particular $\mathcal{T}^\W[x_1,x_2]$ is a polygonal arc. If $b$ is not a corner of this arc, a sufficiently short subarc containing $b$ is a line segment with $b$ in its interior. 

Suppose now that $b$ is a corner which means that $b=\psi_u(0)$ for some $u\in \W$. By the construction of $\eta$, $u1,um\in\mathcal W$, and both $\mathcal T^{\mathcal W}\cap R_{u1}$ and $\mathcal T^{\mathcal W}\cap R_{um}$ are nondegenerate. Choose non-leaf points in these two sets different from $b$. The arcs joining them to $b$ are polygonal and lie in the respective rhombuses. Since $b=\psi_{ui}(-1/2)$ for $i\in\{1,m\}$, their initial edges from $b$ lie in $\psi_{ui}([-1/2,1/2])$. Moreover, 
$$\psi_{u1}([-1/2,1/2]) \cup \psi_{um}([-1/2,1/2]) = \psi_u([-1/2,1/2])$$
with the latter being a line segment. Thus, choosing $x,y$ sufficiently close to $b$ on these respective arcs, $\mathcal T^{\mathcal W}[x,y]$ is a line segment containing $b$ in its interior.

Last, Claim (ii) follows identical arguments with claim (iii) with the difference that the topological embedding $f$ is given from \cite{Charatonik}.
\end{proof}

\section{Whitney-type decompositions for quasiconformal trees}\label{sec: Decomposition of R^2 minus T}
It is well-known that every open set $\Omega \subset \R^2$ (or generally in $\R^n$) admits a \emph{Whitney decomposition}: $\Omega = \bigcup_{n\in\N}Q_n$ where $\{Q_n : n\in\N\}$ is a collection of closed squares inside $\Omega$ with mutually disjoint interiors such that each square satisfies $\diam{Q_n} \simeq \dist(Q_n,\partial\Omega)$. See for example \cite[Theorem VI.1.1,Proposition VI.1.1]{Stein} for a rigorous construction.

The goal of this section is to prove the following proposition which allows us to decompose the exterior of planar quasiconformal trees into Whitney-like domains; these domains are not exactly squares but polygons which are finitely many up to similarities. Recall the \emph{tile decompositions} from Definition \ref{def:tile decomposition}.

\begin{prop}\label{prop:polygonal curves}
Let $T\subset \R^2$ be a $C_T$-bounded turning tree with $\diam{T}=1$ and let $({\bf X}^n)_{n\in\N}$ be a $C$-tile decomposition. There exist
\begin{enumerate}
\item an integer $N\in\N$ depending only on $C$, and a nonempty set $\mathscr{W} \subset\{1,\dots,N\}^*$,
\item positive integers $(k_X)_{X\in{\bf X}^*}$ such that $0<k_Y-k_X\lesssim_{C,C_T} 1$ for any $X\in{\bf X}^n$, $Y\in{\bf X}^{n+1}$, with $Y\subset X$,
\item a collection $\{x_w : w\in\mathscr{W}\setminus\{\varepsilon\}\}$ of double points of $T$ and a collection $\{\zeta_w:w\in\mathscr{W}\setminus\{\varepsilon\}\}$ of arcs,
\item a nested sequence of Jordan domains $\Omega_1\supset\Omega_2\supset\cdots\supset T$, 
\end{enumerate}
so that 
if $\d_X:=2^{-k_X}$ and $\mathscr{W}_n:=\{w\in \mathscr{W}:|w|=n\}$, then the next properties hold.
\begin{enumerate}[(i)]
\item (Properties of $\mathscr{W}$.)
\begin{enumerate}
\item For each $w\in \mathscr{W}$ there exists $N_w \in \{2,\dots,N\}$ such that $wi\in \mathscr{W}$ if and only if $i\in\{1,\dots,N_w\}$.
\item If $w\in\mathscr{W}\setminus\{\varepsilon\}$, then $w^{\uparrow} \in \mathscr{W}$.
\end{enumerate}      
\item (Properties of points $x_w$.)
\begin{enumerate}
\item For all $w\in \mathscr{W}\setminus\{\varepsilon\}$ and $n\in\N$, $x_w=x_{w1^{(n)}}$.
\item For all $w\in \mathscr{W}\setminus\{\varepsilon\}$ there exist unique distinct $X_w^1,X_w^2\in {\bf X}^{|w|}$ such that $\{x_w\}=X_w^1\cap X_w^2$.
\item For all $w\in \mathscr{W}\setminus\{\varepsilon\}$ there exists unique $u\in\mathscr{W}_{|w|} \setminus\{w\}$ such that $x_w=x_u$.
\item If $v,u\in \mathscr{W}$ with $|v|=|u|$ and $x_v\neq x_u$,  
$$|x_v-x_u| \gtrsim_{C_T,C} \max\{\d_{X_v^1},\d_{X_v^2},\d_{X_u^1},\d_{X_u^2}\}.$$
\item For each $x\in T$ and $n\in\N$, there exists $w\in \mathscr{W}_n$, and $X_w\in{\bf X}^n$ such that $|x-x_w| \lesssim_{C_T,C} \d_{X_w}$. 
\end{enumerate}

\item (Properties of arcs $\zeta_w$.)
\begin{enumerate}
\item For each $w\in \mathscr{W}\setminus\{\e\}$, $x_w$ is an endpoint of $\zeta_w$ and $\zeta_w \subset \overline{\Omega_{|w|}}$.
\item If $w\in \mathscr{W}\setminus\{\e\}$ and $m\geq |w|$, then $\zeta_w$ intersects $\partial\Omega_m$ at exactly one point which we denote by $p_{w1^{(m-|w|)}}$.
\item If $n\in\N$ and $w \in \mathscr{W}_n$, then $\zeta_{w1} = \zeta_w \cap \overline{\Omega_{n+1}}$, $\diam(\zeta_w\setminus\zeta_{w1}) \simeq_{C_T,C} 2^{-t_w}$ and $\zeta_w\setminus\zeta_{w1}\subset \mathcal{G}_{2^{-t_w}}$, where $t_w\in\N$ is chosen such that
\[2^{-t_w}<(16C)^{-1}\min\{\d_{X_{w1}^1},\d_{X^2_{w1}}\}\leq 2^{-t_w+1}.\]
\item For each $n\in\N$, the points $\{p_w:w\in\mathscr{W}_n\}$ are oriented on $\partial\Omega_n$ according to the cyclic lexicographic order of $\mathscr{W}_n$.
\item Let $w,u \in \mathscr{W}$ with $x_w\neq x_u$. If $v\in \mathscr{W}_t$ with $t=\max\{|w|+1,|u|+1\}$, then $$\dist(\zeta_w,\zeta_u)\gtrsim_{C_T,C} \max\{\d_{X_{v}^1},\d_{X_{v}^2},\d_{X_{v}^1},\d_{X_{v}^2}\}.$$
\end{enumerate}
\item (Properties of domains $\Omega_n$.) For each $n\in\N$, let $\G_n:=\partial\Omega_n$.
\begin{enumerate}
\item For each $z\in \G_n$ there exists $X\in{\bf X}^n$ such that $\d_X\lesssim_C\dist(z,T)< 8\d_X$.
\item Let $n\in\N$ and $w,u\in \mathscr{W}_n$ such that $u$ is to the left of $w$. There exists unique $j\in \{1,2\}$ such that $x_w,x_u\in \partial X_w^j$, $T(x_w,x_u)\subset X_w^j\setminus\partial X_w^j$, $x_{w2},\dots,x_{wN_w}\in X_w^j\setminus\partial X_w^j$, $\G_n[p_w,p_u]\subset \mathcal{G}_{\d_{Z_w}}$ where $Z_w\in {\bf X}^n$ such that $\d_{Z_w}=\min_{W\in{\bf X}^n,W\cap X_w^j\neq\emptyset}\d_W$,
and $\diam \G_n[p_w,p_u]\simeq_{C_T,C}\d_{Z_w}$.
\end{enumerate} 
\item (2-sidedness property.) Suppose that $w,u \in \mathscr{W}$ are distinct with ${|u|}\leq {|w|}$
and $x_w=x_u$. Let $0<r\leq \min\{\d_{X_w^1},\d_{X_w^2}\}$, and let $p_{r},q_{r}\in\partial B(x_w,r)\cap T$ such that $x_w\in T(p_{r},q_{r})\subset B(x_w,r)$. Denote by $\sigma_w$ and $\sigma_u$ the components of $B(x_w,r)\cap\zeta_{w}$ and $B(x_w,r)\cap\zeta_{u}$, respectively, that contain $x_w$. Then $\sigma_w$, $\sigma_u$ are in different components of $B(x_w,r)\setminus T[p_{r},q_{r}]$. Last, if $w$ is not initial subword of $u$, then
$\zeta_w\cap\zeta_u = \{x_w\}$.
\end{enumerate}
\end{prop}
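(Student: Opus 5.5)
We build everything inductively on the level $n$, using MacManus square thickenings of unions of tiles at tile-dependent scales. First we choose the scales: for $X\in{\bf X}^n$ let $k_X$ be the least integer with $2^{-k_X}\le (16C)^{-1}\cdot c\,\diam X$, with $c$ small depending on $C,C_T$. By Definition \ref{def:tile decomposition}(iv), $16\diam Y\le \diam X\le C\diam Y$ for $Y\subset X$, $Y\in{\bf X}^{n+1}$, so $0<k_Y-k_X\lesssim \log C$. This gives (2). Then set
\[\Omega_n:=\text{the Jordan component (Lemma \ref{lem:properties of thickening}(i)) of } \interior\bigcup_{X\in{\bf X}^n}\mathscr{T}_{\d_X}(X)\ \text{containing } T.\]
Nestedness follows from Lemma \ref{lem:properties of thickening}(iii),(v), after possibly shrinking $c$ so that $\mathscr{T}_{\d_Y}(Y)$ stays inside the interior of $\mathscr{T}_{\d_X}(X)$. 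Property (iv)(a) then follows from Lemma \ref{lem:properties of thickening}(ii), together with Definition \ref{def:tile decomposition}(ii),(iii). The point of (iii) is that tiles far from a given $X$ (at distance $\ge \diam/C$) have thickenings that do not interact with the thickening of $X$.

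Next come the words and points. Distinct level-$n$ vertices $v\in{\bf V}^n$ are exactly the points where two $n$-tiles meet (Lemma \ref{lem:properties of BM}(ii),(iii)). Travelling along $\G_n=\partial\Omega_n$ counterclockwise, each vertex $v$ is visited exactly twice, once from each side of $T$ near $v$. We label these visits by words of $\mathscr{W}_n$ so that the children of a visit $w$ at level $n$ are the visits at level $n+1$ along the sub-arc of $\G_{n+1}$ lying "under" $\G_n[p_w,p_{w^+}]$. The first child $w1$ is the visit to the same point $x_w$ from the same side, which gives (ii)(a). The remaining children $w2,\dots,wN_w$ are the new vertices of ${\bf V}^{n+1}$ in the tile $X_w^j$ swept by that sub-arc. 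Then $N_w\le N(C)$ follows from Definition \ref{def:tile decomposition}(i) and the bounded number of tiles in a thickening. This gives (i) and (ii)(b),(c). Properties (ii)(d),(e) follow from Lemma \ref{lem:properties of BM}(vii) and Definition \ref{def:tile decomposition}(iii),(ii).

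The arcs $\zeta_w$ are built as grid paths. At each level we connect $p_{w1^{(k)}}\in\G_{|w|+k}$ to $p_{w1^{(k+1)}}$ by a path in $\mathcal G_{2^{-t_w}}$ inside the annular region $\overline{\Omega_{|w|+k}}\setminus\Omega_{|w|+k+1}$, staying on the prescribed side of $T$ at $x_w$. We concatenate these paths and add the endpoint $x_w$. The paths have size $\simeq 2^{-t_w}$ and they shrink geometrically by Definition \ref{def:tile decomposition}(v), so $\zeta_w$ is an arc ending at $x_w$. This gives (iii)(a)--(c). The cyclic order (iii)(d) is exactly the labelling convention above. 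Property (iii)(e) uses the separation (ii)(d) combined with $\zeta_w\subset$ a neighborhood of $x_w$ of size $\lesssim\d$.

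The main obstacle is the topological input. We must show that $\G_n$ really visits each vertex exactly twice and in the claimed order; that the arc $\G_n[p_w,p_u]$ lies in a single grid $\mathcal G_{\d_{Z_w}}$ (iv)(b); and the 2-sidedness (v). For (iv)(b) we would use Lemma \ref{lem:properties of BM}(iv),(vi): consecutive visits $w,u$ bound one tile side $X_w^j$, and by the minimality choice of $Z_w$ the relevant part of the boundary is made of squares of side $\d_{Z_w}$. For (v) we plan to invoke Lemma \ref{lem:Jordan domain prerequisite} applied to a Jordan domain bounded by $T[p_r,q_r]$-adjacent pieces of $\G_n$, giving the two sides $W_1,W_2$. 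The two visits to $x_w$ approach from $W_1$ and $W_2$ respectively, since $\Omega_n\setminus T$ near a double point has exactly two local sides. Finally, $\zeta_w\cap\zeta_u=\{x_w\}$ follows because the two arcs live on opposite sides and in disjoint annular grid regions.
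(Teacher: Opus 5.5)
Your scales $k_X$, the domains $\Omega_n$, and properties (2), (ii)(d),(e) and (iv)(a) match the paper. The central object, however, is never actually constructed: the arcs $\zeta_w$ together with their endpoints $p_w$.

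\textbf{The arcs $\zeta_w$ and points $p_w$.} Your definition is circular. You build $\zeta_w$ by concatenating grid paths from $p_{w1^{(k)}}$ to $p_{w1^{(k+1)}}$, yet $p_w$ is supposed to be the point $\zeta_w\cap\Gamma_n$. Likewise, ``$\Gamma_n$ visits each vertex exactly twice'' has no meaning before such arcs exist. The curve $\Gamma_n$ stays at distance $>\delta$ from $T$ and runs alongside $T$ on both sides near $v$ over long stretches, so it does not single out any point.

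What is actually needed is the following. For each double point $x$, each side $i$ and each $k\ge\level(x)$, you need an arc in $\Omega_k\setminus T$ from $x$ to $\Gamma_k$ that leaves $x$ on side $i$. It must have length or diameter $\lesssim_{C_T,C}\min\{\delta_{X^1_{x,k}},\delta_{X^2_{x,k}}\}$, and these arcs must be nested in $k$. Nothing in your proposal shows that short paths exist in the annulus $\overline{\Omega_{n+k}}\setminus\Omega_{n+k+1}$ near $x$ on the prescribed side. Side branches of $T$ attached near $x$ can partially shield it, so an exit path may have to wind, and only a quantitative exit estimate controls its size.

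The paper obtains this from Lemma \ref{lem: Nakki vaisalla}: $\overline{\R^2}\setminus T$ is $C_T$-John. It takes John arcs from points $y^i_{x,m}\to x$ in the two sides $W^i_{x,m}$ out to a far point. It passes to a limit by Arzel\`a--Ascoli and semicontinuity of length, getting $C_T$-John arcs $\gamma^i_x$ with $\ell(\gamma^i_{x,k})\le 16CC_T\min\{\delta_{X^1_{x,k}},\delta_{X^2_{x,k}}\}$ (Lemma \ref{lem:arzela curves}). That length bound is exactly what your (iii)(c), (iii)(e) and the convergence $p_{w1^{(k)}}\to x_w$ silently assume. It is also what fixes how small your constant $c$ must be. In your sketch, bounded turning of $T$ is used only for separation estimates, and that is not enough.

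\textbf{The combinatorial claims.} Even granting the arcs, the claims you defer as ``the main obstacle'' carry most of the proof. A naive grid approximation of an arc can also cross $\Gamma_{n+1}$ several times or run into another vertex's arc. The paper handles this in four steps:
\begin{enumerate}
\item Lemma \ref{lem:diameter of the sides of W.S1}: consecutive endpoints on $\Gamma_n$ correspond to a single tile.
\item Lemma \ref{lem:diameter of the sides of W.S}: the region they cut off meets $T$ only inside that tile, and the arc of $\Gamma_n$ between them lies in that tile's thickening, up to small balls at its boundary points. This gives the grid and diameter claims in (iv)(b).
\item Lemma \ref{lem:prerequisite for polygonal appr}: $\gamma^i_{x,n}$ meets $\Gamma_{n+1}$ only between the two neighbours of $q^i_{x,n+1}$, with quantitative separation.
\item Lemma \ref{lem:polygonal curves}: only after these estimates are the John arcs replaced by grid paths of mesh $\lambda_n$ that cross each $\Gamma_m$ exactly once.
\end{enumerate}
Your sketch also omits $N_w\ge 2$. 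This requires showing that the arc of $\Gamma_{n+1}$ from $p_{w1}$ to $p_{u1}$ lies in the region bounded by $\zeta_w$, $T[x_w,x_u]$, $\zeta_u$ and $\Gamma_n[p_w,p_u]$, and that this region contains new vertices. Without these steps, the labelling and properties (iii)(b),(d), (iv)(b) and (v) are asserted rather than proved.
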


For the rest of this section, we fix a $C_T$-bounded turning tree $T\subset \R^2$ and a $C$-tile decomposition $({\bf X}^n)_{n\in\N_0}$ with $\diam T=1$. Recall the nested family $({\bf V}^n)_{n\geq 0}$ of double points of $T$ associated with the tile decomposition $({\bf X}^n)_{n\geq 0}$. For each $n\in\N$ set ${\bf W}^n:={\bf V}^n\setminus{\bf V}^{n-1}$ and 
\[ {\bf V}^*:={\bf W}^*:=\bigcup_{n=1}^{\infty}{\bf V}^n=\bigcup_{n=1}^{\infty}{\bf W}^n.\] 
We say that $v\in {\bf V}^*$ is an \textit{$n$-vertex} if $v\in{\bf V}^n$ and we define the \emph{level}
\begin{equation}\label{eq:level}
\level(v) := \min\{n\in \N : v \in {\bf V}^n\},
\end{equation}
i.e., the unique $n\in\N$ such that $v\in {\bf W}^n$. Denote by ${\bf X}^*=\bigcup_{n\in\N_0}{\bf X}^n$. Lastly, if $X\in{\bf X}^n$, $n\in\N$, then by Lemma \ref{lem:properties of BM}(i) we have $\emptyset\neq\partial X\subset {\bf V}^n$. Hence, when $\card(\partial X)=1$, we call $X$ \textit{leaf tile} and when $\card(\partial X)\geq 2$, we call it \textit{branch tile}.

Let us briefly describe the geometric idea behind the proof of Proposition \ref{prop:polygonal curves}. Starting from the tile decomposition of $T$, we thicken the tiles at their corresponding scales.  By Lemma \ref{lem:def of thickening of Tm,eta}, these thickenings can be organized into nested Jordan domains $\Omega_n$ containing $T$.  We then associate to every double point, at each relevant scale, two John arcs of quantitatively controlled size which, roughly speaking, approach the point from two different sides of the tree; see Lemma \ref{lem:arzela curves}.  The main technical issue is to arrange these arcs consistently across different tiles and different scales.  The separation and compatibility estimates needed for this are established in Lemma \ref{lem:diameter of the sides of W.S} and Lemma \ref{lem:prerequisite for polygonal appr}.  After these estimates are in place, the arcs are replaced by suitable polygonal approximations which cross the relevant thickenings only once; see Lemma \ref{lem:polygonal curves}. The resulting polygonal curves, together with pieces of the boundaries of the thickenings, enclose the regions that we call Whitney squares.

\subsection{Square thickenings around $T$}\label{sec:sqthick}
Here we construct the Jordan domains $\Omega_n$. For each $n$-tile $X$, let  
\begin{equation}\label{eq:def of delta X}
 \d_X=2^{-k_{X}} \qquad\text{where} \qquad k_X\in\N\quad\text{and}\quad 2^{-k_X}<\frac{\diam X}{2^{15}{C^3_{T}}C^6}\leq 2^{-k_X+1}.
\end{equation}

\begin{rem}
By (iv) of Definition \ref{def:tile decomposition}, if $X\in{\bf X}^n$ and $Y\in{\bf X}^{n+1}$ with $Y\subset X$, then $2^{-k_X}\simeq_{C,C_T} 2^{-k_Y}$ which gives $0<k_Y-k_X\lesssim_{C,C_T} 1$. Moreover, by (viii) of Lemma \ref{lem:properties of BM}, $\lim_{n\to \infty} \sup_{X\in {\bf X}^n} \d_{X} = 0$.
\end{rem}

The next lemma gives the definition of domains $\Omega_n$ of Proposition \ref{prop:polygonal curves}.

\begin{lem}\label{lem:def of thickening of Tm,eta}
For each $n\in\N$, let $E_n=\bigcup_{X\in{\bf X}^n}\mathscr{T}_{\delta_X}(X)$. 
\begin{enumerate}[(i)]
\item The boundary $\partial E_n$ consists of finitely many mutually disjoint Jordan curves, each of which is a subset of $\mathcal{G}_{\d_n}$ where $\d_n=\min_{X\in{\bf X}^n}\d_X$. 
\item There exists a Jordan domain $\Omega_n$ such that $\partial\Omega_n\subset \partial E_n$ and $T\subset \interior E_n\subset \Omega_n$.
\item If $m>n$, then  $\overline{\Omega_{m}}\subset \Omega_{n}$.
\item If $X,Y$ are disjoint $n$-tiles, then $$\dist(\mathscr{T}_{\d_X}(X),\mathscr{T}_{\d_Y}(Y))\geq C^{-1}\max\{\d_X,\d_Y\}.$$
\item The set $E_n$ is $(2^6C_TC)$-bounded turning.
\end{enumerate}
\end{lem}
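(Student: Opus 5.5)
The approach is to handle each item separately, using the elementary properties of square thickenings in Lemma \ref{lem:properties of thickening} together with the separation properties of tiles in Definition \ref{def:tile decomposition}. Throughout, the key observation is that $\d_X$ is dyadic and is smaller than $\diam X$ by the huge factor $2^{15}C_T^3C^6$.

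\emph{Item (i).} Each $\mathscr{T}_{\d_X}(X)$ is a union of closed squares of $\Sigma_{\d_X}$. Since every $\d_X$ is an integer multiple of $\d_n$, each such square is a union of squares of $\Sigma_{\d_n}$. Hence $E_n$ is a finite union of $\d_n$-grid squares, and its boundary lies in $\mathcal{G}_{\d_n}$. To split the boundary into disjoint Jordan curves I would argue as in MacManus \cite[Lemma 2.1]{Mac99}. Alternatively, I would replace $E_n$ by the (still grid-aligned) union of its $\d_n$-squares and note that corner-touching configurations can be avoided. For this, I would use Lemma \ref{lem:properties of thickening}(i) for each piece, and the fact that the pieces of adjacent tiles overlap in a full neighbourhood of the common vertex, since that vertex lies in both tiles.

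\emph{Item (ii).} Since $T=\bigcup_X X$, the set $E_n$ is connected: adjacent tiles share a vertex, and each thickening is connected. Lemma \ref{lem:properties of thickening}(ii) gives $\dist(y,X)>\d_X$ on the boundary, so $T\subset \interior E_n$. I would take $\Omega_n$ to be the complement of the unbounded component of $\R^2\setminus E_n$, together with its interior. This is a Jordan domain whose boundary is the outer boundary curve of $E_n$, which lies in $\partial E_n$.

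\emph{Item (iii).} For $m>n$, every $m$-tile $Y$ lies in an $n$-tile $X$, and $\d_Y\le \d_X/16$ by Definition \ref{def:tile decomposition}(iv). Points of $E_m$ are within $9\d_Y$ of $Y$ by Lemma \ref{lem:properties of thickening}(vi). Points of $\partial\Omega_n$ are at distance more than $\d_{X'}$ from every $n$-tile $X'$, and adjacent tiles have comparable $\d$. With these facts I get $E_m\subset \interior E_n$. Filling in holes then gives $\overline{\Omega_m}\subset\Omega_n$.

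\emph{Item (iv).} Disjoint tiles satisfy $\dist(X,Y)\ge C^{-1}\max\{\diam X,\diam Y\}$, using (iii) of the definition in both orders. Each thickening adds at most $8\d$, which is tiny compared with $\diam X/C$ by \eqref{eq:def of delta X}. The triangle inequality then yields the bound, with room to spare.

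\emph{Item (v), and the main obstacle.} Take $z_1,z_2\in E_n$ and use Lemma \ref{lem:properties of thickening}(vi) to get $w_i\in T$ with segments $[z_i,w_i]\subset E_n$ of length less than $9\d_{X_i}$. I would then join $w_1,w_2$ by $T[w_1,w_2]$, whose diameter is at most $C_T|w_1-w_2|$. There are two cases.
\begin{enumerate}
\item If $|z_1-z_2|$ is comparable to or larger than $\max\{\d_{X_1},\d_{X_2}\}$, the segments are negligible and we get constant about $C_T$.
\item If $|z_1-z_2|$ is much smaller, then either $X_1$ and $X_2$ are disjoint, which item (iv) forbids since $z_1,z_2$ are close, or they intersect. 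In the latter case they have comparable $\d$, and I would instead connect $z_1$ to $z_2$ inside a union of boundedly many grid squares, using the square structure of the thickenings near a common point.
\end{enumerate}
This small-scale case, where one must stay in $E_n$ near a cusp-like junction between thickenings of different mesh sizes, is where the constant $2^6C_TC$ must be tracked carefully. I expect it to be the main obstacle.
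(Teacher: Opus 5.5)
Your arguments for (ii) and (iv) are the paper's. Your route to (iii) is a valid variant. Definition~\ref{def:tile decomposition}(iv) and \eqref{eq:def of delta X} give $\delta_Y<\diam Y/(2^{15}C_T^3C^6)\le\delta_X/8$ strictly, so dyadic rounding does give $\delta_Y\le\delta_X/16$. Hence $9\delta_Y<\delta_X$, so $E_m\subset\interior E_n$, and filling in holes gives $\overline{\Omega_m}\subset\Omega_n$.

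\emph{The gap is in (i).} What must be excluded is a diagonal contact: a point near which $E_n$ is the union of two opposite closed quadrants, where $\partial E_n$ fails to be a disjoint union of Jordan curves. Lemma~\ref{lem:properties of thickening}(i),(iv) rule this out within the union of thickenings of any fixed scale. Your sketch gives nothing for pieces of different scales. That two adjacent pieces both contain a neighbourhood of the common vertex says nothing about the points where $\partial\mathscr{T}_{\delta_X}(X)$ meets $\partial\mathscr{T}_{\delta_Y}(Y)$, which is exactly where a corner-only contact would sit. MacManus's lemma is a single-scale statement.

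The missing idea is a parity observation, and it is how the paper argues. Since $\mathscr{T}_\delta(S)=S^{4\delta}+[-\delta,\delta]^2$ adds one layer of $\delta$-squares to a union of $4\delta$-squares, its boundary lies on the lines $x\in(4\mathbb{Z}\pm1)\delta$ and $y\in(4\mathbb{Z}\pm1)\delta$. So every corner of $\partial\mathscr{T}_\delta(S)$ has both coordinates odd multiples of $\delta$. Write $E_n=\bigcup_\delta\mathscr{T}_\delta(S_\delta)$, with $S_\delta$ the union of the $n$-tiles having $\delta_X=\delta$. Near a diagonal contact each piece is locally empty or exactly one of the two quadrants, since no piece has a diagonal contact of its own. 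So the contact point is a convex corner of two pieces of distinct dyadic scales $\delta<\delta'$. This is impossible, because an odd multiple of $\delta'$ is an even multiple of $\delta$.

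\emph{In (v) you leave open the step that needs an argument, but the cusp-like difficulty does not arise.} Order so that $\delta_{X_1}\le\delta_{X_2}$ and split at the finer scale.

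If $|z_1-z_2|<\delta_{X_1}$, take $S_i\in\Sigma_{\delta_{X_i}}$ with $z_i\in S_i\subset\mathscr{T}_{\delta_{X_i}}(X_i)$. Because $\delta_{X_1}$ divides $\delta_{X_2}$, there is $S'\in\Sigma_{\delta_{X_1}}$ with $z_2\in S'\subset S_2$, and $S_1\cap S'\neq\emptyset$. Either $[z_1,z_2]\subset S_1\cup S'$, or the squares share only a corner $p$ and $[z_1,p]\cup[p,z_2]\subset E_n$. In both cases the path has diameter $|z_1-z_2|$, and neither item (iv) nor intersecting tiles is needed.

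If $|z_1-z_2|\ge\delta_{X_1}$, your tree path works.
\begin{itemize}
\item When $X_1\cap X_2\neq\emptyset$, Definition~\ref{def:tile decomposition}(ii) gives $\delta_{X_2}\le 2C\delta_{X_1}\le 2C|z_1-z_2|$. This controls both segments and yields $2^6C_TC$.
\item When $X_1\cap X_2=\emptyset$, one has $|w_1-w_2|\ge C^{-1}\diam X_2\ge 36\delta_{X_2}$. Hence $|w_1-w_2|\le 2|z_1-z_2|$ and the segments are absorbed.
\end{itemize}
Your split at the coarser scale can also be completed with threshold $(2C)^{-1}\max\{\delta_{X_1},\delta_{X_2}\}$. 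Item (iv) then forces $X_1\cap X_2\neq\emptyset$, and hence $|z_1-z_2|<\delta_{X_1}$, in your small case. Splitting at $\delta_{X_1}$ is what gives the stated constant directly.
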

\begin{proof}
Fix $n\in\N$ and let
\[D_n=\{\delta_X:X\in\mathbf X^n\},\quad S_\delta=\bigcup_{\substack{X\in\mathbf X^n\\
\delta_X=\delta}}X.\]
By Lemma \ref{lem:properties of thickening}(iv), $E_n=\bigcup_{\delta\in D_n}\mathscr T_\delta(S_\delta).$ For each $\delta\in D_n$, Lemma \ref{lem:properties of thickening}(i) implies that $\partial\mathscr T_\delta(S_\delta)$ consists of finitely many mutually disjoint Jordan curves. Moreover, by the definition of square thickening, every corner of $\partial\mathscr T_\delta(S_\delta)$ has both coordinates equal to odd multiples of $\delta$. Since all the numbers $\delta_X$, $X\in\mathbf X^n$, are integer powers of 2, a corner of $\partial\mathscr T_\delta(S_\delta)$ cannot belong to $\partial\mathscr T_{\delta'}(S_{\delta'})$ when $\delta\neq\delta'$. Consequently, the union $E_n$ has no diagonal contacts between grid squares. Since $E_n$ is a finite union of squares of the $\delta_n$-grid, claim (i) follows.

To prove (ii), note that Lemma \ref{lem:properties of thickening}(ii) gives
\[T=\bigcup_{X\in\mathbf X^n}X\subset\operatorname{int}E_n.\]
Since each tile $X$ is connected, so is its square thickening $\mathscr T_{\delta_X}(X)$. As the tiles cover the connected tree $T$, the set $E_n$ is connected. Therefore, the outer boundary component of $E_n$ is a Jordan curve enclosing $\operatorname{int}E_n$. The Jordan domain $\Omega_n$ bounded by this curve has the required properties.

To show (iii), let $m>n$. By Lemma \ref{lem:properties of thickening}(v) $E_m \subset E_n$ which gives $\Omega_{m}\subset \Omega_{n}$. Let now $x\in \partial \Omega_{n}$
and $y\in \partial \Omega_{m}$. By (i), $x\in\mathscr{T}_{\d_X}(X)$ and $y\in \mathscr{T}_{\d_Y}(Y)$ where $X$ is an $n$-tile and $Y$ is an $m$-tile. By Lemma \ref{lem:properties of thickening}(ii) there exists $w_Y\in Y$ such that $|w_Y-y|<8\d_Y$. Moreover, by Lemma \ref{lem:properties of BM}(v), $Y$ is subset to a unique $n$-tile $Y'$. Since $x\in \partial \Omega_n$ it follows from (i) that $x\notin\interior(\mathscr{T}_{\d_{Y'}}(Y'))$. In conjunction with \eqref{eq:def of delta X} and Lemma \ref{lem:properties of thickening}(ii) we have
$$|x-y|\geq |x-w_Y|-|w_Y-y|\geq \d_{Y'}-8\d_Y>\d_{Y'}-16C^{-1}\d_{Y'}>0.$$
Thus, $\partial \Omega_n\cap \partial\Omega_m=\emptyset$, and hence $\overline{\Omega_m}\subset\Omega_n$.

To show (iv), let $X,Y$ be disjoint $n$-tiles. Let also $x\in\mathscr{T}_{\d_X}(X),y\in \mathscr{T}_{\d_Y}(Y)$. Without loss of generality suppose that $\d_X\geq \d_Y$. Let $w_x\in X, w_y\in Y$ be the points given by Lemma \ref{lem:properties of thickening}(ii) such that $|w_x-x|<8\d_X$ and $|w_y-y|<8\d_Y$. Since $X\cap Y=\emptyset$ it follows that $w_x\neq w_y$. Then, by \eqref{eq:def of delta X} and Definition \ref{def:tile decomposition}(iii) we have
\begin{align*}
|x-y|\geq |w_x-w_y|-|w_x-x|-|w_y-y|\geq \dist(X,Y)-8\d_X-8\d_Y&\geq \frac{\diam X}{C}-16\d_X\\ &>\frac{\d_X}{C}. 
\end{align*}

To show (v), let $x_1,x_2\in E_n$. For each $i\in \{1,2\}$, there exists $X_i\in{\bf X}^n$ such that $x_i\in \mathscr{T}_{\d_{X_i}}(X_i)$ and we may assume that $\diam{X_1}\leq \diam{X_2}$ which implies $\d_{X_1}\leq \d_{X_2}$. By Lemma \ref{lem:properties of thickening}(vi) there exists $x_i'\in X_i$ such that $[x_i,x_i']\subset \mathscr{T}_{\d_{X_i}}(X_i)$ and $|x_i-x_i'|< 9\d_{X_i}$. We consider two cases. 

\emph{Case 1.} Suppose that $|x_1-x_2|<\d_{X_1}$. For $i=1,2$, let $S_{i}\in\Sigma_{\d_{X_{i}}}$ with $x_i\in S_{X_i}\subset \mathscr{T}_{\d_{X_i}}(X_i)$. By \eqref{eq:def of delta X}, $\d_{X_1}$ divides $\d_{X_2}$ and therefore there exists $S'\in \Sigma_{\d_{X_1}}$ such that $x_2\in S'\subset S_{2}$. Since $|x_1-x_2|<\d_{X_1}$, we have that $S_{1}\cap S'\neq\emptyset$. If $S',S_{1}$ share a side, then $S'\cup S_{1}$ is a convex set and $[x_1,x_2]\subset E_n$. If $S',S_{1}$ do not share a side, then they share a corner $w$ and $\g=[x,w]\cup[w,y]\subset E_n$ and $\diam \g=|x_1-x_2|$.

\emph{Case 2.} Suppose that $|x_1-x_2| \geq \d_{X_{1}}$. We consider two subcases.

\emph{Case 2.1.} Suppose that $X_1\cap X_2 \neq \emptyset$. 
Set $\sigma:=[x_1,x_1']\cup T[x_1',x_2']\cup [x_2',x_2]$. By (i), $\sigma\subset E_n$ and by the bounded turning property of $T$ and Definition \ref{def:tile decomposition}(ii), as in Case 2.1,
\begin{align*}
\diam(\sigma)&\leq 9\d_{X_{1}}+C_T|x_1'-x_2'|+9\d_{X_{2}}\\ &\leq 9\d_{X_1}+C_T(|x_1-x_2|+9\d_{X_1}+18C\d_{X_1})+18C\d_{X_1}\\ &\leq 2^6CC_T|x_1-x_2|.
\end{align*}

\emph{Case 2.2}. Suppose that $X_1\cap X_2 = \emptyset$. Then, by Definition \ref{def:tile decomposition}(iii), 
\[ |x_1'-x_2'| \geq C^{-1}\diam{X_2} \geq 36\d_{X_2}\]
and, setting $\sigma:=[x_1,x_1']\cup T[x_1',x_2']\cup [x_2',x_2]$,
\[ |x_1-x_2| \geq |x_1'-x_2'| - 9(\d_{X_1} + \d_{X_2}) \geq \frac12|x_1'-x_2'| \]
while
\[ \diam{\sigma} \leq 9\d_{X_{1}}+C_T|x_1'-x_2'|+9\d_{X_{2}}\leq 2C_T|x_1'-x_2'|. \qedhere\]
\end{proof}

Here and for the rest of this section, for $n\in\N$, we write $\G_n = \partial\Omega_n$.

\subsection{John arcs}
Here we take the first step towards the construction of arcs $\zeta_w$. Recall the definition of $\level(x)$ form \ref{eq:level} and the definition of $\d_X$ from \eqref{eq:def of delta X}. Recall also that for each $x\in {\bf W}^*$ and $k\geq \level(x)$, by Lemma \ref{lem:properties of BM} there exist two unique distinct $k$-tiles $X_{x,k}^1,X_{x,k}^2$ that contain $x$. 

\begin{lem}\label{lem:arzela curves} 
There exists a family of $C_T$-John arcs
\[ \{\g_{x,k}^i : x\in {\bf W}^*, k\geq \level(x), \, i\in\{1,2\}\}\]
so that following are true for all $x\in{\bf W}^*$, $l\geq k\geq \level(x)$, and $i\in\{1,2\}$.
\begin{enumerate}[(i)]
\item One endpoint of $\g_{x,k}^i$ is $x$, the other is on $\G_k$, and the rest is in $\Omega_k \setminus T$.
\item The arc $\g_{x,l}^i$ is the closure of the component of $\g_{x,k}^i\cap\Omega_{l}$ that contains $x$.
\item We have $\ell(\g_{x,k}^i)\leq 16CC_T\min\{\d_{X_{1,x}^{k}},\d_{X_{2,x}^{k}}\}$. In particular, $$\g_{x,k}^i\subset B(x,(2C)^{-1}r_{x,k+1})\subset B(x,r_{x,k})$$
where
\[ r_{x,k}=2^{14}C^4C^2_{T}\min\{\d_{X_{x,k}^{1}},\d_{X_{x,k}^{2}}\}. \]
\item[(iv)] We have
\begin{align*}
\overline{B}(x,r_{x,k})\cap \G_k&\subset \mathscr{T}_{\d_{X_{x,k}^1}}(X_{x,k}^1)\cup \mathscr{T}_{\d_{X_{x,k}^2}}(X_{x,k}^2).
\end{align*}
\item[(v)] Let $0<r\leq r_{x,k}$ and $p_{r},q_{r}\in\partial B(x,r)\cap T$ such that $x\in T(p_{r},q_{r})\subset B(x,r)$. Denote by $U_{x,r}^1,U_{x,r}^2$ the connected components of $B(x,r)\setminus T[p_{r},q_{r}]$. The connected component of $B(x,r)\cap\g_{x,k}^i$ that contains $x$ lies in $U_{x,r}^i\cup\{x\}$. In particular, $\g_{x,k}^i\subset U_{x,r_{x,k}}^i\cup\{x\}$ and $\g_{x,k}^1\cap\g_{x,k}^2=\{x\}$. 
\item[(vi)] If $B_{x}^1,B_{x}^2$ are the components of $T\setminus\{x\}$, then there exist Jordan domains $O_1,O_2 \subset \Omega_k$ such that $B_{x}^i\subset O_i$, 
\[ \g_{x,k}^1\cup \g_{x,k}^2\cup\G_{k}=\partial O_1\cup \partial O_2, \quad\text{and}\quad\partial O_1\cap \partial O_2=\g_{x,k}^1\cup \g_{x,k}^2.\]
\item[(vii)] If $y\in {\bf W}^*\setminus\{x\}$, $m\geq \level(y)$, $j\in\{1,2\}$, and $t=\max\{m+1,k+1\}$ then 
\begin{equation*}
\dist(\g_{x,k}^i,\g_{y,m}^j) \geq(2C)\max\{ \d_{X_{x,t}^{1}},\d_{X_{x,t}^2}, \d_{X_{y,t}^1},\d_{X_{y,t}^2}\}
\end{equation*}
\item[(viii)] If $i'\in\{1,2\}\setminus\{i\}$ and $\omega_{x,k}^{i'}$ is the subarc of $\g_{x,k}^{i'}$ in $\overline{\Omega_{k}}\setminus\Omega_{k+1}$ with one endpoint in $\G_{k}$, then
\begin{equation*}
\dist(\g_{x,k}^i,\omega_{x,k}^{i'})\geq (2C)^{-1}\max\{\d_{X_{x,k+1}^{1}},\d_{X_{x,k+1}^{2}}\}. 
\end{equation*}
\end{enumerate}
\end{lem}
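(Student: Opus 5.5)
The plan is to build the arcs first at the level $k_0=\level(x)$ and then obtain every later level by truncation, as (ii) requires. Fix $x\in{\bf W}^*$ and write $k_0=\level(x)$. By Lemma \ref{lem: Nakki vaisalla}, $\overline{\R^2}\setminus T$ is a $C_T$-John domain. The idea is to join $x$ to $\infty$ by John arcs in $\overline{\R^2}\setminus T$, approaching from each of the two sides of $T$ at $x$. To choose the sides, apply Lemma \ref{lem:Jordan domain prerequisite} in the form of local two-sidedness: take a small ball $B(x,r)$ with $r\le r_{x,k_0}$ and points $p_r,q_r$ as in (v). Pick a point $z^i$ in the component $U^i_{x,r}$, very close to $x$. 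John arcs from $z^i$ towards a far point, with $z^i\to x$, have uniformly bounded length on compact sets. By Arzel\`a--Ascoli (hence the label of the lemma) they converge to a $C_T$-John arc $\g^i$ issuing from $x$, with $\g^i\setminus\{x\}\subset\R^2\setminus T$.

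The John condition gives $\dist(z,T)\gtrsim \ell(\g^i[x,z])$ along the arc. Hence $\g^i$ leaves $\Omega_{k_0}$ after length comparable to $\dist$ to $\Gamma_{k_0}$. By Lemma \ref{lem:def of thickening of Tm,eta}(i),(ii) and Lemma \ref{lem:properties of thickening}(ii), that distance is $<8\max\delta$ over the thickened tiles near $x$, and it is comparable to $\min\{\d_{X^1_{x,k}},\d_{X^2_{x,k}}\}$ by Definition \ref{def:tile decomposition}(ii). We then set $\g^i_{x,k}$ to be the closure of the component of $\g^i\cap\Omega_k$ containing $x$, for every $k\ge k_0$. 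This gives (i) and (ii) directly, and gives the length bound (iii) after comparing constants. The large factor $2^{15}C_T^3C^6$ in \eqref{eq:def of delta X} is what makes $r_{x,k}$ dominate the arc length.

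Next come (iv), (v) and (vi). Property (iv) holds because $\overline B(x,r_{x,k})$ meets only tiles adjacent to $x$: non-adjacent tiles are at distance $\ge C^{-1}\diam$ by Definition \ref{def:tile decomposition}(iii), while $r_{x,k}$ is much smaller than the diameters. Lemma \ref{lem:def of thickening of Tm,eta}(iv) then separates the thickenings. For (v), the component of $\g^i\cap B(x,r)$ containing $x$ cannot cross $T[p_r,q_r]$, so it stays in a single side. This side is $U^i$ by the choice of the approximating points $z^i$. Consistency of sides across all $r\le r_{x,k}$ follows because the sides are nested, which can be seen via Lemma \ref{lem:Jordan domain prerequisite}. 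For (vi), the set $\g^1_{x,k}\cup\g^2_{x,k}$ is a crosscut of the Jordan domain $\Omega_k$ through $x$. It splits $\Omega_k$ into two Jordan domains $O_1,O_2$. The two branches of $T\setminus\{x\}$ lie in different pieces by (v) and connectedness.

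The separations (vii) and (viii) are the expected main obstacle. For (vii) with $y\ne x$, I plan to use the John condition. Points of $\g_{x,k}$ lie within $16CC_T\d$ of $x$, and similarly for $y$. The distance $|x-y|$ is controlled by Lemma \ref{lem:properties of BM}(vii) and Definition \ref{def:tile decomposition}(iii) at level $t$: $x$ and $y$ either lie in disjoint $t$-tiles, or are distinct boundary points of one tile, giving $|x-y|\ge \diam/C^2$. Combining this with the lengths from (iii) and the smallness of $\d$ yields the bound. The arcs of different points must not approach each other, and with the length estimate this reduces to $|x-y|\gg$ arc lengths. The difficulty is that arcs for nearby vertices at coarse levels $k$ are long relative to level-$t$ quantities. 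I would try to handle this by the choice $t=\max\{m,k\}+1$ and the nesting (ii). For (viii), the piece $\omega^{i'}_{x,k}$ lies outside $\Omega_{k+1}$, hence is at distance $\gtrsim\d$ at level $k+1$ from $T$. The John condition then keeps it at that distance from the opposite-side arc $\g^i_{x,k}$ near $x$: any path between them near $x$ must cross $T$ by (v), and far from $x$ the two arcs are separated by their John distance to $T$.
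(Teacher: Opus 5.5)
Your construction (limits of $C_T$-John arcs started from points on the two local sides, via Arzel\`a--Ascoli and lower semicontinuity of length, then truncation by the domains $\Omega_k$) and your arguments for (i)--(vi) and (viii) follow the paper's route. In particular, your argument for (viii) is exactly the paper's: a segment joining the two arcs inside the convex ball $B(x,r_{x,k+1})$ must cross $T[p,q]$, while its endpoint on $\omega^{i'}_{x,k}$ lies outside $\Omega_{k+1}$.

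The genuine gap is (vii). Your reduction to ``$|x-y|\gg$ arc lengths'' fails in exactly the case you flag. Suppose $m>k$, so $t=m+1$. Then $|x-y|$ is only bounded below by diameters of level-$t$ tiles. On the other hand, $\ell(\gamma^i_{x,k})\ge\dist(x,\Gamma_k)>\min\{\delta_{X^1_{x,k}},\delta_{X^2_{x,k}}\}$. For instance, if $y\in\partial X^1_{x,m}$, then by Definition \ref{def:tile decomposition}(iv)
\[ |x-y|\le\diam X^1_{x,m}\le 16^{k-m}\diam X^1_{x,k}, \]
which is far below the length of the coarse arc once $m-k$ is large. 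So the triangle inequality with the two lengths gives nothing. Neither the choice of $t$ (which is part of the statement) nor the nesting (ii) repairs this. Property (ii) only says that $\gamma^i_{x,m}$ is a short initial piece of $\gamma^i_{x,k}$; it gives no control over where the long remainder $\gamma^i_{x,k}\setminus\gamma^i_{x,m}$ goes.

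The missing idea is to use the John inequality itself, which you already wrote down in your (iii) paragraph, rather than only the length bound it yields. Assume without loss of generality that $m\ge k$. For every $z_1\in\gamma^i_{x,k}$,
\[ |x-z_1|\le\ell(\gamma^i_{x,k}[x,z_1])\le C_T\dist(z_1,T). \]
Here the minimum in \eqref{eq:John} is attained on the $x$-side because the other endpoint of the limiting arc is far from $\overline{\Omega_1}$; this is the same comparison you need for (iii). If $z_2\in\gamma^j_{y,m}$, then $\dist(z_1,T)\le|z_1-z_2|+|z_2-y|$, hence
\[ |x-y|\le|x-z_1|+|z_1-z_2|+|z_2-y|\le(C_T+1)\bigl(|z_1-z_2|+|z_2-y|\bigr). \]
Now only the length of the \emph{finer} arc enters. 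By (iii) and Definition \ref{def:tile decomposition}(iv), $|z_2-y|\le 32C^2C_T$ times the largest level-$t$ scale $\delta$ at $x,y$. On the other hand, $|x-y|\ge C^{-3}$ times the largest diameter of the four level-$t$ tiles at $x$ and $y$, by Definition \ref{def:tile decomposition}(ii),(iii) and Lemma \ref{lem:properties of BM}(vii). With the factor $2^{15}C_T^3C^6$ in \eqref{eq:def of delta X}, this yields the bound in (vii). In words: the long coarse arc can approach the fine arc of $y$ only where it is close to $T$, hence close to $x$, and $x$ is far from $y$ at scale $t$.

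A smaller point concerns (iii). You must also check that the tile $Z$ with $q^i_{x,k}\in\partial\mathscr{T}_{\delta_Z}(Z)$ is one of $X^1_{x,k},X^2_{x,k}$. Otherwise $8C_T\delta_Z$ need not be comparable to $\min\{\delta_{X^1_{x,k}},\delta_{X^2_{x,k}}\}$. This follows from
\[ (8C_T+8)\delta_Z>\dist(x,Z)\ge C^{-1}\diam Z \quad\text{for } x\notin Z, \]
which contradicts \eqref{eq:def of delta X}. The lower bound holds because such a $Z$ is disjoint from one of the two tiles at $x$, the same separation you use for (iv).
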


\begin{proof}[{Proof of Lemma \ref{lem:arzela curves}}]
For each $x\in{\bf W}^*$ and $m\geq \level(x)$, write 
\[ R_{x,m}=\frac14\min\{\diam X_{x,m}^1,\diam X_{x,m}^2\}.\] 
By Lemma \ref{lem:properties of BM}(viii), $R_{x,m}\to0$ as $m\to\infty$. 

Fix $x\in{\bf W}^*$, let $n=\level(x)$, and let $B_{x}^1,B_{x}^2$ be the components of $T\setminus \{x\}$. For $i=1,2$
we can find a point $p_{x}^i \in B_{x}^i\cap(\R^2\setminus\overline{B}(x,R_{x,n}))$. Let also $p_{x,m}^i$ be the first point when the arc $T[x,p_{x}^i]$ meets $\partial B(x,R_{x,m})$ as we travel from $x$ to $p_{x}^i$. Then the set $B(x,R_{x,m})\setminus T[p_{x,m}^1,p_{x,m}^2]$ has two connected components $W_{x,m}^1,W_{x,m}^2$. After relabeling, we may assume that $W_{x,m+1}^1\subset W_{x,m}^1$ and $W_{x,m+1}^2\subset W_{x,m}^2$ for all $m\geq n$. Note that $\bigcap_{m\geq n} \overline{W_{x,m}^i}=\{x\}$ for $i\in\{1,2\}$.

Fix, for $m\geq n$ and $i\in\{1,2\}$, points $y_{x,m}^i\in (W_{x,m}^i\setminus T)$. Fix $u\in\G_1$ and set $B:=B(u,25)$. It follows from Lemma \ref{lem:properties of thickening}(ii) and the fact that $\diam T=1$ that $T\subset \overline{B}$ and $y_{x,m}^i\in \overline{B}$ for all $m\geq n$ and $i\in\{1,2\}$. Finally, fix $w_0\in\R^2\setminus\overline{B}$ such that $\dist(w_0,\overline{B})>50C_{T}$. 

By Lemma \ref{lem: Nakki vaisalla}, there exist families of arcs $\{\sigma_{x,m}^i : m\geq n, \, i=1,2\}$ such that each $\sigma_{x,m}^i$ has endpoints $w_0$ and $y_{x,m}^i$ and satisfy \eqref{eq:John} with constant $C_{T}$. Let $\tilde{y}_{x,m}^i$ be the first time that the arc $\sigma_{x,m}^i$ meets $\partial \overline{B}$ as we travel from $y_{x,m}^i$ to $w_0$. Then for all $z\in\sigma_{x,m}^i[y_{x,m}^i,\tilde{y}_{x,m}^i]\subset \overline{B}$ we have that $\ell(\sigma_{x,m}^i[y_{x,m}^i,z])\leq \ell(\sigma_{x,m}^i[w_0,z])$ since otherwise 
$$\dist(w_0,\overline{B})\leq |w_0-z|\leq\ell(\sigma_{x,m}^i[w_0,z])\leq C_{T}\dist(z,T)\leq 50C_{T}$$ 
which is contradiction. Therefore, for each $i\in\{1,2\}$ we have 
$$\ell(\sigma_{x,m}^i[y_{x,m}^i,\tilde{y}_{x,m}^i])\leq C_{T}\dist(\tilde{y}_{x,m}^i,T)\leq 50C_{T}$$ 
which implies that the arcs $\{\sigma_{x,m}^i[y_{x,m}^i,\tilde{y}_{x,m}^i]\}_{m\geq n}$ are Lipschitz arcs with uniform bound on their lengths. By the Arzela-Ascoli Theorem, for $i=1,2$, there exist limit curves $\sigma_{x}^i$ connecting $x$ to a point $y_x^i\in\partial B$. 

Let $\g_{x}^i\subset\sigma_x^i$ be an arc that has $x$ and $y_{x}^i$ as endpoints. We claim that $\g_{x}^i$ is $C_{T}$-John arc. To see that, fix $z\in \g_{x}^i$. For each $m\geq n$, there exist $z_m\in\sigma_{x,m}^i[y_{x,m}^i,\tilde{y}_{x,m}^i]$ such that $z_m \to z$. As $x=\lim y_{x,m}^i$ and $y_x^i=\lim  \tilde{y}_{x,m}^i$, by the semicontinuity of length \cite{golab}
\begin{align*}
\dist(z,T) &= \lim_{m\to\infty}\dist(z_m,T)\\ 
&\geq C_{T}^{-1}\liminf_{m\to\infty}\min\{\ell(\sigma_{x,m}^i[z_m,y_{x,m}^i]),\ell(\sigma_{x,m}^i[z_m,\tilde{y}_{x,m}^i])\} \\
&\geq C_{T}^{-1}\min\{\ell(\g_{x}^i[x,z]),\ell(\g_{x}^i[y_x^i,z])\}.
\end{align*}

For each $x\in {\bf W}^*$, each $k\geq \level(x)$, and each $i\in\{1,2\}$, define $\g_{x,k}^i$ to be the closure of the connected component of $\gamma_{x}^i \cap \Omega_k$ that contains $x$. Let also $q_{x,k}^i$ be the endpoint of $\g_{x,k}^i$ on $\G_k$. Claim (ii) is immediate. 

To show (i), fix $x\in {\bf W}^*$, $k\geq \level(x)$, $i\in \{1,2\}$, and $z\in \g_x^i(x,q_{x,k}^i)$. For each $m\in\N$, there exists $w_m \in \sigma_{x,m}^i$ and $z_m \in \sigma_{x,m}^i[y_{x,m}^i,w_m]$, such that $z_m \to z$ and $w_m \to q_{x,k}^i$ as $m\to \infty$. By the $C_{T}$-John property and semicontinuity of length,  
\begin{align*}
0 < \min\{|z - x|, |z - q_{x,k}^i|\} &\leq \min\{\ell(\g_x^i[z, x]), \ell(\g_x^i[z, q_{x,k}^{i}])\} \\
&\leq \liminf_{m \to \infty} \min\{\ell([y_{m,i}, z_m]), \ell([w_{m,i}, z_m])\} \\
&\leq C_{T} \lim_{m \to \infty} \dist(z_m, T)\\ 
&= C_{T} \dist(z, T)
\end{align*}
which shows that $z\not\in T$. This completes the proof of (i).

To show (iii), fix $x\in {\bf W}^*$, $k\geq \level(x)$, and $i\in \{1,2\}$. By Lemma \ref{lem:properties of thickening}(ii) and the fact that $\diam T=1$ we get $\diam\G_1\leq 2$ and hence $\dist(\partial B,\G_1)>23$. By Lemma \ref{lem:def of thickening of Tm,eta}(i),(ii), $q_{x,k}^i\in\partial \mathscr{T}_{\d_Z}(Z)$ for some $Z\in{\bf X}^k$. Moreover, $\ell(\g_{x}^i[x,q_{x,k}^i]) \leq \ell(\g_x^i[q_{x,k}^i,y_x^i])$ since otherwise Lemma \ref{lem:properties of thickening}(ii) and Lemma \ref{lem:def of thickening of Tm,eta} give
\begin{align*}
23<\dist (\partial B,\G_1)\leq \dist (\partial B,\overline{\Omega_k})\leq |y_{x}^i-q_{x,k}^i|&\leq \ell(\g_x^i[q_{x,k}^i,y_x^i])\\ &\leq C_{T}\dist(q_{x,k}^i,T)\\ &\leq 8C_{T}\delta_Z
\end{align*}
which contradicts \eqref{eq:def of delta X}. Therefore, by the John property of $\g_x^i$ and Lemma \ref{lem:properties of thickening}(ii)
\begin{align*} 
\ell(\g_{x,k}^i) = \ell(\g_x^i[x,q_{x,k}^i]) = \min \{\ell(\g_x^i[x,q_{x,k}^i]), \ell(\g_x^i[y_{x}^i,q_{x,k}^i])\} &\leq C_T\dist(q_{x,k}^i,\t)\\
&\leq 8C_T\d_Z.
\end{align*}
Last, we claim that $Z\in\{X_{x,k}^1,X_{x,k}^2\}$. Assume, for a contradiction, that $Z\notin\{X_{x,k}^1,X_{x,k}^2\}$. By Lemma \ref{lem:properties of thickening}(ii) there exists $z\in Z$ such that $|q_{x,k}^i-z|\leq 8\d_Z$. If $Z\cap (X_{x,k}^1 \cup X_{x,k}^2)=\emptyset$, then by Definition \ref{def:tile decomposition}(iii) we have
\begin{align*}
8C_T\d_Z\geq \ell(\g_{x,k}^i)\geq |x-q_{x,k}^i|\geq |x-z|-|z-q_{x,k}^i|&\geq \dist (X_{x,k}^1,Z)-8\d_Z\\ 
&\geq C^{-1}\diam Z-8\d_Z
\end{align*}
which contradicts \eqref{eq:def of delta X}. If $Z\cap X_{x,k}^j\neq\emptyset $ for some $j\in \{1,2\}$, then Lemma \ref{lem:properties of BM}(ii) asserts the existence of $w\in T(x,z]\cap \partial X_{x,k}^j$. Definition \ref{def:tile decomposition}(ii), Lemma \ref{lem:properties of BM}(vii), and the bounded turning property of $T$ give
\begin{align*}
8C_T\d_Z\ge\ell(\g_{x,k}^i)\geq |x-q_{x,k}^i|\geq |x-z|-|z-q_{x,k}^i|&\geq \frac{\diam T[x,z]}{C_T}-8\d_Z \\ 
&\geq\frac{|x-w|}{C_T}-8\d_Z \\ 
&\geq\frac{\diam X_{x,k}^j}{C_TC^2}-8\d_Z\\ 
&\geq \frac{\diam Z}{C_TC^3}-8\d_Z,
\end{align*}
which contradicts \eqref{eq:def of delta X}. The proof the claim is complete. Last, by \eqref{eq:def of delta X} and Definition \ref{def:tile decomposition}(ii) we have that $\d_{X_{x,k}^{i}}< 2C\d_{X_{x,k}^j}$ for $i,j\in \{1,2\}$ distinct. Thus, claim (iii) follows.

To show (iv), fix $x\in{\bf W}^*$ and $k\geq\level(x)$. We claim that $B(x,r_{x,k})\cap \mathscr{T}_{\d_Z}(Z)=\emptyset$ for all $Z\in{\bf X}^{k}\setminus\{X_{x,k}^1,X_{x,k}^2\}$. Assuming the claim, (iv) follows from Lemma \ref{lem:def of thickening of Tm,eta}(i),(ii). To show the claim, fix $Z\in{\bf X}^{k}\setminus\{X_{x,k}^1,X_{x,k}^2\}$ and $z\in \mathscr{T}_{\d_Z}(Z)$. By Lemma \ref{lem:properties of thickening}(ii) there exists $z'\in Z$ such that $|z-z'|<8\d_Z$. Since $Z\neq X_{x,k}^1, X_{x,k}^2$, Lemma \ref{lem:properties of BM}(ii) implies that $T[z',x)\cap\partial X_{x,k}^j=\{v\}$ for some $j\in \{1,2\}$. Fix $y\in \overline{B}(x,r_{x,k})$. If $Z\cap X_{x,k}^j\neq\emptyset$, then by the bounded turning property of $T$, \eqref{eq:def of delta X}, Definition \ref{def:tile decomposition}(ii), and Lemma \ref{lem:properties of BM}(vii) we have
\begin{align*}
|z-y|\geq |z'-x|-|z-z'|-|x-y|&\geq\frac{\diam T[z',x]}{C_T}-8\d_Z-r_{x,k}\\ 
&\geq \frac{|x-v|}{C_T}-16C\d_{X_{x,k}^j}-r_{x,k}\\ 
&\geq \frac{\diam X_{x,k}^j}{C_TC^2}-16C\d_{X_{x,k}^j}-r_{x,k}>0.
\end{align*}
Suppose now that $Z\cap X_{x,k}^j=\emptyset$. Without loss of generality, we may assume that $\d_{X_{x,k}^j}\geq \d_Z$ Then by \eqref{eq:def of delta X} and Definition \ref{def:tile decomposition}(iii) we have
\begin{align*}
|z-y|\geq |z'-x|-|z-z'|-|x-y| &\geq \dist (Z,X_{x,k}^j)-8\d_Z-r_{x,k}\\ 
&\geq C^{-1}\diam X_{x,k}^j-16C\d_{X_{x,k}^j}-r_{x,k}>0.
\end{align*}

To show (v), fix $x\in {\bf W}^*$, $k\geq \level(x)$ and $i\in \{1,2\}$. For each $r\leq r_{x,k}$, Lemma \ref{lem:properties of BM}(vi) and \eqref{eq:def of delta X} give points $p_{r},q_{r}\in\partial B(x,r)\cap T$ such that $x\in T(p_r,q_r)\subset B(x,r)$. Denote by $U_{x,r}^1,U_{x,r}^2$ the connected components of $B(x,r)\setminus T[p_r,q_r]$. By (i), $\g_{x,k}^i\cap T=\{x\}$. Thus, if $\xi_{x,r}^i$ denotes the connected component of $B(x,r)\cap \g_{x,k}^i$ that contains $x$, then we may assume that $\xi_{x,r}^i\subset U_{x,r}^i\cup\{x\}$. Last, by design, $\xi_{x,r}^1\setminus\{x\}$ and $\xi_{x,r}^2\setminus\{x\}$ are located in distinct connected components of $B(x,r)\setminus T[p_r,q_r]$ which shows the first part of claim (v). By (iii), \eqref{eq:def of delta X}, and the definition of $r_{x,k}$ now we have that $\g_{x,k}^i\subset B(x,r_{x,k})$. Therefore, by (i) we may assume again that $\g_{x,k}^i\subset U_{x,r_k}^i\cup\{x\}$ which gives the second part of claim (v). Finally, since $U_{x,r_k}^1\cap U_{x,r_k}^2=\emptyset$ we have $\g_{x,k}^1\cap\g_{x,k}^2=\{x\}$.

To show (vi), fix $x\in {\bf W}^*$ and $k\geq \level(x)$. By (i) and (v), $\g_{x,k}^1\cup \g_{x,k}^2$ is an arc in $\overline{\Omega_k}$ that intersects $T$ at exactly $x$, has both endpoints on $\G_k$ and the rest of the arc (minus the endpoints and $x$) is in $\Omega_k \setminus T$. Therefore, $\Omega_k\setminus (\g_{x,k}^1\cup \g_{x,k}^2)$ contains exactly two components $O_1,O_2$ which are Jordan domains. Clearly $B_x^1\cup B_x^2 \subset O_1\cup O_2$. Assume for a contradiction that $B_x^1\cup B_x^2\subset O_1$. Let $r=\frac12\min_{Z\in{\bf X}^k}\d_Z$ and $q_1,q_2\in \partial B(x,r)\cap T$ such that $x\in T(q_1,q_2)\subset B(x,r)$. By Lemma \ref{lem:properties of thickening}(ii) and Lemma \ref{lem:def of thickening of Tm,eta}(ii), $B(x,r)\subset \Omega_k$. Thus, $q_{x,k}^1,q_{x,k}^2\notin B(x,r)$. For each $i\in\{1,2\}$, let $t_i\in\partial B(x,r)\cap\g_{x,k}^i$ be the first point of $\g_{x,k}^i$ that meets $\partial B(x,r)$ as we travel from $x$ to $q_{x,k}^i$. By (v), the arcs $\g_{x,k}^1(x,t_1),\g_{x,k}^2(x,t_2)$ are in different connected components of $B(x,r)\setminus T[q_1,q_2]$. Denote by $V_1,V_2$ the connected components of $B(x,r)\setminus (\g_{x,k}^1[x,t_1)\cup\g_{x,k}^2[x,t_2))$ and relabeling we may assume that $q_i \in \partial V_i$ and $V_i\subset O_i$. Fix $u_1\in \g_{x,k}^1(x,t_1)$ and $u_2\in \g_{x,k}^2(x,t_2)$. By Lemma \ref{lem:Jordan domain prerequisite} there exists arc $\sigma$ with endpoints $u_1,u_2$ and other than its endpoints, it is contained in $V_2$. But then, $T[q_2,x)\cap (\sigma_1\setminus\{u_1,u_2\})\neq\emptyset$ which contradicts $O_2\cap T=\emptyset$. Thus, relabeling, we may assume that $B_x^1\subset O_1$ and $B_x^2\subset O_2$, and (i) gives that $O_i\cap T=B_x^i$ for $i=1,2$.

To show (vii), fix $x,y\in {\bf W}^*$ distinct, $k\geq \level(x)$, $l\geq\level(y)$, and $i,j\in \{1,2\}$. Without loss of generality, we may assume that $l\geq k$. By Lemma \ref{lem:properties of BM}, $x,y\in{\bf V}^{l+1}$. Since $x\neq y$, Definition \ref{def:tile decomposition}(i) and Lemma \ref{lem:properties of BM}(ii) assert that there exist $i_1,i_2\in\{1,2\}$ such that $X_{x,l+1}^{i_1}\cap X_{y,l+1}^{i_2}=\emptyset$. Without loss of generality, suppose that $X_{x,l+1}^1\cap X_{y,l+1}^1=\emptyset$ and that for some $a\in \{1,2\}$, 
$$\d_{X_{x,l+1}^a}=\max\{\d_ {X_{x,l+1}^1},\d_{X_{x,l+1}^2},\d_ {X_{y,l+1}^1},\d_{X_{y,l+1}^2}\}.$$
Fix also $z_1\in \g_{x,k}^i[x,q_{x,k}^i]$ and $z_2\in \g_{y,l}^j[y,q_{y,l}^j]$. 
By Definition \ref{def:tile decomposition}(ii),(iii) we have $|x-y|\geq \diam X_{x,l+1}^a/{C^2}$ and by (iii) and \eqref{eq:def of delta X}, $|z_2-y|\leq 16CC_T\d_{X_{x,l+1}^a}$. Last, recall that the arc $\g_{x,k}^i$ is John arc and thus 
$$|x-z_1|\leq \ell(\g_{x,k}^i[x,z_1])\leq \dist(z_1,T)\leq |z_1-z_2|+|z_2-y|.$$
It follows
\begin{align*}
|z_1-z_2|\geq |x-y|-|x-z_1|-|y-z_2|&\geq \frac{\diam X_{x,l+1}^{a}}{C^2}-|z_1-z_2|-32C_TC\d_{X_{x,l+1}^a}.
\end{align*}
We conclude that \eqref{eq:def of delta X} gives that $|z_1-z_2|\geq 2C\d_{X_x,l+1}^a$.

To show (viii), let $i'\in\{1,2\}\setminus\{i\}$ and let $p,q\in \partial B(x,r_{x,k+1})\cap T$ such that $x\in T(p,q)\subset B(x,r_{x,k+1})$. Denote by $U_{x}^1,U_{x}^2$ the connected components of $B(x,r_{x,k+1})\setminus T[p,q]$. By (iii),(v) we may assume that $\g_{x,k}^i\subset U_{x}^i\cup\{x\}$ and that $\g_{x,k}^{i'}\subset U_{x}^{i'}\cup\{x\}$. Let $z_1\in\g_{x,k}^i$ and $z_2\in\omega_{x,k}^{i'}$ where $\omega_{x,k}^{i'}$ is the subarc of $\g_{x,k}^{i'}$ in $\overline{\Omega_{k}}\setminus\Omega_{k+1}$ with one endpoint in $\G_{k}$. Since $B(x,r_{x,k+1})$ is a convex set, the line segment $[z_1,z_2]$ must intersect $T[p,q]$ at some point $u$. In conjunction with the fact that $z_2\notin \Omega_{k+1}$ it follows that there exists $p'\in [z_2,u]\cap \G_{k+1}\cap B(x,r_{x,k+1})$. By (iv), Lemma \ref{lem:def of thickening of Tm,eta}(i),(ii), and the choice of $r_{x,k+1}$ we have that $p'\in \partial \mathscr{T}_{\d_{X_{x,k+1}^{j_1}}}(X_{x,k+1}^{j_1})$ and that $u\in X_{x,k+1}^{j_2}$ for $j_1,j_2\in\{1,2\}$. By Lemma \ref{lem:def of thickening of Tm,eta}(i),(ii), $p'\notin \interior(\mathscr{T}_{\d_{X_{x,k+1}^{j_2}}}(X_{x,k+1}^{j_2}))$ and by Lemma \ref{lem:properties of thickening}(ii),
\[ |z_1-z_2|\geq |p'-u|> \d_{X_{x,k+1}^{j_2}}\geq (2C)^{-1}\max\{\d_{X_{x,k+1}^1},\d_{X_{x,k+1}^2}\}.\qedhere \]
\end{proof}

\subsection{Estimates within the thickenings of $T$} For all $n\in\N$, $x\in{\bf V}^n$ and $i\in\{1,2\}$ denote by $q_{x,n}^i$ the unique point of $\g_{x,n}^i\cap \G_n$ by Lemma \ref{lem:arzela curves}(i). Define also 
$$D_n=\{q_{x,n}^i: x\in {\bf V}^n \text{ and } i\in \{1,2\}\}.$$ 
If $x=q_{v,n}^i \in D_n$, we say that $v$ is the \textit{associated} $n$-vertex of $x$. Note that $\card D_n=2\card {\bf V}^n$ and by Definition \ref{def:tile decomposition}(i), $4\leq\card D_n< \infty$. 

\begin{rem}\label{rem: points in gamma n disjoint}
By Lemma \ref{lem:def of thickening of Tm,eta}(iii) for $n\neq m$ we have that $\G_n\cap\G_m=\emptyset$ which implies that $D_n\cap D_m=\emptyset$.    
\end{rem}   

The next lemma estimates the diameter of the subarcs of $\G_n$ between adjacent points in $D_n$. Recall the definition of tile boundary from \eqref{eq:tileboundary}.

\begin{lem}\label{lem:diameter of the sides of W.S1}
Let $n\in \N$, let $x,y\in D_n$ be adjacent, and let $v_x,v_y$ denote the associated $n$-vertices. 
\begin{enumerate}[(i)]
\item If $v_x=v_y$, then there exists a unique leaf tile $X\in {\bf X}^n$ such that $v_{x}\in\partial X$.
\item If $v_x\neq v_y$, then there exists a unique tile $X\in {\bf X}^n$ such that $v_{x},v_y\in\partial X$.   
\end{enumerate}
\end{lem}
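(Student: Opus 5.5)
The plan is to use the Jordan domains $O_1,O_2$ of Lemma \ref{lem:arzela curves}(vi) to cut $\Omega_n$ along all the John arcs $\g_{v,n}^1\cup\g_{v,n}^2$ with $v\in{\bf V}^n$. By Lemma \ref{lem:arzela curves}(v) and (vii), these arcs are pairwise disjoint for distinct vertices. For each $v$ the two arcs meet only at $v$. Each union $\g_{v,n}^1\cup\g_{v,n}^2$ is a crosscut of $\Omega_n$ that meets $T$ only at $v$. Consequently $\Omega_n\setminus\bigcup_{v\in{\bf V}^n}(\g^1_{v,n}\cup\g^2_{v,n})$ splits into finitely many Jordan domains. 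By induction on the number of crosscuts, each of these contains exactly one component of $T\setminus{\bf V}^n$: cut first along one crosscut using Lemma \ref{lem:arzela curves}(vi), then note that each further crosscut lies in one piece and separates the two sides of its vertex there. The closure of that component is an $n$-tile $X$, by the definition of ${\bf X}^n$.

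Next I identify the boundary of the piece $P_X$ containing $X$. It consists of the arcs $\g_{v,n}^{i}$ for $v\in\partial X$, with $i$ chosen so that $\g^i_{v,n}$ bounds the side containing $X$. These alternate with subarcs of $\G_n$. Here I use Lemma \ref{lem:properties of BM}(iii),(vi): each $v\in{\bf V}^n$ lies in exactly two tiles, one on each side. By Lemma \ref{lem:arzela curves}(v), both arcs at $v$ meet the boundary of each of the two pieces adjacent at $v$. So the points of $D_n$ on $\partial P_X\cap\G_n$ are exactly the $2\card(\partial X)$ points $q^i_{v,n}$ with $v\in\partial X$. Going around $\partial P_X$, each arc $\G_n\cap\partial P_X$ between consecutive such points contains no other point of $D_n$. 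In particular it joins two points of $D_n$ that are adjacent on $\G_n$.

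Conversely, let $x,y\in D_n$ be adjacent, and consider the open arc $\G_n(x,y)$, which contains no point of $D_n$. This arc avoids all the crosscuts, so it lies in the boundary of a single piece $P_X$. Both $v_x$ and $v_y$ then lie in $\partial X$.
\begin{itemize}
\item If $v_x\neq v_y$, this proves (ii). Uniqueness holds because two distinct $n$-tiles share at most one point (Lemma \ref{lem:properties of BM}(ii)), so they cannot both have $v_x,v_y$ in their boundaries.
\item If $v_x=v_y=v$, then $\partial P_X$ traverses $\g^1_{v,n}$, $\G_n[x,y]$, $\g^2_{v,n}$ consecutively. So $v$ is the only boundary vertex encountered, giving $\partial X=\{v\}$, and $X$ is a leaf tile. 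Uniqueness: the other tile containing $v$ is not a leaf tile. If it were, $T$ would be the union of two leaf tiles meeting at $v$, so ${\bf V}^n=\{v\}$ and $\card D_n=2$, contradicting $\card D_n\geq4$.
\end{itemize}

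The main obstacle is making the boundary description of the pieces rigorous. I need that the $\G_n$-arcs of $\partial P_X$ between consecutive crosscut endpoints contain no point of $D_n$ belonging to crosscuts on other pieces. This follows from disjointness of the pieces and the Jordan curve theorem. The remaining concern is the careful induction on the crosscuts.
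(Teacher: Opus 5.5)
Your proof is correct, but it takes a different, global route from the paper's.

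\textbf{The paper's route.} It works locally. For the given pair it builds only the Jordan curve $T[v_x,v_y]\cup\g^i_{v_y,n}\cup\G_n[x,y]\cup\g^j_{v_x,n}$. It then argues by contradiction, using one of two hypothetical obstructions:
\begin{itemize}
\item in case (i), an extra vertex $w\in\partial X\setminus\{v\}$;
\item in case (ii), a vertex $u\in T(v_x,v_y]$ separating $v_y$ from the two tiles at $v_x$.
\end{itemize}
Either would have a John arc entering the enclosed domain $O$. That arc cannot cross $T$, the arcs at $v_x,v_y$, or $\G_n$ before its endpoint, so it must land in $\G_n(x,y)$, contradicting adjacency.

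\textbf{Your route.} You cut $\Omega_n$ along all crosscuts $\g^1_{v,n}\cup\g^2_{v,n}$, $v\in{\bf V}^n$, at once. This gives a bijection $X\leftrightarrow P_X$ between $n$-tiles and the complementary Jordan domains, and you read both cases off $\partial P_X$.

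\textbf{Trade-off.} The cost is the iterated $\theta$-curve bookkeeping you flag:
\begin{itemize}
\item each new crosscut lies in one piece and splits it in two;
\item every crosscut is either contained in $\partial P$ or disjoint from $\overline{P}$;
\item a point of $D_n$ cannot lie in the relative interior of a $\G_n$-arc of $\partial P$, because the crosscut issuing from it would enter $P$.
\end{itemize}
The gain is a global combinatorial picture. Uniqueness in (i), which the paper leaves implicit, follows from $\card D_n\ge 4$. Lemma \ref{lem:diameter of the sides of W.S}(i) also follows at once, because the domain $O$ there lies in $P_X$.

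\textbf{One slip to fix.} Describing $\partial P_X$ by a single arc $\g^i_{v,n}$ per $v\in\partial X$ is wrong. By Lemma \ref{lem:arzela curves}(vi) (rather than (v)), the whole crosscut through $v$ separates the two branches of $T$ at $v$. So both $\g^1_{v,n}$ and $\g^2_{v,n}$ lie on $\partial P_X$, and also on the boundary of the piece of the other tile at $v$. Thus $\partial P_X$ consists of these full crosscuts alternating with arcs of $\G_n$. Your count $2\card(\partial X)$ and your treatment of case (i) already use this correct picture, so just delete that clause.
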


\begin{proof}
By Lemma \ref{lem:arzela curves}(i),(v),(vii) the set
$$T[v_x,v_y]\cup \g_{v_y,n}^i\cup\G_n[x,y]\cup \g_{v_x,n}^j$$
is a Jordan curve for $i,j\in\{1,2\}$. Let $O$ be the Jordan domain enclosed by it. By Lemma \ref{lem:arzela curves}(i), $O\cap \G_n=\emptyset$ and $\partial O \cap T \neq \emptyset$. Therefore, by Lemma \ref{lem:def of thickening of Tm,eta}, $O\subset \Omega_n$.

Suppose first that $v_x=v_y=v$. By Lemma \ref{lem:arzela curves}(vi), $O\cap T$ is one of the two components of $T\setminus\{v\}$. Moreover, by Lemma \ref{lem:properties of BM}(ii),(iii), $v$ belongs to exactly two distinct $n$-tiles; one being in $O\cap T$ and the other being in $T\setminus O$. Denote by $X$ the one of these two tiles that is in $O\cap T$. We claim that $X$ is a leaf tile. Suppose, otherwise, that there exists a point $w\in \partial X\setminus\{v\}$. Since $O\subset \Omega_n$, $\g_{w,n}^t\cap \partial O\neq\emptyset$ for some $t\in\{1,2\}$. By Lemma \ref{lem:arzela curves}(i),(vi),(vii) and the fact that $x,y$ are adjacent we reach a contradiction. Hence, $X$ is a leaf tile.

Suppose now that $v_x\neq v_y$. Let $X,Y$ be the unique $n$-tiles that contain $v_x$. Assume for a contradiction that $v_y\in  T\setminus(X\cup Y)$. By Lemma \ref{lem:properties of BM}(iii) there exists $u\in T(v_x,v_y]\cap (\partial X\cup\partial Y)$. Without loss of generality, we may assume that $u\in\partial X$. Since $T$ is a tree, $u$ is unique. Let $r=\frac12\min_{Z\in{\bf X}^n}\d_Z$ and let $p,q\in T[v_x,v_y]\cap \partial B(u,r)$ such that $u\in T(p,q)\subset B(u,r)$. Lemma \ref{lem:properties of thickening}(ii) and Lemma \ref{lem:def of thickening of Tm,eta} give $\overline{B}(u,r)\cap \G_{n}=\emptyset$. Moreover, if $z_1\in\overline{B}(u,r)$ and $z_2\in \g_{v_x,n}^i$, then by Lemma \ref{lem:properties of BM}(vii), Lemma \ref{lem:arzela curves}(iii), and \eqref{eq:def of delta X} we have 
$$|z_1-z_2|\geq |v_x-u|-|z_1-u|-|z_2-v_x|\geq \frac{\diam X}{C^2}-\frac{\d_X}{2}-16CC_ T\d_X>0.$$
If $z_2\in \g_{v_y,n}^j$, then let $Z$ be the $n$-tile such that $v_y\in \partial Z$ and $u'\in T[v_x,v_y)\cap \partial Z$. Lemma \ref{lem:properties of BM}(vii), Lemma \ref{lem:arzela curves}(iii), \eqref{eq:def of delta X}, and the bounded turning property of $T$ give
\begin{align*}
|z_1-z_2|\geq |v_y-u|-|z_1-u|-|z_2-v_y|&\geq \frac{\diam T[u,v_y]}{C_T}-r-16CC_T\d_Z\\ &\geq \frac{|u'-v_y|}{C_T}-\frac{\d_Z}{2}-16CC_T\d_Z\\ &\geq \frac{\diam Z}{C^2C_T}-17CC_T\d_Z\\ &>0.
\end{align*}
Let now $p\in \partial B(u,r)\cap T[u,v_x]$ and $q\in \partial B(u,r)\cap T[u,v_y]$ such that $u\in T(p,q)\subset B(u,r)$. For each $l\in\{1,2\}$, let $\sigma_l$ be the component of $B(u,r)\cap \g_{x,n}^l$ that contains $x$. By Lemma \ref{lem:arzela curves}(v), $\sigma_1,\sigma_2$ lie in different connected components of $B(u,r)\setminus T[p,q]$. Similar arguments with the proof of Lemma \ref{lem:arzela curves}(vi) assert now that for some $j_0\in \{1,2\}$, $\sigma_{j_0}\setminus\{x\}\subset O$ and thus by Lemma \ref{lem:arzela curves}(i), $\g_{u,n}^{j_0}\cap \partial O\neq\emptyset$. By Lemma \ref{lem:arzela curves}(i),(vii) and the fact that $x,y$ are adjacent, we reach a contradiction.  
\end{proof}

\begin{rem}
If $x,y,v_x,v_y$ are as in Lemma \ref{lem:diameter of the sides of W.S1} and $v_x\neq v_y$, then the tile $X$ given by the lemma, can not be a leaf tile.
\end{rem}

\begin{lem}\label{lem:diameter of the sides of W.S}
Let $n\in \N$, let $x,y\in D_n$ be adjacent, let $v_x,v_y$ denote the associated $n$-vertices, and let $X$ be the $n$-tile given by Lemma \ref{lem:diameter of the sides of W.S1}.
\begin{enumerate}[(i)] 
\item If $O$ is the domain enclosed by the Jordan curve 
$$T[v_x,v_y]\cup \g_{v_y,n}^i\cup\G_n[x,y]\cup \g_{v_x,n}^j,$$ 
then $O\subset \Omega_n$ and $O\cap T\subset X\setminus\partial X$. In particular, if $X$ is a leaf tile, then $O\cap T=X\setminus\partial X$.
\item For each $w\in\partial X$ and for each $n$-tile $Z$ with $Z\cap X\neq\emptyset$, 
$$ B(w,R_{X,n})\subset \mathscr{T}_{\d_Z}(B(w,R_{X,n}))\subset B(w,2R_{X,n})$$
where
\[ R_{X,n}=2^{13}C^4C^2_{T}\min\{\d_Z : Z\in {\bf X}^n, Z\cap X\neq\emptyset\}.\]
\item We have 
\[ \G_n[x,y]\subset \mathscr{T}_{\d_X}(X\setminus \bigcup_{w\in\partial X}B(w,R_{X,n}))\cup \bigcup_{w\in\partial X} B(w,2R_{X,n}).\] 
Moreover, if $W\in {\bf X}^n$ with $W\cap X\neq \emptyset$ and $\d_W=\min_{Z\in {\bf X}^n, Z\cap X\neq\emptyset}\d_Z$, then 
$$(2C)^{-1}\d_W\leq \diam \G_n[x,y]\leq 2C\diam W.$$ 
\end{enumerate} 
\end{lem}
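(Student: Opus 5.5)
For (i), I will reuse the Jordan domain $O$ built in the proof of Lemma \ref{lem:diameter of the sides of W.S1}. The inclusion $O\subset\Omega_n$ was already shown there: $O\cap\G_n=\emptyset$ and $\partial O$ meets $T$. To see that $O\cap T\subset X\setminus\partial X$, I will argue by contradiction. Suppose some $z\in O\cap T$ lies outside $X\setminus\partial X$. Then the arc $T[z,v_x]$ (or $T[z,v_y]$) leaves $X$, so it passes through some vertex $u\in\partial X$. If $u\notin\{v_x,v_y\}$, then $u\in O$ and $u$ is an $n$-vertex. By Lemma \ref{lem:arzela curves}(i),(vii), at least one arc $\g_{u,n}^t$ starts inside $O$, ends on $\G_n$, and avoids the other John arcs and $T$. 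It must therefore exit through $\G_n[x,y]$, which puts a point of $D_n$ strictly between $x$ and $y$. This contradicts adjacency.

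If $u\in\{v_x,v_y\}$, I will use the two-sidedness in Lemma \ref{lem:arzela curves}(v),(vi). Near $u$, the domain $O$ lies on only one side of $T$, namely the side containing $\g_{u,n}^j$. Only the branch of $T\setminus\{u\}$ entering $X$ can then lie in $O$, so the other tile at $u$ is excluded. In the leaf case, Lemma \ref{lem:arzela curves}(vi) gives $O\cap T=B_v^i$, which is exactly $X\setminus\partial X$, since $X$ is a leaf tile.

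For (ii), I will work directly with the square-thickening definition. A point at distance $t$ from $B(w,R_{X,n})$ belongs to $\mathscr{T}_{\d_Z}$ only if $t<8\d_Z$, by Lemma \ref{lem:properties of thickening}(ii). Definition \ref{def:tile decomposition}(ii) gives $\d_Z\le 2C\min\d$, so $8\d_Z\le R_{X,n}$ by the choice of constant $2^{13}C^4C_T^2$. This yields the right inclusion. The left inclusion holds because every set lies in its own square thickening.

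For (iii), I will first locate $\G_n[x,y]$. By (i), the arc $\G_n[x,y]$ bounds the part $O$ of $\Omega_n$ whose tree part lies in $X$. Each point of $\G_n[x,y]$ lies in some $\partial\mathscr T_{\d_Z}(Z)$, by Lemma \ref{lem:def of thickening of Tm,eta}(i). Its nearest tree point, at distance less than $8\d_Z$, must be reachable inside $O$. Otherwise, separation estimates like those in the proof of Lemma \ref{lem:arzela curves}(iv), using Definition \ref{def:tile decomposition}(iii) and Lemma \ref{lem:def of thickening of Tm,eta}(iv), would force $\G_n[x,y]$ to cross one of the John arcs. Hence the point is within $8\d_Z$ of $X$. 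If it is far from every $w\in\partial X$, then $Z=X$ and the point lies in $\mathscr{T}_{\d_X}$ of the truncated tile. If it is near some $w$, then it lies in $B(w,2R_{X,n})$ by (ii), since $\d_Z\lesssim R_{X,n}$.

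The diameter bounds then follow. The upper bound holds because $\G_n[x,y]$ lies in a $16\d$-neighbourhood of $X$ together with balls of radius $2R_{X,n}$, so its diameter is at most $\diam X+O(C\d_W)\le 2C\diam W$, using Definition \ref{def:tile decomposition}(ii) and \eqref{eq:def of delta X}. The lower bound holds because $\G_n[x,y]$ lies in the grid $\mathcal G_{\d_n}$ and joins two distinct points of $D_n$. I expect this lower bound to be the main obstacle. The step I would try first is to show that any polygonal arc in $\G_n$ passing around a side of $X$ must traverse at least one full square edge of side at least $\d_W/2$, and then use Lemma \ref{lem:arzela curves}(viii) to separate $x$ from $y$.
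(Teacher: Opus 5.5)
The genuine gap is the lower bound $\diam\G_n[x,y]\ge(2C)^{-1}\d_W$ in (iii). You leave it open, and the route you sketch would not give it. Your arguments for (i), (ii) and the upper bound in (iii) follow the paper's route. The containment in (iii) is also workable once made precise. Use the segment $[t,t']\subset\mathscr T_{\d_Z}(Z)\subset\overline{\Omega_n}$ from Lemma \ref{lem:properties of thickening}(vi) rather than a nearest-point segment, whose position relative to $\overline{\Omega_n}$ and $O$ is not controlled. If $t'\notin X$, this segment must leave $\overline O$ through $T[v_x,v_y]$ or one of the two John arcs. That either contradicts Definition \ref{def:tile decomposition}(iii) or places $t$ in some $B(w,2R_{X,n})$.

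Your sketch for the lower bound fails for the following reasons.
\begin{itemize}
\item The inclusion $\G_n[x,y]\subset\mathcal G_{\d_n}$ carries no information at scale $\d_W$. Here $\d_n=\min_{Z\in{\bf X}^n}\d_Z$ is a global minimum and may be far smaller than $\d_W$.
\item The points $x,y$ are endpoints of John arcs, not grid vertices. So an arc of $\G_n$ joining two distinct points of $D_n$ can a priori be arbitrarily short.
\item Lemma \ref{lem:arzela curves}(viii) does separate $x$ from $y$ when $v_x=v_y=v$, since then $x\in\g_{v,n}^j$ and $y\in\omega_{v,n}^i$. But it works at the scale of the $(n+1)$-tiles at $v$. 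It therefore yields only $|x-y|\ge(2C)^{-2}\d_X$, a weaker constant than the one stated.
\item Lemma \ref{lem:arzela curves}(viii) says nothing at all when $v_x\ne v_y$.
\end{itemize}

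The missing idea is that the lower bound follows directly from (i) and the length bound in Lemma \ref{lem:arzela curves}(iii). For $v\in\partial X$, the tile $X$ is one of the two $n$-tiles at $v$, so $\ell(\g_{v,n}^l)\le 16CC_T\d_X$ for $l=1,2$.

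If $X$ is a leaf tile, then $v_x=v_y=v$, and (i) gives $X\subset\overline O$. Hence
\[\diam X\le\diam\partial O\le\diam\G_n[x,y]+\ell(\g_{v,n}^i)+\ell(\g_{v,n}^j)\le\diam\G_n[x,y]+32CC_T\d_X.\]
If $X$ is a branch tile, then $v_x\ne v_y$ both lie in $\partial X$, and Lemma \ref{lem:properties of BM}(vii) gives
\[\diam\G_n[x,y]\ge|x-y|\ge|v_x-v_y|-|x-v_x|-|y-v_y|\ge C^{-2}\diam X-32CC_T\d_X.\]
In either case \eqref{eq:def of delta X} makes the right-hand side at least $\d_X$. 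Finally, $\d_X\ge\d_W$ by the minimality of $\d_W$. This is exactly the paper's argument, and no grid-combinatorial step is needed.
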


\begin{proof}
For (i), suppose that there exists $y\in O\cap (Z'\cup\partial X\setminus\{v_x,v_y\})$ for some $n$-tile $Z'\neq X$. Let $w\in \partial X$ be the first time that we meet $X$ as we travel from $y$ to $T[v_x,v_y]$. By Lemma \ref{lem:properties of BM}(vi) and Lemma \ref{lem:arzela curves}(vi) it follows that $w\in O$. Similar arguments with Case 1 reach a contradiction. We conclude that $O\cap  T\subset X\setminus\partial X$. Note that by Definition \ref{def:tile decomposition}(i) $\inf_{Z\in{\bf X}^n, Z\cap X\neq\emptyset}\d_X$ is minimized.  

Claim (ii) can be easily verified using Definition \ref{def:tile decomposition}(ii), Lemma \ref{lem:properties of thickening}(ii), and Lemma \ref{lem:arzela curves}(iii).

To show (iii), let $t\in\G_n[x,y]$ such that $t\in\mathscr{T}_{\d_Z}(Z)$ for some $n$-tile $Z\neq X$. Suppose first that $Z\cap X=\emptyset$. Without loss of generality suppose that $\d_Z\leq \d_X$. By Lemma \ref{lem:properties of thickening}(ii) there exists $t'\in Z$ such that $|t-t'|<8\d_Z$. Let $\alpha\subset \mathscr{T}_{\d_Z}(Z)$ be a curve connecting $t$ to $t'$. By Lemma \ref{lem:properties of BM}(vi) and Lemma \ref{lem:arzela curves}(vi),(vii) there exists an arc $\zeta\subset \G_n$ connecting $q_{w,n}^1$ to $q_{w,n}^2$ for some $w\in\partial X$ so that $Z\setminus\{w\}$ lies inside the domain enclosed by the curve $\g_{w,n}^1\cup\zeta\cup\g_{w,n}^2$. Hence, the fact that $O\subset\Omega_n$, and (ii) show that $\alpha\cap B(w,R_{X,n})\neq\emptyset$. But if $z_1\in\mathscr{T}_{\d_Z}(Z)$ and $z_2\in B(w,R_{X,n})$, then by Definition \ref{def:tile decomposition}(iii), the choice of $R_{X,n}$, and \eqref{eq:def of delta X} we have 
$$|z_1-z_2|\ge \dist (X,Z)-R_{X,n}-\d_Z\geq C^{-1}\diam X-R_{X,n}-\d_X>0$$
which is a contradiction.

Suppose now $Z\cap X\neq\emptyset$. We may assume that $t\in \mathscr{T}_{\d_Z}(Z\setminus\bigcup_{w\in\partial X}B(w,R_{X,n}))$ since by (iii) and Lemma \ref{lem:properties of thickening}(iii),(iv) we have
$$\mathscr{T}_{\d_Z}(Z)\subset \mathscr{T}_{\d_Z}(Z\setminus\bigcup_{w\in\partial X}B(w,R_{X,n}))\cup \bigcup_{w\in\partial X}B(w,2R_{X,n}).$$
By Lemma \ref{lem:properties of thickening}(ii) there exists $t'\in Z\setminus\bigcup_{w\in\partial X}B(w,R_{X,n})$ such that $|t-t'|<8\d_Z$. By Lemma \ref{lem:def of thickening of Tm,eta}(ii),(v) there exists a $(2^6C_TC)$-bounded turning curve $\omega\subset\overline{\Omega_n}$ connecting $t$ to $t'$. Similar arguments with the case of $X\cap Y=\emptyset$ show that $\omega\cap B(w,R_{X,n})\neq\emptyset$. Since $t\in\omega\cap (\overline{\Omega_n}\setminus B(w,2R_{X,n}))$, Lemma \ref{lem:properties of thickening}(ii) and Definition \ref{def:tile decomposition}(ii) give
$$8C\d_X\geq 4\d_Z>\frac12|t-t'|\geq \frac{\diam \omega}{2^7C_TC}\geq \frac{\dist(B(w,2R_{X,n}),B(w,R_{X,n})}{2^7C_TC}\geq \frac{R_{X,n}}{2^7C_TC}$$
which is contradiction from the definition of $R_{X,n}$ and \eqref{eq:def of delta X}. Finally, by (ii) and Lemma \ref{lem:properties of thickening}(iii),(iv) we have
\begin{align*}
\G_n[x,y]&\subset \mathscr{T}_{\d_X}(X)\cup\bigcup_{w\in\partial X}B(w,2R_{X,n})\\
&=\mathscr{T}_{\d_X}\left(X\setminus \bigcup_{w\in\partial X}B(w,R_{X,n})\right)\cup \bigcup_{w\in\partial X} B(w,2R_{X,n}).
\end{align*}

It remains to estimate $\diam\G_n[x,y]$. The upper bound follows from Definition \ref{def:tile decomposition}(ii), Lemma \ref{lem:properties of thickening}(ii), and \eqref{eq:def of delta X}. For the lower bound, suppose that $W\in {\bf X}^n$ such that $\d_W=\min_{Z\in {\bf X}^n, Z\cap X\neq\emptyset}\d_Z$. Suppose also that $X$ is a leaf tile and that $v_x=v_y=v$. By (i) we have $\diam O\geq \diam X$. But $\diam O=\diam \partial O$ and thus Lemma \ref{lem:arzela curves}(iii) gives
$$\diam \partial O\leq \diam \G_n[x,y]+\diam \g_{v,n}^1+\diam\g_{v,n}^2\leq \diam \G_n[x,y]+32CC_T\d_X.$$
By \eqref{eq:def of delta X} we conclude that 
$$\diam\G_n[x,y]\geq \diam X-32CC_T\d_X\geq \d_X.$$
Last, if $X$ is a branch tile, then $v_x\neq v_y$ and by Lemma \ref{lem:properties of BM}(vii), Lemma \ref{lem:arzela curves}(iii), and \eqref{eq:def of delta X} we have
\[|x-y|\geq |v_x-v_{y}|-|v_{x}-x|-|v_{y}-y|\geq\frac{\diam X}{C^2}-32CC_{T}\d_X\geq \d_X. \qedhere\]
\end{proof}

The following lemma is a key ingredient to prove Proposition \ref{prop:polygonal curves}.
\begin{lem}\label{lem:prerequisite for polygonal appr}
Let $n\in\N$, $x\in{\bf V}^n$, and let $X_{x,n+1}^{1}, X_{x,n+1}^{2}$ be the unique distinct $(n+1)$-tiles that contain $x$.   
\begin{enumerate}[(i)]
\item If $i\in\{1,2\}$ and $w_1,w_2\in D_{n+1}$ are the points on $D_{n+1}$ adjacent to $q_{x,{n+1}}^i$, then 
$$\g_{x,n}^i\cap \G_{n+1}\subset \G_{n+1}(w_1,q_{x,{n+1}}^i]\cup\G_{n+1}[q_{x,{n+1}}^i,w_2).$$
In particular, if $y_1,y_2\in D_{n+1}\setminus\{q_{x,{n+1}}^i\}$ are adjacent, then
$$\dist(\g_{x,n}^i,\G_{n+1}[y_1,y_2])\geq (2C)^{-1}\max\{\d_{X_{x,n+1}^{1}},\d_{X_{x,n+1}^{2}}\}.$$ 
\item Let $y\in {\bf V}^n$ and let $z,w\in D_{n+1}$ be points adjacent to $q_{y,{n+1}}^j,q_{x,{n+1}}^i$, respectively, where $i,j\in\{1,2\}$ are distinct if $x=y$. If $v_z,v_w$ are the associated $(n+1)$-vertices of $z,w$ respectively, then there exists $(n+1)$-tiles $X,Y$ such that $v_z,x\in \partial X$, $v_w,y\in \partial Y$ and 
$$\dist(\G_{n+1}[z,q_{y,{n+1}}^j],\G_{n+1}[w,q_{x,{n+1}}^i])\geq (2C)^{-1}\max\{\d_{X},\d_Y\}.$$
\end{enumerate}
\end{lem}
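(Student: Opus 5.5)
Both parts rest on the nesting $\g_{x,n+1}^i\subset\g_{x,n}^i$ from Lemma \ref{lem:arzela curves}(ii). The portion of $\g_{x,n}^i$ beyond $\G_{n+1}$, i.e. $\g_{x,n}^i\setminus\g_{x,n+1}^i$, stays inside the small ball $B(x,(2C)^{-1}r_{x,n+1})$ by Lemma \ref{lem:arzela curves}(iii). By Lemma \ref{lem:arzela curves}(iv), $\G_{n+1}$ meets this ball only inside $\mathscr{T}_{\d_{X^1_{x,n+1}}}(X^1_{x,n+1})\cup\mathscr{T}_{\d_{X^2_{x,n+1}}}(X^2_{x,n+1})$.

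For (i), I would argue by contradiction. Suppose $\g_{x,n}^i$ meets $\G_{n+1}$ at a point $t$ outside the two open arcs adjacent to $q^i_{x,n+1}$. Then $t$ lies on some arc $\G_{n+1}[y_1,y_2]$ between adjacent points of $D_{n+1}$, and neither $y_1$ nor $y_2$ equals $q^i_{x,n+1}$. Lemma \ref{lem:diameter of the sides of W.S1} gives the tile $Y\in{\bf X}^{n+1}$ attached to $y_1,y_2$. Lemma \ref{lem:diameter of the sides of W.S}(i) gives the Jordan domain $O$ bounded by $T[v_{y_1},v_{y_2}]$, the two John arcs, and $\G_{n+1}[y_1,y_2]$.

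\begin{enumerate}
\item Since $\g_{x,n}^i$ starts at $x$ on the side $U^i$ given by Lemma \ref{lem:arzela curves}(v), the two-sidedness in Lemma \ref{lem:arzela curves}(v),(vi) should force the subarc of $\g_{x,n}^i$ from $x$ to $t$ to cross $\partial O$.
\item It cannot cross the John arcs of $v_{y_1},v_{y_2}$: this follows from Lemma \ref{lem:arzela curves}(vii), or from (viii) when $v_{y_k}=x$ on the other side.
\item It cannot cross $T$, by Lemma \ref{lem:arzela curves}(i).
\item So it must cross $\G_{n+1}$ before reaching $t$. Adjacency of $q^i_{x,n+1}$ to $w_1,w_2$ then yields a contradiction.
\end{enumerate}

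For the quantitative statement, take $z_1\in\g^i_{x,n}$ and $z_2\in\G_{n+1}[y_1,y_2]$, and let $Z\in{\bf X}^{n+1}$ be the tile with $z_2\in\mathscr{T}_{\d_Z}(Z)$.

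\begin{enumerate}
\item If $Z\notin\{X^1_{x,n+1},X^2_{x,n+1}\}$, then Lemma \ref{lem:arzela curves}(iv) puts $z_2$ outside $B(x,r_{x,n+1})$, while $z_1\in B(x,(2C)^{-1}r_{x,n+1})$. Hence $|z_1-z_2|\gtrsim r_{x,n+1}$, which exceeds $(2C)^{-1}\max\d$.
\item If $Z$ is one of the two tiles at $x$, I would reuse the segment-crossing argument from Lemma \ref{lem:arzela curves}(viii). The segment $[z_1,z_2]$ must exit through $\G_{n+1}$ or cross $T$ in a part of the thickening that the first half excludes.
\item Applying Lemma \ref{lem:properties of thickening}(ii) at that exit point gives $|z_1-z_2|>\d_{X^{j}_{x,n+1}}\ge(2C)^{-1}\max\d$, using Definition \ref{def:tile decomposition}(ii).
\end{enumerate}

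For (ii), the tiles come first. Lemma \ref{lem:diameter of the sides of W.S1} applied to the adjacent pair $(z,q^j_{y,n+1})$ gives an $(n+1)$-tile $X$ with $v_z,y\in\partial X$, or a leaf tile when $v_z=y$. Likewise $(w,q^i_{x,n+1})$ gives a tile $Y$ with $v_w,x\in\partial Y$. The statement's labelling of $X,Y$ appears to be swapped, and I would match it accordingly.

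By Lemma \ref{lem:diameter of the sides of W.S}(iii), each boundary arc lies in the thickening of its tile or within $2R$ of its tile's boundary points.

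\begin{enumerate}
\item If the tiles are disjoint, Lemma \ref{lem:def of thickening of Tm,eta}(iv) separates the thickenings by $C^{-1}\max\{\d_X,\d_Y\}$. The $R$-balls are absorbed using Definition \ref{def:tile decomposition}(iii).
\item If the tiles intersect (or coincide, when $x=y$ with $i\neq j$), the arcs come close only near a common vertex. The first half of (i), which controls which side of $x$ each arc approaches from, places them in different components $U^1,U^2$.
\item A segment-crossing argument as in Lemma \ref{lem:arzela curves}(viii) then separates them by $\d/(2C)$.
\end{enumerate}

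The main obstacle is the topological part of (i): showing $\g^i_{x,n}$ cannot re-enter $\Omega_{n+1}$ and reach distant portions of $\G_{n+1}$. I would handle it with the Jordan-domain argument of Lemma \ref{lem:diameter of the sides of W.S1}, combined with the fact that $\g^i_{x,n}$ is confined to a ball meeting only the two thickenings at $x$.
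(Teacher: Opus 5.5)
\textbf{There is a genuine gap in your topological argument for (i), and your other delicate steps depend on it.} Take $t$ to be the first forbidden point along $\g^i_{x,n}$. Since $O\subset\Omega_{n+1}$, the Jordan-domain argument only shows that $t$ cannot be reached from inside $\Omega_{n+1}$: to do so, the arc would have to enter $O$ through $T$, a John arc, or $\G_{n+1}[y_1,y_2]$ before $t$. It says nothing when $t$ is reached from $\R^2\setminus\overline{\Omega_{n+1}}$, and that is the typical case. Beyond $q^i_{x,n+1}$ the arc $\g^i_{x,n}$ runs through $\overline{\Omega_n}\setminus\Omega_{n+1}$, and it can touch or cross $\G_{n+1}[y_1,y_2]$ from outside without ever meeting $O$.

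Lemma \ref{lem:arzela curves}(v),(vi) do not address that region. Your step 4 also yields no contradiction: the only crossing of $\G_{n+1}$ you know of is at $q^i_{x,n+1}$, which is permitted. Confinement to $B(x,(2C)^{-1}r_{x,n+1})$ together with Lemma \ref{lem:arzela curves}(iv) does not close the gap. Inside that ball, $\G_{n+1}$ still contains forbidden arcs $\G_{n+1}[y_1,y_2]$ on the boundaries of the thickenings of $X^1_{x,n+1}$ and $X^2_{x,n+1}$. Some lie on the far side of $T[p_r,q_r]$, which (v) handles. Others run around parts of these tiles that leave the ball and re-enter it on the side $U^i$, and nothing in your sketch handles those.

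For the same reason your quantitative case (2) is unsupported. Mere disjointness gives no lower bound, and ``$[z_1,z_2]$ crosses $T$'' is exactly the separation statement that is missing. In (ii) with $x=y$ you also cannot appeal to (i), which concerns $\g^i_{x,n}$ rather than arcs of $\G_{n+1}$. The paper instead shows that $\G_{n+1}[w,q^i_{x,n+1}]$ and $\G_{n+1}[z,q^j_{x,n+1}]$ are disjoint (via $w\neq z$). It then uses that both lie in a common grid $\mathcal{G}_\lambda$ with $\lambda\geq(2C)^{-1}\max\{\d_X,\d_Y\}$. Your disjoint-tile case of (ii) matches the paper, and you are right that the labels of $X,Y$ are swapped.

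\textbf{The missing idea, which is how the paper argues, is to prove the distance bound first and deduce the inclusion from its positivity.} Set $R_{n+1}=2^{13}C^4C_T^2\min\{\d_{X^1_{x,n+1}},\d_{X^2_{x,n+1}}\}$. Fix $w\in\G_{n+1}[y_1,y_2]$ and $z\in\g^i_{x,n}$ realizing the distance.
\begin{enumerate}
\item If $w\notin B(x,2R_{n+1})$, the bound is immediate, since $\g^i_{x,n}\subset B(x,C^{-1}R_{n+1})$.
\item Otherwise, join $w$ to $q^i_{x,n+1}$ by a $(2^6C_TC)$-bounded turning curve $\alpha\subset\overline{\Omega_{n+1}}$, using Lemma \ref{lem:def of thickening of Tm,eta}(v). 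If $\alpha$ leaves $B(x,2R_{n+1})$, then $|w-z|\geq|w-q^i_{x,n+1}|-\ell(\g^i_{x,n})$ is large.
\item If $\alpha$ stays in the ball, use that $q^i_{x,n+1}\notin\overline{O}$ and that the John arcs at vertices $v_{y_k}\neq x$ miss $\overline{B}(x,2R_{n+1})$. A case analysis of where $\alpha$ first leaves $\overline{O}$, using Lemma \ref{lem:Jordan domain prerequisite}, produces a subarc $T[t_1,t_2]$ of $T$ crossing the ball. It is either the arc through $x$ or a piece of $T$ re-entering the ball, and $w$ and $\g^i_{x,n}\setminus\{x\}$ lie in different components of $B(x,2R_{n+1})\setminus T[t_1,t_2]$.
\item Then $[w,z]$ meets $T\cap B(x,2R_{n+1})\subset X^1_{x,n+1}\cup X^2_{x,n+1}$. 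Lemma \ref{lem:properties of thickening}(ii) gives $|w-z|>\min\{\d_{X^1_{x,n+1}},\d_{X^2_{x,n+1}}\}\geq(2C)^{-1}\max\{\d_{X^1_{x,n+1}},\d_{X^2_{x,n+1}}\}$.
\end{enumerate}
Routing the auxiliary curve $\alpha$ through $\overline{\Omega_{n+1}}$, where the tree and the John arcs cut out the Jordan domains, is what makes it irrelevant whether $z$ lies inside or outside $\Omega_{n+1}$. The argument only uses that $\g^i_{x,n}\setminus\{x\}$ is connected and disjoint from $T$.
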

\begin{proof}

To show (i), fix $n\in\N$ and $x\in{\bf V}^n$. Let $w_{1},w_{2}$ be the adjacent points of $q_{x,n+1}^i$ for some $i\in\{1,2\}$. Fix $w\in \G_{n+1}[y_1,y_2]$ for some $y_1,y_2\in D_{n+1}\setminus\{q_{x,n+1}^i\}$ adjacent and fix $z\in \g_{x,n}^i$ such that $|w-z|=\dist(\g_{x,n}^i,\G_{n+1}[y_1,y_2])$. Note that 
\[ w\in \G_{n+1}\setminus(\G_{n+1}(w_1,q_{x,n+1}^i]\cup\G_{n+1}[q_{x,n+1}^i,w_2)).\] 
Let also $v_{y_1},v_{y_2}$ be the associated $(n+1)$-vertices of $y_1,y_2$, respectively, and let $Y$ be an $(n+1)$-tile such that $v_{y_1},v_{y_2}\in\partial Y$, given from Lemma \ref{lem:diameter of the sides of W.S1}. By Lemma \ref{lem:properties of BM}(ii),(iii), $x\in {\bf V}^{n+1}$, and for $k\geq n$ there exist unique distinct $k$-tiles $X_{x,k}^{1},X_{x,k}^{2}$ that contain $x$. Let 
\[ R_{n+1}=2^{13}C^4C^2_{T}\min\{\d_{X_{x,n+1}^1},\d_{X_{x,n+1}^2}\}.\] 

We claim that $|w-z|\geq(2C)^{-1}\max\{\d_{X_{x,n+1}^1},\d_{X_{x,n+1}^2}\}$. Assuming the claim, since $\G_{n+1}\subset \Omega_{n}$ from Lemma \ref{lem:def of thickening of Tm,eta} and $\g_{x,n}^i\subset \overline{\Omega_{n}}$ from Lemma \ref{lem:arzela curves}(i) it follows that 
$$\g_{x,n}^i\cap \G_{n+1}\subset \G_{n+1}(w_1,q_{x,{n+1}}^i]\cup\G_{n+1}[q_{x,{n+1}}^i,w_2).$$
To that end, we consider the following cases.

\emph{Case 1:} Suppose that $w\notin B(x,2R_{n+1})$. Note that $w\in\Omega_n$ from Lemma \ref{lem:def of thickening of Tm,eta} and $z\in \overline{\Omega_n}$ from Lemma \ref{lem:arzela curves}(i). By Lemma \ref{lem:def of thickening of Tm,eta}(ii),(v) there exists a $(2^6C_TC)$-bounded turning curve $\alpha\subset \overline{\Omega_n}$ that connects $w$ to $z$. Note that Lemma \ref{lem:arzela curves}(iii) asserts that $z\in \g_{x,n}^i\subset B(x,2R_{n+1})$. Definition \ref{def:tile decomposition}(ii),(iv), the choice of $R_{n+1}$, and \eqref{eq:def of delta X} give
\begin{align*}
|w-z|&\geq (2^6CC_T)^{-1}\diam \alpha\\
&\geq (2^6CC_T)^{-1}\dist(\partial B(x,2R_{n+1}),\partial B(x,R_{n+1}))\\
&= (2^6CC_T)^{-1}R_{n+1}\\
&>\min\{\d_{X_{x,n+1}^1},\d_{X_{x,n+1}^2}\}.
\end{align*}

\emph{Case 2:} Suppose that $w\in B(x,2R_{n+1})$ and that $x\in\{v_{y_1},v_{y_2}\}$. Without loss of generality, we may assume that $x=v_{y_1}$ and hence $y_1=q_{x,n+1}^j$ for some $j\in\{1,2\}\setminus\{i\}$. By Lemma \ref{lem:properties of BM}(ii),(iii),(iv), we have that $Y\in \{X_{x,n+1}^1,X_{x,n+1}^2\}$ and that $Y$ is not a leaf tile. Hence, Lemma \ref{lem:diameter of the sides of W.S1} gives that $x\neq v_{y_2}$. Without loss of generality, suppose that $Y=X_{x,n+1}^1$.

Suppose now that $y_2\in\g_{v_{y_2},n+1}^{i'}$ for some $i'\in\{1,2\}$ and consider the following Jordan curve (see the proof of Lemma \ref{lem:diameter of the sides of W.S1})
$$\g_{v_{y_2},n+1}^{i'}\cup T[v_{y_2},x]\cup\g_{x,n+1}^j\cup\G_{n+1}[q_{x,n+1}^j,y_2].$$
Denote by $U$ the domain enclosed by it. By Lemma \ref{lem:diameter of the sides of W.S}(i), $U\subset \Omega_{n+1}$ and by Lemma \ref{lem:arzela curves}(i),(v),(vi),(vii) and the fact that $y_2$ is adjacent to $q_{x,n+1}^j$ we have that $q_{x,n+1}^i\notin \overline{U}$. By Lemma \ref{lem:def of thickening of Tm,eta}(ii),(v) there exists a $(2^6C_TC)$-bounded turning curve $\alpha\subset \overline{\Omega_{n+1}}$ connecting $w$ to $q_{x,n+1}^i$. If there exists $t\in\alpha$ such that $t\notin B(x,2R_{n+1})$, then Lemma \ref{lem:arzela curves}(iii), the choice of $R_{n+1}$, and \eqref{eq:def of delta X} give
\begin{align*}
|w-z|&\geq |w-q_{x,n+1}^i|-|q_{x,n+1}^i-z|\\
&\geq (2^6C_TC)^{-1}\diam\alpha-\ell(\g_{x,n}^i)\\ 
&\geq (2^6C_TC)^{-1}R_{n+1}-16C_TC\min\{\d_{X_{x,n}^1},\d_{X_{x,n}^2}\}\\ 
&\geq 2^5C_TC\min\{\d_{X_{x,n+1}^1},\d_{X_{x,n+1}^2}\}-16C_TC\min\{\d_{X_{x,n}^1},\d_{X_{x,n}^2}\}\\ 
&\geq (2C)^{-1}\max\{\d_{X_{x,n+1}^1},\d_{X_{x,n+1}^2}\}.
\end{align*}
Hence, for the rest of this case we may assume that $\alpha\subset B(x,2R_{n+1})$. Observe also that $\g_{v_{y_2},n+1}^{i'}\cap B(x,2R_{n+1})=\emptyset$. To see this, let $z_1\in \g_{v_{y_2},n+1}^{i'}$ and $z_2\in B(x,2R_{n+1})$. Lemma \ref{lem:properties of BM}(vii), Lemma \ref{lem:arzela curves}(iii), the choice of $R_{n+1}$, and \eqref{eq:def of delta X} give
\begin{align*}
|z_1-z_2|&\geq|v_{y_2}-x|- |z_1-v_{y_2}|-|z_2-x|\\
&\geq C^{-2}\diam X_{x,n+1}^1-16CC_T\d_{X_{x,n+1}^1}-2R_{n+1}\\
&\ge (2C)^{-1}\max\{\d_{X_{x,n+1}^1},\d_{X_{x,n+1}^2}\}.
\end{align*}

Let $t$ be the first time that $\alpha $ meets $\partial U\setminus\G_{n+1}[q_{x,n+1}^j,y_2]$ as we travel from $w$ to $q_{x,n+1}^i$. By the above observations, $t\in T[x,v_{y_2}]\cup\g_{x,n+1}^j$. Let 
\[p\in \partial B(x,2R_{n+1})\cap  T[v_{y_2},x]\quad \text{and}\quad q\in \partial B(x,2R_{n+1})\cap X_{x,n+1}^2\] 
such that $x\in T(p,q)\subset B(x,2R_{n+1})$. Denote by $W_1,W_2$ the connected components of $B(x,2R_{n+1})\setminus T[p,q]$. By Lemma \ref{lem:arzela curves}(v), without loss of generality we may assume that $q_{x,n+1}^i\in W_i$ and $q_{x,n+1}^j\in W_j$. We may assume also that $w\in W_i$ since otherwise $[w,z]\cap T\neq\emptyset$ and the estimate of $|w-z|$ follows similarly as in the proof of Lemma \ref{lem:arzela curves}(viii). If $t\in \g_{x,n+1}^j\setminus\{x\}$, then $\alpha[w,t]\subset W_j$ since $\alpha[w,t]\cap T[p,q]=\emptyset$, and $\g_{x,n+1}^j\subset W_j\cup \{x\}$, we reach a contradiction. If $t\in T(p,x]$, then by Lemma \ref{lem:Jordan domain prerequisite} there exists an arc $\zeta$ with one endpoint $p_1\in T[p,t)$ and another endpoint $p_2\in T[x,t)$ so that $\zeta\setminus\{p_1,p_2\}\subset U\cap W_i$. Since $q_{x,n}^i\notin \overline{U}$, let $\omega$ be an arc that connects $q_{x,n}^i$ to $t$ so that $\omega\setminus\{t\}\subset \R^2\setminus\overline{U}$. It follows that $\omega\cap (\zeta\setminus\{p_1,p_2\})\neq\emptyset$ which is contradiction since $\zeta\setminus\{p_1,p_2\}\subset U$, and $\omega\setminus\{t\}\subset \R^2\setminus\overline{U}$.

Suppose now that $t\in T(t_1,t_2)$ where $t_1,t_2\in T[p,v_{y_2})\cap\partial B(x,2R_{n+1})$ and $t\in T(t_1,t_2)\subset B(x,2R_{n+1})$. Denote by $V_1,V_2$ the connected components of $B(x,2R_{n+1})\setminus T[t_1,t_2]$. By Lemma \ref{lem:arzela curves}(i), we have that $\g_{x,n}^i,\g_{x,n}^j\subset V_{i_1}$ for some $i_1\in \{1,2\}$. Similar arguments now with the above paragraph show that $w\in V_{i_2}$ for $i_2\in \{1,2\}\setminus\{i_1\}$. Thus, $[w,z]\cap T\neq\emptyset$, and the estimate of $|w-z|$ follows similarly as in the proof of Lemma \ref{lem:arzela curves}(viii). See Figure \ref{fig:different connected components of a ball}.
\begin{figure}
    \centering
    \includegraphics[width=0.7\linewidth]{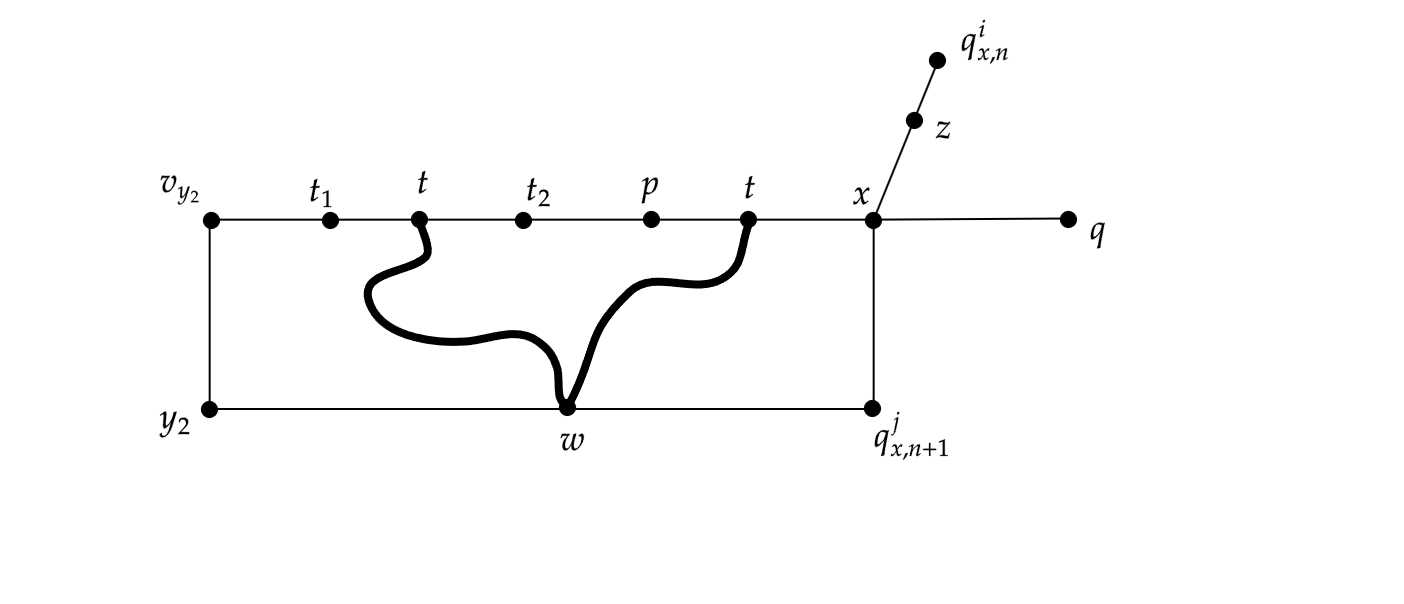}
    \caption{Here the line segment $[w,z]$ intersects $T$.}
    \label{fig:different connected components of a ball}
\end{figure}

\emph{Case 3:} Suppose now that $w\in B(x,2R_{n+1})$ and that $x\notin\{v_{y_1},v_{y_2}\}$. Since $q_{x,n+1}^i\in \G_{n+1}$, Lemma \ref{lem:def of thickening of Tm,eta}(ii),(v) asserts the existence of a $(2^6C_TC)$-bounded turning curve $\alpha\subset \overline{\Omega_{n+1}}$ that connects $w$ to $q_{x,n+1}^i$. Similarly as in Case 2 we may assume that $\alpha\subset B(x,2R_{n+1})$.

For each $k\in\{1,2\}$ suppose that $y_k\in\g_{v_{y_k},n+1}^{i_k}$ where $i_1,i_2\in\{1,2\}$ are distinct if $v_{y_1}=v_{y_2}$. Denote by $O$ the Jordan domain enclosed by the curve
$$T[v_{y_1},v_{y_2}]\cup\g_{v_{y_1},{n+1}}^{i_1}\cup\G_{n+1}[y_1,y_2]\cup \g_{v_{y_2},n+1}^{i_2}.$$
By Lemma \ref{lem:diameter of the sides of W.S}(i) we have $O\subset \Omega_{n+1}$ and by Lemma \ref{lem:arzela curves}(i),(vii), and the fact that $y_1$ is adjacent to $y_2$, we have $q_{x,n+1}^i\notin \overline{O}$.  

We claim that $\overline{B}(x,2R_{n+1})\cap \g_{v_{y_k},n+1}^{i_k}=\emptyset$ for each $k\in \{1,2\}$. Suppose first that $Y\cap (X_{x,n+1}^1\cup X_{x,n+1}^2)=\emptyset$. Without loss of generality, we may assume that $\d_Y\leq \min\{\d_{X_{x,n+1}^1},\d_{X_{x,n+1}^2}\}$. By Definition \ref{def:tile decomposition}(iii), \eqref{eq:def of delta X}, and the choice of $R_{n+1}$,
\begin{align*}
\dist(\g_{v_{y_k,n+1}}^{i_k}, \overline{B}(x,2R_{n+1}))&\geq \dist(Y,X_{x,n+1}^1)-2R_{n+1}-16C_TC\d_Y\\ 
&\ge C^{-1}\diam X_{x,n+1}^1-2R_{n+1}-16C_TC\d_{X_{x,n+1}^1}>0.
\end{align*}
Suppose now that $Y\cap X_{x,n+1}^l\neq\emptyset$ for some $l\in \{1,2\}$. Traveling from $v_{y_k}$ to $x$ along $T[v_{y_k},x]$, let $u\in \partial X_{x,n+1}^l\cap T[v_{y_k},x)$. Definition \ref{def:tile decomposition}(ii), Lemma \ref{lem:properties of BM}(vii), \eqref{eq:def of delta X}, and the choice of $R_{n+1}$ give
\begin{align*}
\dist(\g_{v_{y_k,n+1}}^{i_k}, \overline{B}(x,2R_{n+1}))&\geq (C_T)^{-1}\diam T[x,v_{y_k}]-2R_{n+1}-16C_TC\d_Y\\ 
&\geq (C_T)^{-1}|u-x|-2R_{n+1}-16C_TC\d_Y\\ 
&\geq(C_TC^2)^{-1}\diam X_{x,n+1}^l-2R_{n+1}-32C_TC^2\d_{X_{x,n+1}^l}>0.
\end{align*}
The proof of the claim is complete. 

It follows now with similar arguments as in Case 2 that $w,z$ are in different connected components of $B(x,2R_{n+1})\setminus T[t_1,t_2]$ for some $t_1,t_2\in \partial B(x,2R_{n+1})$ such that $T(t_1,t_2)\subset B(x,2R_{n+1})$. The estimate of $|w-z|$ follows similarly now as in the proof of Lemma \ref{lem:arzela curves}(viii).

To show (ii), fix $n\in\N$, and $x,y\in{\bf V}^n$. Let $w,z\in D_{n+1}$ be adjacent points of $q_{x,{n+1}}^i,q_{y,{n+1}}^j$, respectively, where $i,j\in\{1,2\}$ are distinct if $x=y$. Moreover, let $v_z,v_w$ be the associated $(n+1)$-vertices of $z,w$ respectively. By Lemma \ref{lem:diameter of the sides of W.S1} there exist $(n+1)$-tiles $X,Y$ such that $v_z,x\in \partial X$ and $v_w,y\in \partial Y$. 

Suppose first that $x\neq y$. By Lemma \ref{lem:properties of BM}(ii),(iv) it follows that $X\cap Y=\emptyset$. Let $u_1\in \G_{n+1}[w,q_{x,n+1}^i]$ and $u_2\in \G_{n+1}[z,q_{y,n+1}^j]$. By Lemma \ref{lem:properties of thickening}(iii), Lemma \ref{lem:diameter of the sides of W.S}(iii), 
\[ u_1\in \mathscr{T}_{\d_X}(X)\cup \bigcup_{v\in \partial X}B(v,2R_{X,n+1}), \quad u_2\in \mathscr{T}_{\d_Y}(Y)\cup\bigcup_{v\in \partial Y}B(v,2R_{Y,n+1})\] 
where $R_{X,n+1},R_{Y,n+1}$ are as in the Lemma \ref{lem:diameter of the sides of W.S}(ii). If $u_1\in\mathscr{T}_{\d_X}(X)$ and $u_2\in \mathscr{T}_{\d_{Y}}(Y)$, then by Lemma \ref{lem:def of thickening of Tm,eta}(iv), $|u_1-u_2|\geq (2C)^{-1}\max\{\d_X,\d_Y\}$. Suppose now that $u_1\in B(v,2R_{X,n+1})$, and $u_2\in B(v',2R_{Y,n+1})$ for some $v\in \partial X$ and $v'\in \partial Y$. Assume as we may that $\d_X\geq \d_Y$. By Definition \ref{def:tile decomposition}(iii), \eqref{eq:def of delta X} and the choice of $R_{X,n+1},R_{Y,n+1}$ we have 
\begin{align*}
|u_1-u_2|\geq \dist(X,Y)-2R_{X,n+1}-2R_{Y,n+1}&\geq\frac{\diam X}{C}-2^{14}C^4C_T^2\d_X\\ &>\frac{\max\{\d_X,\d_Y\}}{2C}.
\end{align*}
The rest of the cases follow similarly.

Suppose now that $x=y$. Then $i\neq j$, and $X\cap Y\neq\emptyset$. We claim that 
\[\G_{n+1}[w,q_{x,n+1}^i]\cap\G_{n+1}[z,q_{y,n+1}^j]=\emptyset.\] 
Assuming the claim, it follows from Lemma \ref{lem:properties of thickening}(iii), Lemma \ref{lem:arzela curves}(iv), Lemma \ref{lem:diameter of the sides of W.S}(iii) that $$\G_{n+1}[z,q_{x,n+1}^j]\subset \bigcup_{Z\in {\bf X}^{n+1}, Z\cap X\neq\emptyset}\mathscr{T}_{\d_Z}(Z)$$
and 
$$\G_{n+1}[w,q_{x,n+1}^i]\subset \bigcup_{W\in {\bf X}^{n+1}, W\cap Y\neq\emptyset}\mathscr{T}_{\d_W}(W).$$
In particular, by construction of $\G_{n+1}$, arcs $\G_{n+1}[w,q_{x,n+1}^i]$ and $\G_{n+1}[z,q_{x,n+1}^j]$ are in $\mathcal{G}_{\lambda}$ where $\lambda=\min_{Z\in {\bf X}^{n+1},Z\cap(X\cup Y)\neq\emptyset}\d_Z$. Hence, by Definition \ref{def:tile decomposition}(ii), \eqref{eq:def of delta X}, and the above facts we have
$$\dist(\G_{n+1}[z,q_{y,{n+1}}^j],\G_{n+1}[w,q_{x,{n+1}}^i])\geq \lambda\geq \frac{\max\{\d_{X},\d_Y\}}{2C}.$$
We show the claim. By Lemma \ref{lem:arzela curves}(vi),(viii), and the fact that $\G_{n+1}$ is a Jordan curve, it is enough to show that $w\neq z$. Suppose then that $w=z$. By Lemma \ref{lem:properties of BM}(vi) and Lemma \ref{lem:arzela curves}(vi),(vii), the set
$$\g_{x,{n+1}}^i\cup\G_{n+1}[q_{x,{n+1}}^i,w]\cup\G_{n+1}[w,q_{x,{n+1}}^j]\cup\g_{x,{n+1}}^j$$
is a Jordan curve enclosing a Jordan domain $X\subset O\subset \Omega_{n+1}$. Therefore, if $\g_{v_w,n+1}^{k_0}$ is the arc that does not contain $w$, then Lemma \ref{lem:arzela curves}(i),(vii) asserts that $\g_{v_w,n+1}^{k_0}\cap \partial O\neq\emptyset$. We reach a contradiction from Lemma \ref{lem:arzela curves}(i),(vii) and the fact that the points $q_{x,{n+1}}^i,w$ and $q_{x,{n+1}}^j,w$ are adjacent. 
\end{proof}

\subsection{Polygonal approximations} In the following lemma we approximate the John curves found in Lemma \ref{lem:arzela curves} with polygonal ones.
\begin{lem}\label{lem:polygonal curves}
     There exists a family of arcs
\begin{align*}
\{\zeta_{x,k}^i: x\in {\bf W}^*, k\geq \level(x), i\in\{1,2\}\}
\end{align*} 
so that the following are true for all $x\in {\bf W}^*$, $k\geq \level(x)$, $i\in\{1,2\}$, and for $X_{x,k}^1,X_{x,k}^2$ that are the unique distinct $k$-tiles that contain $x$.
\begin{enumerate}[(i)]
\item One endpoint of $\zeta_{x,k}^i$ is $x$, the other is on $\G_k$, which we call $p_{x,k}^i$, the rest of the curve is in $\Omega_k\setminus T$, and if $l\geq k$, then $\zeta_{x,k}^i\cap \G_l=\{p_{x,l}^i\}$.
\item If $l\ge k$, then $\zeta_{x,l}^i=\zeta_{x,k}^i[x,p_{x,l}^i]$. 
\item Let $0<r\leq \min\{\d_{X_{x,k}^1},\d_{X_{x,k}^2}\}$ and $p_{r},q_{r}\in\partial B(x,r)\cap T$ such that $x\in T(p_{r},q_{r})\subset B(x,r)$. Denote by $U_{x,r}^1,U_{x,r}^2$ the connected components of $B(x,r)\setminus T[p_{r},q_{r}]$. The connected component of $B(x,r)\cap \zeta_{x,k}^i$ that contains $x$ lies in $U_{x,r}^i\cup\{x\}$. 
\item We have $\zeta_{x,k}^i[p_{x,k}^i,p_{x,k+1}^i]\subset\mathcal{G}_{\lambda_{k}}$ where $\lambda_k=2^{-l_k}$ and $l_k\in\N$ is chosen such that $2^{-l_k}<(16C)^{-1}{\min\{\d_{X_{x,k+1}^1},\d_{X_{x,k+1}^2}\}}\leq 2^{-l_k+1}$.
\item We have $\diam\zeta_{x,k}^i[p_{x,k}^i,p_{x,k+1}^i]\simeq_{C_T,C}\min\{\d_{X_{x,k+1}^1},\d_{X_{x,k+1}^2}\}$. 
\item Suppose that $p_{y,k}^j$ is adjacent of $p_{x,k}^i$ in $\G_k$ for some $y\in {\bf W}^k$ and $j\in \{1,2\}$. There exists $X\in {\bf X}^k$ such that $x,y\in \partial X$, $T(x,y)\subset X\setminus\partial X$, and $\G_k[p_{x,k}^i,p_{y,k}^j]\subset \mathcal{G}_{\d_Z}$ where $Z\in{\bf X}^n$ such that $\d_Z=\min_{W\in{\bf X}^n,W\cap X\neq\emptyset}\d_W$. Last, we have that $\diam \G_k[p_{x,k}^i,p_{y,k}^j]\simeq_{C_T,C}\d_Z$.
\item We have $\zeta_{x,k}^1\cap\zeta_{x,k}^2=\{x\}$.
\item Let $y\in {\bf W}^*\setminus\{x\}$, $l\geq \level(y)$, $j\in\{1,2\}$, and $t=\max\{l+1,k+1\}$. Let also $Y_{y,t}^1,Y_{y,t}^2$ be the unique distinct $t$-tiles that contain $y$. Then $$\dist(\zeta_{x,k}^i,\zeta_{y,m}^j)\gtrsim_{C_T,C}\max\{ \d_{X_{x,t}^1},\d_{X_{x,t}^2}, \d_{Y_{y,t}^1},\d_{Y_{y,t}^2}\}.$$
\end{enumerate}
\end{lem}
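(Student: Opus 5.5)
We build the arcs $\zeta_{x,k}^i$ by replacing the John arcs $\g_{x,k}^i$ of Lemma \ref{lem:arzela curves} annulus by annulus. Fix $x\in{\bf W}^*$, $n=\level(x)$ and $i$. For each $k\ge n$ the subarc of $\g_{x,n}^i$ in the closed ring $\overline{\Omega_k}\setminus\Omega_{k+1}$ with one endpoint on $\G_k$ is the arc $\omega_{x,k}^i$ of Lemma \ref{lem:arzela curves}(viii). We replace it by a polygonal arc of the grid $\mathcal{G}_{\lambda_k}$ and then concatenate. Since $\G_k,\G_{k+1}$ lie in coarser dyadic grids $\mathcal{G}_{\d}$ with $\d\ge\d_{k+1}\ge 16C\lambda_k$, they are unions of sides of $\mathcal{G}_{\lambda_k}$. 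We let $\zeta_{x,n}^i$ be the union of these pieces, together with $x$, and set $\zeta_{x,k}^i:=\zeta_{x,n}^i\cap\overline{\Omega_k}$. Then (ii) holds by construction, and (i) follows once each piece meets $\G_k$ and $\G_{k+1}$ in single points.

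Concretely, consider the union $P_k$ of all closed squares of $\Sigma_{\lambda_k}$ meeting $\omega_{x,k}^i$. This set is connected. By Lemma \ref{lem:arzela curves}(iii) it lies in $B(x,r_{x,k})$, and since $\lambda_k<(16C)^{-1}\min\{\d_{X^1_{x,k+1}},\d_{X^2_{x,k+1}}\}$, together with (viii), (vii) and Lemma \ref{lem:prerequisite for polygonal appr}(i), it stays at distance $\gtrsim\max_j\d_{X^j_{x,k+1}}$ from $T$ (away from $x$), from the other arc $\g^{i'}_{x,k}$, from all arcs $\g_{y,m}^j$ with $y\neq x$, and from the sides $\G_{k+1}[y_1,y_2]$ not adjacent to $q^i_{x,k+1}$. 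Inside the boundary of $P_k$ intersected with the ring we pick a grid path from a vertex of $\G_k$ near $q^i_{x,k}$ to a vertex of $\G_{k+1}$ near $q^i_{x,k+1}$. We then shorten it to a simple path that touches $\G_k$ and $\G_{k+1}$ only at its endpoints. This uses that $\overline{\Omega_k}\setminus\Omega_{k+1}$ is a grid annulus, so a first-hit/last-exit truncation works.

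The consecutive pieces share the endpoints $p^i_{x,k+1}$. Near $x$ we must also treat infinitely many scales: the pieces shrink geometrically by Lemma \ref{lem:properties of BM}(viii) and Definition \ref{def:tile decomposition}(v), so the union converges to $x$ and is an arc. The diameter bound (v) follows from $\diam P_k\lesssim \ell(\omega_{x,k}^i)+\lambda_k$ and (iii) of Lemma \ref{lem:arzela curves} for the upper bound. The lower bound holds because the ring has width $\gtrsim \d_{X_{x,k+1}}$ near $x$ by Lemma \ref{lem:properties of thickening}(ii) and Lemma \ref{lem:def of thickening of Tm,eta}(iii). Property (iv) is built in. Properties (vii) and (viii) follow from the separation estimates just listed, with constants degraded by $O(\lambda_k)$. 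For (iii), the side condition, note that each piece at scale $\ge k$ lies within $O(\lambda_k)$ of $\g^i_{x,k}$, which lies in $U^i$ by Lemma \ref{lem:arzela curves}(v). Since the polygon stays off $T$ and off the other side, the component through $x$ cannot cross $T[p_r,q_r]$.

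Finally, (vi) is Lemma \ref{lem:diameter of the sides of W.S1} and Lemma \ref{lem:diameter of the sides of W.S}(iii), applied with $q$ replaced by $p$. Adjacency of the $p$'s matches adjacency of the $q$'s because $p^i_{x,k}$ is within $O(\lambda_{k-1})$ of $q^i_{x,k}$ along $\G_k$, and Lemma \ref{lem:prerequisite for polygonal appr}(ii) separates distinct such short arcs. The grid statement holds since $\G_k$ is a union of grid arcs by Lemma \ref{lem:def of thickening of Tm,eta}(i). The main obstacle is the simple-path extraction: we must guarantee that the polygonal piece meets $\G_{k+1}$ exactly once, and on the correct short arc $\G_{k+1}(w_1,w_2)$ around $q^i_{x,k+1}$. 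We will handle this using Lemma \ref{lem:prerequisite for polygonal appr}(i) and by choosing the last exit point from $\Omega_{k+1}$.
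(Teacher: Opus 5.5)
Your scheme is the paper's: replace $\g^i_{x,n}$ ring by ring by a grid path in the boundaries of the $\lambda_k$-squares of $\overline{\Omega_k}\setminus\Omega_{k+1}$ that meet the arc, then concatenate. There is, however, a genuine gap at exactly the step you flag as the main obstacle, and your proposed remedy does not close it.

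\textbf{Where the construction breaks.} The arc $\omega^i_{x,k}$ of Lemma \ref{lem:arzela curves}(viii) runs from $q^i_{x,k}$ to the point $w$ where $\g^i_{x,n}$, travelling from $\G_k$, first meets $\G_{k+1}$. By contrast, $q^i_{x,k+1}$ is where the arc first meets $\G_{k+1}$ travelling from $x$, and this is where $\omega^i_{x,k+1}$, and hence your next piece, begins.
\begin{itemize}
\item In general $w\neq q^i_{x,k+1}$, and nothing forces $|w-q^i_{x,k+1}|$ to be $O(\lambda_k)$.
\item The sub-arc of $\g^i_{x,n}$ between $w$ and $q^i_{x,k+1}$ may enter and leave $\Omega_{k+1}$, so ring squares do not shadow it.
\item Hence the part of $\partial P_k$ inside the ring reaches $\G_{k+1}$ only near $w$. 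There is no vertex of $\G_{k+1}$ near $q^i_{x,k+1}$ on it.
\item If you call the endpoint near $w$ your $p^i_{x,k+1}$, the next piece (built from $\omega^i_{x,k+1}$) does not start there.
\end{itemize}
Either way, consecutive pieces do not share endpoints. Taking ``the last exit point from $\Omega_{k+1}$'' just returns $w$ again.

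\textbf{The missing bridge along $\G_{k+1}$.} This is what the paper supplies.
\begin{itemize}
\item By Lemma \ref{lem:prerequisite for polygonal appr}(i), $\g^i_{x,k}\cap\G_{k+1}$, and in particular $w$, lies in $\G_{k+1}(w_1,q^i_{x,k+1}]\cup\G_{k+1}[q^i_{x,k+1},w_2)$, where $w_1,w_2$ are the neighbours of $q^i_{x,k+1}$ in $D_{k+1}$.
\item Let $\alpha\subset\G_{k+1}$ be the sub-arc from $w$ to $q^i_{x,k+1}$. Take the $\lambda_k$-squares of the closed ring that meet $\alpha$.
\item Near $x$, $\G_{k+1}$ lies in $\mathcal{G}_{\min_j \d_{X^j_{x,k+1}}}$ by Lemma \ref{lem:arzela curves}(iv), and $\lambda_k$ divides that mesh. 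So these squares form a connected strip along $\G_{k+1}$.
\item Run a grid path in the boundaries of these squares, inside the open ring, from the corner reached near $w$ to a side meeting $\G_{k+1}$ only at a point $p^i_{x,k+1}$ adjacent to $q^i_{x,k+1}$.
\item The level-$k$ piece is then an arc in the union of the two polygonal paths.
\end{itemize}

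\textbf{Consequences for your other claims.} The bridge is $O(\lambda_k)$-close to $\alpha$, not to $\g^i_{x,k}$, so your argument for (iii), (vii), (viii) does not cover it.
\begin{itemize}
\item Its separation from the other side at $x$, from the arcs $\g^j_{y,m}$, and from the non-adjacent sides of $\G_{k+1}$ must come from Lemma \ref{lem:prerequisite for polygonal appr}(i),(ii).
\item Its contribution to the upper bound in (v) comes from Lemma \ref{lem:diameter of the sides of W.S}(iii).
\end{itemize}
Separately, your claim that $\G_k,\G_{k+1}$ are globally unions of sides of $\mathcal{G}_{\lambda_k}$ is false. The quantity $\d_{k+1}=\min_{X\in{\bf X}^{k+1}}\d_X$ is a global minimum and can be far smaller than $\lambda_k$, which is tied to the tiles containing $x$. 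Only the local statement near $x$ from Lemma \ref{lem:arzela curves}(iv) is available, and the construction has to be carried out with it.
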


\begin{proof}
Let $x\in {\bf W}^*$, $n=\level(x)$, and $i\in \{1,2\}$. Also, for each $m\geq n$, let $X_{x,m}^1,X_{x,m}^2$ be the $m$-tiles such that $\{x\}=X_{x,m}^1\cap X_{x,m}^2$, given from Lemma \ref{lem:properties of BM}(ii),(iii). Recall the arc $\g_{x,n}^i$ from Lemma \ref{lem:arzela curves}. For each $k\geq n$ let $q_{x,k}^i$ be the endpoint of the arc $\g_{x,k}^i$ on $\G_k$. By Lemma \ref{lem:arzela curves}(ii) we have $\g_{x,n}^i=\bigcup_{k=n}^{\infty}\g_{x,k}^i[q_{x,k}^i,q_{x,k+1}^i]$. We replace each arc $\g_{x,k}^i[q_{x,k}^i,q_{x,k+1}^i]$ by a polygonal one satisfying the desired properties. We do this by induction on $k$. 
 
Suppose first that $k=n$. Let $w$ be the first time that the arc $\g_{x,n}^i$ meets $\G_{{n+1}}$ as we travel from $q_{x,n}^i$ to $q_{x,n+1}^i$. By Lemma \ref{lem:def of thickening of Tm,eta}(ii),(iii) we have $O_n:=\Omega_{{n}}\setminus\overline{\Omega_{{n+1}}}\neq\emptyset$. Hence, Lemma \ref{lem:arzela curves}(i) gives that $\g_{x,n}^i(q_{x,n}^i,w)\subset O_n$. See Figure \ref{fig:polygonal arcs step 1}.
\begin{figure}[H]
\centering
\includegraphics[width=0.5\linewidth]{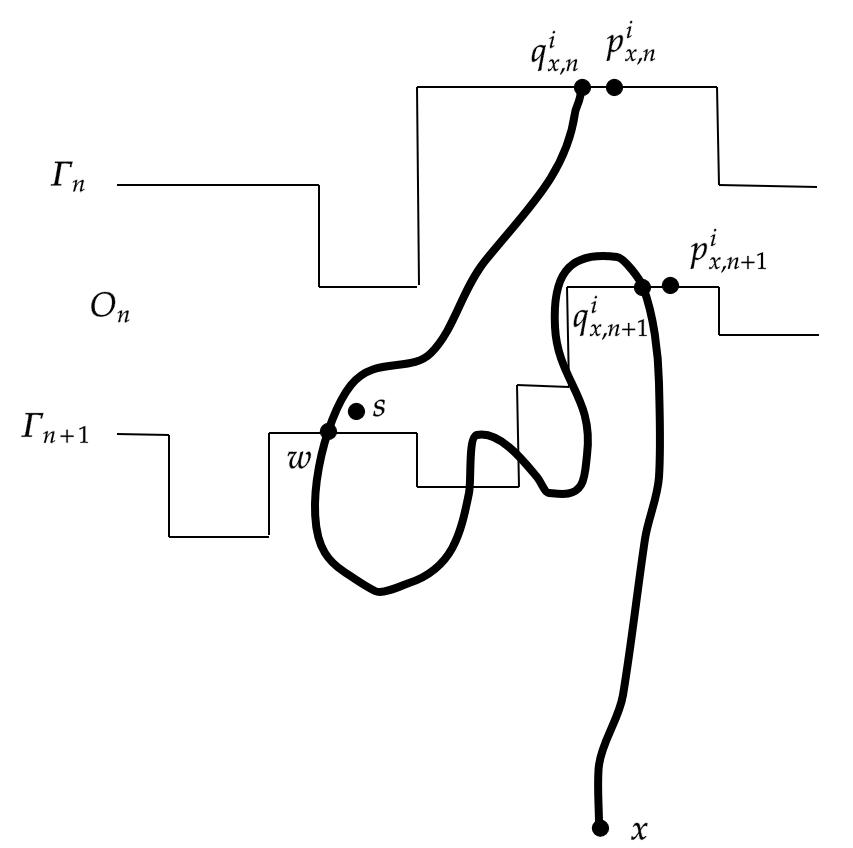}
\caption{}
\label{fig:polygonal arcs step 1}
\end{figure}

First, we replace the arc $\g_{x,n}^i[q_{x,n}^i,w]$. Let $\lambda_{n}$ as in the statement and consider the square grid $\mathcal{G}_{\lambda_{n}}$ and the associated family of (closed) squares $\Sigma_{\lambda_{n}}$. Let 
$$\mathcal{A}=\{S\in \Sigma_{\lambda_{n}}\cap\overline{O_n}:S\cap \g_{x,n}^i[q_{x,n}^i,w]\neq\emptyset\}.$$
Recall the scales $r_{x,l}$, $l\geq n$ from Lemma \ref{lem:arzela curves}. For simplicity, write $r_l$. We claim that 
\begin{align}\label{eq:local distance of Gn,Gn+1}
\dist(\G_n\cap B(x,r_n),\G_{n+1}\cap B(x,r_{n+1}))>4\lambda_{n}.
\end{align}
Assuming the claim, since by Lemma \ref{lem:arzela curves}(iii), $\g_{x,n}^i\subset B(x,r_{x,n+1})\cup B(x,r_{x,n})$, it follows that $\mathcal{A}\neq\emptyset$. To show the claim, let $z_1\in \G_{n+1}\cap B(x,r_{n+1})$ and $z_2\in \G_n\cap B(x,r_n)$. By Lemma \ref{lem:arzela curves}(iv), $z_1\in \mathscr{T}_{\d_{X_{x,n+1}^j}}(X_{x,n+1}^j)$ and $z_2\in \mathscr{T}_{\d_{X_{x,n}^{j'}}}(X_{x,n}^{j'})$ for some $j,j'\in \{1,2\}$. Lemma \ref{lem:properties of thickening}(ii) asserts the existence of $z\in X_j^{n+1}$ such that $|z_2-z|<8\d_{X_{x,n+1}^j}$. Lemma \ref{lem:properties of thickening}(ii) and \eqref{eq:def of delta X} now give
$$|z_1-z_2|\geq |z_2-z|-|z_1-z|\geq \d_{X_{x,n}^{j'}}-8\d_{X_{x,n+1}^j}>4\lambda_{n}.$$
The proof of the claim is complete.

By Lemma \ref{lem:arzela curves}(iv), $\G_n\cap B(x,r_{n})\subset \mathcal{G}_{\min\{\d_{X_{x,n}^1},\d_{X_{x,n}^2}\}}$ and by \eqref{eq:def of delta X}, $\lambda_n$ divides $\min\{\d_{X_{x,n}^1},\d_{X_{x,n}^2}\}$. Thus, there exists $S_1\in\mathcal{A}$ such that $q_{x,n}^i\in \partial S_1$. Let $\ell_1$ be a side of $\partial S_1$ such that $\ell_1$ meets $\G_n$ at a unique point, which we call $p_{x,n}^i$, and $\ell_1\setminus\{p_{x,n}^i\}\subset O_n$ from \eqref{eq:local distance of Gn,Gn+1}. Let $t\in\ell_1$ be the endpoint of $\ell_1$ on $O_n$. The point $p_{x,n}^i$ is the replacement of the point $q_{x,n}^i$. The fact that $\G_{n+1}\cap B(x,r_{n+1})\subset \mathcal{G}_{\min\{\d_{X_{x,n+1}^1},\d_{X_{x,n+1}^2}\}}$ and that $\lambda_n$ divides $\min\{\d_{X_{x,n+1}^1},\d_{X_{x,n+1}^2}\}$ implies that there exists a square $S_2\in\mathcal{A}$ such that $w\in \partial S_2$. By \eqref{eq:local distance of Gn,Gn+1} there exists ${s}\in \partial S_2\cap O_n$ that is a corner of $S_2$. Since $\g_{x,n}^i(q_{x,n}^i,w)\subset O_n$ is connected we can find an arc $\omega\subset \bigcup_{S\in\mathcal{A}}\partial S\cap O_n$ that connects $t$ to $s$. See Figure \ref{fig:polygonal arcs step 2}.

\begin{figure}[H]
\centering
\includegraphics[width=0.7\linewidth]{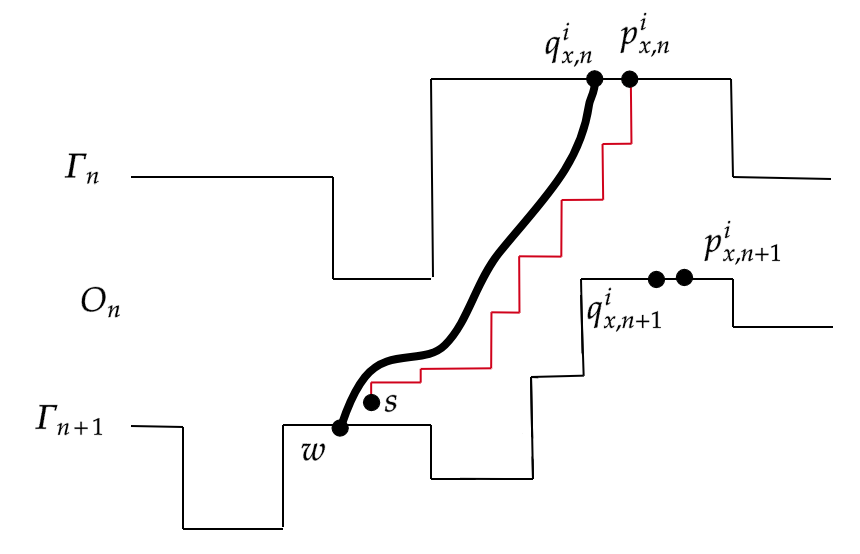}
\caption{}
\label{fig:polygonal arcs step 2}
\end{figure}

Set now 
$\omega_{x,n}^i=\ell_1\cup \omega$. Note that $\omega_{x,n}^i$ is an arc that connects $p_{x,n}^i$ to $s$. Since $\omega_{x,n}^i\subset \overline{O_n}$, it follows from Lemma \ref{lem:def of thickening of Tm,eta} that $\omega_{x,n}^i\cap T=\emptyset$. Note that if $z\in \g_{x,n}^i[q_{x,n}^i,w]$ and $z'\in \omega_{x,n}^i$, then $|z-z'|\leq 2\lambda_n$. Thus, by Lemma \ref{lem:arzela curves}(viii)  we have $\omega_{x,n}^i\cap \{\g_{y,m}^1,\g_{y,m}^2\}=\emptyset$ for $y\in{\bf V}^m\setminus\{x\}$ and that $\omega_{x,n}^i\cap\g_{x,n}^j=\emptyset$ for $j\in\{1,2\}\setminus\{i\}$. Last, Lemma \ref{lem:properties of thickening}(ii), Lemma \ref{lem:arzela curves}(iii), and \eqref{eq:def of delta X} give 
\begin{align*}
\diam \omega_{x,n}^i&\leq 4\lambda_n+ \diam \g_{x,n}^i[q_{x,n}^i,w]\\
&\leq 4\lambda_n+\ell(\g_{x,n}^i)\\ 
&\leq 4\lambda_n+64 C_ TC^3\min\{\d_{X_{x,n+1}^1},\d_{X_{x,n+1}^2}\}\\ 
&\lesssim_{C_T,C}\min\{\d_{X_{x,n+1}^1},\d_{X_{x,n+1}^2}\}.
\end{align*}

We replace now the arc $\g_{x,n}^i(w,q_{x,n+1}^i]$. Note that $q_{x,n+1}^i\in D_{n+1}$ and hence, denote by $z_{1},z_{2}$ its adjacent points in $\G_{n+1}$. By Lemma \ref{lem:prerequisite for polygonal appr}(i) there exists $m\in\{1,2\}$ such that $w\in\G_{n+1}[q_{x,n+1}^i,z_{m}]$. Denote by $\alpha$ the arc that lie in $\G_n[z_m,q_{x,n+1}^i]$ that connects $w$ to $q_{x,n+1}^i$. Consider the following set
$$\mathcal{A}'=\{S\in\Sigma_{\lambda_n}\cap \overline{O_n}:S\cap \alpha\neq\emptyset\}.$$
Note that by \eqref{eq:local distance of Gn,Gn+1}, $S_2\in\mathcal{A}'$. Moreover, by \eqref{eq:def of delta X} and Lemma \ref{lem:arzela curves}(iii),(iv) we have $\g_{x,n}^i\subset B(x,r_{n+1})$, $\G_{n+1}\cap B(x,r_{n+1})\subset \mathcal{G}_{\min\{\d_{X_{x,n+1}^1},\d_{X_{x,n+1}^2}\}}$, and that $\lambda_n$ divides $\min\{\d_{X_{x,n+1}^1},\d_{X_{x,n+1}^2}\}$. Hence, there exists a square $S_3\in\mathcal{A}'$ such that $q_{x,n+1}^i\in \partial S_3$. Let $\ell_2$ be a side of $\partial S_3$ that meets $\G_{n+1}$ at a unique point, which we call $p_{x,n+1}^i\in\G_{n+1}\cap \partial S_3$, and the other endpoint is in $O_n$ from \eqref{eq:local distance of Gn,Gn+1}, say $t'$, and $\ell_{2}\setminus\{p_{x,n+1}^i\}\subset O_n$. Since $\alpha$ is connected we can find an arc $\omega'\subset \bigcup_{S\in \mathcal{A}'}\partial S\cap O_n$ that connects $s$ to $t'$. See Figure \ref{fig:polygonal arcs step 3}.

\begin{figure}[H]
\centering
\includegraphics[width=0.7\linewidth]{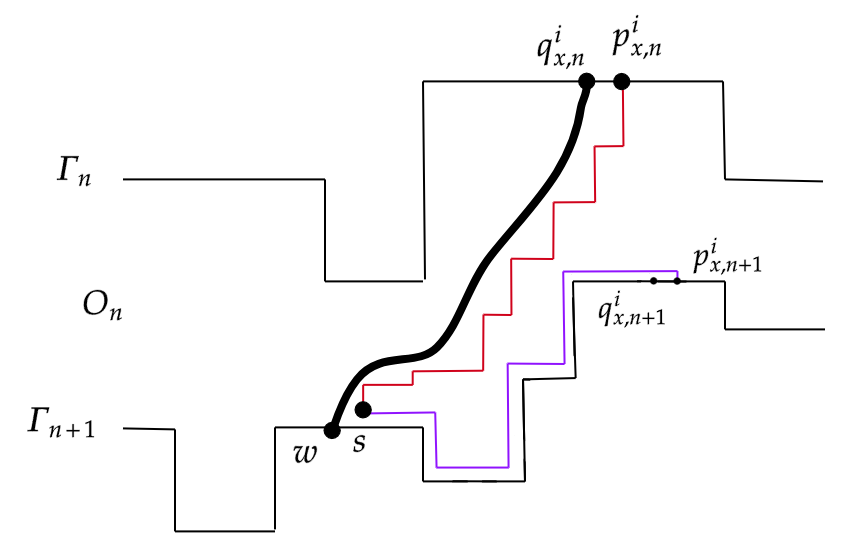}
\caption{}
\label{fig:polygonal arcs step 3}
\end{figure}

Set now $\omega_{x,n+1}^i=\omega'\cup\ell_{2}$. Note that $\omega_{x,n+1}^i$ is an arc that connects $s$ to $p_{x,n+1}^i$. Since $\omega_{x,n+1}\subset \overline{O_n}$, it follows now from Lemma \ref{lem:def of thickening of Tm,eta} that $\omega_{x,n+1}^i\cap  T=\emptyset$. Note also that if $z\in \alpha$ and $z'\in \omega_{x,n+1}^i$, then $|z-z'|\leq 2\lambda_n$. Thus, by Lemma \ref{lem:prerequisite for polygonal appr}(i) we have $\omega_{x,n+1}^i\cap \{\g_{y,m}^1,\g_{y,m}^2\}=\emptyset$ for $y\in{\bf V}^m\setminus\{x\}$ and that $\omega_{x,n+1}^i\cap \g_{x,n}^j=\emptyset$ for $j\in\{1,2\}\setminus\{i\}$. Last, Lemma \ref{lem:diameter of the sides of W.S}(iii) and \eqref{eq:def of delta X} give
\begin{align*}
\diam \omega_{x,n+1}^i&\leq 4\lambda_n+\diam \alpha\\
&\leq 4\lambda_n+\diam\G_{n+1}[q_{x,n+1},z_m]\\ 
&\lesssim_{C,C_T}\min\{\d_{X_{x,n+1}^1},\d_{X_{x,n+1}^2}\}.
\end{align*}

Let now $\zeta_n^i\subset\omega_{x,n}^i\cup\omega_{x,n+1}^i$ be an arc that connects $p_{x,n}^i$ to $p_{x,n+1}^i$. By construction, $\zeta_n^i\cap T=\emptyset$, $\zeta_n^i\subset\mathcal{G}_{\lambda_n}\cap \overline{\Omega_n}$, $\zeta_n^i\cap \G_n=\{p_{x,n}^i\}$, $\zeta_n^i\cap\G_{n+1}=\{p_{x,n+1}^i\}$, $\zeta_n^i\cap \{\g_{y,m}^j\}_{j\in\{1,2\}}=\emptyset$ for $y\in{\bf V}^m\setminus\{x\}$, $\zeta_{n}^i\cap\g_{x,n}^j=\emptyset$ for $j\in\{1,2\}\setminus\{i\}$, and $\diam \zeta_n^i\lesssim\min\{\d_{X_{x,n+1}^1},\d_{X_{x,n+1}^2}\}.$ For the lower bound, 
similar arguments with \eqref{eq:local distance of Gn,Gn+1} show that 
\[ |p_{x,n}^i-p_{x,n+1}^i|\gtrsim_{C_T,C}\min\{\d_{X_{x,n+1}^1},\d_{X_{x,n+1}^2}\}.\]

Suppose now that we have replaced the arc $\g_{x,k}^i[q_{x,k}^i,q_{x,k+1}^i]$, for $k\geq n$, with $\zeta_k^i[p_{x,k}^i,p_{x,k+1}^i]$. To replace the arc $\g_{x,k}^i[q_{x,k+1}^i,q_{x,k+2}^i]$ the process is similar with the basis step with the only difference that we replace the point $q_{x,k+1}^i$ with $p_{x,k+1}^i$.

For each $k\geq n$, set 
$$\zeta_{x,k}^i=\bigcup_{\ell=k}^{\infty}\zeta_{\ell}^i\cup\{x\}=\bigcup_{\ell=k}^{\infty}\zeta_{x,\ell}^i[p_{x,\ell}^i,p_{x,\ell+1}^i]\cup\{x\}.$$
Note that $\zeta_{x,k}^i$ is a closed arc since by Lemma \ref{lem:arzela curves}(iii)
$$\dist(x,\zeta_{x,k}^i)\leq |x-p_{x,k}^i|\leq |x-q_{x,k}^i|+|q_{x,k}^i-p_{x,k}^i|\lesssim_{C_T,C}\min\{\d_{X_{x,k+1}^1},\d_{X_{x,k+1}^2}\}$$ 
and thus for $k\to \infty$, $\min\{\d_{X_{x,k+1}^1},\d_{X_{x,k+1}^2}\}\to 0$ from Lemma \ref{lem:properties of BM}(viii) and \eqref{eq:def of delta X}.

Claims (i),(ii),(iv),(v) follow from the construction.

Claims (iii),(vii) follow from the construction and Lemma \ref{lem:arzela curves}(v),(vii),(viii).

We show claim (vi). The existence of $k$-tile $X$ follows similar arguments with Lemma \ref{lem:diameter of the sides of W.S}(i) where instead of Lemma \ref{lem:arzela curves}(v), we use (iii). The fact that $T(x,y)\subset X\setminus\partial X$ follows from Lemma \ref{lem:properties of BM}(iv). The fact that $\G_n[p_{x,n}^i,p_{y,n}^j]\subset \mathcal{G}_{\d_Z}$ where $Z\in{\bf X}^n$ such that $\d_Z=\min_{W\in{\bf X}^n,W\cap X\neq\emptyset}\d_W$ and the estimate of $\diam \G_n[p_{x,n}^i,p_{y,n}^j]$  follows from Lemma \ref{lem:arzela curves}(iv), Lemma \ref{lem:diameter of the sides of W.S}(iii), and the fact that for each $n\in\N$, $z\in {\bf V}^n$, $|q_{x,n}^i-p_{x,n}^i|\leq 2\lambda_n$.

Finally, claim (viii) follows Lemma \ref{lem:arzela curves}(vii), Lemma \ref{lem:prerequisite for polygonal appr}(i),(ii), and the fact that for each $n\in\N$, $z\in {\bf V}^n$, $|q_{x,n}^i-p_{x,n}^i|\leq 2\lambda_n$.
\end{proof}

We are finally ready to prove Proposition \ref{prop:polygonal curves}.

\begin{proof}[{Proof of Proposition \ref{prop:polygonal curves}}]
Fix a tree $T\subset \R^2$ that is $C_T$-bounded turning and has a $C$-tile decomposition $({\bf X}^n)_{n\in\N}$. Let $(k_X)_{X\in{\bf X}^n}$, $(\d_X)_{X\in{\bf X}^n}$ as in \eqref{eq:def of delta X} and let $({\bf V}^n)_{n\in\N_0}$ as in Definition \ref{def:tile decomposition}. Note that Definition \ref{def:tile decomposition}(iv) asserts that $k_X\simeq_{C}k_Y$ for any $X\in {\bf X}^n$ and $Y\in {\bf X}^{n+1}$ with $X\cap Y\neq\emptyset$. Let also $(\Omega_n)_{n\in\N}$ as in Lemma \ref{lem:def of thickening of Tm,eta} and set $\G_n=\partial \Omega_n$. Note that by Lemma \ref{lem:def of thickening of Tm,eta}, $\Omega_1\supset\Omega_2\supset\cdots$. Let $K\in \N$ be the constant given from Definition \ref{def:tile decomposition}(i) and let $N\in\N$ such that $N>4K$. Note that $N$ depends only on $C$. 

We define the set $\mathscr{W}$ in an inductive fashion. To that end, let 
$$K_n=\{p_{x,n}^i:x\in {\bf V}^n, i\in \{1,2\}\},$$
where $p_{x,n}^i$ is as in Lemma \ref{lem:polygonal curves}(i).
Fix a leaf $1$-tile $X$ in $T$ and let $\{v\}=\partial X$. Without loss of generality let $p_1,\dots,p_{m_1}$ be a counterclockwise enumeration of the points in $K_1$ along $\G_1$ with $p_1=p_{v,1}^1$ and $p_{2}=p_{v,1}^2$. By Definition \ref{def:tile decomposition}(i), Lemma \ref{lem:polygonal curves} and the definition of $N$ we have $m_1\in \{4,\dots, N\}$. The points $p_1,\dots,p_{m_1}$ induce a sequence on the double points of ${\bf V}^1$ and the arcs $(\zeta_{x,1}^i)_{x\in {\bf V}^1,i\in \{1,2\}}$, namely
\begin{align*}
x_1,x_2,\dots,x_{m_1}\qquad \text{and}\qquad \zeta_1,\zeta_2\dots,\zeta_{m_1}.
\end{align*}
where $x_1=x_2=v$, and $\zeta_1=\zeta_{v,1}^1$, $\zeta_2=\zeta_{v,1}^2$.

Set $\mathscr{W}_0=\{\e\}$ and $\mathscr{W}_1=\{1,\dots,m_1\}\subset \{1,\dots,N\}$. Suppose that we have defined $\mathscr{W}_n=\{w_1^n,\dots,w_{m_n}^n\}\subset \{1,\dots,N\}^{n}$ for some $n\in\N$. For simplicity, we write $w_i=w_i^n$. Let 
$$\mathscr{W}_{n+1}=\{w_11,\dots,w_1N_{w_1},w_21,\dots,w_{m_n}1,\dots, w_{m_n}N_{w_{m_n}}\}$$
where for each index $i\in \{1,\dots,m_n\}$ and each word $u\in \{1,\dots,N_{w_i}\}$, $p_{w_iu}$ is the unique endpoint of $\zeta_{w_i}|\overline{\Omega_{n+1}}$ on $\G_{n+1}$ by Lemma \ref{lem:polygonal curves}(i). Set $x_{w_i1}=x_{w_i}$ and $\zeta_{w_i1}=\zeta_{w_i}|\overline{\Omega_{n+1}}$. 
Finally, set $\mathscr{W}=\bigcup_{n\in\N_0}\mathscr{W}_n$ and note that for each $n\in\N$, $\card(\mathscr{W}_n)<\infty$.

Properties (ii.a) and (ii.c) follow from the design of $\mathscr{W}$. Property (ii.b) follows from Lemma \ref{lem:properties of BM}(ii),(iii). Property (ii.d) follows from Lemma \ref{lem:polygonal curves}(viii). Property (ii.e) follows the fact that for each $n\in\N$, $T=\bigcup_{X\in {\bf X}^n}X$. 

Property (iii) follows mainly from . Properties (iii.a) and (iii.b) follow from Lemma \ref{lem:polygonal curves}(i). Property (iii.c) follows from Lemma \ref{lem:polygonal curves}(ii),(iv),(v). Property (iii.e) follows from Lemma \ref{lem:polygonal curves}(viii). Last, property (iii.d) follows from the design of $\mathscr{W}$.

Property (v) follows from Lemma \ref{lem:polygonal curves}(iii),(vii)

It remains to show properties (i) and (iv). Property (i.b) follows from the design of $\mathscr{W}$. Property (iv.a) follows from  Lemma \ref{lem:properties of thickening}(ii) and similar arguments with Lemma \ref{lem:def of thickening of Tm,eta}(iii),(iv). For the rest of the proof we show properties (i.a) and (iv.b) simultaneously.

Fix $n\in\N$, $i\in \{1,\dots,m_n\}$ and $w_i\in\mathscr{W}_n$. We show that $N_{w_i}\in \{2,\dots,N\}$. Note that by design of $\mathscr{W}$, the points $p_{w_i},p_{w_{i+1}}$ are adjacent in $\G_n$. By Lemma \ref{lem:polygonal curves}(i),(vii),(viii) the set
$$\zeta_{w_i}\cup T[x_{w_i},x_{w_{i+1}}]\cup \zeta_{w_{i+1}}\cup \G_n[p_{w_{i+1}},p_{w_i}].$$
is a Jordan curve and let $O$ denote the Jordan domain enclosed by it. Using similar arguments as in the proof of Lemma \ref{lem:diameter of the sides of W.S}(i),(ii), it follows that there exists a unique $X\in{\bf X}^n$ such that $x_{w_i},x_{w_{i+1}}\in \partial X$, $O\subset \Omega_n$, and that $O\cap T\subset X\setminus\partial X$. Let $\g_1,\g_2$ be the two arcs in $\G_{n+1}$ that connect $p_{w_i1}$ to $p_{w_{i+1}1}$, where $\g_1$ is the arc that contains the points $p_{w_i1},\dots,p_{w_iN_{w_i}},p_{w_{i+1}1}$. 

We claim that $\g_1(p_{w_i1},p_{w_{i+1}1})\subset O$. Assume otherwise. By Lemma \ref{lem:polygonal curves}(i) either $\g_2(p_{w_i1},p_{w_{i+1}1})\subset O$ or $O\cap (\g_1\cup\g_2)=\emptyset$. If the latter is true, then $(\overline{O}\setminus\{p_{w_i1},p_{w_{i+1}1}\})\cap \G_{n+1}=\emptyset$ and since $\overline{O}\setminus\{p_{w_i1},p_{w_{i+1}1}\}$ is connected, and $x_{w_i},x_{w_{i+1}}\in\overline{O}$ we have that $\overline{O}\setminus\{p_{w_i1},p_{w_{i+1}1}\}\subset \Omega_{n+1}$ which is false since $p_{w_i},p_{w_{i+1}}\in\overline{O}\cap \G_n$ and, therefore, by Lemma \ref{lem:def of thickening of Tm,eta} they cannot be interior points of $\Omega_{n+1}$. Suppose now that $\g_2(p_{w_i1},p_{w_{i+1}1})\subset O$. Note that by construction of $\g_2$, there exists $p_{v1}\in \g_2$ where $v\in\mathscr{W}_n\setminus\{w_i,w_{i+1}\}$. By Lemma \ref{lem:properties of BM}(iv) and Lemma \ref{lem:polygonal curves}(i),(vii),(viii) it follows that $\zeta_{v1}\subset O$. In particular, since $O\cap T\subset X\setminus\partial X$, we have that $x_{v1}\in X$. But by (ii), $x_{v1}=x_v$ which implies that $x_v\in \partial X\cup Z$ for some $Z\in{\bf X}^n\setminus\{X\}$. We reach a contradiction. The proof of the claim is complete

Now by Lemma \ref{lem:properties of BM}(ii),(iv) and Lemma \ref{lem:polygonal curves}(i),(iii),(viii), $N_{w_i}\geq 2$. Since, 
\[ x_{w_i2},\dots,x_{w_iN_{w_i}}\in X\cap {\bf V}^{n+1},\] 
Definition \ref{def:tile decomposition}(i) gives that $N_{w_i}\leq N$ and property (i.b) follows. Last, property (iv.a) follows from the above work and Lemma \ref{lem:polygonal curves}(vi).
\end{proof}

\section{Proof of Theorem \ref{thm:main thm} and Theorem \ref{thm:Schoenflies-trees}}\label{sec: proof of main theorem}
In this section we prove our main results Theorem \ref{thm:main thm} and Theorem \ref{thm:Schoenflies-trees}.

\subsection{Compatible Whitney decompositions}\label{sec:compatibility of qc trees}
We start by establish a compatibility property of the Whitney decompositions constructed in Section \ref{sec: Decomposition of R^2 minus T}. More precisely, we show that a quasisymmetric or a bi-Lipschitz branch-orientation-preserving homeomorphism between two planar quasiconformal trees induces the same indexing set $\mathscr W$ for their respective Whitney decompositions. This allows us to match corresponding Whitney polygons in the proof of Theorem \ref{thm:Schoenflies-trees}.

\begin{prop}\label{prop:combinatorial connection T, T^W^T}
Let $T$ be a planar quasiconformal tree and let $f:T\to T'$ be a quasisymmetric branch-orientation-preserving homeomorphism. Let also, $({\bf X}^n)_{n\in\N}$ be a $C$-tile decomposition of $T$ and $\{\mathbb{X}^n\}_{n\in\N}$ be the $C'$-tile decomposition given from Proposition \ref{prop:tile-decomposition}. If $\mathscr{W}$, $\mathscr{W}'$ are the sets given from Proposition \ref{prop:polygonal curves} for $(T,({\textbf X}^n)_{n\in\N})$ and $(T',({\mathbb X}^n)_{n\in\N})$, respectively, then $\mathscr{W}=\mathscr{W}'$.
\end{prop}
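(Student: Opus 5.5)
The plan is to show by induction on $n$ that the two constructions in the proof of Proposition \ref{prop:polygonal curves} produce the same words at level $n$. More strongly, I would show that the bijection $x\mapsto f(x)$ on vertices carries the cyclically ordered list of points $p_w\in\G_n$, $w\in\mathscr{W}_n$, to the corresponding list $p'_w\in\G'_n$. That is, if the point $p_w\in\G_n$ lies on the arc $\zeta_{x,n}^i$, then the point $p'_w\in\G'_n$ lies on the arc $\zeta_{f(x),n}^{i'}$ attached to $f(x)$ from the corresponding side. The tile structures match because $\mathbb{X}^n=\{f(X):X\in{\bf X}^n\}$. Hence $f({\bf V}^n)$ is the vertex set of level $n$ for $T'$, leaf tiles go to leaf tiles, and $x,y\in\partial X$ if and only if $f(x),f(y)\in\partial f(X)$.

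For the base step, I would choose the leaf $1$-tile $X$ for $T$ and $f(X)$ for $T'$. The enumeration of $K_1$ along $\G_1$ is then determined combinatorially, by Lemma \ref{lem:polygonal curves}(vi) and Proposition \ref{prop:polygonal curves}(iv.b). Consecutive points $p_{w},p_{u}$ correspond to vertices $x_w,x_u$ lying on the boundary of a common $n$-tile $X$ with $T(x_w,x_u)\subset X\setminus\partial X$. So the cyclic sequence $x_{w_1},\dots,x_{w_{m_1}}$ is the sequence of vertices met when one walks around the tree, as in a contour walk. Each leaf tile is visited once, with a repeated vertex, and each branch tile is crossed between its boundary vertices. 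The order in which the branches at each branch point are visited is the counterclockwise order of those branches. I would justify this by comparing with a small disk around the branch point, using Lemma \ref{lem: maximal arcs} and the two-sidedness property (v). Since $f$ is branch-orientation-preserving and preserves tiles and vertices, the contour walk of $T'$ starting at $f(X)$ is the image under $f$ of the contour walk of $T$. This gives $m_1=m_1'$ and $x'_w=f(x_w)$ for all $w\in\mathscr{W}_1$.

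For the inductive step, assume $\mathscr{W}_n=\mathscr{W}_n'$ with $x'_w=f(x_w)$. For adjacent $w_i,w_{i+1}$, the proof of (i.a)/(iv.b) shows that the points $p_{w_i2},\dots,p_{w_iN_{w_i}}$ are exactly the points of $K_{n+1}$ lying on the arc $\g_1\subset O\cap\G_{n+1}$. Their vertices are the points of ${\bf V}^{n+1}$ inside the tile $X$ with $x_{w_i},x_{w_{i+1}}\in\partial X$. Their order is again the contour walk of the subtree $X$ at level $n+1$, traversed from $x_{w_i}$ to $x_{w_{i+1}}$ on the side determined by $\zeta_{w_i}$. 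Applying the same contour-walk description inside $f(X)$, with the same orientation, shows that $N_{w_i}=N'_{w_i}$ and $x'_{w_ij}=f(x_{w_ij})$. This completes the induction, and it follows that $\mathscr{W}=\mathscr{W}'$.

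I expect the main obstacle to be the rigorous identification of the cyclic order of the points $p_w$ along $\G_n$ with the contour walk, and in particular the claim that at a branch point inside a tile this walk visits the branches in counterclockwise order. For this I would construct, for consecutive branches, the Jordan domains bounded by $\zeta$-arcs, a tree arc and a piece of $\G_n$, as in Lemma \ref{lem:diameter of the sides of W.S}(i). I would then show that they meet a small disk $D$ around the branch point in sectors whose order along $\partial D$ matches the order of the points given by Lemma \ref{lem: maximal arcs}(i). Definition \ref{defn:BOP} then transfers this order to $T'$. A further point needing care is whether $\Gamma_n$ and $\Gamma'_n$ carry the same orientation, which I would settle by choosing both enumerations counterclockwise.
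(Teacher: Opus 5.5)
Your plan is sound and aims at the same strengthened statement as the paper: the labelling $w\mapsto x_w$ should be carried to $w\mapsto f(x_w)$. The paper organizes the argument differently.

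\textbf{The paper's route.} It never describes the order of $K_n$ along $\Gamma_n$ globally. Instead it argues at $n=1$ by contradiction at the first discrepancy:
\begin{enumerate}
\item It matches the leaf tile and relabels so that $f(x_{i_1})=f(x_1)=f(x_2)=f(x_{i_2})$, then assumes $i_3\neq 3$.
\item It forms the Jordan domain $O$ bounded by $\zeta_2$, $\zeta_{i_3}$, $T[x_{i_3},x_2]$ and the arc of $\Gamma_1$ from $p_2$ to $p_{i_3}$, and the analogous domain $O'$ for $T'$.
\item It observes that $\zeta_3$ enters $O$, whereas the corresponding arc at $f(x_3)$ lies outside $\overline{O'}$.
\item It excludes $x_3\in T(x_2,x_{i_3})$ by the adjacency property (iv.b) of Proposition~\ref{prop:polygonal curves}.
\item Otherwise it finds a branch point $b\in T(x_2,x_{i_3})$ at which the branches through $x_2$, $x_{i_3}$, $x_3$ appear in different cyclic orders on small Jordan domains $D,D'$ built with Lemma~\ref{lem:Jordan domain prerequisite}. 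This contradicts Definition~\ref{defn:BOP}.
\end{enumerate}
The case of general $n$ is declared identical. The step you flag as your main obstacle is exactly the positive form of this local argument.

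\textbf{What each route buys.} The contradiction route is more economical. It analyses a single branch point and three branches. It never needs to identify the whole $\Gamma_n$-order with a contour walk, which your route must establish at every branch point of the finite tree spanned by ${\bf V}^n$. In exchange, your version has three advantages:
\begin{itemize}
\item It makes the induction on $n$ explicit, through $\zeta_{w1}=\zeta_w\cap\overline{\Omega_{n+1}}$ and the domain $O$ from the proof of (i.a)/(iv.b).
\item It makes explicit the compatible choice of the starting leaf tile $f(X)$, which the paper absorbs into ``after relabeling''.
\item It exhibits $\mathscr{W}$ as an invariant of the tile tree together with the cyclic orders at branch points. Quasisymmetry then enters only through Proposition~\ref{prop:tile-decomposition}.
\end{itemize}
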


\begin{proof}
Applying Proposition \ref{prop:polygonal curves}, to $(T,({\bf X}^n))$ and $(T',({\mathbb X}^n)_{n\in\N})$ there exist integers $N,N'\geq 1$ and sets  $\mathscr{W}\subset\{1,\dots,N\}^*$, $\mathscr{W}'\subset \{1,\dots,N'\}$ satisfying the properties of Proposition \ref{prop:polygonal curves}. Since $g$ is a homeomorphism it follows that $N=N'$ and that for each $n\in\N$, $\card(\mathscr{W}_n)=\card(\mathscr{W}_n')$. It remains to show that $\mathscr{W}=\mathscr{W}'$. To this end, we claim that if $n\in\N$, if $w\in\mathscr{W}_n'$, and if $y_w\in g({\bf V}^n)$, then $y_w=g(x_w)$, i.e., $w\in \mathscr{W}_n$. Assuming the claim, it follows that $\mathscr{W}'_n\subset\mathscr{W}_n$. Similar arguments applied to $g^{-1}$ show that $\mathscr{W}_n\subset \mathscr{W}_n'$. For simplicity, we show the claim for $n=1$ and identical arguments work for any $n\in\N$. 

Let $\Gamma_1$ be the Jordan curve associated with $T$ from \textsection\ref{sec:sqthick}, and let $(\zeta_i)_{i\in\mathscr{W}_1}$ denote the arcs with endpoints $x_i$ and $p_i\in\Gamma_1$ as in Proposition \ref{prop:polygonal curves}. As in the proof of Proposition \ref{prop:polygonal curves}, we may assume that $x_1=x_2\in X$, where $X\in{\bf X}^1$ is a fixed leaf tile of $T$, and that the points $\{p_i\}_{i\in\mathscr{W}_1}$ are enumerated in counterclockwise order along $\Gamma_1$. Let also $(g(x_{i_j}))_{j\in\mathscr{W}_1}\subset g(\mathbf{V}^n)$, $i_j\in\mathscr{W}_1$, denote the double points of $T'$, let $\G_1'$ be the Jordan curve associated with $T'$ from \textsection\ref{sec:sqthick}, and let $(\xi_j)_{j\in\mathscr{W}_1}$ denote the corresponding arcs with endpoints $g(x_{i_j})$ and $q_j\in \G_1'$ as in Proposition \ref{prop:polygonal curves}. Suppose also that the points $\{q_j\}_{j\in\mathscr{W}_1}$ are likewise enumerated in counterclockwise order along $\Gamma_1’$. After relabeling, we may assume that 
\[ g(x_{i_1})=g(x_1)=g(x_2)=g(x_{i_2}).\] 
Finally, after a cyclic permutation of the indexing, we may further assume that $i_3\neq 3$; in particular, $i_3\notin\{1,2,3\}$.

Denote by $\g[p_2,p_{i_3}]\subset \G_1$ the counterclockwise arc in $\G_1$ that connects $p_2$ to $p_{i_3}$. Consider the following sets
\begin{align*}
    \g[p_2,p_{i_3}]\cup \zeta_{i_3}\cup T[x_{i_3},x_2]\cup \zeta_2,\qquad \G_1'[q_2,q_{i_3}]\cup\xi_{i_3}\cup \mathcal{T}^{\W}[g(x_{i_3}),g(x_2)]\cup \xi_2.
\end{align*}
By Proposition \ref{prop:polygonal curves}(iv),(vii) both of these sets are Jordan curves. Denote by $O,O'$ the domains enclosed by them, respectively.

By Proposition \ref{prop:polygonal curves}(ii), $x_2\notin \{x_3,x_{i_3}\}$ and since $g$ is a homeomorphism $g(x_2)\notin\{g(x_3), g(x_{i_3})\}$. By construction of $\g$, $p_3\in \g(p_2,p_{i_3})$ and $p_1\notin \g$. By Proposition \ref{prop:polygonal curves}(i), $O\cap \G_1=\emptyset$ and $O$ is connected with $\partial O\cap T\neq\emptyset$. Thus, by Proposition \ref{prop:polygonal curves}(iii), $O\subset \Omega_1$ and similarly, $O\subset \Omega_1'$. Finally, by Proposition \ref{prop:polygonal curves}(iv),(vii) and the fact that $O\subset \Omega_1$, we have that $\zeta_3\setminus\{x_3,p_3\}\subset O$ and that $\xi_3\setminus\{g(x_3)\}\subset \R^2\setminus\overline{O'}$. Hence, either $x_3\in T(x_2,x_{i_3})$ or $x_3\in O$.

Suppose first, that $x_3\in T(x_2,x_{i_3})$. Since $g$ is homeomorphism we have $g(x_3)\in T'(g(x_2),g(x_{i_3}))$. However, since $q_2,q_{i_3}$ are adjacent points in $\G_1'$, Proposition \ref{prop:polygonal curves}(v) asserts the existence of $Y\in g({\bf X}^1)$ such that $g(x_2),g(x_{i_3})\in \partial Y$ and it asserts that $T'(g(x_2),g(x_{i_3}))\subset Y\setminus\partial Y$ which is a contradiction. 

Suppose now that $x_3\in O$. Traveling along $T[x_2,x_3]$ from $x_3$ to $x_2$, let $b\in T[x_2,x_3]$ be the first time that we meet $T[x_2,x_3]$. We claim that $b\in T(x_2,x_{i_3})$. Suppose otherwise that $b\in\{x_2,x_{i_3}\}$. Since $p_2,p_3$ are adjacent, by Proposition \ref{prop:polygonal curves}(v) there exists $Z\in{\bf X}^1$ such that $x_3,x_2\in \partial Z$ and $T(x_2,x_3)\subset Z\setminus\partial Z$. Thus, $b\neq x_{i_3}$. If $b=x_2$, then we claim that $\zeta_1\setminus\{x_2,p_1\}\subset O$. Assuming the claim, it follows from Proposition \ref{prop:polygonal curves}(iv),(vii) that $p_1\in \g(p_2,p_{i_3})$ which is contradiction. To show the claim, suppose otherwise. By Proposition \ref{prop:polygonal curves}(iv),(vii) we may assume that $\zeta_1\setminus\{x_2\}\subset \R^2\setminus\overline{O}$. Consider the following Jordan curve
$$\zeta_{i_3}\cup T[x_{i_3},x_2]\cup T[x_2,x_3]\cup \zeta_3\cup (\g[p_2,p_{i_3}]\setminus\G_1[p_2,p_3))$$
and denote by $V$ the domain enclosed. Note that $V\subset O$. Let $B=B(x_2,r)$ where $r=\frac12\min_{W\in{\bf X}^1}\d_W$. It is easy to verify that $x_3,x_{i_3},p_2,p_3\notin B$. Let $t_1\in \partial B\cap T(x_2,x_{i_3})$ and $t_2\in \partial B\cap T(x_2,x_{3})$ such that $x_2\in T(t_1,t_2)\subset B$. Denote by $W_1,W_2$ the connected components of $B\setminus T(t_1,t_2)$. By Lemma \ref{lem:Jordan domain prerequisite} there exists an arc $\alpha$ with one endpoint $t_1'\in T(x_2,t_1)$ and the other $t_2'\in T(x_2,t_2)$ such that $\alpha\setminus\{t_1',t_2'\}\subset V\cap W_i\subset O\cap W_i$ for some $i\in \{1,2\}$. But then, by Proposition \ref{prop:polygonal curves}(vii) we have that $(\alpha\setminus\{t_1',t_2'\})\cap (\zeta_1\cup \zeta_2)\neq\emptyset$ which is contradiction since $\zeta_2\subset \partial O$ and $\zeta_1\setminus\{x_2\}\subset \R^2\setminus\overline{O}$. See also Figure \ref{fig:combinatorial-intersection 1}. This proves the claim, and we get that $b\in T(x_2,x_3)$.

\begin{figure}
\centering
\includegraphics[width=0.6\linewidth]{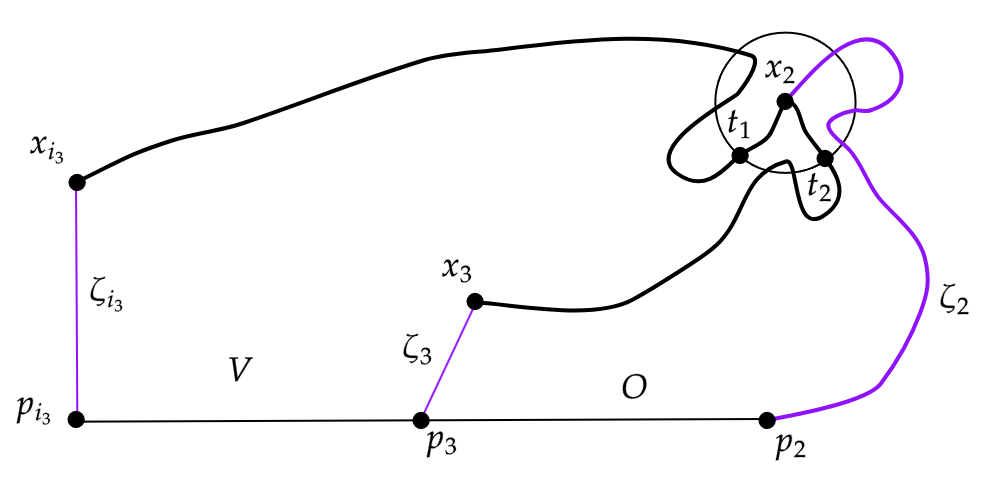}
\caption{Here $\zeta_1,\zeta_2$ are in the same connected component of $B(x_2,r)\setminus T[t_1,t_2]$.}\label{fig:combinatorial-intersection 1}
\end{figure}

By Lemma \ref{lem: metric trees properties of BM}(ii) it follows that $b$ is a branch point and, therefore, $g(b)\in T'(g(x_{i_3}),g(x_2))$ is also
a branch point. By Lemma \ref{lem:Jordan domain prerequisite}, there exist arcs
$\sigma_1,\sigma_2,\omega_1,\omega_2$ such that
\begin{enumerate}
\item $\sigma_1$ and $\sigma_2$ have common endpoints $s_1\in T(b,x_2)$ and $s_2\in T(b,x_{i_3})$, whereas $\omega_1$ and $\omega_2$ have common endpoints 
\[ s_1'\in T'(g(b),g(x_2)) \quad\text{and}\quad s_2'\in T'(g(b),g(x_{i_3})),\]
\item the interiors (with the induced topology) of $\sigma_1$ and $\omega_1$ 
\[ \sigma_1\setminus\{s_1,s_2\}\subset O, \qquad \omega_1\setminus\{s_1',s_2'\}\subset O'.\]
\item the interiors (with the induced topology) of $\sigma_2$ and $\omega_2$
\[ \sigma_2\setminus\{s_1,s_2\}\subset\mathbb{R}^2\setminus\overline{O}, \qquad \omega_2\setminus\{s_1',s_2'\}\subset\mathbb{R}^2\setminus\overline{O'}.\]
\end{enumerate}

Denote by $D$, resp.\ $D'$, the domain enclosed by $\sigma_1\cup\sigma_2$, resp.\ $\omega_1\cup\omega_2$. Let $B_1,\ldots,B_k$ be the branches of $T$ at $b$. Then $g(B_1),\ldots,g(B_k)$ are the branches of $T'$ at $g(b)$. For each $i\in\{1,\ldots,k\}$, we have $\partial D\cap B_i\neq\emptyset$ and $\partial D'\cap g(B_i)\neq\emptyset$. Moreover, by Lemma \ref{lem: maximal arcs}(i), there exist $y_i\in\partial D\cap B_i$ and $z_i\in\partial D'\cap g(B_i)$ such that $T[b,y_i)\subset D$ and $T'[g(b),z_i)\subset D'$. Relabeling the branches, we may assume that $x_2\in B_1$, $x_{i_3}\in B_2$, and $x_3\in B_3$. Without loss of generality, we may choose $s_1=y_1$, $s_2=y_2$, $s_1'=z_1$, and $s_2'=z_2$. By Proposition \ref{prop:polygonal curves}(iii), the fact that $T$ is a tree, and that $x_3\in B_3$, it follows that $B_3\subset O$. Hence, since $y_3\in B_3$, we have that $y_3\in O$. Similarly, since $g(x_3)\notin\overline{O'}$, we have $z_3\notin\overline{O'}$.

Consider now the Jordan curves. 
\begin{align*}
\alpha_1 &=T[p_2,p_{i_3}]\cup\zeta_{i_3}\cup T[x_{i_3},y_2] \cup\sigma_1\cup T[y_1,x_2]\cup\zeta_2,\\
\alpha_2 &=T'[q_2,q_{i_3}]\cup\xi_{i_3}\cup T'[g(x_{i_3}),z_2] \cup\omega_1\cup T'[z_1,g(x_2)]\cup\xi_2.
\end{align*}
Denote by $U,U'$ the domains enclosed by $\alpha_1,\alpha_2$, respectively. Since $y_3\in O\cap\partial U$ and $\sigma_1\setminus\{y_1,y_2\}\subset O\cap\partial U$, the counterclockwise order of the points $\{y_i\}_{i=1}^{\val(b)}$ along $\partial D$, starting from $y_2$, is
\[ y_2,\dots, y_3,\dots y_1,\dots .\]
On the other hand, since $z_3\notin\omega_1$ and $\omega_1\setminus\{z_1,z_2\}\subset O'\cap\partial U'$, their corresponding points occur counterclockwise along $\partial D'$, starting from $z_2$, in the order
\[z_2,\dots, z_1,\dots, z_3,\dots\]
This contradicts the assumption that $g$ is branch-orientation-preserving.
\end{proof}

\begin{figure}[H]
    \centering
    \includegraphics[width=\linewidth]{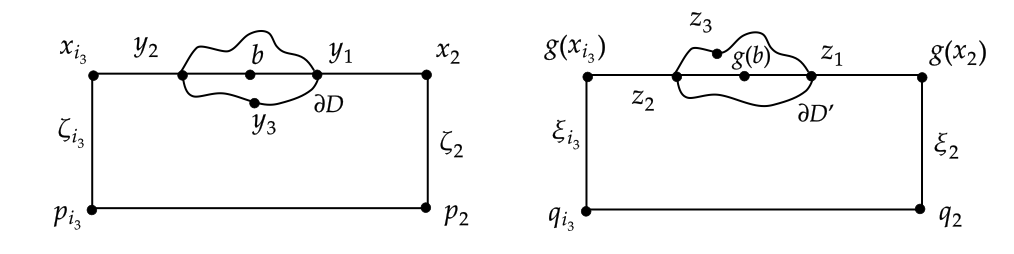}
    \caption{}
    \label{fig:Combinatorial intersection 2}
\end{figure}

\subsection{Proof of main results}
We first show the following quantitative version of Theorem \ref{thm:Schoenflies-trees}.

\begin{thm}\label{thm: intro 2 quant.}
Let $T\subset\mathbb R^2$ be planar quasiconformal tree and let $g:T\to \R^2$ be a branch-orientation-preserving embedding.
\begin{enumerate}[(i)]
\item If $g$ is $\eta$-quasisymmetric, then $g$ extends to a $K$-quasiconformal homeomorphism $ F:\mathbb R^2\to\mathbb R^2$, with $K$ depending only on the bounded turning constant of $T$ and on $\eta$.
\item If $g$ is $L$-bi-Lipschitz, then $g$ extends to an $L'$-bi-Lipschitz homeomorphism $F:\mathbb R^2\to\mathbb R^2,$ with $L'$ depending only on the bounded turning constant of $T$ and on $L$.
\end{enumerate}
Conversely, if $F:\R^2\to\R^2$ is an orientation preserving homeomorphism and if $T\subset \R^2$ is a tree, then $F|T$ is branch-orientation-preserving.
\end{thm}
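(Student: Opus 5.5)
The plan is to build $F$ cell by cell, using the two Whitney-type decompositions and their combinatorial equivalence. Set $T'=g(T)$. By Proposition \ref{prop:tile-decomposition}, fix a $C$-tile decomposition $({\bf X}^n)$ of $T$; its push-forward $\mathbb{X}^n=g({\bf X}^n)$ is a $C'$-tile decomposition of $T'$. After rescaling we may assume $\diam T=\diam T'=1$. Apply Proposition \ref{prop:polygonal curves} to $(T,{\bf X}^n)$ and to $(T',\mathbb{X}^n)$. This produces index sets $\mathscr W,\mathscr W'$, double points $x_w,x'_w$, arcs $\zeta_w,\zeta'_w$ and Jordan domains $\Omega_n,\Omega'_n$. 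By Proposition \ref{prop:combinatorial connection T, T^W^T}, we have $\mathscr W=\mathscr W'$ and $x'_w=g(x_w)$.

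For each $n$ and each pair of cyclically adjacent $w,u\in\mathscr W_n$, the cell $P_{w}$ is the closed polygon bounded by
\begin{itemize}
\item the arcs $\zeta_w\setminus\zeta_{w1}$ and $\zeta_u\setminus\zeta_{u1}$,
\item the arc $\G_n[p_w,p_u]$,
\item the polygonal arc of $\G_{n+1}$ from $p_{w1}$ to $p_{u1}$ passing through $p_{w2},\dots,p_{wN_w}$, together with the pieces of $\zeta_{w2},\dots$ lying between $\G_{n+1}$ and the tree.
\end{itemize}
More simply, the cells are the closures of the components of $\Omega_n\setminus(\overline{\Omega_{n+1}}\cup\bigcup_{v\in\mathscr W_n}\zeta_v)$. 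Properties (iii.d), (iv.b) and (v) show that these cells are Jordan domains, and that the corresponding cells $P'_w$ for $T'$ have the same combinatorics: the same adjacencies and the same cyclic order. The outer region $\R^2\setminus\Omega_1$ is matched separately, using a fixed annulus-type homeomorphism.

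The key geometric input comes from (iii.c), (iv.b) and (ii.d). Each cell is a polygon in a dyadic grid of mesh comparable to its diameter, and it has a uniformly bounded number of edges, since $N_w\le N$ and $k_Y-k_X\lesssim1$. Hence, after rescaling by $\d_{X_w^j}$, there are only finitely many cells up to similarity. The same holds for $T'$, with scale $\d_{g(X_w^j)}$.

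First define $F$ on the 1-skeleton. On each edge, map the vertex $p_w$ to $p'_w$ and map $x_w$ to $g(x_w)$. Then map each straight grid segment by a piecewise-linear parametrization proportional to arclength. Consistency across neighbouring cells follows from the nested structure (iii.a)–(iii.c). Next, extend over each cell using Lemma \ref{lem:Tukia quasi-similarity}. Each cell is an $(L_1,\lambda)$-quasi-similar image of $\mathbb S^1$ with $L_1$ uniform, by the finiteness up to similarity. The boundary map is an $(L_2,\lambda'/\lambda)$-quasi-similarity, where $\lambda'$ is the scale of $P'_w$; here $L_2$ is uniform because the edge lengths in $P_w$ and in $P'_w$ are comparable to the respective cell scales. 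Finally set $F=g$ on $T$. Continuity at $T$ follows from (iv.a) and (ii.e): cells near $x\in T$ shrink to $x$, and their images shrink to $g(x)$.

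The map $F$ is then a homeomorphism of $\R^2$ that is quasiconformal off $T$. Indeed, each cell map is a quasi-similarity with uniform constant, so $H_F\lesssim1$ in cell interiors, and on shared edges neighbouring cells have comparable scales in both decompositions. By Lemma \ref{lem: Nakki vaisalla}, $\R^2\setminus T$ is John, and Lemma \ref{lem:qc removable} then gives that $F$ is quasiconformal on all of $\R^2$.

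The main obstacle is the comparability of ratios: the scale ratio $\lambda'/\lambda$ of adjacent cells must be uniformly comparable to that of their neighbours. I would derive this from quasisymmetry of $g$ applied to the tiles $X_w^j$, using $\diam g(X)/\diam g(Y)\simeq_\eta\diam X/\diam Y$ for adjacent tiles. In the bi-Lipschitz case, $\diam g(X)\simeq_L\diam X$, so every ratio $\lambda'/\lambda\simeq1$. Each cell map is then uniformly bi-Lipschitz, so $F$ is BLD off $T$. Since $T$ has area zero and is removable for paths in a suitable sense, $F$ is BLD on all of $\R^2$, hence bi-Lipschitz by the remark following the definition of BLD maps.

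For the converse, take $D$ to be a small disk at a branch point $b$, and $D'=F(D)$. The points $x_i\in\partial D$ map to $F(x_i)\in\partial D'$, and the conditions $T[b,x_i)\subset D$ are preserved. An orientation preserving homeomorphism of Jordan curves preserves counterclockwise cyclic order, and by Lemma \ref{lem: maximal arcs}(ii) the choice of $D'$ does not matter.
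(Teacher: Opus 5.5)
Your proposal is correct and follows essentially the same route as the paper: push the tile decomposition forward, use Proposition \ref{prop:polygonal curves} and the compatibility proposition of \S\ref{sec:compatibility of qc trees} to get identically indexed Whitney cells (the pieces of $\Omega_n\setminus\overline{\Omega_{n+1}}$ cut out by the arcs $\zeta_v$, which are finitely many up to similarity), map edges by normalized arclength, extend over each cell and over $\R^2\setminus\Omega_1$ with Lemma \ref{lem:Tukia quasi-similarity}, check continuity at $T$, conclude with Jones--Smirnov removability, and run a length argument in the bi-Lipschitz case. The only loose spot is the phrase ``$T$ has area zero and is removable for paths,'' which you can replace by an elementary segment argument: if $[x,y]$ meets $T$, let $a$ and $b$ be the first and last points of $T$ on it, bound $|F(x)-F(a)|$ and $|F(b)-F(y)|$ using that $F$ is uniformly locally Lipschitz off $T$, bound $|F(a)-F(b)|$ by the Lipschitz constant of $g$, and argue the same way for $F^{-1}$.
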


\begin{proof}
The converse part follows directly by definition. 

Let $T,$ and $g$ be as in (i) and let $T'=g(T)$. By Proposition \ref{prop:tile-decomposition} there exist a $C_1$-tile decomposition $({\bf X}^n)_{n\in\N}$ of $T$ and a $C_2$-tile decomposition $({\mathbb X}^n)_{n\in\N}$ of $T'$ where $C_1$ depends on the bounded turning constant of $T$ and $C_2$ depends on the same constant and the distortion $\eta$. Moreover, by Propositions \ref{prop:polygonal curves}, \ref{prop:combinatorial connection T, T^W^T} both $T$ and $T'$ induce the same set $\mathscr{W}\subset \{1,\dots,N\}^*$ where $N\in\N$ depends only on $C_1$. For each $n\in\N$ denote by $\G_n$, resp. $\G_n'$, the associated Jordan curve of $T$, resp. of $T'$, given from Proposition \ref{prop:polygonal curves}(iv). Last, by Proposition \ref{prop:polygonal curves}(iii),(iv), let $Z_w\in\mathbf{X}^n$ and $l_w$ be associated with the tree $T$, and $W_w\in{\mathbb X}^n$ and $l_w'$ be associated with the tree $T'$, respectively.

Fix now $n\in\N$ and let $\mathscr{W}_n=\{w_1^n,\dots,w_{m_n}^n\}$. For simplicity, we write $w_i^n = w_i$. By Proposition \ref{prop:polygonal curves}(iii),(iv),(v), there exist Jordan domains $D_{w_i}$ and $\D_{w_i}$ such that 
\begin{align*}
\partial D_{w_i}&= e_{w_i}^U\cup e_{w_i}^L \cup \bigcup_{j=1}^{N_{w_i}} e_{w_i}^{B_j} \cup e_{w_i}^R, \qquad \partial \D_{w_i} = f_{w_i}^U\cup f_{w_i}^L\cup \bigcup_{j=1}^{N_{w_i}} f_{w_i}^{B_j} \cup f_{w_i}^R.
\end{align*}
so that
\begin{enumerate}
\item the upper edges satisfy 
\begin{align*}
e_{w_i}^U=\G_n[p_{w_i},p_{w_{i+1}}] \subset\mathcal G_{\d_{Z_{w_i}}} \quad&\text{and}\quad \diam e_{w_i}^U\simeq_{C_1} \d_{Z_{w_i}}\\
f_{w_i}^U=\G_n'[q_{w_i},q_{w_{i+1}}] \subset G_{\d_{W_{w_i}}} \quad&\text{and}\quad \diam f_{w_i}^U\simeq_{C_2} \d_{W_{w_i}},
\end{align*}
\item the left edges satisfy
\begin{align*}
e_{w_i}^L=\zeta_{w_{i+1}}\setminus \zeta_{w_{i+1}1}(p_{w_{i+1}1},x_{w_{i+1}1}] \subset \mathcal{G}_{l_{w_{i+1}}} \quad&\text{and}\quad \diam e_{w_i}^L\simeq_{C_1} l_{w_{i+1}}\\
f_{w_i}^L=\xi_{w_{i+1}}\setminus\xi_{w_{i+1}1}(q_{w_{i+1}1},g(x_{w_{i+1}1})] \subset \mathcal{G}_{l_{w_{i+1}}'} \quad&\text{and}\quad \diam f_{w_i}^L\simeq_{C_2} l_{w_{i+1}}',
\end{align*}
\item
the bottom edges satisfy
\begin{align*}
e_{w_i}^{B_j} = \G_{n+1}[p_{w_ij},p_{w_i(j+1)}] \subset \mathcal{G}_{\d_{Z_{w_ij}}} \quad&\text{and}\quad \diam e_{w_i}^{B_j}\simeq_{C_1}\d_{\d_{Z_{w_ij}}}\\
f_{w_i}^{B_j} = \G_{n+1}'[q_{w_ij},q_{w_i(j+1)}] \subset\mathcal G_{\d_{W_{w_ij}}} \quad&\text{and}\quad \diam f_{w_i}^{B_j}\simeq_{C_2} \d_{W_{w_ij}},
\end{align*}
with $\lambda_{w_i}^{B_j}=\d_{Z_{w_ij}}$ and $\ell_{w_i}^{B_j}=\d_{W_{w_ij}}$
\item
the right edges satisfy
\begin{align*}
e_{w_i}^R = \zeta_{w_i} \setminus \zeta_{w_i1}(p_{w_i},x_{w_i1}] \subset \mathcal{G}_{l_{w_i}} \quad&\text{and}\quad \diam e_{w_i}^R\simeq_{C_1}l_{w_i}\\
f_{w_i}^R= \xi_{w_i} \setminus \xi_{w_i1}(q_{w_i},g(x_{w_i1})] \subset\mathcal G_{l_{w_i}'}\quad&\text{and}\quad \diam f_{w_i}^R\simeq_{C_2}l_{w_i}'.
\end{align*}
\end{enumerate} 

\begin{figure}[ht]
\centering
\includegraphics[width=0.6\linewidth]{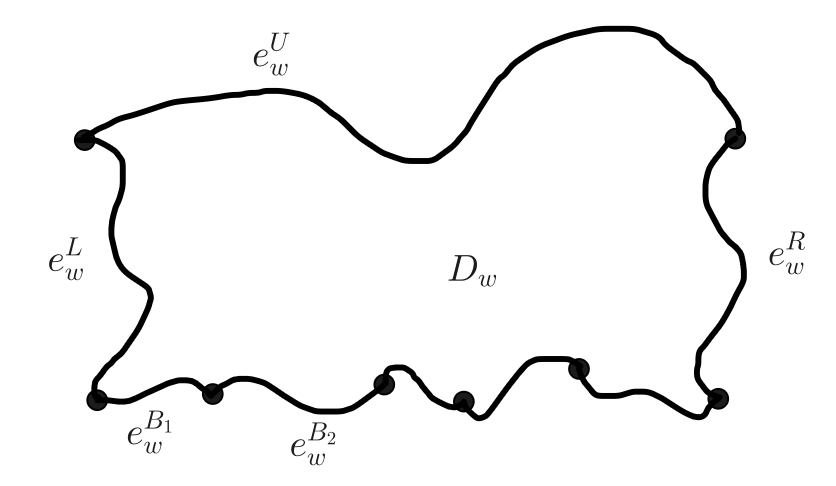}
\caption{A Whitney tile $D_w$.}
\label{fig:whitneytile}
\end{figure}

Recalling that $N_{w_i}\leq N$ and that
\begin{align*}
\d_{Z_{w_i}} \simeq_{C_1} l_{w_{i+1}} \simeq_{C_1} \d_{Z_{w_ij}} \simeq_{C_1} l_{w_i}\\
\d_{W_{w_i}} \simeq_{C_2} l_{w_{i+1}}' \simeq_{C_2} \d_{W_{w_ij}} \simeq_{C_2} l_{w_i}'
\end{align*}
it follows that there exists $K\in\N$ depending on $C_1,C_2$ such that all edges above consist of at most $K$ line segments.

Consider now, for each $A\in\{U,L,R,B_1,\ldots,B_{N_w}\}$, the normalized arclength parametrizations $G_w^A:e_w^A\to[0,1]$ and $H_w^A:f_w^A\to[0,1]$, with corresponding endpoints mapped to $0$ and $1$, and set $h_w^A=(H_w^A)^{-1}\circ G_w^A.$ By the finite-model construction, the source and target edges, after normalization by similarities, belong to finite families of nondegenerate polygonal arcs determined by $C_1$ and $C_2$. In particular, their lengths are comparable to their diameters. Together with the diameter estimates above, this gives
\[\frac{\ell(f_w^A)}{\ell(e_w^A)}\simeq_{C_1,C_2}\frac{\diam f_w^A}{\diam e_w^A}\simeq_{C_1,C_2}\frac{\delta_{W_w}}{\delta_{Z_w}}.\]
Since $h_w^A$ is defined by normalized arclength, each $h_w^A$ is an $(L,\delta_{W_w}/\delta_{Z_w})$-quasi-similarity,
where $L$ depends only on $C_1,C_2$.

Moreover, the edges of $\partial D_w$ lie on dyadic grids with mutually comparable mesh sizes and have uniformly controlled diameters. Hence, after normalization by similarities, the boundaries $\partial D_w$ belong to a finite family of nondegenerate Jordan polygons. The same holds for $\partial\Delta_w$. Consequently, these boundaries are uniformly quasiconvex, and, respectively, $(M,\delta_{Z_w})$- and $(M,\delta_{W_w})$-quasi-similar to $\mathbb{S}^1$,
where $M$ depends only on $C_1,C_2$.

For each $w\in\mathcal W$, define $\alpha_w:\partial D_w\to\partial\Delta_w$ by $\alpha_w|_{e_w^A}=h_w^A$. Since both boundaries are uniformly quasiconvex and the corresponding arclength ratios are comparable to $\delta_{W_w}/\delta_{Z_w}$, the map $\alpha_w$ is an $(L,\delta_{W_w}/\delta_{Z_w})$-quasi-similarity, where $L$ depends only on $C_1,C_2$. Similarly, define $h':\Gamma_1\to\Gamma'_1$ by $h'|_{e_v^U}=h_v^U$ for each $v\in\mathcal W_1$. Since the scales of all first-level tiles are mutually comparable and $\card(\mathcal W_1)\leq N$, the same finite-model argument shows that $\Gamma_1,\Gamma'_1$ are uniformly quasiconvex and that $h'$ is an $(L,\delta_{W_v}/\delta_{Z_v})$-quasi similarity for any fixed $v\in\mathcal W_1$. 

Applying Lemma \ref{lem:Tukia quasi-similarity}, for each $w\in\mathcal W$ we obtain an extension $\alpha'_w:\overline{D_w}\to\overline{\Delta_w}$ which is an $(L',\delta_{W_w}/\delta_{Z_w})$-quasi-similarity.
Similarly, we obtain an extension $H':\mathbb R^2\setminus\Omega_1
\to\mathbb R^2\setminus\Omega'_1$ which is an $(M',\delta_{W_v}/\delta_{Z_v})$-quasi-similarity for any fixed $v\in\mathcal W_1$, where $L',M'$ depend only on $C_1,C_2$.

Let now $F:\R^2\to\R^2$ with $F|_{\R^2\setminus\Omega_1}=H'$, $F|_{\overline{D_w}}=\alpha'_{w}$ for each $w\in \mathscr{W}$, and $F|_{T}=g$. We claim that $F|_{\R^2\setminus T}$ is $M$-quasiconformal, where $M\geq 1$ depends only on $C_1,C_2$. By construction, $F|_{\R^2\setminus T}: \R^2\setminus T \to \R^2\setminus T'$ is a homeomorphism. Let $x\in\R^2\setminus T$. If $x$ belongs to the interior of a Whitney domain $D_w$, then, for all sufficiently small $r>0$, we have $B(x,r)\subset D_w$. Since $F|_{D_w}=\alpha'_w$ is an $(s_w,L')$-quasi-similarity for some $s_w>0$, it follows directly from the definition of quasiconformal distortion $H_F$ that $ H_F(x)\leq (L')^2$. The same argument applies if $x\in \R^2\setminus\overline{\Omega}_1$, with $M'$ in place of $L'$. Suppose now that $x$ belongs to the boundary of a Whitney domain. By construction, there are at most 4 such domains meeting at $x$, and their scales are mutually comparable. The corresponding scaling factors of the quasi-similarities $F|_{D_w}$ are therefore mutually comparable, with constants depending only on $C_1,C_2$. Applying the quasi-similarity inequalities on each of the domains containing $x$ gives $H_F(x)\leq M$, where $M$ depends only on $C_1,C_2$. Consequently, $F|_{\R^2\setminus T}$ is $M$-quasiconformal.

It remains to show that $F$ is a homeomorphism. Assuming the claim, it follows from Lemma \ref{lem: Nakki vaisalla} and Lemma \ref{lem:qc removable}, that $F$ is a $\Lambda$-quasiconformal map with $F(T)=g(T)=T'$, where $\Lambda$ depends only on $C_1,C_2$.

Suppose that $x_n\to x\in T$. Passing to a subsequence we may assume that there exists an increasing sequence $j_n$ such that $x_n\in \overline{D_{w_{j_n}}}$ for some $w_{j_n}\in \mathscr{W}_{j_n}$. Let $z_n=p_{w_{j_n}}$ and let $v_n=x_{w_{j_n}}$. By Proposition \ref{prop:polygonal curves}, $\diam \overline{D_{w_{j_n}}},\diam \overline{\Delta_{w_{j_n}}}\to 0$. By Proposition \ref{prop:polygonal curves},
$$|v_n-x|\leq |v_n-z_n|+|z_n-x_n|+|x_n-x|\lesssim \d_{X_{j_n}}+\diam \overline{D_{w_{j_n}}}+|x_n-x|\to 0,$$
where $X_{j_n}\in\mathbf{X}^{j_n}$ contain $v_n$ on their boundary. Hence, $v_n\to x$. By continuity of $g$ we have that $g(v_n)\to g(x)$. Note also that $F(x_n)\in\overline{\D_{w_{j_n}}}$, $F(z_n)=q_{w_{j_n}}$, and $F(v_n)=g(x_{w_{j_n}})$. Therefore, by Proposition \ref{prop:polygonal curves},
\begin{align*}
|F(x_n)-F(x)| &\leq |F(x_n)-F(z_n)|+|F(z_n)-F(v_n)|+|F(v_n)-F(x)| \\ & \lesssim \diam \overline{\Delta_{w_{j_n}}}+\d_{Y_{j_n}}+|g(v_n)-g(x)|\to 0,
\end{align*}
where $Y_{j_n}\in{\mathbb X}^{j_n}$ contain $g(v_n)$ on their boundary. It follows that $F(x_n)\to F(x)$ which means that $F$ is continuous. Identical arguments show that $F^{-1}$ is also continuous. Thus, $F$ is a homeomoprhism.

For (ii), suppose that $g:T\to T'$ is $L$-bi-Lipschitz. Since the corresponding tiles have $L$-comparable diameters, it follows from the definition of the scales in \eqref{eq:def of delta X} that $\delta_{W_w}\simeq_{C_1,C_2,L}\delta_{Z_w}$ for every $w\in\mathcal W$. The same holds for the scaling factor of the exterior extension $H'$. Recall that the boundary maps $\alpha_w$ and $h'$ are uniformly quasi-similarities and that their source boundaries are uniformly bi-Lipschitz equivalent to $\mathbb{S}^1$, after normalization. Hence, by Lemma \ref{lem:Tukia quasi-similarity}, their extensions $\alpha'_w$ and $H'$ can be chosen uniformly bi-Lipschitz, with constants depending only on $C_1,C_2,L$. Thus, there exists $L'\geq1$, depending only on the bounded turning constants of $T,T'$ and on $L$, such that
\begin{equation}\label{eq:bilip-pieces}
(L')^{-1}\ell(\gamma) \leq \ell(F\circ\gamma)\leq L'\ell(\gamma)
\end{equation}
whenever $\gamma$ is a rectifiable arc contained either in $T$ or in one of the Whitney domains.

We claim that $F$ is $L'$-BLD. Let $\gamma:[a,b]\to\mathbb R^2$ be a rectifiable path, which we may assume to be parametrized by arclength. Since $T$ and the boundaries of the Whitney domains form a closed set, the portions of $\gamma$ contained in the interiors of the Whitney polygons form a countable collection of pairwise disjoint open subarcs. Applying \eqref{eq:bilip-pieces} to these subarcs and summing gives the required length estimate on the portion of $\gamma$ outside $T$. On the remaining portion, the same estimate follows from $F|_T=g$, which is $L$-bi-Lipschitz. By the additivity of length, $\ell(F\circ\gamma)\leq L'\ell(\gamma).$ The same argument applies to $F^{-1}$, since its restrictions to $T'$ and to the corresponding Whitney polygons are uniformly bi-Lipschitz. Hence,
\[ \ell(\gamma) =\ell(F^{-1}\circ(F\circ\gamma)) \leq L'\ell(F\circ\gamma),\]
and therefore
\[ (L')^{-1}\ell(\gamma) \leq\ell(F\circ\gamma) \leq L'\ell(\gamma).\]
Thus $F$ is an $L'$-BLD homeomorphism of $\mathbb R^2$. Hence, $F$ is bi-Lipschitz, with constant depending only on the bounded turning constant of $T$ and on $L$.
\end{proof}

We finish the section by showing a quantitative version of Theorem \ref{thm:main thm}. 

\begin{thm}\label{thm:quantitative version}
Given $C\geq 1$, $\epsilon>0$, and $m\in\{2,3,\dots\}$, there exists $c=c(m)\geq 1$ and $K=K(C,m,\e)\geq 1$ with the following property. 
\begin{enumerate}
\item If $T$ is a planar tree with $\val(T)\leq m$, then there exists a homeomorphism $F:\R^2\to\R^2$ such that $F(T)$ is a semi-geodesic tree with constant $c$ and with $\dim_H{F(T)}<1+\epsilon$.
\item If $T\subset \R^2$ is a $C$-bounded turning tree, with $C$-uniform branch separation, and with $\val(T)\leq m$, then there exists a $K$-quasiconformal map $F:\R^2\to\R^2$ such that $F(T)$ is a semi-geodesic tree with constant $c$ and with $\dim_H{F(T)}<1+\epsilon$.
\end{enumerate}
\end{thm}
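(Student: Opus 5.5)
The plan is to combine Proposition \ref{prop: enumeration of T=enumeration of Tmeta} with the extension theorems. Proposition \ref{prop: enumeration of T=enumeration of Tmeta} supplies a model tree $\mathcal{T}^{\mathcal W}$ and a branch-orientation-preserving map onto it. The topological extension theorem of Adkisson and MacLane \cite{ADMaclane} upgrades this map to a global homeomorphism in part (1). Theorem \ref{thm: intro 2 quant.} upgrades it to a global quasiconformal map in part (2).

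For (1), fix $m$ and $\epsilon$ and let $T\subset\R^2$ have valence at most $m$. When $m=2$, $T$ is an arc. We then either treat $m=2$ separately, mapping $T$ to a line segment by the Schoenflies theorem, or we view $T$ as having valence at most $3$ and run the argument with $m=3$. In the main case, Proposition \ref{prop: enumeration of T=enumeration of Tmeta}(ii) gives an $m$-dictionary $\mathcal W$ and a branch-orientation-preserving homeomorphism $h:T\to\mathcal T^{\mathcal W}$, where $\mathcal T^{\mathcal W}$ is semi-geodesic. By Proposition \ref{prop: enumeration of T=enumeration of Tmeta}(i) (equivalently Lemma \ref{lem: global homeo}), $\mathcal T^{\mathcal W}$ is $c_m$-quasiconvex with $\dim_H\le 1+\epsilon$. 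To get strict inequality in $\dim_H F(T)<1+\epsilon$, we run the construction with $\epsilon/2$ in place of $\epsilon$ in \eqref{eq:Hausdim}. The main theorem of \cite{ADMaclane} states that branch-orientation-preserving homeomorphisms between planar trees extend to orientation-preserving homeomorphisms of $\R^2$; this produces $F$. The constant $c=c_m$ depends only on $m$. We must also check that the segment property (2) of Definition \ref{def:semigeodesic} is independent of $\epsilon$, which holds because the proof of Proposition \ref{prop: enumeration of T=enumeration of Tmeta} produces segments directly.

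For (2), let $T$ be $C$-bounded turning with $C$-uniform branch separation and valence at most $m$. A $C$-bounded turning planar tree is doubling, since it is a subset of $\R^2$, so $T$ is a quasiconformal tree. Proposition \ref{prop: enumeration of T=enumeration of Tmeta}(iii), run again with $\epsilon/2$, yields a branch-orientation-preserving $\theta$-quasisymmetric $g:T\to\mathcal T^{\mathcal W}$, where $\theta$ depends only on $m,\epsilon,C$ and $\mathcal T^{\mathcal W}$ is semi-geodesic with constant $c_m$ and dimension $\le 1+\epsilon/2$. Viewing $g$ as an embedding $T\to\R^2$, Theorem \ref{thm: intro 2 quant.}(i) gives a $K$-quasiconformal extension $F:\R^2\to\R^2$. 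Here $K$ depends only on the bounded turning constant $C$ and on $\theta$, hence only on $C,m,\epsilon$. Since $F(T)=\mathcal T^{\mathcal W}$, the conclusion follows.

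The substantive work has already been done in the quoted results, so no step should be a real obstacle. The main point needing care is the quantitative bookkeeping. We must confirm that the semi-geodesic constant of $\mathcal T^{\mathcal W}$ depends only on $m$ and not on $\mathcal W$, which is Lemma \ref{lem: global homeo}. We must also confirm that the strict dimension bound survives, which the $\epsilon/2$ device handles. A secondary point is that \cite{ADMaclane} is stated for homeomorphisms between trees in the plane, which matches the setting of (1) exactly.
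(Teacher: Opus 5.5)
Your proposal is correct and follows the paper's proof essentially verbatim. Part (1) combines Proposition~\ref{prop: enumeration of T=enumeration of Tmeta}(ii) with the Adkisson--MacLane extension theorem, and part (2) combines Proposition~\ref{prop: enumeration of T=enumeration of Tmeta}(iii) with Theorem~\ref{thm: intro 2 quant.}(i). Your extra bookkeeping is harmless and only makes explicit points the paper passes over: the case $m=2$, the remark that a bounded turning planar tree is doubling, and running the construction with $\epsilon/2$ to secure the strict dimension bound. In fact strictness already holds without this device, since by \eqref{eq:Hausdim} the similarity dimension of $\{\psi_j\}$ is strictly less than $1+\epsilon$.
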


\begin{proof}[Proof of Theorem \ref{thm:quantitative version}]
Fix constants $C$, $\epsilon$, and $m$ as in the statement. 

Suppose first that $T\subset \R^2$ is a tree. By Proposition \ref{prop: enumeration of T=enumeration of Tmeta}(ii) there exists an $m$-disctionary $\W$ and a branch-orientation-preserving homeomorphism $g:T\to \mathcal{T}^{\W}$ such that $\dim_H\mathcal{T}^{\W}<1+\epsilon$ and $\mathcal{T}^\W$ is semi-geodesic with constant $c=c(m)$. By \cite[Theorem 2]{ADMaclane}, the map $f$ extends to a homeomorphism $F:\R^2 \to \R^2$.

Suppose now that $T$ is $C$-bounded turning and has $C$-uniform branch separation. By Proposition \ref{prop: enumeration of T=enumeration of Tmeta}(iii) there exists an $m$-disctionary $\W$ and a $\theta$-quasisymmetric branch-orientation-preserving map $g:T\to \mathcal{T}^{\W}$ such that $\dim_H\mathcal{T}^{\W}<1+\epsilon$, $\mathcal{T}^\W$ is semi-geodesic with constant $c=c(m)$, and $\theta$ depends only on $C$ ,$\epsilon$, $m$. Theorem \ref{thm: intro 2 quant.} asserts now the existence of a $K$-quasiconformal map $F:\R^2\to \R^2$ such that $F(T)=g(T)=\mathcal{T}^{\W}$ and $K$ depends only on $m$, $C$, and $\epsilon$. 
\end{proof}

\section{A quasiconformal uniformization of the Vicsek tree}\label{sec:Vicsekunif}
Using complex notation, define 5 similarities on $\mathbb{C}$:
\[ \phi_1(z)=\frac13(z-2-2i), \qquad \phi_2(z)=\frac13(z-2+2i), \qquad \phi_3(z)=\frac13(z+2+2i), \]
\[\phi_4(z)=\frac13(z+2-2i), \qquad \phi_5(z)=\frac13 z.\]
The Vicsek tree $\V$ is the unique nonempty compact subset of
$\mathbb C$ satisfying
\[\V=\bigcup_{i=1}^5 \phi_i(\V).\]

\begin{figure}[h]
    \centering
    \includegraphics[width=0.9\textwidth]{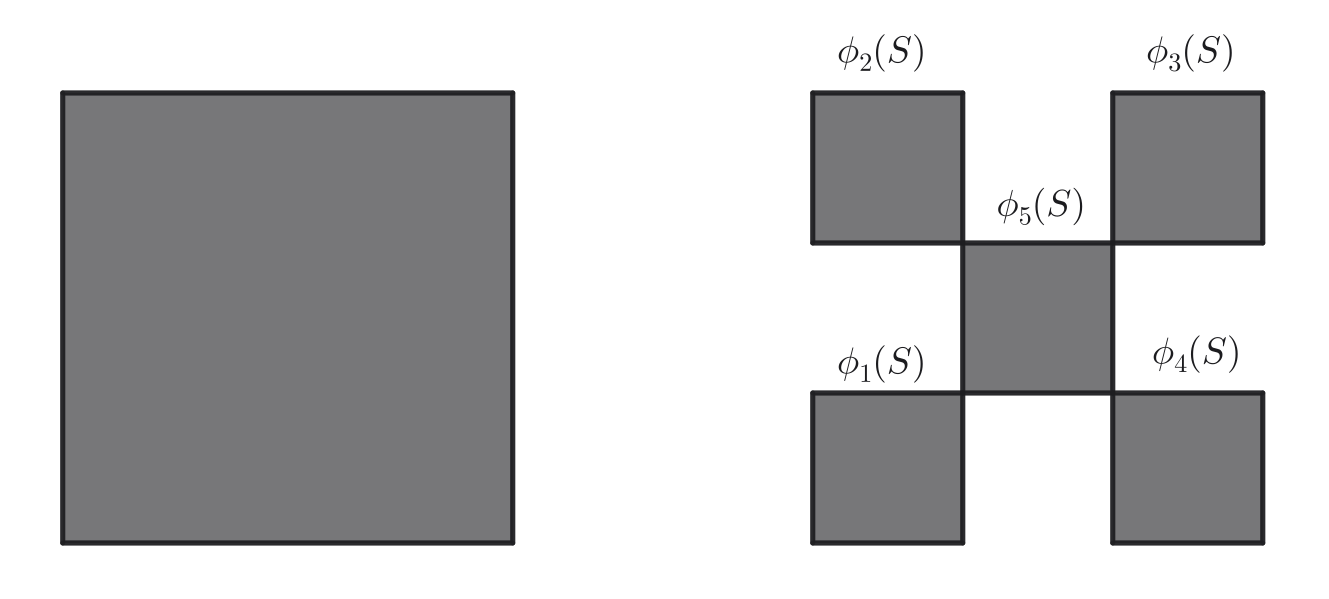}
    \caption{The square $S=[-1,1]^2$ and its images under $\phi_1,\dots,\phi_5$.}
    \label{fig:vicsek-model-tiles1}
\end{figure}

Given a word $w=i_1\cdots i_k\in A_5^*$, we write
\[\phi_w=\phi_{i_1}\circ\cdots\circ\phi_{i_k}\quad\text{and}\quad\V_w=\phi_w(\V).\]

\begin{lem}\label{lem:semigeodesic Vicsek}
The Vicsek tree $\V$ is semi-geodesic.
\end{lem}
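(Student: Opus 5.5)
We must verify the two conditions of Definition \ref{def:semigeodesic} for $\V$. The plan is to exploit the cross structure: let $X_0=[-1-i,1+i]\cup[-1+i,1-i]$, the union of the two diagonals of $S=[-1,1]^2$. One checks that $\phi_j$ maps the appropriate diagonal endpoints of $S$ to points of $X_0$, so that $X_0\subset\bigcup_{j}\phi_j(X_0)$. Hence the sets $V_n=\bigcup_{|w|\le n}\phi_w(X_0)$ form an increasing sequence of finite trees made of line segments. Set $V=\bigcup_n V_n$. Since $\V\subset S$ is the attractor and $\phi_w(X_0)\subset\V_w$ with $\diam \V_w\to0$, we get $\V=\overline{V}$, exactly as for $\mathcal T^{2m-2}$ in the proof of Proposition \ref{prop: enumeration of T=enumeration of Tmeta}.

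For property (2) I would follow the argument at the end of that proof. Take a non-leaf point $p\in\V$. Two components of $\V\setminus\{p\}$ are open, so they contain points $x_1,x_2\in V$, and then $\V[x_1,x_2]\subset V_n$ for some $n$ is a polygonal arc through $p$. In fact every such arc consists of diagonal segments of squares $\phi_w(S)$, and consecutive segments meet only at centers $\phi_w(0)$ or at corners. Whenever the arc turns at a center, it passes straight through along the other diagonal of that subsquare; at a corner, the two diagonals of adjacent subsquares are collinear. So no genuine corner occurs where $p$ is interior. A simpler route is to argue that the collection of segments forms maximal lines: the diagonal of $\phi_w(S)$ through $p$, extended, already lies in $\V$. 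If $p$ is not a leaf, then $p$ lies in the relative interior of some diagonal segment of some $\phi_w(X_0)$. The leaves are exactly the points not lying in the open part of any diagonal, namely the limits of nested corners.

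For property (1), given $x,y\in\V$, choose the longest common word $w$ with $x,y\in\V_w$. Rescaling by $\phi_w^{-1}$, we may assume $x\in\V_i$ and $y\in\V_j$ with $i\neq j$. The pieces $\V_i$ meet only at $\phi_i(\pm1\pm i)$-type points on $X_0$. I would then use the standard estimate for self-similar trees satisfying the open set condition: $\ell(\V[x,y])\lesssim|x-y|$. To prove it, write $\V[x,y]$ as $\V[x,c]\cup\V[c,y]$, where $c$ is the junction point. By self-similarity, $\ell(\V[z,c])\le C\,\diam$ of the smallest cell containing $z$ and $c$, since geodesics inside a cell have length at most $\ell(X_0$-diagonal path$)\cdot\sum 3^{-k}$-type bounds. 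The arc from $z$ to a corner is made of diagonal pieces of geometrically decreasing size. Finally, $|x-y|\gtrsim |x-c|+|y-c|$ by the analogue of Lemma \ref{lem: geometry of rhombuses}, using that distinct first-level squares meet at a single point with a definite angle.

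The main obstacle is this last separation estimate. When $x,y$ lie in adjacent pieces near their common point $c$, they approach $c$ along different diagonals, which meet at right angles or are collinear. Either way $|x-y|\ge c(|x-c|+|y-c|)$ with $c=1/\sqrt2$.
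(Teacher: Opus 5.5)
Your treatment of condition (2) follows the paper's argument. You approximate $\V$ by the finite segment trees $\bigcup_{|w|=n}\phi_w(X_0)$, observe that $\V[x_1,x_2]$ is a polygonal arc through $p$, and use that such an arc can only turn at centers $\phi_w(0)$. Each center lies in the interior of both diagonals of $\phi_w(S)$, and these diagonals are contained in $\V$. Your sentence that the arc ``passes straight through'' a center, so that ``no genuine corner occurs,'' is wrong as stated: $\V[-1-i,-1+i]$ turns at $0$. What you need is a segment of $\V$, not of the arc, containing $p$ in its interior, and the center observation supplies exactly that. The alternative characterization of leaves is asserted without proof and is not needed.

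For condition (1) you take a genuinely different route. The paper does not prove quasiconvexity; it cites Baudoin--Chen for the bi-Lipschitz equivalence of the intrinsic and Euclidean metrics on $\V$. You instead argue directly from self-similarity, which makes the lemma self-contained. The argument can be completed, but as written the inequalities do not close.

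The missing step is the following. Let $\V_u$ be the smallest cell containing $z$ and the junction point $c$. You have $\ell(\V[z,c])\le C\diam\V_u$ and $|x-y|\gtrsim|x-c|+|y-c|$, but you still need $\diam\V_u\lesssim|z-c|$. This holds for $\V$:
\begin{enumerate}
\item $c$ is a corner of $\phi_u(S)$, and the only child square containing $c$ is the corner child $\phi_{uj}(S)$;
\item minimality of $u$ forces $z\notin\V_{uj}$;
\item the other four child squares lie at distance at least $\frac{2\sqrt2}{3}3^{-|u|}=\frac13\diam\V_u$ from $c$.
\end{enumerate}
This is where the specific geometry of $\V$ enters, not self-similarity alone. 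Indeed, the ``standard estimate for self-similar trees satisfying the open set condition'' that you invoke is false in general: a von Koch arc is such a tree and has no rectifiable subarcs.

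Two smaller repairs are also needed.
\begin{itemize}
\item If $\V_i\cap\V_j=\emptyset$ (two corner pieces), there is no junction point. In that case $|x-y|\ge\frac23$, and your geometric-series argument bounds the length of every arc in $\V$ by $4\sqrt2$, which suffices.
\item The separation estimate should not be justified by the directions of the diagonals at $c$, since these only control points lying on those diagonals. The correct reason is that $\phi_i(S)$ and $\phi_j(S)$ lie in opposite closed quadrants at $c$. Hence $\langle x-c,y-c\rangle\le0$ and $|x-y|^2\ge|x-c|^2+|y-c|^2$, which gives your constant $1/\sqrt2$.
\end{itemize}
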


\begin{proof}
It is known that the geodesic metric on $\V$ is bi-Lipschitz equivalent to the restriction of the Euclidean metric to $\V$; see \cite[Section 2.2]{BaudoinChen1}. Hence $\V$ is quasiconvex. It remains to verify the second condition. Let
\[L=[-1-i,1+i]\cup[-1+i,1-i]\]
be the union of the two diagonals of $\V$, and for $n\geq 0$ set
\[\Gamma_n:=\bigcup_{|w|=n}\phi_w(L).\]
Then each $\Gamma_n$ is a finite tree consisting of line segments, $\Gamma_n\subset \V$, and $\bigcup_n\Gamma_n$ is dense in $\mathbb V$.

Let $p\in \V\setminus\mathcal L(\V)$. Choose points $x_1,x_2$ in two distinct components of $\V\setminus\{p\}$ and in $\bigcup_n\Gamma_n$. For sufficiently large $n$ we have $x_1,x_2\in\Gamma_n$. By uniqueness of arcs, $ \V[x_1,x_2]=\Gamma_n[x_1,x_2],$ so $\V[x_1,x_2]$ is a polygonal arc containing $p$. If $p$ is not a corner of this arc, then a sufficiently short subarc containing $p$ is a line segment. If $p$ is a corner, then $p=\phi_w(0)$ for some word $w$. Since $\phi_w([-1-i,1+i])\subset\mathbb V$, this line segment contains $p$ in its interior. Thus, in either case, $p$ lies in the interior of a line segment contained in $\mathbb V$, proving the second condition in the definition of a semi-geodesic tree.
\end{proof}

For the rest of this section, we use the following definition. 

\begin{definition}
Given a tree $T$ and a branch point $b\in T$ with $\val_T(b)=4$ let $B_1(b),B_2(b),B_3(b),B_4(b)$ be a counterclockwise enumeration of its branches. We say that $B_1(b),B_{3}(b)$ (resp. $B_2(b),B_{4}(b)$) are \textit{opposite in $T$ at $b$}.
\end{definition}

The next proposition gives a characterization for all trees that are quasiconformally homeomorphic to $\mathbb{V}$. 

\begin{prop}\label{prop: uniformization of T^4}
A planar tree $T\subset \R^2$ is $K$-quasiconformally equivalent to $\mathbb{V}$ if and only if there exist $C\geq 1$ and a sequence $({\bf X}^n)_{n\in\mathbb N_0}$ of finite covers of $T$ with the following properties.
\begin{enumerate}
\item For each $n\in\N_0$, ${\bf X}^n$ is a collection of nondegenerate subtrees of $T$ with ${\bf X}^0 =\{T\}$.
\item If $n\in\mathbb N_0$ and $X\in\mathbf X^n$, then there exists $b_X\in X\setminus\partial X$ such that $X\setminus\{b_X\}$ has exactly four components. Their closures are precisely the four members of $\mathbf X^{n+1}$ contained in $X$. 
\item If $n\in\N$ and $X\in\mathbf X^n$, then $\card(\partial X)\in\{1,2\}$. If $\partial X=\{p_X,q_X\}$, then $p_X$ and $q_X$ belong to opposite branches of $T$ at $b_X$.
\item If $n\in\N$ and $X\in {\bf X}^n$, then there exists $X'\in {\bf X}^{n-1}$ such that $X\subset X'$.
\item If $n\in\N$, $X,Y \in {\bf X}^n$ are distinct, and $X\cap Y \neq \emptyset$, then they intersect on a point of $\partial X\cap \partial Y$ and $\diam{X} \leq C\diam{Y}$.
\item If $n\in\N$, $X,Y \in {\bf X}^n$, and $X\cap Y = \emptyset$, then $\dist(X,Y) \geq C^{-1}\diam{X}$.
\item If $n\in\N_0$, $X\in {\bf X}^n$, $Y\in {\bf X}^{n+1}$, and $X\cap Y \neq \emptyset$, then 
\[ C^{-1}\diam{Y} \leq \diam{X} \leq C\diam{Y}.\]
\item If $n,m \in \N_0$, $X\in {\bf X}^n$, $Y\in {\bf X}^m$, with $m\geq n+C$ and $X\cap Y\neq \emptyset$, then $\diam{Y} \leq \frac12\diam{X}$.
\end{enumerate}
The constants $K$ and $C$ are quantitatively related.
\end{prop}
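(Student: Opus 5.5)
The plan is to prove both directions by working with the self-similar structure of $\V$ and the extension result Theorem \ref{thm: intro 2 quant.}.

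\emph{Necessity.} Suppose $F:\R^2\to\R^2$ is $K$-quasiconformal with $F(\V)=T$. Let $\mathbf{Y}^n=\{\V_w: w\in A_5^n\}$. I will first check directly from the IFS that $(\mathbf{Y}^n)$ satisfies (1)--(8), with $C$ absolute. The branch point of $\V_w$ is $b_{\V_w}=\phi_w(0)$, and its four subtiles are $\V_{wi}$ for $i\le 4$, together with the central piece $\V_{w5}$. Since five subtiles appear rather than four, I will replace the word structure accordingly. I group so that $X\setminus\{b_X\}$ has four components: the closures of the four arms from $\phi_w(0)$ toward the corners. Each arm is the union of a corner piece and a quarter of the center, and is bi-Lipschitz to a copy of $\V$ with one boundary point at a corner and one at $\phi_w(0)$. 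Iterating this ``arm'' decomposition, a tile with two boundary points has them at the two ends of a diagonal, which gives the opposite-branch property (3). The metric properties (5)--(8) follow from the similarity ratios being $1/3$ and from the fixed positions. Next, I transport to $T$ by $\mathbf{X}^n=F(\mathbf{Y}^n)$. Properties (1)--(4) are topological; for (3) I use the converse part of Theorem \ref{thm: intro 2 quant.}, which shows $F|\V$ is branch-orientation-preserving, so oppositeness is preserved. Since $F$ is $\eta$-quasisymmetric with $\eta=\eta_K$, properties (5)--(8) transfer by \cite[Proposition 10.8]{Heinonen}, as in Proposition \ref{prop:tile-decomposition}.

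\emph{Sufficiency.} Given $(\mathbf{X}^n)$, I build a combinatorial coding. By (2), each $X\in\mathbf{X}^n$ has four children; I label them counterclockwise around $b_X$ and match them with the four arm-children of the corresponding $\V$-tile $Y$, requiring that children containing points of $\partial X$ correspond to children containing the matching points of $\partial Y$. Property (3) guarantees that such a matching exists with boundary points on opposite branches, since in $\V$ boundary points of arm-tiles also lie on opposite branches. I then define $g:\V\to T$ by sending $\bigcap_n Y_n$ to $\bigcap_n X_n$ for nested matched sequences. Properties (7)--(8) make diameters decay geometrically, so $g$ is well defined and continuous. Consistency on shared boundary points comes from the matching and from (5). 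Injectivity follows from (5)--(6).

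Quasisymmetry of $g$ follows from the standard quasi-visual argument of \cite[Proposition 2.7]{BM22}. Both families are quasi-visual approximations: adjacent tiles have comparable size by (5) and (7), disjoint tiles are separated by (6), and there is uniform decay by (8). A homeomorphism matching tiles of two quasi-visual approximations is quasisymmetric. By construction $g$ respects the counterclockwise order of branches at every $b_X$. Every branch point of $T$ is some $b_X$: a branch point that is never the center of a tile would eventually lie in the interior of arbitrarily small tiles, giving a contradiction. Hence $g$ is branch-orientation-preserving, and Theorem \ref{thm: intro 2 quant.} extends it to a $K(C)$-quasiconformal map, using that $\V$ is quasiconvex by Lemma \ref{lem:semigeodesic Vicsek}.

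\emph{Main obstacle.} The delicate step is the matching. The inductive labeling must simultaneously respect counterclockwise order at $b_X$ and send boundary points to boundary points with the correct adjacency across neighboring tiles. Consistency is needed when two tiles share a point of $\partial X\cap\partial Y$. I will handle this by fixing, for each tile, the labels of the (at most two) children containing the boundary points first, using (3) to ensure they lie in positions $1$ and $3$. The remaining two labels are then forced by the cyclic order.
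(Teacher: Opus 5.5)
Your outer architecture matches the paper's, and those steps are fine. For necessity you push a model decomposition of $\V$ forward by $F$. For sufficiency you code $T$ tile by tile against that model, get quasisymmetry from \cite[Proposition 2.7]{BM22}, check branch-orientation preservation, and extend with Theorem~\ref{thm: intro 2 quant.}. The matching you call the main obstacle is routine once a model exists, since (3) forces the two children carrying boundary points into opposite positions and cyclic order fixes the rest.

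The genuine gap is the model itself. Both directions need a decomposition of $\V$ satisfying (1)--(8), which is the paper's Lemma~\ref{lem: Vicsek is QC with T^4}, and your sketch does not produce one. The level-one arm $D_\dagger=\{0\}\cup\bigcup_{k\ge0}\V_{5^{(k)}1}$ is not a similarity copy of $\V$: its diameter is $\frac12\diam\V$, while its $\mathcal H^s$-measure, $s=\log5/\log3$, is $\frac14$ that of $\V$. So there is no self-similar rule for ``iterating the arm decomposition''. Transporting the decomposition through a bi-Lipschitz identification at each level composes bi-Lipschitz maps and gives no uniform constants. Your description of the tiles also conflicts with (2). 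By (2), every boundary point of a tile is the center $b_Y$ of an ancestor, hence a $4$-valent branch point, never a corner of a cell (corners are leaves or double points of $\V$). For example, $\partial D_\dagger=\{0\}$, and the corner $-1-i$ is a leaf.

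What actually happens is the following. Splitting $D_\dagger$ at $\phi_1(0)$ gives three $\frac13$-copies of $D_\dagger$ and a new piece $D_\ddagger$ with $\partial D_\ddagger=\{0,\phi_1(0)\}$, made of two $\frac13$-arms glued at their outer corners. Splitting $D_\ddagger$ at $\phi_{13}(0)$ creates a third type $D_\times$, a $\frac19$-arm glued to a $\frac13$-arm. One has to verify that the substitution closes up on these three types.

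Even then, (5) does not follow from ``ratios $1/3$''. The similarity depth grows by $0$, $1$ or $2$ from parent to child depending on the types, so two same-level tiles meeting at a point could a priori drift apart in size. The paper proves (5) by following the two forced chains of tiles containing $p=b_Y$, starting from the last common ancestor $Y$. It shows that after at most two steps both chains are $\ddagger$-tiles in which $p$ plays the same role, so all later increments agree and the depths differ by at most $2$. Property (8) likewise uses that zero increments ($\dagger\to\ddagger\to\times$) occur at most twice in a row. Property (6) uses the cores of two-boundary tiles together with quasiconvexity of $\V$. This construction and the synchronization argument are the substantive content behind the proposition, and they are missing from your proposal.
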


We note that properties (5)--(8) of Proposition \ref{prop: uniformization of T^4} are exactly those of a \emph{quasivisual approximation} of $T$ in the sense of Bonk and Meyer; see \cite[Definition 2.1]{BM22}. We split the proof of Proposition \ref{prop: uniformization of T^4} in two parts. 

\begin{lem}\label{lem: Vicsek is QC with T^4}
There exists a sequence $({\bf X}^n)_{n\in\mathbb N_0}$ of finite covers of $\mathbb{V}$ that satisfies (1)--(8) of Proposition \ref{prop: uniformization of T^4}.
\end{lem}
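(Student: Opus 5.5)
We need covers of $\V$ satisfying (1)--(8). The natural choice is not the self-similar pieces $\V_w$ themselves: they do satisfy property (2) only with five pieces, since $\V_w\setminus\{\phi_w(0)\}$ has four components but the centre piece $\V_{w5}$ contains the branch point. So I would take the cells of a modified decomposition. Set ${\bf X}^0=\{\V\}$ and $b_{\V}=0$. The four closures of the components of $\V\setminus\{0\}$ are the four ``arms'' $A_j=\V\cap Q_j$, where $Q_j$ is the closed quadrant. Each arm is the union of the corner piece $\V_j$ and half of the centre piece $\V_5$ (a scaled quarter-arm). I would rather build the decomposition in terms of arms and their self-similar structure.

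The key observation is that every arm $A$ is, up to a similarity, the diagonal half-line piece $\V\cap\{\Re z,\Im z\ge 0\}$. Along its diagonal it has a branch point at $\phi_3(0)=\tfrac{2+2i}{3}$, the centre of the corner piece. Removing that point splits $A$ into four subtrees. One is the part toward $0$, which is a union of a scaled arm and further pieces along the diagonal. The other three are the three remaining arms of $\V_3$. The inner piece is not similar to an arm, so instead I would define the tiles recursively by a finite list of \emph{types}. Each type is a compact subtree of $\V$ determined up to similarity. Each type has a distinguished interior branch point of valence $4$, and removing it yields four subtrees, each again of one of finitely many types, scaled by ratios bounded below. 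Concretely, I would choose the types as the arms (one boundary point) and the ``diagonal segments'' $\V\cap\phi_w(\text{strip around the diagonal between two centres})$ (two boundary points on opposite sides). Their branch points are the centres $\phi_w(0)$ lying on the diagonal, and the two boundary points lie on the diagonal through $b_X$. That gives the opposite-branch condition (3), because the two diagonal directions at $\phi_w(0)$ correspond to opposite branches in the counterclockwise order.

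With such a finite type list, properties (1), (2), (4) are built in by construction. For (3), the boundary points of a child lie either at $b_X$ or at boundary points of $X$, so there are at most two. For the metric properties (5)--(8) I would use the finiteness of types together with the self-similarity of $\V$. Scale ratios between parent and child lie in a finite set contained in $[3^{-k},1)$. Adjacent tiles of the same level differ in scale by a bounded factor, which I would check inductively: neighbours at level $n+1$ are children of the same or adjacent level-$n$ tiles. Disjoint same-level tiles are separated comparably to their diameter by a compactness argument over the finitely many configurations of pairs of adjacent or near tiles up to similarity. This uses Lemma~\ref{lem:semigeodesic Vicsek}, specifically quasiconvexity, to convert arc separation into Euclidean separation. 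Property (8) follows since every child has ratio at most some $\rho<1$.

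The main obstacle is designing the finite type list so that every child of every type is again of a listed type, while the two boundary points land in opposite branches. I would first try taking tiles to be the closures of components of $\V\setminus V_n$, where $V_n$ consists of the midpoints of diagonal edges of $\Gamma_n$ between consecutive centres. I would then verify by direct inspection of the $\phi_i$ that removing a suitable centre from each such tile produces four tiles of the same form at a finer stage.
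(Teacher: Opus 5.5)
Your architecture is the paper's: finitely many model tiles up to similarity, each split at one branch point on its straight core into four tiles from the list, with (5)--(8) then read off from finiteness and quasiconvexity. But the step you flag as the main obstacle is the actual content of the lemma, and both of your concrete suggestions for it fail.

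\emph{The double-point decomposition violates (2).} Cutting at the midpoints of the edges between consecutive centres means cutting at junction points such as $\phi_1(1+i)=-\frac{1+i}{3}$, which are double points. But (2) and (4) force every boundary point of a tile of level $\ge1$ to be some splitting point $b_X$, hence a branch point of valence four.

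\emph{Two types do not close up.} Split the arm $\{0\}\cup\bigcup_{k\ge0}\V_{5^{(k)}1}$ at $\phi_1(0)$. The inner piece is two arms of size $\frac13$ joined tip to tip, with boundary $\{0,\phi_1(0)\}$ (the paper's $D_\ddagger$). Split this at either least-generation point of its core, $\phi_{13}(0)$ or $\phi_{51}(0)$. Besides a scaled copy of $D_\ddagger$, you get two arms of sizes $\frac13$ and $\frac19$ joined tip to tip (the paper's $D_\times$), which is not similar to $D_\ddagger$. You need this third, asymmetric type, split at the midpoint of its core into two copies of $D_\ddagger$ and two arms.

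The more serious gap is (5). It does not follow from finiteness of types, and your induction over levels multiplies the comparability constant by a fixed factor at each step. In fact (5) depends on which of its two admissible points a $\ddagger$-tile is split at. Suppose you decide by a fixed orientation of each diagonal, always putting the $\times$-child at the left end. Let $p$ be the splitting point of a $\ddagger$-tile. The tiles containing $p$ from the left eventually shrink by $\frac13$ at every level. Those from the right shrink by $\frac13$ only every second level, because they alternate between $\times$ and $\ddagger$. So their diameter ratio is unbounded.

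The paper keys the rule to generations instead: the $\times$-child goes at the inherited boundary point. It then proves (5) by following the two chains of tiles containing a common point $p$ down from their last common ancestor (Cases 1--4). After two steps the similarity-depth increments of the two chains coincide, which gives $|s(Z)-s(W)|\le 2$.

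Your compactness argument for (6) also presupposes (5). The paper obtains (6) from (5), quasiconvexity, and the observation that an arc between disjoint tiles crosses the core of a two-ended tile adjacent to one of them.
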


\begin{proof}
Recall $A_k=\{1,\dots,k\}$ from \textsection\ref{sec:words}. For each branch point $p$ of $\mathbb V$, set
$$\gen(p):= \min\{|w|:w\in A^*_5,\ p=\phi_w(0)\}.$$

We define three types of tiles on $\V$. First, any image of the set
\[ D_{\dagger}:=\{0\}\cup\bigcup_{k\geq 0}\mathbb V_{5^{(k)}1}\]
under a similarity is called a $\dagger$-tile. Note that $\V$ is the union of 4 $\dagger$-tiles which intersect exactly at 0 and are isometric to each other. Second, $D_{\dagger}$ is a union of 4 subtrees that intersect at $\psi_1(0)$; the three of them are $\dagger$-tiles isometric to each other and the fourth is denoted by $D_{\ddagger}$. Any image of $D_{\ddagger}$ under a similarity is called a $\ddagger$-tile. Third, $D_{\ddagger}$ is a union of 4 subtrees that intersect exactly at $\psi_{13}(0)$; one is a $\ddagger$-tile, two are $\dagger$-tiles that are opposite and isometric to each other, and the third is denoted by $D_{\times}$. Any image of $D_{\times}$ under a similarity is called a $\times$-tile. Finally, $D_{\times}$ is a union of four subtrees that intersect exactly at $\psi_{51}(0)$; two are $\ddagger$tiles which are opposite and isometric and two are $\dagger$-tiles that are opposite and isometric. The boundaries of these three models, relative to $\mathbb V$, are $\partial D_\dagger=\{0\}$, $\partial D_\ddagger=\{0,\phi_1(0)\}$, and $\partial D_\times=\{0,\phi_{13}(0)\}$. 

\begin{figure}[h]
    \centering
    \includegraphics[width=0.31\textwidth]{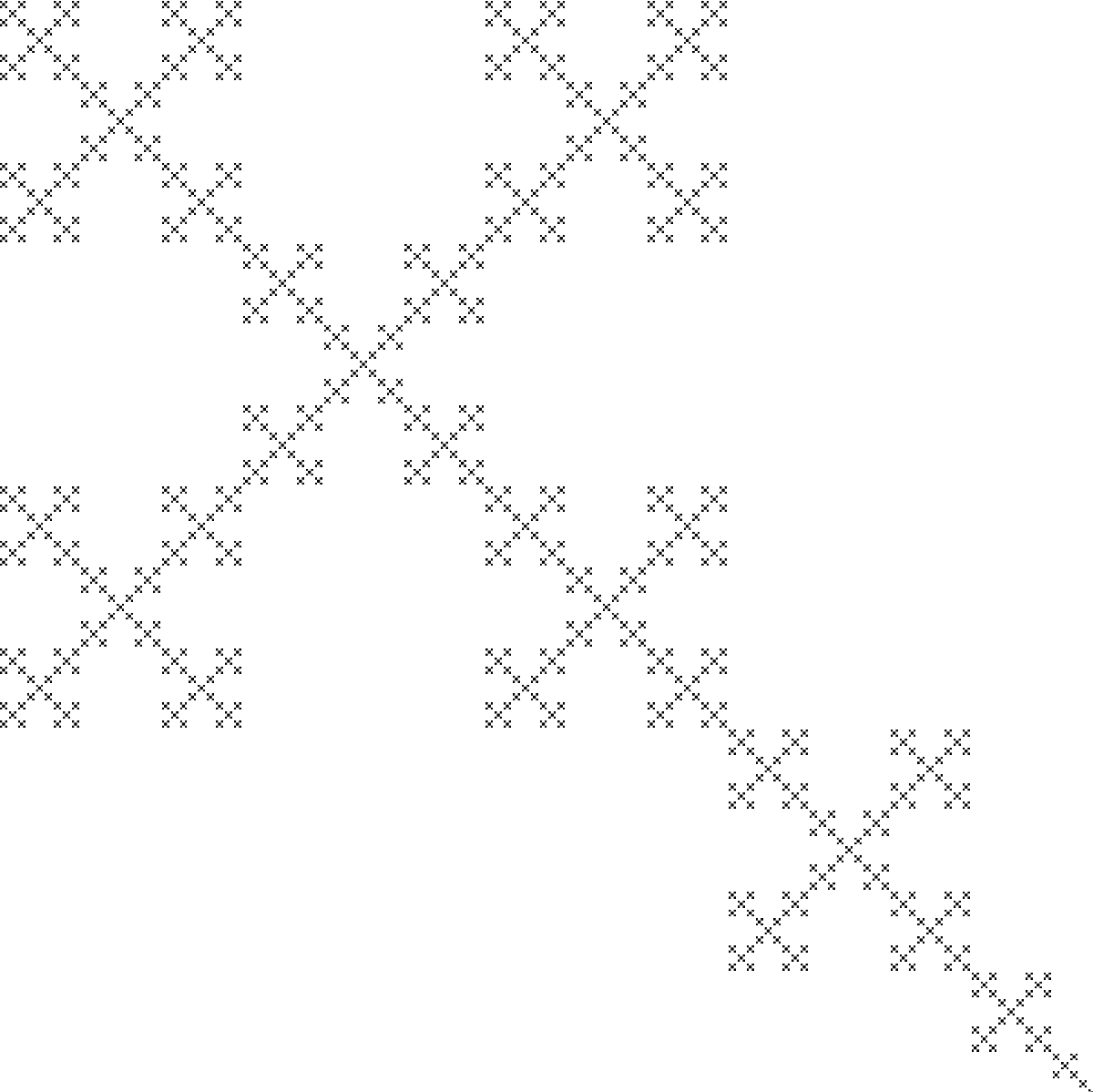}
    \hfill
    \includegraphics[width=0.31\textwidth]{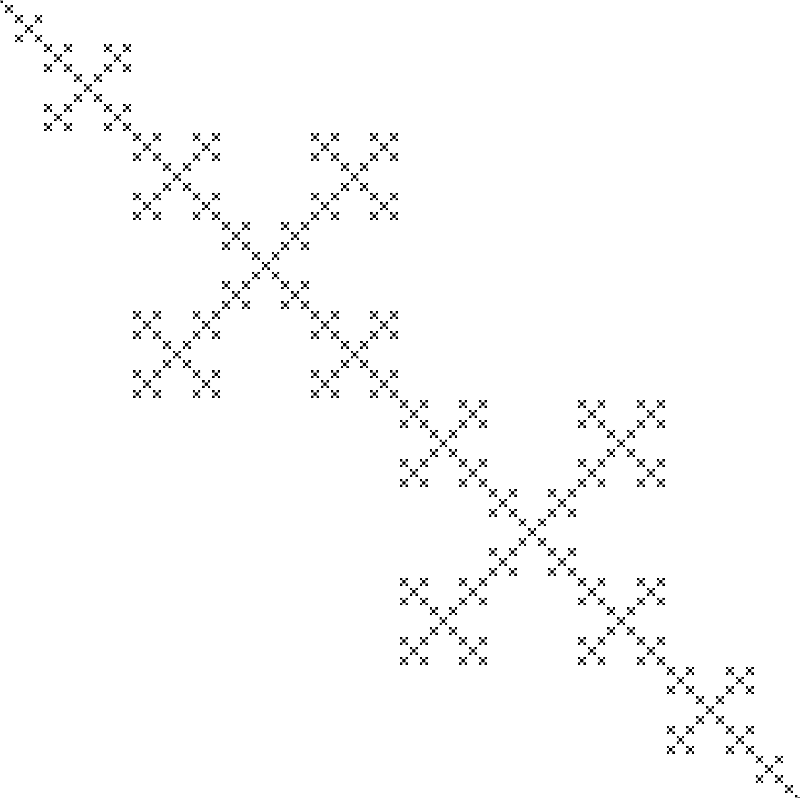}
    \hfill
    \includegraphics[width=0.31\textwidth]{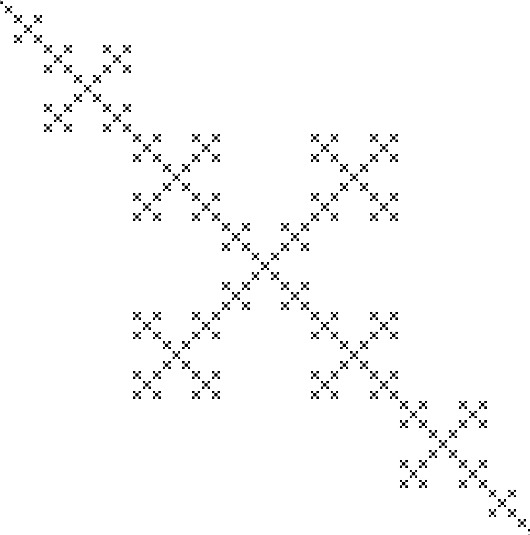}
    \caption{The three model tiles: a $\dagger$-tile, a $\ddagger$-tile,
    and a $\times$-tile.}
    \label{fig:vicsek-model-tiles}
\end{figure}

Let ${\bf X}^0 = \{\V\}$ and let ${\bf X}^1 = \{X_1,\dots,X_4\}$ where each $X_i$ is a $\dagger$-tile isometric to $D_{\dagger}$ and all $X_i$ intersect at $0$. Assume now that we have defined ${\bf X}^n = \{X_w : w\in A_4^n\}$ for some $n\in\N$ so that the following properties hold.
\begin{enumerate}
\item[(a)] Each $X_w$ is either a $\dagger$-tile, or a $\ddagger$-tile, or a $\times$-tile.
\item[(b)] If $X_w$ is a $\dagger$-tile, then $\partial X_w$ contains exactly one branch point.
\item[(c)] If $X_w$ is a $\ddagger$-tile, then $\partial X_w$ contains exactly two points $\{p,q\}$, both branch points. One of them satisfies $p\in \partial X_{w^{\uparrow}}$ and $\gen(p)\leq \gen(q)$.
\item[(d)] If $X_w$ is a $\times$-tile, then $\partial X_w$ contains exactly two points $\{p,q\}$, both branch points. They satisfy $\gen(p)< \gen(q)$.
\end{enumerate}
Fix $w\in A_4^n$. We consider the following instances. 
\begin{enumerate}
\item[(i)] If $X_w$ is a $\dagger$-tile with $\partial X_w=\{p\}$, then $X_w$ consists of 3 isometric $\dagger$-tiles, two of which are opposite, and one $\ddagger$-tile,all intersecting at $b_w\in X\setminus\{p\}$ with $\gen(b_w)=\gen (p)+1$.
\item[(ii)] If $X_w$ is a $\ddagger$-tile with $\partial X_w=\{p,q\}$ and $p\in \partial X_{w^\uparrow}$, then $\gen(p)\le \gen(q)$ and $X_w$ consists of a $\times$-tile with $p$ in its boundary, $2$ opposite isometric $\dagger$-tiles and $1$ $\ddagger$-tile, all intersecting at a branch point $b_w\in X\setminus\{p,q\}$ with $\gen(b_{w})=\gen(q)+1$.
\item[(iii)] If $X_w$ is a $\times$-tile with $\partial X_w=\{p,q\}$, then, $\gen(p)<\gen(q)$, and $X_w$ consists of 2 isometric opposite $\dagger$-tiles and $2$ isometric opposite $\ddagger$-tiles, all intersecting at a branch point $b_w$ with $\gen (b_w)=\gen(q)$.
\end{enumerate}
The new collection ${\bf X}^{n+1}$ consisting of all new tiles produced as above, satisfies (a)--(d). A simple induction now shows that properties (1)--(4) and the intersection assertion in (5) of Proposition \ref{prop: uniformization of T^4} hold.

We next verify the comparability of intersecting tiles. For a similarity copy $X$ of a model tile, let $s(X)$ denote its similarity depth, so that $\diam X=3^{-s(X)}d_{\tau(X)}$, where $\tau(X)\in\{\dagger,\ddagger,\times\}$ is its type and $d_\tau$ is the diameter of the corresponding model. The four initial $\dagger$-tiles have depth $0$. The defining similarities give the following parent--child increments:
\[
\begin{array}{c|c|c|c}\label{eq:table for Vicsek}
\text{parent}&\text{child type}&\text{multiplicity}
&s(X')-s(X)\\ \hline
\dagger&\dagger&3&1\\
\dagger&\ddagger&1&0\\
\ddagger&\times&1&0\\
\ddagger&\dagger&2&2\\
\ddagger&\ddagger&1&1\\
\times&\ddagger&2&1\\
\times&\dagger&2&2
\end{array}
\tag{1.1}
\]

Let $Z,W\in\mathbf X^N$ distinct such that $Z\cap W\neq\emptyset$, and let $Y$ be their longest common ancestor. Write
\[Y=Z_0\supset Z_1\supset\cdots\supset Z_k=Z\quad\text{and}\quad Y=W_0\supset W_1\supset\cdots\supset W_k=W. \]
Their intersection is $\{p\}$, where $p=b_Y$, and $Z_i\cap W_i=\{p\}$ for $1\leq i\leq k$. Thus each subsequent tile in either chain is the unique child containing $p$. If $k=1$, \eqref{eq:table for Vicsek} gives $|s(Z)-s(W)|\leq2$; when $Y=\mathbb V$, the two children have equal depth. Suppose henceforth that $k\geq2$. We consider the following cases.

\emph{Case 1: $Y=\mathbb V$.} The tiles $Z_1,W_1$ are isometric $\dagger$-tiles. Their children containing $p=0$ are $\ddagger$-tiles of equal depth, in which $p$ has smaller generation than the other boundary point. The two chains thereafter have identical similarity-depth increments. Hence $s(Z)=s(W)$.

\emph{Case 2: $Y$ is a $\dagger$-tile.}
Write $\gen(\partial Y)=n$. Then $\gen(p)=n+1$. A $\dagger$-child of $Y$ has depth $s(Y)+1$, and its child containing $p$ is a $\ddagger$-tile of the same depth. The $\ddagger$-child of $Y$ has depth $s(Y)$ and boundary generations $n,n+1$, with $p$ of larger generation. Its child containing $p$ is therefore its $\ddagger$-child, of depth $s(Y)+1$. Consequently, regardless of which two children $Z_1,W_1$ occur, $Z_2,W_2$ are $\ddagger$-tiles of depth $s(Y)+1$, with boundary generations $n+1,n+2$ and $p$ of smaller generation. Their subsequent increments agree, and $s(Z)=s(W)$.

\emph{Case 3: $Y$ is a $\times$-tile.} Write $\partial Y=\{a,q\}$, where $\gen(a)=m<n=\gen(q)$. Then $\gen(p)=n$. Each $\dagger$-child of $Y$ has depth $s(Y)+2$, and its child containing $p$ is a $\ddagger$-tile of the same depth. The two $\ddagger$-children of $Y$ have depth $s(Y)+1$ and boundary generations $m,n$ and $n,n$. In the first, $p$ has larger generation. In the second, $p$ is the newly created boundary point, rather than the inherited one. Thus, in both cases, the child containing $p$ is the $\ddagger$-child, of depth $s(Y)+2$. It follows that $Z_2,W_2$ are $\ddagger$-tiles of equal depth, with boundary generations $n,n+1$ and $p$ of smaller generation. Their subsequent increments agree, so $s(Z)=s(W)$.

\emph{Case 4: $Y$ is a $\ddagger$-tile.} Let $n$ be the larger boundary generation of $Y$, allowing the two generations to be equal. Then $\gen(p)=n+1$. Each $\dagger$-child has depth $s(Y)+2$ and contributes a $\ddagger$-child containing $p$ at the same depth. This latter tile has boundary generations $n+1,n+2$. The $\ddagger$-child of $Y$ has depth $s(Y)+1$, with $p$ of larger boundary generation. Its child containing $p$ is a $\ddagger$-tile of depth $s(Y)+2$, again with boundary generations $n+1,n+2$. Finally, the $\times$-child has depth $s(Y)$. Its child containing $p$ is a $\ddagger$-tile of depth $s(Y)+1$, with both boundary generations equal to $n+1$. In this tile, $p$ is the boundary point inherited from its parent. Therefore $Z_2,W_2$ are $\ddagger$-tiles whose depths differ by at most one. In each, $p$ either has smaller generation or is the inherited boundary point in the equal-generation case. The two alternating-pattern observations apply, so all subsequent similarity-depth increments agree. Hence $|s(Z)-s(W)|\leq1$.

Combining these cases with $k=1$ gives $|s(Z)-s(W)|\leq2$. Set $d_-:=\min\{d_\dagger,d_\ddagger,d_\times\}$ and $d_+:=\max\{d_\dagger,d_\ddagger,d_\times\}$. Then
\[ \left(9\frac{d_+}{d_-}\right)^{-1} \leq \frac{\diam Z}{\diam W}= 3^{s(W)-s(Z)}
\frac{d_{\tau(Z)}}{d_{\tau(W)}} \leq9\frac{d_+}{d_-}.\]
This proves the diameter comparison in (5).

We show (6). For each tile $X$ with two boundary points, denote by $I_X$ its core, namely the arc joining these points. Since such tiles are similarity copies of $D_\ddagger$ or $D_\times$, whose cores are nondegenerate, $\diam I_X\simeq_{\V} \diam X$.

Let $X,Y\in\mathbf X^n$ be disjoint, and suppose that $\diam X\geq\diam Y$. Choose $x\in X$ and $y\in Y$ with $|x-y|=\operatorname{dist}(X,Y)$. After leaving $X$, the arc $\mathbb V[x,y]$ traverses a tile $Z\in\mathbf X^n$ adjacent to $X$. Since $X$ and $Y$ are disjoint, it enters and leaves $Z$ at distinct boundary points. Thus $Z$ is an arc tile and $I_Z\subset\mathbb V[x,y]$. Using the quasiconvexity of $\mathbb V$ (see Lemma \ref{lem:semigeodesic Vicsek}) and the comparison of intersecting tiles, we obtain
\[\diam X\simeq\diam Z\simeq\diam I_Z\leq\diam \mathbb V[x,y]\lesssim |x-y| =\operatorname{dist}(X,Y).\]

Property (7) follows from \eqref{eq:table for Vicsek}. Finally, we verify (8). Let $C_0$ be the comparability constant in (5), and choose $k\in\mathbb N$ sufficiently large that
\[C_0\frac{d_+}{d_-}3^{-k}\leq\frac12.\]
Set $N=3k$. Suppose first that $Y\in {\bf X}^{n+N}, X\in{\bf X}^n$, and $Y\subset X$. Then by \eqref{eq:table for Vicsek} $s(Y)-s(X)\geq k,$ and hence
\[\diam Y\leq \frac{d_+}{d_-}3^{-k}\diam X.\]
If $Y\not\subset X$, it follows similarly by applying property (5).
This proves (8).
\end{proof}

\begin{proof}[{Proof of Proposition \ref{prop: uniformization of T^4}}]
Let $({\bf X}^n)_{n\in\N_0}$ be the subdivision of $\mathbb{V}$ from Lemma \ref{lem: Vicsek is QC with T^4}.

Assume first that there exists a $K$-quasiconformal map $f:\R^2\to\R^2$ with $f(\mathbb{V})=T$. For each $n\in\mathbb N_0$, set ${\bf Y}^n=\{f(X):X\in {\bf X}^n\}$. We show that $({\bf Y}^n)_{n\in\N_0}$ satisfies properties (1)--(8). Properties (1)--(4) are immediate by the fact that $f$ is a homeomorphism. 

For (5) let $n\in\N$ and $f(X_1),f(X_2)\in {\bf Y}^n$ be distinct with $f(X_1)\cap f(X_2) \neq \emptyset$. Then $X_1\cap X_2$ contains exactly one point which is on $\partial X_1\cap \partial X_2$, so $f(X_1)\cap f(X_2)$ contains exactly one point which is on $\partial f(X_1)\cap \partial f(X_2)$. Moreover, since $\diam{X_1} \leq C\diam{X_2}$, by quasiconformality, $\diam{f(X_1)} \leq C_1\diam{f(X_2)}$ for some $C_1\geq 1$ depending only on $C$ and $K$.

For (6), let $n\in\N$ and $f(X_1),f(X_2)\in {\bf Y}^n$ with $f(X_1)\cap f(X_2) = \emptyset$. Then $\dist(X_1,X_2) \geq C^{-1}\diam{X_1}$ and, by quasiconformality, 
\[ \dist(f(X_1),f(X_2)) \geq C_2^{-1}\diam{f(X_1)}\] 
for some $C_2\geq 1$ depending only on $C$ and $K$.

For (7), let $n\in\N$, $f(X_1)\in {\bf Y}^n$, and $f(X_2)\in {\bf Y}^{n+1}$ with $f(X_1)\cap f(X_2) \neq \emptyset$. Then $C^{-1}\diam{X_2} \leq \diam{X_1} \leq C\diam{X_2}$ and, by quasiconformality, $a\diam{f(X_2)} \leq \diam{f(X_1)} \leq b\diam{f(X_2)}$ for some $a,b>0$ depending only on $C$ and $K$.

For (8), by quasiconformality of $f$, there exists $\rho\in (0,1)$ such that if $X_1,X_2 \in {\bf X}^*$ with $X_1\cap X_2 \neq \emptyset$ and $\diam{X_1} \leq \rho \diam{X_2}$, then $\diam{f(X_1)} \leq \frac12 \diam{f(X_2)}$. Choose $k_0\in\N$ such that $(1/2)^{k_0}c < \rho$. Applying (8) for $({\bf X}^n)_{n\in\N_0}$ for $k_0$ many times, we can find $N\in\N$ depending only on $K$ and $C$ such that $\diam{X_1} \leq \rho \diam{X_2}$ if $X_2 \in {\bf X}^n$ and $X_1 \in {\bf X}^m$ with $m\geq n+N$.

For the opposite direction, suppose that $T$ is a tree in $\R^2$ which admits a decomposition $({\bf Y}^n)_{n\in\N_0}$ satisfying (1)--(8). Set $Y_{\e}=T$ and write ${\bf Y}^1 =\{Y_1,\dots,Y_4\}$ where tiles are enumerated according to the counterclockwise orientation. Similarly, set $X_{\e}=\V$ and write ${\bf X}^1 =\{X_1,\dots,X_4\}$ where tiles are enumerated according to the counterclockwise orientation. Suppose now that for some $n\in\N$ we have labeled $\bigcup_{m=0}^n{\bf Y}^m = \{Y_w : w\in \bigcup_{m=0}^nA_4^m\}$ and $\bigcup_{m=0}^n{\bf X}^m = \{X_w : w\in \bigcup_{m=0}^nA_4^m\}$ so that 
\begin{enumerate}
\item $X_w \subset X_{w^{\uparrow}}$ and $Y_w\subset Y_{w^{\uparrow}}$ if $w\in A_4^m$ and $m\in \{1,\dots,n\}$,
\item $\card{\partial Y_w}=2$ if and only if $\card{\partial X_w} =2$,
\item $Y_w\cap Y_u \neq \emptyset$ for $w,u \in \bigcup_{m=0}^nA_4^m$ if and only if $X_w\cap X_u \neq \emptyset$.
\end{enumerate}

Assume first that for $w\in A^n_4$ we have $\card{\partial X_w}=\card{\partial Y_w}=1$. Then there exists unique $u\in A^n_4\setminus\{w\}$ such that $Y_w\cap Y_u \neq \emptyset$. Write $Y_w = Y_{w1}\cup \cdots \cup Y_{w4}$ enumerated according to the counterclockwise orientation of branches and so that $Y_{w1} \cap Y_u \neq \emptyset$. We similarly define tiles $X_{w1},\dots,X_{w4}$.

Assume now that for $w\in A^n_4$ we have $\card{\partial X_w}=\card{\partial Y_w}=2$. Let $b_{Y_w}$ be the branch point at which $Y_w$ is subdivided by (2). Since the boundary points of an arc tile lie in opposite branches at $b_{Y_w}$ (by (3)), we may label the four children of $Y_w$ and the four children of $X_w$ so that the correspondence preserves containment, intersections, arc and leaf tiles, and opposite pairs. 

Proceeding inductively we define $Y_w$ and $X_w$ for every $w\in A_4^*$. By property (8) for $({\bf Y}^n)_{n\in\N_0}$ and $({\bf X}^n)_{n\in\N_0}$, for each $w\in A^{\N}_4$,
\[ \lim_{n\to\infty}\diam{X_{\{w\}}} = \lim_{n\to\infty}\diam{Y_{\{w\}}} =0.\] 
Therefore, we can define surjective maps $\pi_T : A^{\N}_4 \to T$ and $\pi_{\V} : A^{\N}_4 \to \V$ by
\[ \pi_T(w)=\bigcap_{n\geq1}Y_{w\{n\}},\qquad\pi_{\V}(w)= \bigcap_{n\geq1} X_{w\{n\}}.\]
Since $Y_w\cap Y_u \neq \emptyset$ if and only if $X_w\cap X_u \neq \emptyset$, we have that $\pi_T(w)=\pi_T(u)$ if and only if $\pi_{\V}(w) = \pi_{\V}(u)$. Consequently, there exists a bijection $h:T\to\V$ such that $h(\pi_T(w))=\pi_{\V}(w)$. The function $h$ also satisfies $h(Y_w)=\V_w$ for all $w\in A_4^*$. Since $\{\mathbf Y^n\}_{n\in\N_0}$ is a quasivisual approximation of $T$, and $\{\mathbf X^n\}_{n\in\N_0}$ is a quasivisual approximation of $\V$, by \cite[Proposition 2.7]{BM22} $h$ is a quasisymmetry.

In particular, $h$ is a homeomorphism and hence preserves the valence of branch points. Since every branch point of $\V$ is $4$-valent, the same is true for $T$. Moreover, every branch point of $\V$ occurs as the subdivision point of some tile $X_w$. Since $h(Y_w)=X_w$ and the correspondence preserves the four children, we have $ h(b_{Y_w})=b_{X_w}.$ It follows that every branch point of $T$ occurs as some $b_{Y_w}$.

Finally, the labeling of tiles $Y_w$ was chosen so that the cyclic order of the four branches at each $b_{Y_w}$ agrees with that of the corresponding branch point of $\V$. Hence $h$ is branch-orientation-preserving. By Theorem \ref{thm: intro 2 quant.}(i), $h$ extends to a global quasiconformal homeomorphism of $\mathbb R^2$.
\end{proof}

\section{Julia sets that are quadrivalent trees}\label{sec:Julia}
Given $c\in \mathbb{C}$, we denote for the rest of this section $f_c(z) = z^2+c$. The goal of this section is to prove the following proposition which yields Theorem \ref{thm:Vicsek}. 

\begin{prop}\label{prop: J(f) is QC with T^4}
There exist infinitely many distinct $c\in (-2,-1 )$ so that $J(f_c)$ is quasiconformally equivalent to $\V$.
\end{prop}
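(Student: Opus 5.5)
The plan is to verify the tile criterion of Proposition \ref{prop: uniformization of T^4} for a suitable family of real quadratic polynomials. I will not build the conjugacy by hand. The natural candidates are real parameters $c\in(-2,-1)$ for which the critical point $0$ is strictly preperiodic and lands on a repelling cycle. These are Misiurewicz parameters, and for them $J(f_c)$ is a dendrite with finitely many branch-point orbits. Such $f_c$ is subhyperbolic, hence semihyperbolic, so by \cite{CJY} $J(f_c)$ is a quasiconformal tree; in fact it is $C$-bounded turning with $\R^2\setminus J(f_c)$ John.

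I want every branch point to be $4$-valent, with the dynamics splitting tiles at these points in the $4$-fold pattern of Lemma \ref{lem: Vicsek is QC with T^4}. To arrange this, I will choose $c$ so that the critical value $c$ lands after finitely many steps on a repelling periodic point $\beta_c$ whose orbit consists of branch points with exactly $4$ arms. The concrete route is through kneading theory / Hubbard trees: choose combinatorics in which the Hubbard tree is a tree whose unique branch point orbit has $4$ arms. For real $c$, the Hubbard tree contains the real segment $[c,f_c(c)]$, and the branch points sit off the real line symmetric under conjugation. A family of such combinatorics, for instance an $n$-dependent tuning/renormalization sequence or the Misiurewicz points of a fixed type accumulating on $-2$, should produce infinitely many distinct $c$.

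With $c$ fixed, I will define ${\bf X}^0=\{J\}$ and take ${\bf X}^1$ to be the closures of the four components of $J\setminus\{b\}$, where $b$ is a chosen $4$-valent branch point, for example the preimage of $\beta_c$ closest to $0$, or $0$'s own image structure. I then define ${\bf X}^{n+1}$ by pulling back ${\bf X}^n$ under $f_c$ and further subdividing at the appropriate branch point. The combinatorial properties (1)–(4) will come from the Hubbard tree combinatorics, including the opposite-branch condition in (3), which is a cyclic-order statement checked on the finite Hubbard tree. The metric properties (5)–(8) will come from expansion. Away from the postcritical set, the inverse branches are univalent with Koebe distortion control, so tiles at level $n$ are images of finitely many model tiles under maps of bounded distortion. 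Near the critical point, the finitely many preperiodic orbits and the repelling cycle give uniformly controlled distortion, comparable to $z^2$ composed with conformal maps. This is the standard quasivisual-approximation argument for subhyperbolic maps, as in \cite{BM22}.

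The main obstacle is the combinatorial step. I need to exhibit infinitely many explicit real Misiurewicz parameters whose Julia sets have \emph{only} $4$-valent branch points, in a single orbit structure compatible with the $\dagger,\ddagger,\times$ splitting pattern. My first attempt will be to take the parameters where the critical value lands on the $\alpha$-fixed-point-type cycle after an even number $2k$ of steps. I will then compute the Hubbard tree, check that valence $4$ arises because the repelling periodic orbit has rotation number with $4$ access directions, and check that no other branch points occur. Once this is done, Proposition \ref{prop: uniformization of T^4} yields a quasiconformal map of $\R^2$ carrying $\V$ onto $J(f_c)$.
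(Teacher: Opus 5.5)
There is a genuine gap, and it is in the step you flag as the main obstacle: your mechanism for producing $4$-valent branch points is backwards. If the critical value $c$ lands on a repelling periodic point $\beta_c$, then, since $0$ is strictly preperiodic, the relevant iterate of $f_c$ is a local homeomorphism at $c$, so $\val(c)=\val(\beta_c)$. Since $0$ has local degree two, Lemma~\ref{lem:valence in tree Julia set} gives $\val(0)=2\val(c)$. Landing on a cycle of $4$-armed points therefore makes $0$ and all of its preimages $8$-valent, and $J(f_c)$ is not even homeomorphic to $\V$. What you need is the opposite: the critical value must be a \emph{double} point, i.e.\ the landing point must have exactly two arms.

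Your ``first attempt'' (landing on the $\alpha$-fixed point) is in fact the paper's family. But for $c\in(-2,-1)$ that fixed point $-p$ lies in the interior of $[-q,q]\subset J(f_c)$ and has exactly two arms, so the check you propose (four access directions at the cycle) would fail. The correct count is $\val(0)=2\cdot 2=4$. Your claim that the branch points sit off the real line is also false: $0$ and $\pm\sqrt{-c}$ are real branch points. What the paper proves (Lemma~\ref{lem:valence of J(f)}) is that the branch points are exactly $\bigcup_{n\ge0}f_c^{-n}(0)$. It does this using density of $\bigcup_n f_c^{-n}([-q,q])$ and uniqueness of arcs; in particular there are no periodic branch points at all.

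Several other steps are only asserted.

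Infinitely many parameters come cheaply (Lemma~\ref{lem: choice of c}). The function $Q_N(c)=(f_c^N(0)+f_c^{N-1}(0))/c$ is a polynomial with $Q_N(-2)=-2$ and $Q_N(-1)=1$. A root $c_N\in(-2,-1)$ gives $f_c^N(0)=-f_c^{N-1}(0)$, hence $f_c^{N+1}(0)=f_c^N(0)$, and distinct exact preperiods give distinct $c_N$.

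Your subdivision rule must be made precise. The paper splits each tile at the unique point of least generation in the interior of its core edge; uniqueness comes from Rolle's theorem and $(f_c^k)'(x)=2^k\prod_{j<k}f_c^j(x)$.

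The opposite-branch condition (3) cannot be read off the Hubbard tree, which for real $c$ is the interval $[c,c^2+c]$ and has no branch points. It comes instead from the fact that $f_c^{\gen(b)}$ is an orientation-preserving local homeomorphism at $b$. This map carries the two arcs newly attached at $b$ onto the two halves of the non-real arc $f_c^{-1}([-q,c])$ through $0$, which are opposite at $0$.

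Finally, your distortion sketch covers nested tiles in (5), but intersecting tiles of the same level are images of models under different inverse branches. So you also need a uniform bound on $|m_X-m_Y|$, together with the derivative formulas at the shared boundary point (Lemma~\ref{lem:julia-metric-properties}). Otherwise, your finite-model, Koebe, and square-root-coordinate approach to (6)--(8) is in the same spirit as the paper's.
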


As a corollary, any planar tree with a decomposition $({\bf X}^n)_{n\in\N_0}$ that satisfies (1)--(8) in Proposition \ref{prop: uniformization of T^4} is a Julia set of a quadratic UQR map. 

We note that Proposition \ref{prop: J(f) is QC with T^4} contrasts with the fact that the Vicsek tree is not the Julia set of any rational map. The statement and proof of that fact were communicated to us by Alastair Fletcher.

\begin{lem}\label{lem:VicseknotJulia}
The Vicsek tree $\V$ is not the Julia set of any rational map.
\end{lem}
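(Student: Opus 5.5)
We argue by contradiction: suppose $J(R)=\V$ for some rational map $R$ of degree $d\geq 2$. The first step is to record the relevant properties of $\V$. It is a compact, full (non-separating) continuum with empty interior, so its complement in $\overline{\R^2}$ is a single simply connected Fatou component $U$. This component is completely invariant, since every Fatou component maps onto a Fatou component and $U$ is the only one. After a Möbius conjugation we may put a point of $U$ at $\infty$. Then $R^{-1}(U)=U$ forces $R^{-1}(\infty)\subset U$, and we first reduce to the case where $R$ is a polynomial.

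For the reduction, $U$ is a simply connected completely invariant component, hence either the basin of an attracting or parabolic point or a Siegel disk; the Siegel case is excluded because a Siegel disk is not completely invariant when $d\geq2$. By the Riemann--Hurwitz formula applied to $R\colon U\to U$, which has degree $d$, the component $U$ contains $d-1$ critical points counted with multiplicity. If $U$ is a superattracting basin of a point of local degree $d$, then $R$ is conjugate to a polynomial. In general we would instead use the topological structure directly, which is cleaner. The map $R$ restricted to $J=\V$ is a $d$-to-$1$ branched self-cover of the tree, and it is a local homeomorphism at every point of $J$ that is not a critical point lying in $J$. Each branch point of $\V$ has valence exactly $4$, and the branch points are dense.

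The key step uses the self-similarity rigidity of $\V$. Consider a branch point $b\in\V$ that is neither critical nor a critical value image; all but finitely many branch points qualify. Near $b$, the map $R$ is a conformal homeomorphism onto its image and sends $\V\cap B(b,r)$ onto $\V\cap R(B(b,r))$. The four branches at $b$ are straight segments meeting at right angles: the two diagonals through $\phi_w(0)$. A conformal map preserves angles, so the images are four analytic arcs at $R(b)$ meeting at right angles, and $R(b)$ is again a branch point. We then exploit the fact that $\V$ contains straight segments through every non-leaf point, as in Lemma~\ref{lem:semigeodesic Vicsek}. An analytic map sending a segment into a union of countably many segments must send it into a single line locally. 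Hence on each small disk $R$ maps pieces of lines to pieces of lines, and therefore acts on each diagonal direction by a real-analytic map that preserves a line.

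The main step, and the one I expect to be the hardest, is to extract a contradiction from this. My plan is to show that $R$ is locally a similarity with $|R'|$ constant along segments. Then, by the identity theorem, $R$ coincides near $b$ with an affine map $z\mapsto az+c$, and hence globally, which contradicts $d\geq 2$. To get there, note that $\V$ is locally the grid-like structure of diagonals at all scales. For a conformal map $g$ to send both families of parallel diagonals, at infinitely many nearby points, into straight segments, $g$ must map lines of two transverse directions through a dense set to lines. This forces $g''=0$: a holomorphic map that sends every line of a direction through a dense set to a line has vanishing Schwarzian, and the additional straightness of the images forces it to be affine. The subtle point is justifying that the image segments are straight. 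I would argue this via Hausdorff measure and density, using the fact that $\V\cap B$ has positive $\dim_H\V$-measure while straight segments carry no such measure. Failing that, I would use the fact that the tangent cone of $\V$ at branch points consists of lines, combined with the local homeomorphism $R|_{\V}$.
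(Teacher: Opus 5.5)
\textbf{There is a genuine gap in the main step.} Your setup is essentially right up to the point where you know that $R$, near a point of a diagonal segment where $R'\neq 0$, sends straight pieces of $\V$ to straight segments. The reduction to polynomials in your first paragraph is unnecessary, and you rightly abandon it. The gap is in deriving $R''\equiv 0$ from this.

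\emph{Why your justification fails.} A holomorphic map sending a family of parallel lines to lines need not have vanishing Schwarzian: $z\mapsto e^{z}$ sends horizontal lines to rays and has Schwarzian $-\tfrac12$. More seriously, the diagonal segments contained in $\V$ are dense only in $\V$, which has empty interior. So no continuity argument can make a curvature quantity vanish on an open set, even using both diagonal directions. The Hausdorff-measure idea for proving the images are straight does not lead anywhere either.

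\emph{The missing idea.} You only need perpendicular segments crossing a \emph{single} diagonal at a set with an accumulation point. Let $S=\{z_0+te^{i\theta}\}$ be a diagonal piece on which $g$ is holomorphic with $g'\neq0$. The image $g(S)$ has curvature $\operatorname{Im}(e^{i\theta}g''/g')/|g'|$, so straightness of $g(S)$ gives $\operatorname{Im}(e^{i\theta}g''/g')=0$ on $S$. At each center $x$ of a subcell lying on $S$, the other diagonal of that subcell is a segment through $x$ in the perpendicular direction. Straightness of its image gives $\operatorname{Re}(e^{i\theta}g''/g')(x)=0$. Hence $g''(x)=0$ on a dense subset of $S$, and the identity theorem gives $g''\equiv 0$. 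This is exactly the paper's argument. It normalizes so that $F=L_2\circ g\circ L_1$ is real on an interval, then compares $\operatorname{Re}F(x+it)=F(x)$ with the Taylor expansion $\operatorname{Re}F(x+it)=F(x)-\tfrac12F''(x)t^2+O(t^4)$.

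\emph{Straightness of the images.} The paper shows that any compact subarc of an arc in $\V$ avoiding its endpoints lies in a finite tree $K_k$ of diagonal segments, by uniqueness of arcs. Hence it is polygonal, while a regular analytic arc has no corners. Your remark about countable unions of segments can also be made to work, via Baire category plus real analyticity. But you first need to observe that interior points of an arc in $\V$ are non-leaves and therefore lie in $\bigcup_k K_k$; $\V$ itself is not a countable union of segments.

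\emph{A smaller correction.} The four branches of $\V$ at $b$ are not segments. Each contains only an initial half-diagonal, with side branches accumulating at $b$ at all scales.
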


\begin{proof}
Let $Q$ be the initial square in the construction of $\V$, and let $\phi_1,\ldots,\phi_5$ be its defining similarities. For $w\in\{1,\ldots,5\}^k$, set $\phi_w=\phi_{w_1}\circ\cdots\circ\phi_{w_k}$ and call $Q_w:=\phi_w(Q)$ a level-$k$ construction cell. Both diagonals of every $Q_w$ belong to $\V$. If $\Delta$ is the union of the diagonals of $Q$, then
\[K_k:=\bigcup_{w\in\{1,\ldots,5\}^k}\phi_w(\Delta)\]
is a connected finite tree consisting of segments in the two diagonal directions. Moreover, $K_k\subset K_{k+1}$ and $\overline{\bigcup_{k\ge0}K_k}=\V$: the union contains the center of every construction cell, whose diameter tends to zero with its level.

Note that every regular analytic arc $A\subset \V$ is a line segment. Indeed, every compact subarc of $A$ avoiding its endpoints lies in some $K_k$. Choose points of the dense union $\bigcup_kK_k$ in the components beyond its two endpoints and use uniqueness of arcs. Since $K_k$ is a finite union of line segments, this subarc is polygonal. However, a regular analytic arc has a continuous nonzero tangent and therefore cannot have corners. Thus $A$ is a line segment. The same conclusion holds for every similarity image of $\V$.

Suppose, towards a contradiction, that $J(f)=\V$ for a rational map $g$ with $\deg g\ge2$. Let $D$ be a diagonal of a construction cell $Q_w$. Choose $z\in\operatorname{int}D$ with $g'(z)\ne0$ and a disk $B=B(z,r)$ on which $g$ is holomorphic and injective. Set $S=D\cap B$. Since $g(\V)=\V$, the set $g(S)\subset \V$ is a regular analytic arc, and hence a line segment.

Let $E\subset S$ consist of the centers of construction cells $Q_{wu}\subset Q_w$ lying on $S$. The subdivisions of $D$ have mesh tending to zero, so $E$ is dense in $S$. At each $q\in E$, the other diagonal of the corresponding cell supplies a nondegenerate segment perpendicular to $S$, with $q$ in its interior.

Choose complex affine similarities $L_1,L_2$ such that $L_1(I)=S$ for an open interval $I\subset\mathbb R$ and $L_2(g(S))\subset\mathbb R$. Set $F=L_2\circ g\circ L_1$ and $E'=L_1^{-1}(E)$. Then $g(I)\subset\mathbb R$, so all Taylor coefficients of $F$ at points of $I$ are real. For $x\in E'$, a sufficiently short vertical segment through $x$ belongs to $L_1^{-1}(\V\cap B)$. Its image under $F$ is a regular analytic arc in $L_2(\V)$, hence a line segment. By conformality, this segment is perpendicular to $F(I)\subset\mathbb R$. Consequently, for sufficiently small $t\in\mathbb R$,
\[\operatorname{Re}F(x+it)=F(x).\]
On the other hand, the Taylor expansion at $x$ gives
\[\operatorname{Re}F(x+it)=F(x)-\frac{F''(x)}{2}t^2+O(t^4).\]
Thus $F''(x)=0$ for every $x\in E'$. Since $E'$ is dense in $I$, the identity theorem gives $F''\equiv0$. Hence $F$ is affine, and therefore $\deg g=\deg F=1$, a contradiction.
\end{proof}

\begin{lem}\label{lem: choice of c}
For every $N\ge3$, there exists $c_N\in(-2,-1)$ such that
\[ f_{c_N}^{N+1}(0)=f_{c_N}^N(0), \qquad f_{c_N}^{j+1}(0)\ne f_{c_N}^j(0),\quad\text{for $j\in \{0,\dots,N-1\}$.}\]
Moreover, $c_N\ne c_M$ whenever $N\ne M$.

Conversely, if $c\in\mathbb R$ satisfies these conditions for some $N\ge3$, then $c\in(-2,-1)$ and $f^N_c(0)$ is a repelling fixed point of $f_c$.
\end{lem}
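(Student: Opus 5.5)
The plan is to study the real polynomials $P_N(c):=f_c^{N+1}(0)-f_c^N(0)$ on $[-2,-1]$ and locate a root by an intermediate value argument. Write $c_k(c)=f_c^k(0)$, so $c_0=0$, $c_1=c$, and $c_{k+1}=c_k^2+c$. The condition $f_c^{N+1}(0)=f_c^N(0)$ says that $c_N$ is a fixed point of $f_c$. The fixed points of $f_c$ are $\beta=\tfrac12(1+\sqrt{1-4c})$ and $\alpha=\tfrac12(1-\sqrt{1-4c})$. I would first rule out $c_N=\alpha$ for $c\in(-2,-1)$ (or at the chosen root). The preimages of $\alpha$ under $f_c$ are $\alpha$ and $-\alpha$. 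Hence if $c_N=\alpha$ and $c_{N-1}\neq\alpha$, then $c_{N-1}=-\alpha$, so $c_{N-2}^2=-\alpha-c$. Tracing back, one should contradict minimality, or at least get a closed condition that one avoids. It is cleaner to target $c_N=\beta$ directly through the preimage tree of $\beta$: $f_c(-\beta)=\beta$. So the condition becomes $c_{N-1}=-\beta$, since $c_{N-1}=\beta$ would violate minimality.

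The key step is a sign change. At $c=-2$ the critical orbit is $0\mapsto-2\mapsto2\mapsto2$, with $\beta=2$. So $c_k(-2)=2=\beta$ for all $k\ge2$ and $c_1(-2)=-2=-\beta$. I would therefore consider $g_N(c):=c_{N-1}(c)+\beta(c)$. This vanishes at $c=-2$ only for $N=2$; for $N\ge3$ it equals $4$. At $c=-1$ the orbit is the superattracting 2-cycle $0,-1,0,-1,\dots$ with $\beta=\tfrac{1+\sqrt5}{2}$. Then $g_N(-1)\in\{\beta,\beta-1\}>0$, so there is no sign change at the endpoints. Instead I would use the known monotone, expanding behavior near $c=-2$. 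The map $c\mapsto c_k(c)$ oscillates rapidly: its derivative at $c=-2$ grows like $4^k$, and $c_{N-1}$ traverses $[-\beta,\beta]$ several times as $c$ runs over a small interval $(-2,-2+\epsilon_N)$. Concretely, I would show by induction that on $[-2,-1]$ the quantity $c_k$ attains $-\beta$ in $(-2,c^*_{k})$, where $c^*_k$ is the first parameter to the right of $-2$ with $c_{k-1}=0$ (a superattracting center). At $c^*_k$ we have $c_{k}=c$, while at $-2$ we have $c_k=2$. Then $c_k$ runs continuously from $\beta$ down toward values near $c\approx-2$, so it passes through $-\beta$. The smallest such root in $(-2,-1)$ is $c_N$.

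To get $f_c^{j+1}(0)\ne f_c^j(0)$ for $j<N$, I would take $c_N$ to be the root of $g_N$ closest to $-2$ and check that an earlier landing on $\beta$ would force an earlier root of some $g_M$ with $M<N$ nearer to $-2$. The roots should be ordered: the first root of $g_N$ lies strictly between $-2$ and the first root of $g_{N-1}$. This ordering also gives $c_N\ne c_M$ for $N\ne M$. I expect this ordering and minimality argument to be the main obstacle. I would attempt it via monotonicity of $c_k$ on the interval $(-2,\text{first root})$, showing that $c\mapsto c_k(c)$ is strictly monotone there with $c_j\in(\beta-\text{small},\beta]$ avoided appropriately.

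For the converse, suppose $c$ is real and the orbit is strictly preperiodic onto a fixed point. Then $0\in J(f_c)$, since there is no attracting cycle to absorb the critical point. So $J(f_c)$ is real and connected and $c\in[-2,1/4]$. Misiurewicz parameters in $[-1/4,1/4]$ and $(-1,-1/4)$ are excluded because there the dynamics have an attracting or parabolic cycle, which contradicts preperiodicity. The value $c=-2$ is excluded by minimality with $N\ge3$, and $c=-1$ by periodicity. Hence $c\in(-2,-1)$. Finally, both fixed points of a Misiurewicz map are repelling: a nonrepelling fixed point would attract a critical orbit (Fatou), or would be indifferent, which is impossible for real $c<-1$ since $|f_c'(\alpha)|=|1-\sqrt{1-4c}|>1$ and $f_c'(\beta)>1$. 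So $f_c^N(0)$ is repelling.
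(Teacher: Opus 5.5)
\textbf{There is a genuine gap in the existence part.} The problem is the choice of fixed point: for $c\in(-2,-1)$ the critical orbit can never land on $\beta$, so your $g_N=c_{N-1}+\beta$ has no zero there.

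\emph{Why $g_N$ has no zero.} For $-2<c<-1$ one has $c>-\beta(c)$: squaring $\sqrt{1-4c}>-2c-1>0$ reduces to $4c(c+2)<0$. Moreover, $f_c$ maps $(-\beta,\beta)$ into $[c,\beta)$, because $x^2+c<\beta^2+c=\beta$. Hence $c_k\in[c,\beta)\subset(-\beta,\beta)$ for every $k\ge1$, and $g_N>0$ on all of $(-2,-1]$. The zero at $c=-2$ exists only for $N=2$, as you observed.

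\emph{Where your argument breaks.} The failing step is ``$c_k$ runs from $\beta$ down toward values near $c\approx-2$, so it passes through $-\beta$.'' At your superattracting parameter one has $c_k(c_k^*)=c_k^*>-\beta(c_k^*)$, so no crossing is forced, and none occurs. This is the same obstruction the paper uses in the converse direction: $c_N=\beta$ would force $c_{N-1}=-\beta$, which lies outside the invariant interval. So your instinct to ``first rule out $c_N=\alpha$'' is backwards. The negative fixed point $\alpha$ is the only one the orbit can land on.

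\textbf{The repair is short.} It also makes your root-ordering and monotonicity program unnecessary. Write $a_k=f_c^k(0)$ to free the symbol $c_N$ for the parameter, and aim for $a_{N-1}=-a_N$. That is, study $a_N(c)+a_{N-1}(c)$; this is what the paper does, up to the harmless factor $1/c$.
\begin{enumerate}
\item At $c=-2$ the sum equals $2+2=4$. At $c=-1$ it equals $-1$, being two consecutive terms of $0,-1,0,-1,\dots$.
\item The intermediate value theorem gives a root $c_N\in(-2,-1)$. There $a_{N+1}=a_N^2+c=a_{N-1}^2+c=a_N$, so $a_N$ is fixed.
\item Minimality: if some $a_j$ with $j\le N-1$ were already fixed, then $a_{N-1}=a_N=-a_{N-1}$. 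So $0=a_{N-1}$ would be fixed, forcing $c=0$, a contradiction.
\item Distinctness $c_M\ne c_N$ for $M\ne N$ follows at once: for $M<N$, the iterate $a_M$ is not fixed at the parameter $c_N$.
\end{enumerate}

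\textbf{Your converse is workable.} It uses two facts: attracting or parabolic cycles on $(-1,1/4]$ absorb an infinite critical orbit, and for $c<-1$ both fixed points have multipliers of modulus $>1$. This is a valid alternative to the paper's elementary interval arguments. You should drop the claim that $J(f_c)$ is real, and justify the absorption step by the standard Fatou argument.
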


\begin{proof}
Fix $N\geq 3$ as in the statement. Let $a_j(c)=f_c^j(0)$ and consider $Q_N(c)= (a_N(c)+a_{N-1}(c))/c$ for $c\neq 0$. A simple induction shows that $a_j(c)=c+O(c^2)$ for every $j\geq 1$. Hence $Q_N$ is a polynomial with $Q_N(0)=2$. Moreover, note that $Q_N(-2)=-2$ and $Q_N(-1)=1$, and thus, by the intermediate value theorem, there exists $c_N\in(-2,-1)$ such that $Q_N(c_N)=0$. Equivalently, $a_N=-a_{N-1}$. Then
\[ a_{N+1}=P_{c_N}(a_N) =P_{c_N}(-a_{N-1}) =P_{c_N}(a_{N-1})=a_N,\]
which implies that $a_N$ is fixed. Suppose for a contradiction that $a_j$ is fixed for some $j\in \{1,\ldots,N-1\}$. Then $a_{N-1}=a_N$, and thus $a_N=0$, which implies that $c_N=0$, a contradiction. Similarly, if $M\neq N$, then $c_M\neq c_N$.

It remains to show the converse statement. If $c<-2$, then the orbit of $0$ escapes to infinity, while for $c=-2$ it has exact preperiod two. If $c>1/4$, there are no real fixed points. For $0<c\leq 1/4$, let $\alpha>0$ be the smaller real fixed point. Since $0\leq x<\alpha$ implies $x<f_c(x)<\alpha$, the critical orbit is strictly increasing and remains below $\alpha$. Hence it cannot reach a fixed point after finitely many iterations. The case $c=0$ is also excluded, since $0$ is fixed.  Now suppose that $-1\leq c<0$. Then $f_c([c,0])=[c,c^2+c]\subset[c,0]$, since $c^2+c\leq0$ and $c^2+c\geq c$. Hence, the critical orbit remains in $[c,0]$. Suppose that the critical orbit reaches a negative fixed point $\alpha<0$ for the first time at iteration $N$. Since $f_c^{-1}(\alpha)=\{\alpha,-\alpha\}$, the point immediately preceding this iteration must be $-\alpha>0$, which is impossible because the entire critical orbit is contained in $[c,0]$. Clearly, the orbit cannot reach a positive fixed point either. Thus $c\in(-2,-1)$.

Finally, put $s=-c\in(1,2)$ and $a_2=s(s-1)$. The interval $[c,a_2]$ is forward invariant under $f_c$. Let $q>0$ be the positive fixed point of $f_c$. Since $s=q(q-1)$ and $q<2$, we have $s<q$, and therefore  $[c,a_2]\subset(-q,q).$ In particular, the entire critical orbit belongs to $(-q,q)$. If $a_N=q$, then $a_{N-1}=-q$, a contradiction. Thus, 
\[f^{N}_c(0):=a=\frac{1-\sqrt{1-4c}}{2}<-\frac12,\]
where the last inequality follows from $c<-1$. Consequently, $|f_c'(\alpha)|=|2\alpha|>1$, and hence $\alpha$ is repelling.
\end{proof}

For the rest of this section, fix $N\geq 3$ and $c\in (-2,-1)$ such that $f_c^{N+1}(0) = f_c^N(0)$ and $f_c^{j+1}(0)\neq f^j_c(0)$ for $j=0,\dots,N-1$. Note that since $c\in \R$, the forward orbit of $0$ consists of real numbers. Let $q>0$ be the positive fixed point of $f_c$. 

\begin{lem}\label{lem:J(f) is a tree}
The Julia set $J(f_c)$ is a tree. 
\end{lem}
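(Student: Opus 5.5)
The plan is to exploit the fact that $f_c$ is postcritically finite with strictly preperiodic critical point $0$, so that $f_c$ is a Misiurewicz polynomial. First, I will record that by Lemma \ref{lem: choice of c} the critical orbit $0\mapsto c\mapsto a_2\mapsto\cdots\mapsto a_N$ is finite, never returns to $0$, and lands on the repelling fixed point $a_N$. Hence $f_c$ has no attracting or parabolic cycles, and, since the only critical point is not recurrent, no Siegel disks or other bounded Fatou components. (A bounded Fatou component of a polynomial would have to be eventually periodic by Sullivan's theorem, and every periodic one is associated with a critical point whose orbit is infinite or accumulates in a specific way, which fails here.) I conclude that the Fatou set is exactly the basin of infinity. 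Since $0$ does not escape, $J(f_c)$ is connected, and thus $K(f_c)=J(f_c)$ is a full, connected compact set with empty interior.

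Second, I will establish local connectivity. For postcritically finite polynomials this is classical (Douady--Hubbard, via the subhyperbolicity of $f_c$: there is an orbifold metric near $J(f_c)$ that $f_c$ expands). Concretely, I will use the expansion to show that the Böttcher conjugacy extends continuously to the unit circle, so $J(f_c)$ is a Peano continuum. Alternatively, I could cite that semihyperbolic polynomials with connected Julia set have John basins of infinity (Carleson--Jones--Yoccoz \cite{CJY}). That is even stronger: by Lemma \ref{lem: Nakki vaisalla}, $J(f_c)$ is then bounded turning, and in particular locally connected.

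Third, I must rule out simple closed curves. Suppose $\gamma\subset J(f_c)$ is a Jordan curve. The bounded complementary component of $\gamma$ is disjoint from the basin of infinity, because the basin is connected and unbounded and does not meet $\gamma$. Hence that component lies in $K(f_c)=J(f_c)$, which contradicts the fact that $J(f_c)$ has empty interior. So $J(f_c)$ is a compact, connected, locally connected set with no simple closed curves, that is, a tree (it is clearly nondegenerate).

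I expect the main obstacle to be the rigorous exclusion of bounded Fatou components together with local connectivity. The cleanest route is to invoke the standard theorem that Misiurewicz (postcritically finite, strictly preperiodic) quadratic polynomials have dendrite Julia sets \cite{CJY}, and to verify its hypotheses from Lemma \ref{lem: choice of c}.
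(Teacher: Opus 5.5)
Your argument is correct and follows essentially the same route as the paper: postcritical finiteness with a strictly preperiodic critical point forces the Fatou set to be the basin of infinity alone (the paper cites Beardon's Corollary 9.5.3 where you argue via Sullivan's theorem and the classification of periodic components), local connectivity comes from postcritical finiteness, and the tree conclusion follows. Your explicit exclusion of Jordan curves spells out the step the paper leaves implicit: the bounded complementary component of such a curve would lie in $K(f_c)=J(f_c)$, which has empty interior.
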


\begin{proof}
By Lemma \ref{lem: choice of c}, the critical point $0$ is strictly preperiodic and its orbit reaches a negative repelling fixed point $-p$. Moreover, as shown in the proof of Lemma \ref{lem: choice of c}, the interval $[c,c^2+c]$ is forward invariant and contained in $(-q,q)$. Consequently,
\[ \post(f_c):=\bigcup_{n\geq1}f^n_c(0) =\{f_c(0),\ldots,f^N_c(0)\}\subset(-q,q)\setminus\{0\}.\]
In particular, $f_c$ is postcritically finite, and its only finite critical point is strictly preperiodic. By \cite[Corollary~9.5.3] {Beardon}, there is only one connected and simply connected Fatou component. Moreover, $J(f_c)$ is connected and locally connected, since $f_c$ is a postcritically finite polynomial. Thus, $J(f_c)$ is a tree.
\end{proof}

Since $f_c([-q,q])=[c,q]\subset [-q,q]$, we have $[-q,q]\subset J(f_c)$. For each $n\in \N$ set
\begin{equation}\label{eq:finite trees def}
 T_n=f^{-n}_c([-q,q]), \text{ for $n\geq 0$}, \quad\text{and}\quad T=\bigcup_{n\in\N_0}T_n.
\end{equation}
The sequence $(T_n)_{n\geq 0}$ increasing sequence of ``finite'' trees with each $T_n$ obtained recursively by attaching curvilinear arcs at preimages of the critical point $0$. See Figure \ref{fig:iterations of T_n} for the first 4 non zero iterations of the case of $N=3$ with one real solution $c\approx -1.54368901269208$.

\begin{figure}[H]
    \centering
    \includegraphics[width=1\linewidth]{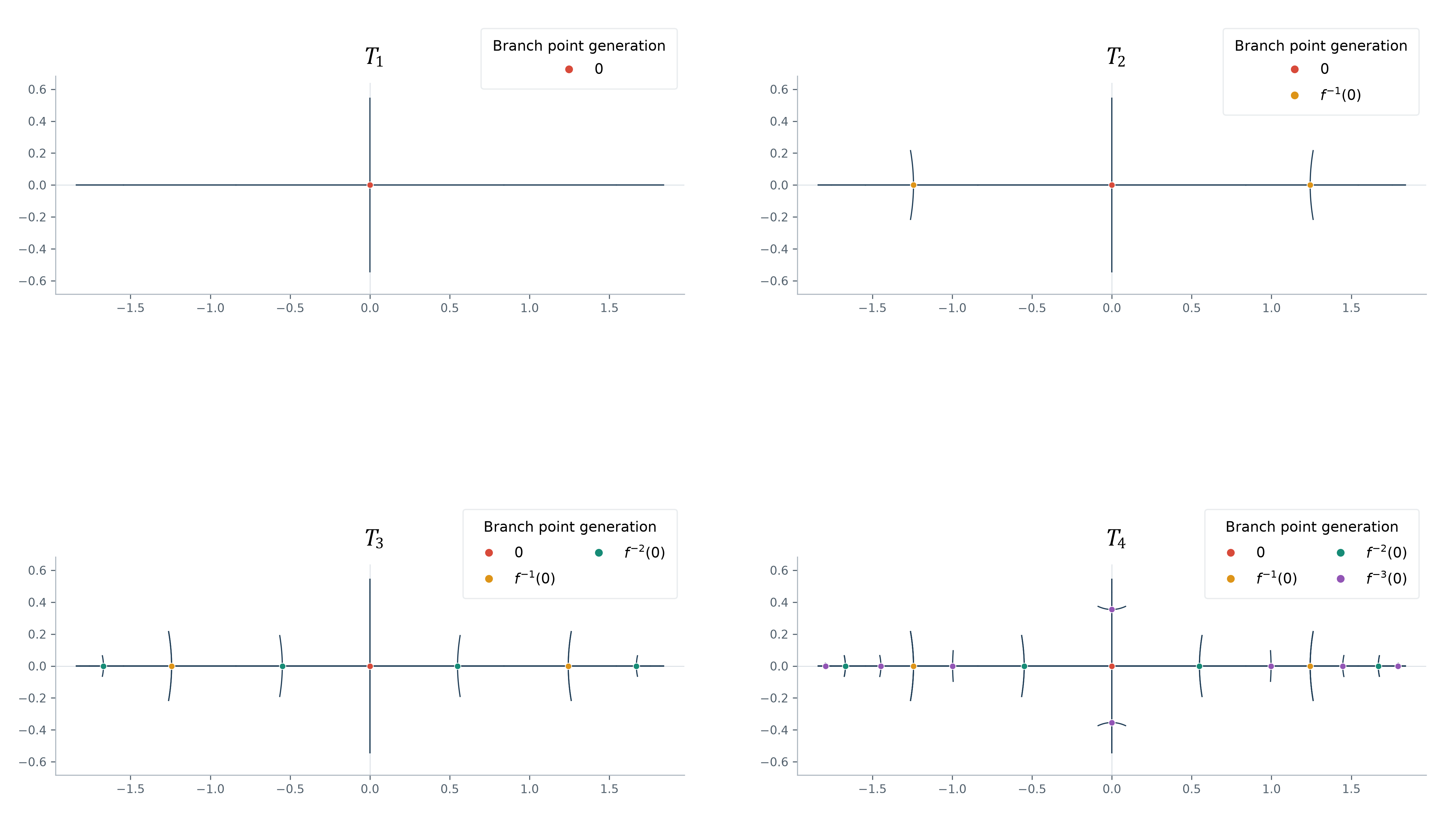}
    \caption{}
    \label{fig:iterations of T_n}
\end{figure}

\begin{lem}\label{lem:valence in tree Julia set}
If $b\in J(f_c)$, then $\val (f_c(b))=\val(b)/d$ where $d$ is the local degree of $f_c$ at $b$. 
\end{lem}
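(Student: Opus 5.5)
The plan is to compare the branches of $J(f_c)$ at $b$ with those at $f_c(b)$ through the local normal form of $f_c$ at $b$, and then to count components by a covering argument. Set $K=J(f_c)$. By Lemma \ref{lem:J(f) is a tree}, $K$ is a tree. Moreover $K$ is completely invariant: $f_c(K)=K$ and $f_c^{-1}(K)=K$. Let $d\in\{1,2\}$ be the local degree of $f_c$ at $b$, so $d=2$ exactly when $b=0$.

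First choose Jordan neighbourhoods. By local normal form there are Jordan domains $U\ni b$ and $V\ni f_c(b)$ such that $f_c:U\to V$ is a branched covering of degree $d$, branched only at $b$ when $d=2$. Shrinking $V$, we may take $V$ small enough that it meets every branch of $K$ at $f_c(b)$, and that each branch meets $V\setminus\{f_c(b)\}$ in a connected piece containing an initial arc $K[f_c(b),y)$ as in Lemma \ref{lem: maximal arcs}(i). Local connectedness of the tree gives this: arcs from $f_c(b)$ are unique, and small connected neighbourhoods exist.

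Next lift the branches. Complete invariance gives $K\cap U=f_c^{-1}(K\cap V)\cap U$. The map $f_c:U\setminus\{b\}\to V\setminus\{f_c(b)\}$ is a $d$-fold covering of punctured disks. Each local branch $\beta$ at $f_c(b)$ is an arc $K[f_c(b),y)$ minus its endpoint $f_c(b)$. It is simply connected, so it lifts to exactly $d$ disjoint arcs in $U\setminus\{b\}$, each accumulating at $b$. Every local branch of $K$ at $b$ contains such an initial arc, and that arc maps into a single branch at $f_c(b)$. Hence the local branches at $b$ correspond bijectively to the lifts, and $\val(b)=d\cdot\val(f_c(b))$. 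When $d=2$ one also checks that the two lifts of a given arc are distinct branches at $b$; this is just the statement that the preimage of a slit is two slits, via $z\mapsto z^2$ in the normal form.

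The main obstacle is justifying that the global valence, meaning the number of components of $K\setminus\{b\}$, equals the number of local arcs emanating from $b$ inside $U$. One direction: each component of $K\setminus\{b\}$ has $b$ in its closure, so it contains an initial arc from $b$ entering $U$. The other direction: two distinct local arcs cannot lie in the same component, because by Lemma \ref{lem: metric trees properties of BM}(i) the arc joining them in $K$ would pass through $b$. So $\val(b)$ is exactly the number of germs of arcs at $b$, and likewise at $f_c(b)$. The remaining care is needed when valence is infinite, which does not occur here since the degree is bounded. Dividing by $d$ then yields $\val(f_c(b))=\val(b)/d$.
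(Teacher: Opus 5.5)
Your strategy is the same as the paper's: use the $d$-sheeted covering $U\setminus\{b\}\to V\setminus\{f_c(b)\}$, lift an initial arc from each branch at $f_c(b)$, and count. The inequality $\val(b)\geq d\,\val(f_c(b))$ goes through once you lift a chosen arc $\gamma_i=K[f_c(b),y_i]$ from each branch $B_i$ at $f_c(b)$. The resulting $d\,\val(f_c(b))$ lifts are pairwise disjoint away from $b$. So the union of any two of them is the arc of $K$ joining their far endpoints, it passes through $b$, and Lemma~\ref{lem: metric trees properties of BM}(i) puts them in different branches.

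The gap is in the reverse inequality $\val(b)\leq d\,\val(f_c(b))$. You obtain it by identifying a ``local branch'' at $f_c(b)$ with an arc $K[f_c(b),y)$, after shrinking $V$ so that each branch meets $V\setminus\{f_c(b)\}$ in a connected set. Both steps fail.
\begin{itemize}
\item Local connectedness gives small connected neighbourhoods of $f_c(b)$ in $K$, but it does not make a branch intersected with $V$ connected: a branch can leave $V$ and re-enter it.
\item A piece of a branch near $f_c(b)$ is never an arc here, because the iterated preimages of $0$ are branch points and are dense in $J(f_c)$.
\end{itemize}
Consequently, knowing that an initial arc $\omega$ of an arbitrary branch $B$ at $b$ maps into some $B_i$ does not make $\omega$ one of your lifts, so your ``hence bijectively'' is unproved. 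For the same reason, your sentence ``two distinct local arcs cannot lie in the same component'' is false for two arcs that share an initial segment and split at a later branch point.

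The missing ingredient is a tree fact: two arcs issuing from $p$ into the same component of $K\setminus\{p\}$ intersect in a nondegenerate arc $K[p,z]$ with $z\neq p$. (If they met only at $p$, their union would be the arc joining their endpoints, and it would pass through $p$.) Here is how to use it.
\begin{itemize}
\item First, $f_c(\omega)$ is an arc, because $f_c|_\omega$ is injective. For $b=0$ this uses $0\in K[z,-z]$ for every $z$, together with the fact that $0$ is an endpoint of $\omega$.
\item Apply the tree fact to $\gamma_i$ and $f_c(\omega)$ to get a common nondegenerate subarc $\sigma$ starting at $f_c(b)$.
\item The part of $\omega$ lying over $\sigma$ is a lift of $\sigma\setminus\{f_c(b)\}$. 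By uniqueness of path lifting it must be one of the $d$ lifts of $\sigma$; otherwise points of $\sigma$ would have $d+1$ preimages in $U\setminus\{b\}$. This is exactly how the paper concludes.
\end{itemize}
Hence every branch at $b$ contains one of your lifts, which gives $\val(b)\leq d\,\val(f_c(b))$.

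This argument also shows that the induced map from branches at $b$ to branches at $f_c(b)$ is exactly $d$-to-one, which covers infinite valence as well. Bounded degree is not what rules that case out.
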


\begin{proof}
Suppose that $d$ is the local degree of $f_c$ at $b$ and that $\val f_c(b)=p$. Then there exist open neighborhoods $U$ of $f_c$ and $V$ of $f_c(b)$ such that $f_c:U\setminus\{b\}\to V\setminus\{f_c(b)\}$ is a $d$-sheeted covering map. 

Let $B_1,\dots,B_p$ be the closures of components of $J(f_c)\setminus \{f_c(b)\}$. For each $i\in \{1,\dots,p\}$ let $\g_i\subset V\cap \overline{B_i}$ be arcs emanating from $f_c(b)$. Then the lifting of these arcs $\widetilde{\g_{i,j}}\subset U$ emanate from $b$ and 
\[ (\widetilde{\g_{i,j}}\setminus\{b\})\cap (\widetilde{\g_{i,j'}}\setminus\{b\})=\emptyset\] 
for distinct $j, j'\in \{1,\dots,d\}$. Since $J(f_c)$ is a tree it follows that for each $i\in \{1,\dots,p\}$ the arcs $\widetilde{\g_{i,j}}$ are located in distinct branches of $b$. Moreover, if for some $i\in \{1,\dots,p\}$, $k,k'\in \{1,\dots,d\}$, the arcs  $\widetilde{\g_{i,k}},\widetilde{\g_{i',k'}}$ are located in the same branch of $b$, then there exists a common subarc $\alpha\subset \widetilde{\g_{i,k}}\cup\widetilde{\g_{i',k'}}$ emanating from $b$ that has two distinct images and hence contradiction. Thus, $\val(b)\geq dp$. Denote now by $B_1,\dots, B_{pd}$ the associated branches of $\{\widetilde{\g_{i,j}}\}_{i,j}$. If $B$ is a branch of $b$ that is not one of $B_1,\dots, B_{pd}$, then let $\omega\subset U\cap B$ be an arc emanating from $b$. Since Julia sets are completely invariant $f_c(\omega)$ lies in one of the branches of $f_c(b)$. Thus, there exists a subarc of $f_c(\omega)$ that has $d+1$ preimages which is contradiction.
\end{proof}

A rational map is called \emph{semihyperbolic} if it has no parabolic periodic points and no recurrent critical points in its Julia set. The polynomial $f_c$ considered here is semihyperbolic. Indeed, its only finite critical point is $0$ with finite orbit, and hence nonrecurrent. Moreover, $f_c$ has no parabolic periodic points due to the fact every cycle of immediate parabolic basins contains a critical point. Since the only finite critical point of $f_c$ is $0$, the existence of a parabolic cycle would imply that the orbit of $0$ converges to this cycle. This contradicts the above orbit, which lands on the fixed point $-p$. Thus $f_c$ is semihyperbolic. 

\begin{lem}\label{lem:valence of J(f)}
The Julia set $J(f_c)$ is a quadrivalent quasiconformal tree with 
$$\mathcal{B}(J(f_c))=\bigcup_{n\in\N_0}f_c^{-n}(0)$$
with the convention that if $n=0$, then $f_c^{0}=\operatorname{Id}$.
\end{lem}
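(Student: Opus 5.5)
We will prove the two assertions of Lemma \ref{lem:valence of J(f)} separately: first the quasiconformal tree property, then the valence count.

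\emph{Quasiconformal tree.} By Lemma \ref{lem:J(f) is a tree}, $J(f_c)$ is a tree. We showed just before the statement that $f_c$ is semihyperbolic, and the paper recalls from \cite{CJY} that a semihyperbolic polynomial whose Julia set is a tree has a quasiconformal tree as its Julia set. Alternatively, semihyperbolicity makes the basin of infinity a John domain \cite{CJY}, and Lemma \ref{lem: Nakki vaisalla} then gives that $J(f_c)$ is bounded turning. Doubling is automatic for planar sets, so $J(f_c)$ is a quasiconformal tree.

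\emph{Valence.} We use Lemma \ref{lem:valence in tree Julia set}, the identity $\val(f_c(b))=\val(b)/d_b$, where $d_b=2$ if $b=0$ and $d_b=1$ otherwise.
\begin{enumerate}
\item Take $b\in J(f_c)$ with $b\notin\bigcup_{n\ge0}f_c^{-n}(0)$. Its forward orbit avoids $0$, so $\val(f_c^k(b))=\val(b)$ for all $k$. We must show $\val(b)\le2$. We will use that $J(f_c)$ is a compact tree of bounded valence (one could instead invoke the Thurston/Kiwi description of postcritically finite dendrites) to get that the branch points are countable. Better, we will argue directly: if $\val(b)\ge3$, then every iterate is a branch point of the same valence, and since $\post(f_c)$ is finite, we can pull back by univalent branches of $f_c^{-k}$ near these points. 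The cleanest route is the standard fact that a branch point of a postcritically finite dendrite Julia set is either precritical or eventually periodic. So it suffices to rule out periodic branch points not in the critical orbit.
\item The only periodic points the critical orbit lands on is the fixed point $\alpha=f_c^N(0)$. We will compute its valence via the real dynamics. Since $f_c^{N-1}(0)=-\alpha$ and $\alpha=f_c(-\alpha)$, the counting gives $\val(\alpha)=\val(f_c^j(0))=\val(0)/2$ for $1\le j\le N$.
\item We then determine $\val(0)$. The point $0$ lies in $(-q,q)\subset J(f_c)$, so $\val(0)\ge2$. Its image $c$ is the endpoint $f_c(-q)\cdots$; in fact $c=\min J(f_c)\cap\R$ is the critical value. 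We expect the critical value $c$ to have valence $2$, because the real segment continues past $c$ only through the branch at $c$ coming from the pullback of $\alpha$. This gives $\val(0)=4$.
\item From $\val(0)=4$, every preimage $b\in f_c^{-n}(0)$ has valence $4$ by the same identity, since the iterates $f_c^j(b)$ for $j<n$ avoid $0$.
\item Valence at most $2$ elsewhere: we will use the Hubbard tree $H$ spanned by $\post(f_c)=\{c,\dots,\alpha\}\subset\R$, which is the real interval $[c,c^2+c]$. Every branch point of $J(f_c)$ must eventually map into $H$ at a point of valence $\ge3$ in $J$. Since $H$ is an interval, the points of $H$ with valence at least $3$ in $J$ are exactly the postcritical points carrying attached preimage arcs. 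These are the points of the critical orbit, and their preimages are all precritical.
\end{enumerate}

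\emph{Main obstacle.} The hard step is the exact computation $\val(c)=2$, equivalently $\val(\alpha)=2$. We will attempt it with the hubbard-tree/landing-rays argument: $\alpha$ is a repelling fixed point on the real line. The combinatorial rotation number of rays landing at a fixed point lying in $H$ as a non-endpoint forces exactly two rays, namely the rays of angles $1/3$ and $2/3$ or their real analogue. Since $\alpha<0$ is the $\beta$-type fixed point's partner, the real symmetry gives exactly the two rays $\theta,1-\theta$. The number of rays landing equals the valence, so $\val(\alpha)=2$.
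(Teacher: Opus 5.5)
The quasiconformal-tree part is fine and is the same argument as the paper's (via \cite{CJY}). The gap is in the inclusion $\mathcal{B}(J(f_c))\subset\bigcup_{n}f_c^{-n}(0)$, i.e.\ in showing that a point whose orbit avoids $0$ has valence at most $2$.

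\begin{itemize}
\item Your step 1 reduces this, via a cited structural fact, to excluding periodic branch points off the critical orbit. No later step actually does this; the ray argument only treats $\alpha$.
\item Step 5, which is meant to close the gap, does not. Its claim that the points of $H=[c,c^2+c]$ with valence $\ge 3$ in $J(f_c)$ are ``the postcritical points carrying attached preimage arcs'' is false. By your own steps 2--3, each $a_j=f_c^j(0)$ has valence $\val(0)/2=2$. Also, preimages of postcritical points need not be precritical; $\alpha$ itself is an example.
\item What step 5 really needs is that a non-precritical point of the arc $H$ carries no further branches of $J(f_c)$, and this is only asserted. ``$H$ is an interval'' cannot give it: $0\in H$ has two branches in $H$ but four in $J(f_c)$. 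Some argument specific to this $c$ is unavoidable, since for general postcritically finite dendrites periodic branch points off the critical orbit do occur (the $\alpha$-fixed point of $z^2+i$).
\item Step 3 is only a heuristic, and $\min J(f_c)\cap\R$ is $-q$, not $c$. Your ray count at $\alpha$ can be repaired: the set of rays landing at $\alpha$ is conjugation-invariant, so its rotation number satisfies $\rho=-\rho$. Also $\rho\neq 0$, since angle $0$ lands at $q$. Hence $\rho=1/2$, and only the rays $1/3,2/3$ land. As written, though, it is not a proof.
\end{itemize}

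The missing idea, which is how the paper argues, is to work with the finite trees $T_n=f_c^{-n}([-q,q])$.

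\begin{itemize}
\item $[-q,q]$ is an arc and $f_c$ is a local homeomorphism away from $0$. By induction, every branch point of $T_n$ therefore lies in $\bigcup_{j<n}f_c^{-j}(0)$.
\item $T=\bigcup_n T_n$ is dense in $J(f_c)$.
\item Let $b$ be a branch point of $J(f_c)$. Each component of $J(f_c)\setminus\{b\}$ is relatively open, so you can pick points of $T$ in three of them, and then $M$ with all three in $T_M$. Uniqueness of arcs in the subtree $T_M$ shows that $b$ is a branch point of $T_M$, hence precritical.
\end{itemize}

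This one observation also removes your ``main obstacle''. The critical value $c$ is an interior point of $[-q,q]\subset J(f_c)$ and is not precritical, so $\val(c)=2$. Lemma \ref{lem:valence in tree Julia set} then gives $\val(0)=4$, and your step 4 gives valence $4$ at every precritical point. No Hubbard-tree structure theory or external rays are needed.
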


\begin{proof}
Since $J(f_c)$ is a tree from Lemma \ref{lem:J(f) is a tree} and $f_c$ is semi-hyperbolic, by \cite[Theorem 1.1]{CJY} it is a quasiconformal tree. 

Note that $0$ is a branch point of $T$ and that a simple induction shows that the branch points of $T$ are preimages of $0$. Since $c$ is not preimage of 0 and $c\in (-q,q)\subset T_1$, it follows that $c$ is a double point in $T$. Moreover, the local degree of $f_c$ at $0$ is $2$ and hence by Lemma \ref{lem:valence in tree Julia set} we have that $\val_{T}(0)=4$. Let now $C_1,\ldots,C_4$ be the four components of $T\setminus\{0\}$. They lie in four distinct components of $J(f_c)\setminus\{0\}$. Indeed, if $x_i\in C_i$ and $x_j\in C_j$, then $x_i,x_j\in T_n$ for some $n$, and the unique arc $J(f_c)[x_i,x_j]$ coincides with the arc joining them in the subtree $T_n$, which contains $0$. Conversely, every component of $J(f_c)\setminus\{0\}$ is relatively open in $J(f_c)$, since $J(f_c)$ is a tree, and therefore intersects the dense set $T$. Thus there can be no fifth component, and $\val_{J(f_c)}(0)=4$. Consequently, since $0$ is the only critical point of $f_c$, Lemma \ref{lem:valence in tree Julia set} gives that $\bigcup_{n\in\N_0}f_c^{-n}(0)\subset \mathcal{B}(J(f_c))$ and that for all $b\in \bigcup_{n\in\N_0}f_c^{-n}(0)$, $\val_{J(f_c)}(b)=4$.

Conversely, let $b$ be a branch point of $J(f_c)$. Choose three distinct components of $J(f_c)\setminus\{b\}$. Each is relatively open and therefore intersects the dense set $T=\bigcup_n T_n$. Choose one point in each component and then $M$ large enough that all three belong to $T_M$. By uniqueness of arcs, their joining arcs in $T_M$ coincide with those in $J(f_c)$, so $b$ is a branch point of $T_M$. By the finite-tree construction, $b$ is precritical. This proves the asserted description of $\mathcal B(J(f_c))$, and the valence assertion follows from the preceding argument.
\end{proof}

\begin{lem}\label{lem: properties of branches of J(f)}
Let $b\in\mathcal{B}(J(f_c))$. 
\begin{enumerate}[(i)]
\item There exists a minimal $m\geq 1$ such that
\[b\in f_c^{-(m-1)}(0)\cap\mathcal{B}(T_m).\]
\item There are two opposite branches $N_{b,m}^+$ and $N_{b,m}^-$ of $b$ in $T_m$ such that
\[ N_{b,m}^+\cap T_{m-1}=N_{b,m}^-\cap T_{m-1}=\emptyset.\]
\item For every $n> m$ and $s\in\{+,-\}$, let $N_{b,n}^s$ be the unique branch of $b$ in $T_n$ containing $N_{b,m}^s$. Then the set
\[ N_b^s:= \overline{\bigcup_{n\geq m}N_{b,n}^s}\setminus\{b\} \]
is a branch of $b$ in $J(f_c)$ (where the closure is taken in $J( f_c)$). Moreover, $0\notin N_b^s$, and the branches $N_b^+$,$N_b^-$ are opposite of $b$ in $J(f_c)$.
\item After choosing the signs consistently, the map
\[ f_c^{m-1}\big|_{N_b^+\cup N_b^-}: N_b^+\cup N_b^- \to N_0^+\cup N_0^-\]
is a homeomorphism and for each $s\in\{+,-\}$, $N_b^s\cap \post (f_c)=\emptyset$.
\end{enumerate}
\end{lem}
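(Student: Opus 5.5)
We argue by tracking how the finite trees $T_n=f_c^{-n}([-q,q])$ grow. By Lemma \ref{lem:valence of J(f)}, every branch point $b$ of $J(f_c)$ is a preimage of $0$, say $b\in f_c^{-k}(0)$ with $k$ minimal. The first step is to show that $b$ becomes a branch point of $T_{k+1}$. Note $f_c^{k}(T_{k+1})=T_1=f_c^{-1}([-q,q])$, and $T_1$ is the cross formed by $[-q,q]$ and the vertical arc $f_c^{-1}([c,-q])$ through $0$; indeed $f_c^{-1}([-q,c])$ is the imaginary segment $[-i\sqrt{q+c},i\sqrt{q+c}]$, since $c>-q$. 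Because $\post(f_c)\subset(-q,q)\setminus\{0\}$ and $b$ is not a critical point of $f_c^{k}$ (by minimality of $k$, none of $b,\dots,f_c^{k-1}(b)$ equals $0$), the map $f_c^k$ is a local homeomorphism near $b$. Pulling back the four local arms of the cross at $0$ along the branch of $f_c^{-k}$ at $0$ therefore gives four arms at $b$ inside $T_{k+1}$. Set $m=k+1$ and take $m$ minimal with this property; this gives (i).

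For (ii), we use the same local inverse branch $g$ of $f_c^{-(m-1)}$ near $0$ with $g(0)=b$. The vertical arms $N_{0,1}^\pm$ of $T_1$ (the two halves of the imaginary segment) are disjoint from $T_0=[-q,q]$. Their images $N_{b,m}^\pm=g(N_{0,1}^\pm)$, after extending the lift along the whole arms, are disjoint from $T_{m-1}$. To see this, suppose some point of $N_{b,m}^\pm$ lay in $T_{m-1}$. Applying $f_c^{m-1}$ would put a point of $N_{0,1}^\pm$ in $f_c^{m-1}(T_{m-1})=T_0$, which is a contradiction. Oppositeness is inherited: $f_c^{m-1}$ is an orientation-preserving local homeomorphism at $b$, so it preserves the cyclic order of arms, and $N_{0,1}^\pm$ are opposite at $0$.

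For (iii), the branches $N_{b,n}^s$ are nested and increasing in $n$. Their union is connected and has $b$ in its closure, and distinct branches of $T_n$ at $b$ stay in distinct components of $J(f_c)\setminus\{b\}$ by uniqueness of arcs. Then density of $T$ in $J(f_c)$, exactly as in the proof of Lemma \ref{lem:valence of J(f)}, shows that the closure minus $b$ is a full branch of $J(f_c)$. Since $0\in T_0\subset T_{m-1}$, and every point of $N_b^s$ is reached from $N_{b,m}^s$ without passing through $T_{m-1}$, we get $0\notin N_b^s$. Opposite cyclic order in $J(f_c)$ follows from (ii) and Lemma \ref{lem: maximal arcs}(ii).

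For (iv), we need to lift $f_c^{m-1}$ injectively over all of $N_0^+\cup N_0^-\cup\{0\}$. The obstacle is to show that this set avoids $\post(f_c)$, so that all inverse branches of $f_c^{-(m-1)}$ are defined on a simply connected neighborhood of it. Here the argument goes as follows. The set $\post(f_c)$ is real and contained in $(-q,q)=T_0\setminus\{\pm q\}$, while $N_0^\pm\cap T_0=\emptyset$ because $N_0^\pm$ are the vertical branches at $0$. So $N_0^\pm\cap\post(f_c)=\emptyset$, and $0\notin\post(f_c)$. Hence $f_c^{m-1}$ has no critical values on the connected set $\overline{N_0^+\cup N_0^-}\setminus\{\text{leaf issues}\}$. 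A single-valued inverse branch $g$ then extends over it by monodromy, since a tree is simply connected. The image of $g$ is the connected set containing $N_{b,m}^\pm$, namely $N_b^+\cup N_b^-\cup\{b\}$. So $f_c^{m-1}$ restricts to a homeomorphism $N_b^+\cup N_b^-\to N_0^+\cup N_0^-$. Finally, $N_b^s\cap\post(f_c)=\emptyset$ because $\post(f_c)\subset T_0\subset T_{m-1}$ and $N_b^s$ meets $T_{m-1}$ at most in $b$; when $m=1$ this reduces to the previous sentence.
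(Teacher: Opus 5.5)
Your outline is sound. For (iv) it takes a genuinely different route from the paper.

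The paper never uses inverse branches in (iv). It shows $0\in J(f_c)[x,-x]$ for every $x$ via the involution $z\mapsto -z$, and deduces that $f_c$ is injective on every connected subset of $J(f_c)\setminus\{0\}$. It then proves level by level that $f_c(N_{p,n}^s)=N_{f_c(p),n-1}^s$ and passes to the increasing unions, with a compactness argument for surjectivity. Finally it gets $N_b^s\cap\post(f_c)=\emptyset$ from forward invariance of $\post(f_c)$ together with $N_0^s\cap\post(f_c)=\emptyset$.

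You instead observe that $K_0:=N_0^+\cup N_0^-\cup\{0\}$ misses $\post(f_c)$, which contains the critical values $a_1,\dots,a_{m-1}$ of $f_c^{m-1}$. You then take a univalent inverse branch $g$ of $f_c^{m-1}$ with $g(0)=b$ on a simply connected neighborhood of $K_0$, so injectivity is automatic. Post-avoidance follows directly from $\post(f_c)\subset T_0\subset T_{m-1}$.

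Your route is shorter and does not use the evenness of $f_c$. It also produces the kind of holomorphic inverse branch later exploited in Lemma~\ref{lem:julia-cores}. The paper's route is purely topological and needs neither neighborhoods nor monodromy. In (ii), your proof that the lifted arms avoid $T_{m-1}$, via $f_c^{m-1}(T_{m-1})=T_0$, is also cleaner than the paper's.

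There is one step you must supply in (iv). Connectedness only gives $g(N_0^s)\subset N_b^s$: $g(N_0^s)$ is connected, lies in $J(f_c)$ by complete invariance, contains $N_{b,m}^s$, and misses $b$ because $g$ is injective. Calling it ``the connected set containing $N_{b,m}^\pm$'' does not give equality. Without equality you have not shown that $f_c^{m-1}$ is injective on all of $N_b^+\cup N_b^-$.

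A clopen argument closes this gap:
\begin{itemize}
\item $g(N_0^s)$ is relatively open in $J(f_c)$. Indeed, $g$ is an open map, $N_0^s$ is open in $J(f_c)$, and $g(V)\cap J(f_c)=g(V\cap J(f_c))$ for open $V\subset U$, since $f_c^{m-1}\circ g=\mathrm{id}$.
\item $g(N_0^s)$ is relatively closed in $J(f_c)\setminus\{b\}$, because $g(N_0^s\cup\{0\})$ is compact.
\item Since $N_b^s$ is connected, $g(N_0^s)=N_b^s$.
\end{itemize}

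Three smaller repairs:
\begin{itemize}
\item You use $N_b^s\cap T_{m-1}=\emptyset$ in both (iii) and (iv). Justify it by uniqueness of arcs rather than by ``reached without passing through $T_{m-1}$''. A point $y\in T_{m-1}\setminus\{b\}$ lies outside $N_{b,m}^s$, hence in another branch of $T_m$ at $b$, hence in another branch of $J(f_c)$ at $b$.
\item A compact non-separating continuum has arbitrarily small Jordan-domain neighborhoods. So a simply connected $U\supset K_0$ with $\overline U\cap\post(f_c)=\emptyset$ exists, and no ``leaf issues'' need to be excluded.
\item In (ii), check that the lifted arm is a whole branch of $T_m$ at $b$. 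Its points other than $b$ are noncritical for $f_c^{m-1}$, and $T_1$ is locally an arc or an endpoint at their images. The paper also leaves this implicit.
\end{itemize}
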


\begin{proof}
By Lemma \ref{lem:valence of J(f)} and the nestedness of $\{T_n\}_{n\in\N_0}$ there exists a minimal $m\geq1$ such that $b\in f_c^{-(m-1)}(0)\cap\mathcal B(T_m).$ This gives (i).

To show (ii), let $I=[-q,c]$. Since $f_c^{-1}(I)$ is the non-real arc through $0$, the two components $N_{0,1}^+$ and $N_{0,1}^-$ of $f_c^{-1}(I)\setminus\{0\}$ are opposite branches of $0$ in $T_1$. Now, it follows by construction that the two pullback arcs of $I$ under $f_c^m$ having $b$ as an endpoint are precisely the lifts under $f_c^{m-1}$ of $N_{0,1}^+$ and $N_{0,1}^-$. Their interiors are contained in $T_m\setminus T_{m-1}$, and hence they determine two branches $N_{b,m}^+$ and $N_{b,m}^-$ of $b$ in $T_m$ satisfying $N_{b,m}^s\cap T_{m-1}=\emptyset.$ Since $b,f_c(b),\ldots,f_c^{m-2}(b)$ are not critical points, $f_c^{m-1}$ is a local orientation-preserving homeomorphism at $b$. It therefore preserves the cyclic order of the four local branches. Since $N_{0,1}^+$ and $N_{0,1}^-$ are opposite at $0$, their lifts $N_{b,m}^+$ and $N_{b,m}^-$ are opposite at $b$.

To show (iii), for every $n> m$, let $N_{b,n}^s$, $s\in\{+,-\},$ denote the unique branch of $b$ in $T_n$ containing $N_{b,m}^s$. Since $T_n\subset T_{n+1}$, $N_{b,n}^s\subset N_{b,n+1}^s$. We claim that
$$N_b^s:=\overline{\bigcup_{n\geq m}N_{b,n}^s}\setminus\{b\}$$
is a branch of $b$ in $J(f_c)$.

Let $B_b^s$ denote the branch of $b$ in $J(f_c)$ containing $N_{b,m}^s$. Clearly, $\bigcup_{n\geq m}N_{b,n}^s\subset B_b^s$. For the reverse inclusion, let $x\in B_b^s$. Since $B_b^s$ is relatively open in $J(f_c)$ and $J(f_c)=\overline{\bigcup_{n\geq 0}T_n}$, there exist $x_k\in B_b^s\cap\bigcup_{n\geq 0}T_n$ such that $x_k\to x$. We may assume that $x_k\notin N_{b,m}^s$. For each $k$, fix $p_k\in N_{b,m}^s$ and choose $n_k\geq m$ sufficiently large so that $x_k,p_k\in T_{n_k}$. Since $x_k$ and $p_k$ belong to the same component of $J(f_c)\setminus\{b\}$, the arc $J(f_c)[x_k,p_k]$ does not contain $b$. Moreover, since $T_{n_k}$ is a subtree of $J(f_c)$ and $x_k,p_k\in T_{n_k}$, we have $J(f_c)[x_k,p_k]= T_{n_k}[x_k,p_k]$. Thus $x_k$ and $p_k$ belong to the same branch of $b$ in $T_{n_k}$. Since $p_k\in N_{b,m}^s\subset N_{b,n_k}^s$, it follows that $x_k\in N_{b,n_k}^s$. Therefore, by the definition of $N_b^s$ and the fact that $x_k\to x\neq b$, we have $x\in N_b^s$. Hence $B_b^s=N_b^s$.

We also note that $0\notin N_b^s$. This is immediate when $b=0$, so suppose that $b\neq0$. Then $m\geq2$ and $0\in T_{m-1}$. By (ii), $N_{b,m}^s\cap T_{m-1}=\emptyset$. Fix $p\in N_{b,m}^s$. Thus $p$ and $0$ lie in distinct branches of $b$ in $T_m$, and consequently the arc $T_m[p,0]$ contains $b$. If $0\in N_{b,n}^s$ for some $n\geq m$, then, since $p\in N_{b,m}^s\subset N_{b,n}^s$, the arc $T_n[p,0]$ would not contain $b$. On the other hand, $p,0\in T_m$ and $T_m$ is a subtree of $T_n$, so uniqueness of arcs gives $T_n[p,0]=T_m[p,0]$, a contradiction. Hence $0\notin N_{b,n}^s$ for every $n\geq m$. Since $N_b^s$ is the branch of $b$ determined by these nested branches, it follows that $0\notin N_b^s$. Last, by nestedness of $\{T_n\}_{n\in\N_0}$ the branches $N_{b,n}^+,N_{b,n}^-$ are opposite in $T_n$, for $n>m$ and thus $N_b^+,N_b^-$ are opposite as well.

To show (iv), we show first the following elementary fact. For every $x\in J(f_c)$, $0\in J(f_c)[x,-x].$ Indeed, $f_c(-z)=f_c(z)$, and hence $J(f_c)$ is invariant under the involution $A(z)=-z.$ Therefore
$$A(J(f_c)[x,-x]) = J(f_c)[-x,x] = J(f_c)[x,-x].$$
The restriction of $A$ to the arc $J(f_c)[x,-x]$, which is homeomorphic to a closed interval, interchanges its two endpoints. Hence, it has a fixed point on this arc. The only fixed point of $A(z)=-z$ is $0$ and thus $0\in J(f_c)[-x,x]$. In particular, if $K\subset J(f_c)\setminus\{0\}$ is connected, then $f_c|_K$ is injective. To see this, if $x,y\in K$, $x\neq y$, and $f_c(x)=f_c(y)$, then $y=-x$. Since a connected subset of a tree containing $x$ and $-x$ contains their unique arc, $J(f_c)[x,-x]\subset K$. Hence, $0\in K,$ which is contradiction.

We now claim that, for every branch point $p\neq 0$ and every sufficiently large $n$, $f_c(N_{p,n}^s)=N_{f_c(p),n-1}^s$, with the signs chosen consistently. Indeed, this holds at the first level at which $p$ appears by the definition. If it holds at level $n$, then every new arc added to $N_{p,n}^s$ is a lift of an arc added to $N_{f_c(p),n-1}^s$. Conversely, every such arc has a unique lift in $N_{p,n}^s$, since $0\notin N_{p,n}^s$ and hence $f_c$ is injective there. Thus the claim follows by induction.

Applying the claim successively to $b,f_c(b),\ldots,f_c^{m-2}(b)$ we obtain 
\[ f_c^{m-1}(N_{b,n}^s)=N_{0,n-m+1}^s.\] 
Passing to the increasing unions and using continuity gives $f_c^{m-1}(N_b^s)\subset N_0^s$. Conversely, let $y\in N_0^s$ and choose $y_k\in N_{0,n_k-m+1}^s$ with $y_k\to y$. Choose $x_k\in N_{b,n_k}^s$ with $f_c^{m-1}(x_k)=y_k$. By compactness of $\overline{N_b^s}$, after passing to a subsequence, $x_k\to x\in\overline{N_b^s}$. Since $y\neq0=f_c^{m-1}(b)$, we have $x\neq b$, and hence $x\in N_b^s$. Therefore $f_c^{m-1}(x)=y$, proving $f_c^{m-1}(N_b^s)=N_0^s$. Finally, by (ii) we have that $f_c^{m-1}$ is injective on $N_b^+\cup N_b^-$. Hence
\[f_c^{m-1}|_{N_b^+\cup N_b^-}:N_b^+\cup N_b^-\longrightarrow N_0^+\cup N_0^-\]
is a homeomorphism. 

It remains to verify that for each $s\in \{+,-\}$, $N_b^s\cap \post (f_c)=\emptyset$. By construction, $N_0^s\cap \post (f_c)=\emptyset$. Now, suppose that $x\in N_b^s\cap\post(f_c)$ for some $s\in\{+,-\}$. Since $\post(f_c)$ is forward invariant, $f_c^{m-1}(x)\in\post(f_c).$ On the other hand, by the above
$f_c^{m-1}(x)\in N_0^s.$ Thus $f_c^{m-1}(x)\in N_0^s\cap\post(f_c),$ which is impossible.
\end{proof}

\begin{lem}\label{lem:koebe}
Let $D,U$ be simply connected domains in $\mathbb{C}$ with $\overline{D} \subset U$ let $z_0\in D$. There exists a constant $C\geq1$, depending only on $D$, $U$, and $z_0$, such that every univalent map $g:U\to\mathbb C$ satisfies
\[ C^{-1}|g'(z_0)|\,|z-w| \leq |g(z)-g(w)| \leq C|g'(z_0)|\,|z-w| \]
for all $z,w\in D$. Consequently, for every compact set $E\subset D$,
\[ C^{-1}|g'(z_0)|\diam E \leq \diam g(E) \leq C|g'(z_0)|\diam E.\]
\end{lem}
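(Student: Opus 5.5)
This is a Koebe distortion statement, and the plan is to reduce it to the classical Koebe theorem for normalized univalent maps of the disk by a compactness argument. The conclusion is invariant under post-composition of $g$ with affine maps $z\mapsto az+b$: both sides scale by $|a|$, since $|(ag+b)'(z_0)|=|a||g'(z_0)|$. So it suffices to treat the normalized family
\[
\mathcal F=\{g:U\to\mathbb C \text{ univalent}:\ g(z_0)=0,\ g'(z_0)=1\}.
\]
We may assume $U\neq\mathbb C$. Otherwise every univalent entire map is affine, so $|g(z)-g(w)|=|g'(z_0)||z-w|$ and we can take $C=1$.

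\emph{Step 1 (normality).} Let $\varphi:\mathbb D\to U$ be a Riemann map with $\varphi(0)=z_0$. For $g\in\mathcal F$, the map $g\circ\varphi/\varphi'(0)$ belongs to the classical class $S$. By the Koebe growth theorem, $S$ is locally uniformly bounded, so it is a normal family. By Hurwitz's theorem it is compact: limits are univalent, because the normalization $f'(0)=1$ rules out constant limits. Transporting back through $\varphi$, the family $\mathcal F$ is compact in the topology of locally uniform convergence on $U$, and every limit is univalent.

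\emph{Step 2 (the two-point estimate on $\overline D$).} Consider the continuous function
\[
\Phi(g,z,w)=\frac{g(z)-g(w)}{z-w}\quad (z\neq w), \qquad \Phi(g,z,z)=g'(z),
\]
on $\mathcal F\times\overline D\times\overline D$. Continuity on the diagonal follows from locally uniform convergence of derivatives together with the integral representation $\Phi(g,z,w)=\int_0^1 g'(w+t(z-w))\,dt$. That representation only applies when the segment $[z,w]$ lies in $U$, so we handle the general case locally: cover $\overline D$ by finitely many disks $B_j$ with $2B_j\subset U$.

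\begin{itemize}
\item For $z,w$ in a common $B_j$, use the integral formula, which holds because $B_j$ is convex.
\item For $z,w$ with $|z-w|\ge\rho$, where $\rho$ is a Lebesgue number of the cover, use the quotient directly.
\end{itemize}

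Since $\mathcal F\times\overline D\times\overline D$ is compact, $\Phi$ is bounded above. For the lower bound, note that $\Phi$ never vanishes: $g$ is injective and $g'\neq0$ for univalent $g$. Hence $|\Phi|$ has a positive minimum. This gives constants $C^{-1}\le|\Phi|\le C$, depending only on $D$, $U$ and $z_0$ through $\varphi$, which is the two-sided inequality for $g\in\mathcal F$. Undoing the normalization gives the general statement.

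\emph{Step 3 (diameters).} The diameter inequality follows by taking the supremum over $z,w\in E$.

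The main obstacle is Step 2's lower bound, which requires uniformity over the whole family. Compactness of $\mathcal F$ handles this, but only once Step 1 gives genuine compactness, meaning univalent limits. That is exactly where Hurwitz's theorem and the normalization $g'(z_0)=1$ are essential.
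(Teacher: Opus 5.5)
Your argument is correct, but it is not the paper's route. The paper proves the lemma in one line by citing McMullen's two-point Koebe distortion theorem. Since $\operatorname{mod}(U\setminus\overline D)\ge m>0$, that theorem gives $C=C(m)$ with $C^{-1}|g'(x)|\le |g(y)-g(z)|/|y-z|\le C|g'(x)|$ for all $x,y,z\in D$, and one takes $x=z_0$.

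You instead prove a qualitative version from scratch. You normalize, transport the class $S$ through a Riemann map to get compactness of $\mathcal F$ (Koebe growth, Montel, Hurwitz), and then obtain two-sided bounds from the continuity and non-vanishing of $\Phi$ on the compact set $\mathcal F\times\overline D\times\overline D$. Your route uses only classical facts and does not depend on the exact form of a cited theorem. The cited theorem gives something stronger: a constant depending only on the conformal invariant $m$, uniform over all configurations with $\operatorname{mod}(U\setminus\overline D)\ge m$. Your $C$ is an unspecified compactness constant tied to the particular triple $(D,U,z_0)$. For the paper's application this is enough, since there are finitely many normalized models, each with a fixed neighborhood.

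One remark applies to both arguments. Each silently uses that $\overline D$ is compact: you need it for the finite cover of $\overline D$ and for compactness of the product, and the paper needs it for the positive modulus. The lemma genuinely requires $D$ to be bounded. For example, take $U=\{\operatorname{Im} z>0\}$, $D=\{\operatorname{Im} z>1\}$, $z_0=2i$ and $g(z)=-1/z$. With $z=t+2i$ and $w=-t+2i$ we get $|g(z)-g(w)|/|z-w|=(t^2+4)^{-1}\to 0$, while $|g'(z_0)|=1/4$. You should therefore state that $D$ is bounded as an explicit hypothesis.
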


\begin{proof}
Since $\overline{D}\subset U$, the annular region separating $D$ from $\partial U$ has positive modulus. More precisely, there is $m>0$ such that $\operatorname{mod}(D,U)\geq m.$ McMullen's two-point distortion theorem \cite[Theorem 2.9]{McM94} therefore gives a constant $C=C(m)\geq1$ such that, for every univalent map $g:U\to\mathbb C$ and all $x,y,z\in D$,
\[C^{-1}|g'(x)|\leq\frac{|g(y)-g(z)|}{|y-z|}\leq C|g'(x)|.\]
Taking $x=z_0$ yields the desired estimate.

Now let $E\subset D$ be compact. The upper bound follows by applying the preceding estimate to arbitrary $y,z\in E$ and taking the supremum. Since $E$ is compact, there exist $y,z\in E$ such that $|y-z|=\diam E$. Applying the lower bound to this pair gives the lower bound.
\end{proof}

\begin{lem}[{\cite{CJY}}]\label{lem:koebe2}
There exist $r>0$, $C>0$, and $\lambda\in(0,1)$ such that, for every $z\in J(f_c)$, every $n\in\mathbb N$, and every component $W$ of $f_c^{-n}(B(z,r))$, we have $ \diam W\leq C\lambda^n$.
\end{lem}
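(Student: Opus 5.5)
The plan is to prove this as a standard consequence of semihyperbolicity, in the spirit of the Carleson--Jones--Yoccoz argument. Since $0$ is the only finite critical point and it is strictly preperiodic (Lemma \ref{lem: choice of c}), the critical orbit $\post(f_c)$ is a finite subset of $J(f_c)$. The first step is to choose $r>0$ small enough that every ball $B(z,2r)$, $z\in J(f_c)$, contains at most one point of $\post(f_c)\cup\{0\}$. The ball may also be required to avoid points of the finite orbit whose separation we need.

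The first goal is a uniform bound on the degree of pullbacks. For any component $W$ of $f_c^{-n}(B(z,r))$, the chain $W=W_n\to W_{n-1}\to\cdots\to W_0=B(z,r)$ of iterated images meets the critical point $0$ at most a bounded number of times, say at most $D_0$, independent of $n$. Indeed, if $0\in W_j$ for two indices $j<k$, then $f_c^{k-j}$ maps a neighborhood of $0$ into $W_j$. That would force two postcritical points, or a periodic return of $0$, to lie within distance $\lesssim r$ of each other, contradicting both the choice of $r$ and the fact that $0$ is not periodic. Hence $\deg(f_c^n|W)\le 2^{D_0}$. By the Riemann--Hurwitz formula, each $W_j$ is simply connected, since the disk contains at most one critical value at each stage.

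Next I would prove the diameter shrinking. Let $N_0$ be such that every component of $f_c^{-N_0}(B(z,2r))$ has diameter less than $r/2$ for every $z$. This uniform smallness is the main obstacle, and I would establish it by contradiction. Suppose components of $f_c^{-n_k}(B(z_k,2r))$ have diameter bounded below as $n_k\to\infty$. Because the degree is bounded, a distortion argument for proper maps of bounded degree (McMullen's lemma, or Lemma \ref{lem:koebe} applied away from at most one branch point) shows that these components contain disks of definite size centered in $J(f_c)$. A disk of definite size meeting $J(f_c)$ has images under $f_c^{n_k}$ that cover all of $J(f_c)$ for large $k$, by the blow-up property of Julia sets, uniformly by compactness. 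This contradicts $f_c^{n_k}(W)\subset B(z_k,2r)$ once $2r<\diam J(f_c)$.

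Given $N_0$, iteration yields the exponential bound. Each component of $f_c^{-N_0}(B(z,r))$ lies in some $B(z',r/2)$ with $z'\in J(f_c)$. Applying the same bound to $B(z',r)$ and repeating, every component of $f_c^{-kN_0}(B(z,r))$ has diameter at most $2^{-k}r$; the required nesting of pullbacks holds because preimage components of a subset lie in preimage components of the set. Intermediate times $n=kN_0+j$ are handled trivially by the uniform bound. This gives $\diam W\le C\lambda^n$ with $\lambda=2^{-1/N_0}$. Alternatively, one may cite \cite{CJY} directly, since semihyperbolicity was verified earlier in the paper.
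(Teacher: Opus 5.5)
The paper gives no argument for this lemma. It checks that $f_c$ is semihyperbolic (the critical point $0$ has finite orbit, hence is nonrecurrent, and there are no parabolic cycles) and then cites \cite{CJY} and \cite{Miha}. Your closing sentence is therefore exactly the paper's proof. The self-contained argument you give before it, however, has two genuine gaps.

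\textbf{First gap: the degree bound.} Suppose $0\in W_j\cap W_k$ with $1\le j<k$. Applying $f_c^j$ and $f_c^k$ gives only $a_j,a_k\in B(z,r)$, and the choice of $r$ then forces $a_j=a_k$. This is no contradiction when $j,k\ge N$, since then $a_j=a_k=-p$; the non-periodicity of $0$ only rules out $j=0$.

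Reading it instead as ``$W_j$ contains $0$ and $a_{k-j}\neq 0$'' gives a contradiction only if $\diam W_j$ is already known to be small. That smallness is what you later deduce \emph{from} the degree bound, so the argument is circular. The case $-p\in B(z,r)$ is exactly where a chain can linger at the fixed point and then pass through $0$, so it has to be handled.

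Here you can handle it using that $-p$ is repelling and differs from the critical value $c$:
\begin{enumerate}
\item For small $r$, the inverse branch fixing $-p$ maps $B(-p,2r)$ into itself. Hence every $W_i$ containing $-p$ lies in $B(-p,2r)$.
\item If $0\in W_j\cap W_k$ with $N\le j<k$, then $W_{j-N+1}$ contains $f_c^{N-1}(0)=p$.
\item It also contains $-p$, because $-p=f_c^N(0)\in W_{k-N}$ propagates to every $W_i$ with $i\le k-N$.
\item This is impossible once $r<p$.
\end{enumerate}
So each chain meets $0$ at most once, and $\deg(f_c^n|W)\le 2$.

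\textbf{Second gap: the iteration.} Your step 3 gives that components of $f_c^{-N_0}(B(z,2r))$ have diameter $<r/2$. For a component $W$ of $f_c^{-2N_0}(B(z,r))$, this only places $W$ in a component of $f_c^{-N_0}(B(z',r))$, so again $\diam W<r/2$. To get $r/4$ you would need the same statement at scale $r/2$ with the same $N_0$. No such scale-invariant statement holds: near the critical value, pulling $B(c,\rho)$ back through $0$ produces sets of diameter about $2\sqrt{\rho}$, which is much larger than $\rho$.

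What the degree bound does give is a \emph{relative} contraction:
\begin{enumerate}
\item Let $W''\subset W'$ be components of $f_c^{-n}(B(z',r/2))$ and $f_c^{-n}(B(z',r))$. Then $\mathrm{mod}(W'\setminus\overline{W''})\ge \log 2/(2\pi D)$.
\item Teichm\"uller's modulus estimate then yields $\diam W''\le\kappa\diam W'$ with $\kappa=\kappa(D)<1$.
\item Let $M_n$ be the supremum of $\diam W$ over $z\in J(f_c)$ and components $W$ of $f_c^{-n}(B(z,r))$. For such $W$ at level $n+N_0$, step 3 puts $f_c^{n}(W)$ inside some $B(z',r/2)$ with $z'\in J(f_c)$, so $W\subset W''\subset W'$ for a pair as above.
\item Hence $M_{n+N_0}\le\kappa M_n$, and so $M_n\le C\kappa^{n/N_0}$.
\end{enumerate}

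The same annulus argument is also what makes your step 3 rigorous. A large component of $f_c^{-n}(B(z,r))$ sitting inside the corresponding component of $f_c^{-n}(B(z,2r))$ forces a disk of definite radius about a point of $J(f_c)$. A domain of large diameter, by itself, need not contain a large disk.
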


\begin{proof}
The only finite critical point is $0$ and it has a finite orbit, and hence nonrecurrent. Moreover, $f_c$ has no parabolic periodic points. Indeed, every cycle of immediate parabolic basins contains a critical point. Since the only finite critical point of $f_c$ is $0$, the existence of a parabolic cycle would imply that the orbit of $0$ converges to this cycle. This contradicts the above
orbit, which lands on the fixed point $-p$. Thus $f_c$ is semihyperbolic in the sense of \cite{CJY}. The exponential shrinking follows now from  a standard characterization of semihyperbolicity; see \cite{CJY} and \cite{Miha}.
\end{proof}

We split the proof of Proposition \ref{prop: J(f) is QC with T^4} into two parts. For $b\in B(J(f_c))$, set
\[\gen (b):=\min\{k\in\mathbb N_0:f_c^k(b)=0\}.\]
Recall that $N\ge3$, $c\in(-2,-1)$, $a_j=f_c^j(0)$, and $S=\operatorname{post}(f_c)=\{a_1,\ldots,a_N\}$, with $a_j=-p$ for $j\ge N$.

\begin{lem}\label{lem:julia-subdivision}
There exists a sequence of finite covers 
\[ (\mathbf X^n)_{n\geq 0}=(\{X_w:w\in A_4^n\})_{n\geq 0}\] 
of $J(f_c)$, such that if $m_w:=\max_{b\in\partial X_w}\gen (b)$, then the following properties hold.
\begin{enumerate}
\item We have that $\mathbf X^0=\{J(f_c)\}$ and $(\mathbf X^n)_{n\geq 0}$ satisfies properties (1)--(4) and the intersection assertion in (5) of Proposition \ref{prop: uniformization of T^4}. 
\item For $n\ge1$ and $w\in A_4^n$, the points of $\partial X_w\subset B(J(f_c))$ are leaves of $X_w$, and $\card (\partial X)\in \{1,2\}$.
\item For each $w\in A_4^*$, $E_w:=X_w\cap T_{m_w+1}$ is an edge of $T_{m_w+1}$ whose endpoints contain $\partial X_w$, and
\[(X_w\setminus\partial X_w)\cap\bigcup_{j=0}^{m_w}f_c^{-j}(0)=\emptyset.\]
\item For each $w\in A_4^*$, there is a unique point $b_w\in\operatorname{int}E_w$ of least generation. The sets $X_{w1},\ldots,X_{w4}$ are the closures of the four components of $X_w\setminus\{b_w\}$ and
\[|w|-1\le m_w\le N(|w|-1),\quad m_w+1\le\gen (b_w)\le m_w+N.\]
\end{enumerate}
\end{lem}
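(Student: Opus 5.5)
The construction will be inductive on $n$, subdividing each tile at the branch point of least generation in the interior of its core edge, in the same spirit as the $\dagger/\ddagger/\times$ subdivision of $\V$ in Lemma \ref{lem: Vicsek is QC with T^4}. Set $\mathbf X^0=\{J(f_c)\}$ and $b_\e=0$. By Lemma \ref{lem:valence of J(f)}, $J(f_c)\setminus\{0\}$ has four components. Let $X_1,\dots,X_4$ be their closures, labelled counterclockwise. Each $X_i$ is a subtree with $\partial X_i=\{0\}$, and $0$ is a leaf of $X_i$. Since $0\in T_0=[-q,q]$ and $T_1$ consists of $[-q,q]$ together with the arc $f_c^{-1}([-q,c])$ through $0$, the intersection $X_i\cap T_1$ is a single edge of $T_1$ emanating from $0$. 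This gives (2) and (3) at level one with $m_i=0$.

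For the induction step, suppose $X_w$ satisfies (2) and (3). Let $E_w=X_w\cap T_{m_w+1}$, a single edge. The key claim is that $\operatorname{int}E_w$ contains branch points of $J(f_c)$, and that among them there is a unique one $b_w$ of minimal generation, with $m_w+1\le \gen(b_w)\le m_w+N$.

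For the lower bound, (3) already excludes generations at most $m_w$ in $X_w\setminus\partial X_w$.

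For existence and the upper bound, push $E_w$ forward. For $j\le m_w$ no preimage of $0$ of generation at most $m_w-j$ relative to $f_c^{j}$ lies inside, so $f_c^{m_w}$ maps $\operatorname{int}E_w$ injectively. The injectivity argument of Lemma \ref{lem: properties of branches of J(f)}(iv) applies because $0\notin$ the relevant connected sets. Thus $f_c^{m_w}$ maps $E_w$ homeomorphically onto an edge of $T_1$ with an endpoint at $0$, that is, onto a subarc of $[-q,q]$ or of $N_{0,1}^\pm$. Such an arc contains the critical value direction: under at most $N$ further iterates it hits $0$ at an interior point. This uses the finite postcritical orbit $\{a_1,\dots,a_N\}$ of Lemma \ref{lem: choice of c} and the fact that $[-q,q]\ni 0$.

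For uniqueness of the minimum, if two interior points had the same minimal generation $k$, then $f_c^{k}$ would send both to $0$ injectively on the subarc between them. But then $f_c^{k-1}$ maps that subarc to an arc containing $\pm$ a point and its negative, forcing a smaller-generation zero in between, by the fact $0\in J(f_c)[x,-x]$.

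Once $b_w$ is found, it is a $4$-valent branch point by Lemma \ref{lem:valence of J(f)}. Define $X_{w1},\dots,X_{w4}$ as the closures of the components of $X_w\setminus\{b_w\}$, ordered counterclockwise starting from the component containing the inherited boundary point. Two of the children contain the two halves of $E_w$. The other two are $N_{b_w}^{\pm}$ from Lemma \ref{lem: properties of branches of J(f)}, which are opposite and avoid $T_{\gen(b_w)}$. The new boundaries are then $\partial X_{wi}\subseteq(\partial X_w\cap X_{wi})\cup\{b_w\}$, of cardinality $1$ or $2$, consisting of leaves. If $\card(\partial X_w)=2$, the two old boundary points lie on $E_w$ on opposite sides of $b_w$. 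Hence every child has at most two boundary points, and in the two-point case they lie in opposite branches (the two halves of $E_w$ continue in opposite directions at $b_w$, since $f_c^{\gen(b_w)}$ is a local homeomorphism there and sends them to the real/imaginary opposite pair at $0$). This yields properties (1)--(4) and the intersection part of (5) of Proposition \ref{prop: uniformization of T^4}.

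To verify (3) for the children, set $m_{wi}=\max\gen(\partial X_{wi})$. The new core $X_{wi}\cap T_{m_{wi}+1}$ is a single edge. For the two halves of $E_w$ this holds since nothing of generation at most $\gen(b_w)$ branches off their interiors: on $E_w$, $b_w$ is minimal, and off $E_w$ the subtrees attach only at points of higher generation. For the $N^\pm$ children it holds by Lemma \ref{lem: properties of branches of J(f)}(ii),(iii): the edge of $T_{\gen(b_w)+1}$ starting at $b_w$ is the lift of $N_{0,1}^{\pm}$. The estimate $|w|-1\le m_w\le N(|w|-1)$ then follows by induction from $m_{wi}\le\gen(b_w)\le m_w+N$ and $m_{wi}\ge m_w$ with strict increase at least every step along a chain.

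The main obstacle is the edge claim in (3), namely that $X_w\cap T_{m_w+1}$ is exactly one edge and is mapped homeomorphically by $f_c^{m_w}$ onto a standard edge at $0$. I would handle it by a single induction carrying, alongside the statement, the extra assertion that $f_c^{m_w}|_{X_w}$ is injective and maps $E_w$ onto a subarc of $[-q,q]\cup N_{0,1}^+\cup N_{0,1}^-$ ending at $0$.
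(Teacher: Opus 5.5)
Your overall scheme is the paper's. You induct on $n$ and subdivide $X_w$ at the least-generation point $b_w$ of $\operatorname{int}E_w$. The children are the two halves of $E_w$ together with $\overline{N_{b_w}^{\pm}}$. You get the opposite-branch condition from the fact that $f_c^{\gen(b_w)}$ is a local orientation-preserving homeomorphism at $b_w$, sending the halves of $E_w$ to the real directions and $N^{\pm}_{b_w}$ to the imaginary directions at $0$. Your uniqueness argument is a valid replacement for the paper's Rolle argument: $f_c^{k-1}$ is injective on the subarc, so its endpoints go to $\pm y$, and $0\in J(f_c)[y,-y]$.

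The gap is the existence of $b_w$ together with the bound $\gen(b_w)\le m_w+N$. The sentence ``such an arc contains the critical value direction: under at most $N$ further iterates it hits $0$ at an interior point'' is an assertion, not an argument. It is also the only genuinely dynamical input in the lemma; the edge bookkeeping you single out as the main obstacle is routine by comparison. Nothing in your sketch explains why the iterates of the arc must cross $0$, let alone within $N$ steps.

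What is missing is one more push-forward plus the specific combinatorics of the critical orbit. The set $f_c^{m_w+1}(E_w)$ is a real interval with distinct endpoints in $S\cup\{\pm q\}$:
\begin{itemize}
\item it is $[c,f_c(t)]$ with $t=f_c^{m_w}(q_w)\in S$ when $\card(\partial X_w)=2$;
\item it is $[c,q]$ or $[-q,c]$ when $X_w$ is a leaf tile, according as $f_c^{m_w}(E_w)$ is a real or an imaginary arm of $T_1$.
\end{itemize}
Each case yields an interior point of small generation:
\begin{itemize}
\item Since $f_c^{N-1}(a_j)=-p$ for every $a_j\in S$, the map $f_c^{N-1}$ is not injective on $[c,f_c(t)]$. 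Let $k\le N-1$ be the first index at which $f_c^k$ fails to be injective there. Your injectivity principle ($f_c$ is injective on connected subsets of $J(f_c)$ in which $0$ is absent or a leaf) forces $0$ into the interior of the arc $f_c^{k-1}([c,f_c(t)])$. This gives an interior point of generation at most $N-2$. The paper gets the same from Rolle's theorem and $(f_c^k)'(x)=2^k\prod_{j<k}f_c^j(x)$.
\item The interval $[c,q]$ already contains $0$ in its interior.
\item The map $f_c$ sends $[-q,c]$ homeomorphically onto $[a_2,q]$, and $f_c^{N-2}(a_2)=-p<0<q=f_c^{N-2}(q)$. By connectedness some point of $(a_2,q)$ has generation at most $N-2$.
\end{itemize}
Pulling back along the homeomorphism $f_c^{m_w+1}|_{E_w}$ gives a point of $\operatorname{int}E_w$ of generation at most $m_w+N$. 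That is exactly the missing existence statement and upper bound, and with it the rest of your induction goes through.
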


\begin{proof}
For $M\ge1$, recall $T_M=f_c^{-M}([-q,q])$ and set
\[V_M:=\bigcup_{j=0}^{M-1}f_c^{-j}(0)\ \cup\ f_c^{-M}(\{-q,q\}).\]
We call the closure of a component of $T_M\setminus V_M$ an \emph{edge} of $T_M$; for $E=J(f_c)[u,v]$, set $\operatorname{int}E:=E\setminus\{u,v\}$. A simple induction gives
\begin{equation}\label{eq:iterate-derivative}
(f_c^k)'(x)=2^k\prod_{j=0}^{k-1}f_c^j(x).
\end{equation}
It follows by construction of $T_M$ and \eqref{eq:iterate-derivative} that if $E$ is an edge of $T_M$, then $f_c^M$ maps $E$ homeomorphically onto a real interval with endpoints in $S\cup\{-q,q\}$. 

Every interval between consecutive points of $S\cup\{-q,q\}$ contains a point $y\in B(J(f_c))$ with $\gen (y)\le N-1$. To see this, if both endpoints lie in $S$, their images under $f_c^{N-1}$ coincide, so Rolle's theorem and \eqref{eq:iterate-derivative} give $\gen (y)\le N-2$. Since $S\subset[c,a_2]$, the rightmost interval is $(a_2,q)$. We have $f_c^{N-2}(a_2)=-p<0<q=f_c^{N-2}(q)$, so the intermediate value theorem again gives a point of generation at most $N-2$. Finally, $f_c$ maps $(-q,c)$ homeomorphically onto $(a_2,q)$, giving the bound $N-1$ in the leftmost interval.

Every open interval $I\subset[-q,q]$ with endpoints in $S\cup\{-q,q\}$ contains such a consecutive interval, and hence a point of generation at most $N-1$. The point of least generation in $I$ is unique: two points of generation $d\ge1$ would, by Rolle's theorem applied to $f_c^d$, enclose a point of generation less than $d$; for $d=0$, uniqueness is immediate. Pulling back along $f_c^M|_E$, the first branch point in $\operatorname{int}E$ is therefore unique and has generation in $\{M,\ldots,M+N-1\}$.

Set $\mathbf X^0=\{J(f_c)\}$ and let $\mathbf X^1=\{X_1,\ldots,X_4\}$ be the closures of the four branches of $J(f_c)$ at $0$, labeled in cyclic order. Thus $\partial X_i=\{0\}$ and $m_i=0$. Suppose that $\mathbf X^n=\{X_w:w\in A_4^n\}$ is defined and satisfies:
\begin{enumerate}[(i)]
\item $\card (\partial X_w)\in\{1,2\}$, and every boundary point is a leaf of $X_w$,
\item $E_w:=X_w\cap T_{m_w+1}$ is an edge of $T_{m_w+1}$ whose endpoints contain $\partial X_w$,
\item if $\partial X_w=\{p_w\}$, then $m_w=\gen (p_w)$,
\item if $\partial X_w=\{p_w,q_w\}$, then $\gen (q_w)<\gen (p_w)=m_w$,
\item $(X_w\setminus\partial X_w)\cap\bigcup_{j=0}^{m_w}f_c^{-j}(0)=\emptyset$.
\end{enumerate}
Let $b_w$ be the first branch point introduced in $\operatorname{int}E_w$. By the edge observation,
\[\gen (b_w)\in\{m_w+1,\ldots,m_w+N\}.\]
No branch point of smaller generation occurs in $X_w\setminus\partial X_w$. By the inductive costruction of $T_M$, an arc outside $E_w$ can enter $X_w$ only after an attachment in $\operatorname{int}E_w$; attachments at $\partial X_w$ lie outside $X_w$, since each boundary point is a leaf of $X_w$. Thus no new arc occurs in $X_w$ before $b_w$ is introduced.

Let $X_{w1},\ldots,X_{w4}$ be the closures of the four components of $X_w\setminus\{b_w\}$, labeled in cyclic order. If $\partial X_w=\{p_w\}$, choose the labeling so that $\partial X_{w1}=\{p_w,b_w\}$ and $\partial X_{wj}=\{b_w\}$ for $j\in\{2,3,4\}$. If $\partial X_w=\{p_w,q_w\}$, its two boundary points lie in opposite branches at $b_w$. Choose the labelling so that
\[\partial X_{w1}=\{q_w,b_w\},\quad \partial X_{w3}=\{p_w,b_w\},\quad\partial X_{w2}=\partial X_{w4}=\{b_w\}.\]
The intersection $X_w\cap T_{\gen (b_w)+1}$ consists of four edges meeting at $b_w$. Hence every $X_{wi}$ satisfies (i)--(v), with $m_{wi}=\gen (b_w)$. The collection of all $X_{wi}$ defines $\mathbf X^{n+1}$. The induction and Lemma \ref{lem: properties of branches of J(f)}(iii) prove properties (1)--(4) and the intersection assertion in (5). At each subdivision the exponent increases by at least one and at most $N$, so $n-1\le m_w\le N(n-1)$.
\end{proof}
For the rest of this section, set $\mathbf X^*:=\bigcup_{n\geq1}\mathbf X^n$. For each $X\in\mathbf X^*$, write $m_X:=\max_{b\in\partial X}\gen(b)$ and $E_X:=X\cap T_{m_X+1}$. Thus, if $X=X_w$, then $m_X=m_w$ and $E_X=E_w$.

\begin{lem}\label{lem:julia-cores}
For each $X\in\mathbf X^*$, set
\[I_X:=
\begin{cases}
E_X, & \card (\partial X)=1,\\
J(f_c)[p_X,q_X], & \partial X=\{p_X,q_X\}.
\end{cases}\]
\begin{enumerate}
\item The set $I_X$ is an arc in $X$ and
$\diam I_X\simeq_f\diam X$.
\item For each $b\in\partial X$,
\begin{equation*}
\diam X\simeq_f
\begin{cases}
|(f_c^{m_X})'(b)|^{-1}, & \gen (b)=m_X,\\
|(f_c^{m_X})''(b)|^{-1/2}, & \gen (b)<m_X.
\end{cases}
\end{equation*}
\item If $X,Y\in\mathbf X^n$ and $X\cap Y=\emptyset$, then there exists $Z\in\mathbf X^n$ such that $\card (\partial Z)=2$, $Z\cap X\ne\emptyset$, and
$I_Z\subset J(f_c)[x,y]$ for all $x\in X$ and $y\in Y$.
\end{enumerate}
\end{lem}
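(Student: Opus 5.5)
The strategy is to pull every tile back to a finite list of model configurations near the critical orbit, using $f_c^{m_X}$ together with the distortion estimates of Lemma \ref{lem:koebe} and Lemma \ref{lem:koebe2}. Fix $X\in\mathbf X^*$ and let $m=m_X$. By Lemma \ref{lem:julia-subdivision}(3), $E_X$ is an edge of $T_{m+1}$ whose endpoints contain $\partial X$. As in the proof of that lemma, $f_c^{m+1}$ maps $E_X$ homeomorphically onto a real interval with endpoints in $S\cup\{-q,q\}$. Moreover, $(X\setminus\partial X)$ avoids $\bigcup_{j\le m}f_c^{-j}(0)$.

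For (1), consider first the case $\card(\partial X)=1$. Then $I_X=E_X$ is an arc by definition. If $\partial X=\{p_X,q_X\}$, then $I_X$ is the arc $J(f_c)[p_X,q_X]$, which lies in $X$ because $X$ is a subtree; by Lemma \ref{lem:julia-subdivision}(3) it is contained in $E_X$, and in fact $I_X=E_X$. The inequality $\diam I_X\le\diam X$ is trivial. For the reverse inequality, I will show that $f_c^m$ is univalent on a fixed neighbourhood of $f_c^m(X)$, pulled back. Here is the argument. Since $X\setminus\partial X$ contains no preimage of $0$ of generation at most $m$, the map $f_c^m$ is injective on $X$. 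By Lemma \ref{lem: properties of branches of J(f)}(iv), its image $f_c^m(X)$ is contained in one of finitely many model continua near $f_c^m(b)$. These are either the branch closures $N_0^\pm$, or, when $\gen(b)=m$, a closed branch at $0$.

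The case split in (2) comes from the boundary point. If $\gen(b)=m$, then $f_c^m(b)=0$ and $f_c^m$ is a local homeomorphism at $b$, with $0$ lying in the interior of the image neighbourhood. If $\gen(b)<m$, then $b$ maps to a point which is afterwards critical; more precisely, some earlier iterate passes through $0$ with local degree $2$. I then factor $f_c^m=f_c^{m-\gen(b)}\circ f_c^{\gen(b)}$. The first factor, applied after the second, is a square-root-type map near $0$ composed with a univalent branch. So the relevant scale becomes $|(f_c^m)''(b)|^{-1/2}$ instead of $|(f_c^m)'(b)|^{-1}$.

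In both cases I choose simply connected domains $D\Subset U$ around each of the finitely many model images of $E_X$, with $U$ avoiding $\post(f_c)$ where a univalent inverse branch is needed. Lemma \ref{lem: properties of branches of J(f)}(iv) gives $N_b^s\cap\post(f_c)=\emptyset$, which makes this possible. Lemma \ref{lem:koebe} then says that the inverse branch $g$ of $f_c^m$ on $U$ distorts all diameters of compact subsets of $D$ by the same factor $|g'(z_0)|$, up to a constant. Applying this to $f_c^m(X)$ and $f_c^m(I_X)$ gives the comparabilities in (1) and (2), since the model sets have diameters uniformly bounded above and below.

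The main obstacle is making the set of models genuinely finite. I need to show that $f_c^m(X)$ ranges over finitely many sets that sit uniformly inside $D$. To do this, I use that $f_c^m(E_X)$ is an interval with endpoints in the finite set $S\cup\{-q,q\}$, and that $f_c^m(X)$ is the union of this interval with the branches hanging off it. These branches are determined by that interval together with the finite combinatorial data of the edge. Lemma \ref{lem:koebe2} handles the places where $U$ cannot be chosen uniformly: it gives exponential shrinking of pullbacks, which lets me pass from a small ball to the model. Dependence on $f$ enters only through these finitely many constants.

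For (3), let $X,Y\in\mathbf X^n$ be disjoint, and take $x\in X$ and $y\in Y$. The arc $J(f_c)[x,y]$ leaves $X$ through a point of $\partial X$ and enters some $Z\in\mathbf X^n$ that meets $X$. It does so at a point of $\partial X\cap\partial Z$, by the intersection property in Lemma \ref{lem:julia-subdivision}(1). Since $Y\neq Z$ — this is the case where $Z$ is the tile adjacent to $X$ along the arc — the arc must exit $Z$ through a second boundary point. Hence $\card(\partial Z)=2$, and the subarc of $J(f_c)[x,y]$ between the two boundary points is exactly $I_Z$. This choice of $Z$ does not depend on $x$ and $y$, because $\partial X$ separates $X$ from $Y$ through a single point. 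If instead $Y$ itself were adjacent, the two tiles would intersect, contradicting disjointness.
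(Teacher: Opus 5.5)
Your overall architecture (normalize each tile by $f_c^{m_X}$ onto one of finitely many models, then apply Lemma \ref{lem:koebe}) is the paper's, and your part (3) is essentially the paper's argument. The analytic core of (1)--(2), however, has a genuine gap.

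\textbf{The neighbourhood $U$ cannot avoid $\post(f_c)$.} Choosing such a $U$ is only possible for the non-real branches $\overline{N_0^\pm}$, and those are all that Lemma \ref{lem: properties of branches of J(f)}(iv) covers. The two real branches at $0$ contain $c$ and $a_2=c^2+c>0$ respectively. For $\card(\partial X)=2$, the image $f_c^{m_X}(X)$ is not a branch at $0$ at all. It is the piece $P_t$ of $J(f_c)$ between $0$ and $t=f_c^{m_X}(b)\in S$, where $b$ is the older boundary point. So it has a postcritical point as an endpoint, and possibly further points of $S$ on $[0,t]$. In the one-boundary case with a real model, $f_c^{m}$ has no critical points on $X$, so the inverse branch is still regular along the model. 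But you must show it continues to a univalent map on a \emph{fixed} simply connected $U_P$ with $U_P\cap S=P\cap S$, uniformly in $m$. The paper does this with its square-root observation: it extracts square roots of functions all of whose zeros have even order, with the parity forced by the germs along $P$.

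\textbf{The two-boundary case is the real obstruction.} Here $b$ is a critical point of $f_c^{m_X}$ of local degree two. So $g=(f_c^{m_X}|_X)^{-1}$ is not holomorphic at $t$, and Lemma \ref{lem:koebe} cannot be applied to $g$ on any neighbourhood of the model.

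Your factorization $f_c^{m}=f_c^{m-\gen(b)}\circ f_c^{\gen(b)}$ only explains heuristically where $|(f_c^{m})''(b)|^{-1/2}$ comes from. Since $m_X-\gen(b)$ is unbounded (the older boundary point is inherited along $X_{w1},X_{w11},\dots$), it does not give a univalent map on a fixed domain. Lemma \ref{lem:koebe2} cannot fill this either: it gives only upper bounds on diameters, while (1) and (2) are two-sided comparabilities.

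The missing device is the lift $\pi_t(\zeta)=t+\epsilon\zeta^2$.
\begin{enumerate}
\item $P_t$ lifts to a tree $Q_t$ in the closed right half-plane, and $h_X=g\circ\pi_t$ satisfies $h_X'(0)\neq0$.
\item $h_X$ extends holomorphically to a fixed neighbourhood $U_t$ of $Q_t$, again by the square-root observation.
\item $h_X$ is injective on $U_t$. The key facts are $U_t\cap(-U_t)=B(0,\delta)$, and that $h_X(\zeta)=h_X(-\zeta)$ near a point would propagate to $0$ by the identity theorem, contradicting $h_X'(0)\neq0$.
\item Lemma \ref{lem:koebe} applied to $h_X$ gives (1).
\item Differentiating $f_c^{m}\circ h_X=\pi_t$ twice at $0$ gives $(f_c^{m})''(b)\,h_X'(0)^2=2\epsilon$, which is exactly the second estimate in (2).
\end{enumerate}

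\textbf{A smaller point.} The absence of low-generation preimages of $0$ in $X\setminus\partial X$ only gives local injectivity of $f_c^{m}$. Global injectivity on $X$ needs the symmetry fact $0\in J(f_c)[z,-z]$, applied one iterate at a time.
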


\begin{proof}
If $\partial X=\{p_X,q_X\}$, then $I_X=J(f_c)[p_X,q_X]=E_X$. If $\partial X=\{b\}$, then $I_X=E_X=J(f_c)[b,a]$, where $a\in f_c^{-(m_X+1)}(\{-q,q\})$ is its other endpoint.

We describe the finitely many normalized models. For distinct $a,b\in J(f_c)$, let $B_a(b)$ denote the component of $J(f_c)\setminus\{a\}$ containing $b$, and, for $t\in S$, set
\[P_t:=\overline{B_0(t)}\cap\overline{B_t(0)}.\]
Let $P_1,\ldots,P_4$ be the closures of the four components of $J(f_c)\setminus\{0\}$. We equip $P_t$ with the core $J(f_c)[0,t]=[0,t]$, and each $P_i$ with its corresponding edge of $T_1$.

Let $X\in\mathbf X^*$ and write $m=m_X$. Recall that $0\in J(f_c)[z,-z]$ for every $z\in J(f_c)$. Thus $f_c$ is injective on any connected subset of $J(f_c)$ on which $0$ is absent or is a leaf. Applying this observation successively proves that $f_c^m|_X$ is injective: at each step a preimage of $0$ in $X$, if present, is a boundary point and hence a leaf of $X$. The image of $X\setminus\partial X$ is relatively open in $J(f_c)$, since $f_c^m$ is a local homeomorphism there. It is also relatively closed in the component determined by the images of the boundary points, by compactness of $X$. Consequently, $f_c^m$ maps $X$ homeomorphically onto the corresponding model and maps $I_X$ onto its prescribed core.

We use the following square-root observation. Let $K\subset U$ be a compact connected set in a simply connected domain, let $\phi$ be a nonconstant polynomial, and suppose that $U\cap\phi^{-1}(S)=K\cap\phi^{-1}(S)$. Suppose that a solution $h$ of $f_c^m\circ h=\phi$ has coherent holomorphic germs along $K$. Set $H_0=\phi$. Inductively, choose
\[H_j^2=H_{j-1}-c,\quad 1\le j\le m,\]
with the germs prescribed by $H_j=f_c^{m-j}\circ h$ along $K$. A zero of $H_{j-1}-c$ satisfies $\phi(\zeta)=f_c^{j-1}(c)=a_j$, so it belongs to $K$. Its order is even, by the prescribed germ of $H_j$. A holomorphic function with only even-order zeros on a simply connected domain has a holomorphic square root: factor its zeros and take a holomorphic logarithm of the nonvanishing factor. Thus the induction extends each $H_j$ to $U$, and $H_m$ extends $h$. The signs are fixed by the germs along the connected set $K$.

For each regular model $P$, choose a bounded simply connected neighborhood $U_P$ with $U_P\cap S=P\cap S$. Such neighborhoods may be chosen explicitly: write $P\cap\mathbb R=[u,v]$, choose $u'<u\le v<v'$ with $[u',v']\cap S=[u,v]\cap S$, and choose $R$ with $P\subset B(0,R)$ and $-R<u'<v'<R$. Then take
\[U_P=B(0,R)\setminus\bigl(({-\infty},u']\cup[v',\infty)\bigr).\]
A normalized inverse branch regular along $P$ extends holomorphically to $U_P$ by the observation with $K=P$ and $\phi=\operatorname{id}$. This extension is univalent, since it is a right inverse of $f_c^m$.

Suppose first that $\card (\partial X)=1$. The inverse branch is regular along its entire model. Lemma \ref{lem:koebe} on a fixed relatively compact neighborhood of the model in $U_P$ gives (1), since there are finitely many models and their prescribed cores are nondegenerate.

Suppose that $\card (\partial X)=2$. Let $b$ be its older boundary point and set $t=f_c^m(b)\in S$. The point $b$ is the only critical point of $f_c^m$ in $X$, and its local degree is two. Thus $g=(f_c^m|_X)^{-1}$ has square-root behavior at $t$. Choose $\epsilon\in\{-1,1\}$ with $-\epsilon t>0$ and set $\pi_t(\zeta)=t+\epsilon\zeta^2$. The model $P_t$ does not meet the real ray issuing from $t$ away from its core. Hence $\zeta=\sqrt{\epsilon(z-t)}$ lifts $P_t$ to a compact tree $Q_t\subset\{\operatorname{Re}\zeta>0\}\cup\{0\}$, with lifted core $J_t=[0,\sqrt{|t|}]$ and $Q_t\cap\mathbb R=J_t$. Define $h_X=g\circ\pi_t$ on $Q_t$. The quadratic change of variables removes the criticality at $b$; in particular, $h_X$ has a holomorphic germ at $0$ with $h_X'(0)\ne0$.

Set $S_t=\pi_t^{-1}(S)$. This is a finite subset of the real and imaginary axes. Choose $a>\sqrt{|t|}$ below every positive real point of $S_t\setminus Q_t$, choose $0<\delta<a$ below the modulus of every nonzero point of $S_t$, and choose $R>\max\{a,\sup_{Q_t}|\zeta|\}+1$. If the relevant finite set is empty, the corresponding restriction is omitted. Then
\[U_t=\bigl((B(0,R)\cap\{\operatorname{Re}\zeta>0\})\setminus[a,R)\bigr)\cup B(0,\delta)\]
is a bounded simply connected neighborhood of $Q_t$, with
\[U_t\cap S_t=Q_t\cap S_t,\qquad U_t\cap(-U_t)=B(0,\delta).\]
The square-root observation, with $K=Q_t$ and $\phi=\pi_t$, extends $h_X$ holomorphically to $U_t$.

This extension is univalent. If $h_X(\zeta)=h_X(\xi)$, then $\pi_t(\zeta)=\pi_t(\xi)$, so $\xi=\zeta$ or $\xi=-\zeta$. In the second case both points belong to $D:=U_t\cap(-U_t)$. If $\zeta\ne0$, differentiation gives
\[(f_c^m)'(h_X(\zeta))h_X'(\zeta)=2\epsilon\zeta\ne0.\]
Local uniqueness of the inverse therefore implies $h_X(z)=h_X(-z)$ near $\zeta$. The identity theorem on $D$ forces equality near $0$, contradicting $h_X'(0)\ne0$. Thus $h_X$ is univalent on $U_t$. Lemma \ref{lem:koebe} on a fixed relatively compact neighborhood of $Q_t$ gives
\[\diam h_X(J_t)\simeq_c\diam h_X(Q_t).\]
Since $h_X(J_t)=I_X$ and $h_X(Q_t)=X$, this proves (1).

The same coordinates give (2). At the newer boundary, ordinary inverse differentiation gives the first estimate; in a square-root model, this boundary corresponds to $\sqrt{|t|}\ne0$, where $\pi_t'$ does not vanish. At the older boundary, differentiation twice at $0$ gives
\[(f_c^m)''(b)h_X'(0)^2=2\epsilon,\]
which gives the second estimate by Lemma \ref{lem:koebe}. All constants are uniform over the finite model family.

Finally, for (3), let $X,Y\in\mathbf X^n$ be disjoint. Follow the arc joining $X$ to $Y$ after its exit from $X$, and let $Z$ be its first traversed $n$-tile. Then $Z\cap X\ne\emptyset$ and $Z\ne Y$, since $X\cap Y=\emptyset$. The arc enters and leaves $Z$ through distinct boundary points, so $\card (\partial Z)=2$. The core $I_Z$ lies in this joining arc and hence in $J(f_c)[x,y]$ for every $x\in X$ and $y\in Y$.
\end{proof}

The next lemma, in conjunction with Lemma \ref{lem:julia-subdivision} and Proposition \ref{prop: uniformization of T^4}, gives the proof of Proposition \ref{prop: J(f) is QC with T^4}.

\begin{lem}\label{lem:julia-metric-properties}
The subdivision $(\mathbf X^n)_{n\in\mathbb N_0}$ of Lemma~\ref{lem:julia-subdivision} satisfies properties (5)--(8) of Proposition \ref{prop: uniformization of T^4}, with constants depending only on $c$.
\end{lem}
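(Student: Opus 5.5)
The plan is to reduce everything to the diameter formula of Lemma \ref{lem:julia-cores}(2), combined with the shrinking estimate of Lemma \ref{lem:koebe2}. The first step is a ``one-step comparability'' estimate. Let $X\in\mathbf X^n$ and let $Y=X_{wi}$ be one of its children, so that $m_Y=\gen(b_w)\in\{m_X+1,\dots,m_X+N\}$ by Lemma \ref{lem:julia-subdivision}(4). The point $b_w$ is a boundary point of $Y$ of generation $m_Y$. Lemma \ref{lem:julia-cores}(2) then gives $\diam Y\simeq|(f_c^{m_Y})'(b_w)|^{-1}$. On the other hand, $f_c^{m_X}$ maps $X$ homeomorphically onto one of the finitely many normalized models $P$, and it sends $b_w$ to a point $y$ of generation at most $N$ in the interior of the model core.

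We then write $(f_c^{m_Y})'(b_w)=(f_c^{m_Y-m_X})'(y)\,(f_c^{m_X})'(b_w)$. The first factor is bounded above and below by constants depending only on $c$, since there are finitely many such $y$ and none of them is critical for $f_c^{m_Y-m_X}$. For the second factor, we use the univalent extensions $h_X$ on $U_P$ (or on $U_t$ in the square-root case) constructed in the proof of Lemma \ref{lem:julia-cores}, together with Lemma \ref{lem:koebe}. Together these give $|(f_c^{m_X})'(b_w)|^{-1}\simeq\diam X$, because $f_c^{m_X}(b_w)$ stays in a fixed compact part of the model core that is away from the critical value $t$. This yields $\diam Y\simeq_c\diam X$, which is property (7). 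Moreover, $\diam Y\le\rho\diam X$ for some fixed $\rho<1$, because $h_X$ maps the subset $f_c^{m_X}(Y)$ of $P$, which has smaller normalized size, into $X$ with bounded distortion. The latter bound also follows from Lemma \ref{lem:koebe2}, since $m_Y>m_X$.

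Property (5), the comparability of adjacent tiles, goes as follows. If $X,Y\in\mathbf X^n$ meet at $p$, then $p\in\partial X\cap\partial Y$. We pass to the longest common ancestor $Z$, so that $p=b_Z$. The two chains descending from $Z$ to $X$ and to $Y$ always choose the child containing $p$. In the model coordinates given by $f_c^{\gen(p)}$ near $p$, these chains are the pullbacks, under a single univalent branch (the square root is not needed, since $p$ is the newest point of generation $\gen(p)$), of the four branches at $0$, subdivided in the same way. The subdivision near $0$ is eventually periodic in type, as in the case analysis of Lemma \ref{lem: Vicsek is QC with T^4}, so the two chains have comparable diameters at each level. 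Property (8) follows by iterating the $\rho$-contraction of the first step together with (5), exactly as in the proof of Lemma \ref{lem: Vicsek is QC with T^4}.

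For property (6), let $X,Y\in\mathbf X^n$ be disjoint with $\diam X\ge\diam Y$. We apply Lemma \ref{lem:julia-cores}(3) to obtain $Z$ with $\card(\partial Z)=2$, $Z\cap X\neq\emptyset$, and $I_Z\subset J(f_c)[x,y]$. Using Lemma \ref{lem:julia-cores}(1) and (5), we get $\diam X\simeq\diam Z\simeq\diam I_Z\le\diam J(f_c)[x,y]$. Since $J(f_c)$ is a quasiconformal tree, hence bounded turning (Lemma \ref{lem:valence of J(f)}), the last quantity is $\lesssim|x-y|$.

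The main obstacle is (5), the comparability across levels of the two descending chains at a common subdivision point. The approach I would try is to reduce, via the local univalent branch of $f_c^{\gen(p)}$ and Lemma \ref{lem:koebe}, to a statement about the fixed finite subdivision pattern at $0$ in $T_1$. Bounded distortion then transfers the estimate back to $p$.
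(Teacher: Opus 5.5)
Your treatment of (6) is the paper's, and your derivative-based proof of the nested case of (7) is a valid alternative to the paper's direct Koebe comparison of $h(P')$ with $h(P)$; that proof uses the chain rule at $b_w$, the univalent extensions from Lemma~\ref{lem:julia-cores}, and Lemma~\ref{lem:koebe}. The genuine gap is (5). You identify it as the crux but do not prove it. Transporting the two chains to $0$ by the univalent branch of $f_c^{-\gen(p)}$ at $p$ is legitimate, since no iterate has a critical value in $B(0,\dist(0,S))$. But this only moves the problem. You must still show that two descending chains at $0$ have comparable diameters level by level. These chains start from models in different branches; the starting models are the $P_i$ or the $P_t$, not simply ``the four branches at $0$, subdivided in the same way''. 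Saying the pattern is ``eventually periodic in type, as for the Vicsek tree'' does not give this. For $\V$, type and similarity depth determine a tile's diameter. Here the diameter is governed by the generation count through Lemma~\ref{lem:julia-cores}(2), so what you must control is the generation offset between the two chains.

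The missing step is the bound $|m_X-m_Y|\le 3N-4$ for distinct intersecting $X,Y\in\mathbf X^n$. Let $d(t)$ be the least generation of a precritical point strictly between $0$ and $t\in S$. Rolle's theorem, using $f_c^N(0)=f_c^N(t)$, gives $1\le d(t)\le N-1$, and one checks $d(-p)=2$. Along each chain the generations satisfy $m_{X_{r+2}}=\gen(b)+A_r$ with $A_{r+1}=A_r+d(a_{A_r})$. Once $A_r\ge N$ the increment is $2$, which yields $2-N\le A_r-2r\le 2N-2$ for both chains. You then convert the offset into diameters:
\begin{itemize}
\item If $b$ is the newer boundary point of both tiles, then $m_X=m_Y$, and the first-derivative formula of Lemma~\ref{lem:julia-cores}(2) gives the same quantity for both.
\item If $b$ is the older boundary point, then $(f_c^{m_Y})''(b)=(f_c^{j})'(t)\,(f_c^{m_X})''(b)$ with $t=f_c^{m_X}(b)\in S$ and $0\le j\le 3N-4$. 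Only finitely many nonzero values $(f_c^j)'(t)$ occur.
\end{itemize}
Without this, nothing in your sketch keeps the two chains synchronized. Once it is available, the reduction to $0$ is unnecessary, because Lemma~\ref{lem:julia-cores}(2) already expresses the diameters at $p$.

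Your argument for (8) also has a hole. The one-step contraction $\diam Y\le\rho\diam X$ with a uniform $\rho<1$ is not justified. Bounded distortion only gives $\diam Y\lesssim(\diam P'/\diam P)\diam X$, and the Koebe constant can swamp the factor $\diam P'/\diam P<1$. Lemma~\ref{lem:koebe2} over a single step gives only $\diam f_c^{m_X}(Y)\lesssim\lambda$, which need not be small. So there is nothing to iterate.

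The fix is to apply Lemma~\ref{lem:koebe2} across $k$ levels at once:
\begin{itemize}
\item Put $Q=f_c^{m_X}(Y)$. Then $f_c^{\ell}$ with $\ell=m_Y-m_X\ge k$ maps $Q$ homeomorphically onto a model.
\item Cover each model by boundedly many compact subtrees lying in $r$-balls. This gives $\diam Q\lesssim\lambda^k$.
\item Transfer back with Lemma~\ref{lem:koebe} in ordinary coordinates. In square-root coordinates use $\diam W\le 4\sqrt{\diam Q}$ for the lift $W$; your proposal does not address this case. The result is $\diam Y\le C_1\rho^k\diam X$.
\item Choose $K$ with $2C_0C_1\rho^K\le 1$, and use (5) for the non-nested case.
\end{itemize}
The non-nested case of (7) likewise reduces to (5).
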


\begin{proof}
For $t\in S$, set
\[d(t):=\min\left\{k\ge1:f_c^{-k}(0)\cap(\min\{0,t\},\max\{0,t\})\ne\emptyset\right\}.\]
Since $f_c^N(0)=f_c^N(t)=-p$, Rolle's theorem and \eqref{eq:iterate-derivative} give $1\le d(t)\le N-1$. The point realizing this minimum is unique by the Rolle argument in Lemma~\ref{lem:julia-subdivision}. Moreover, $d(-p)=2$: set $s=-c=p(p+1)>1$. The generation one points $\pm\sqrt{s}$ lie outside $(-p,p)$, whereas
\[0<u:=\sqrt{s-\sqrt{s}}<p,\qquad f_c^2(-u)=0.\]

We claim that $|m_X-m_Y|\le3N-4$ whenever $X,Y\in\mathbf X^n$ are distinct and $X\cap Y\neq\emptyset$. Write $X\cap Y=\{b\}$, let $Z$ be their longest common ancestor, and write
\[Z=X_0\supset X_1\supset\cdots\supset X_m=X,\qquad Z=Y_0\supset Y_1\supset\cdots\supset Y_m=Y,\]
where $X_i,Y_i$ belong to the same subdivision level and $X_i\cap Y_i=\{b\}$ for $1\le i\le m$. Set $k=\gen (b)$. Since $Z$ is subdivided at $b$, we have $m_{X_1}=m_{Y_1}=k$. After the next subdivision retaining $b$, write $m_{X_2}=k+A_0$, with $1\le A_0\le N$. Thereafter, write $m_{X_{r+2}}=k+A_r$. Since $f_c^{k+A_r}(b)=a_{A_r}$, the normalized core is $[0,a_{A_r}]$, and the subdivision rule gives
\[A_{r+1}=A_r+d(a_{A_r}).\]

Let now $r\ge0$ and set $\tau=\min\{r\ge0:A_r\ge N\}$. Since $1\le d(t)\le N-1$, we have $\tau\le N-1$ and $N\le A_\tau\le2N-2$. For $r<\tau$, we have $r+1\le A_r<N$. For $r\ge\tau$, we have $a_{A_r}=-p$ and $A_{r+1}=A_r+2$. Hence,
\[2-N\le A_r-2r\le2N-2.\]
Applying the same argument to $Y_i$ gives $m_{Y_{r+2}}=k+B_r$, with the same bounds for $B_r-2r$. Since $X$ and $Y$ belong to the same subdivision level, their corresponding indices agree, and
\[|m_X-m_Y|=|(A_r-2r)-(B_r-2r)|\le3N-4.\]
The cases $m=1,2$ follow directly from the initial bounds. This proves the claim.

We now prove (5). Let $X,Y\in\mathbf X^n$ be distinct intersecting tiles, with $X\cap Y=\{b\}$. Note that $b$ is either the newer or the older boundary point of both $X,Y$. If it is the newer boundary point, then $m_X=m_Y=\gen (b)$, and Lemma \ref{lem:julia-cores}(2) gives
\[\diam X\simeq_f|(f_c^{m_X})'(b)|^{-1}=|(f_c^{m_Y})'(b)|^{-1}\simeq_c\diam Y.\]
Suppose now that $b$ is the older boundary point. Assume $m_Y=m_X+j$, with $0\le j\le3N-4$, and put $t=f_c^{m_X}(b)\in S$. Since $(f_c^{m_X})'(b)=0$, the chain rule gives
\[(f_c^{m_Y})''(b)=(f_c^j)'(t)(f_c^{m_X})''(b).\]
Since $0\notin S$, the finitely many values $|(f_c^j)'(t)|$, with $t\in S$ and $0\le j\le3N-4$, are positive. By Lemma \ref{lem:julia-cores}(2),
\[\diam X\simeq_f|(f_c^{m_X})''(b)|^{-1/2}\simeq_c|(f_c^{m_Y})''(b)|^{-1/2}\simeq_c\diam Y.\]

For (6), let $X,Y\in\mathbf X^n$ be disjoint, and choose $x\in X$, $y\in Y$ with $|x-y|=\operatorname{dist}(X,Y)$. Assume $\diam X\ge\diam Y$. By Lemma~\ref{lem:julia-cores}, there exists $Z\in\mathbf X^n$ with $Z\cap X\ne\emptyset$ and $I_Z\subset J(f_c)[x,y]$. Thus, Lemma \ref{lem:julia-cores}(1), (5), and bounded turning give
\[\diam X\simeq_{c}\diam Z\simeq_{c}\diam I_Z\le\diam J(f_c)[x,y]\lesssim_{c}|x-y|=\operatorname{dist}(X,Y).\]

For (7), suppose that $X\in\mathbf X^n$, $Y\in\mathbf X^{n+1}$, and $Y\subset X$. Normalize $X$ to one of the finitely many models in the proof of Lemma~\ref{lem:julia-cores}. The subdivision rule is preserved: its splitting point maps to the first branch point of the prescribed core, and $Y$ maps to one of the four subtrees obtained by subdividing that model there. Thus, in the ordinary or square-root coordinates, there is a fixed model $P$ and one of finitely many nondegenerate compact subtrees $P'\subset P$ such that $X=h(P)$ and $Y=h(P')$, where $h$ is univalent on a fixed neighborhood of $P$. Lemma \ref{lem:koebe} gives
\[\diam Y=\diam h(P')\simeq_c\diam h(P)=\diam X.\]
The case $X=J(f_c)$ is absorbed by enlarging the constant. If $X\cap Y\ne\emptyset$ but $Y\not\subset X$, let $X'\in\mathbf X^n$ contain $Y$. Then $X'\cap X\ne\emptyset$, and the conclusion follows from (5) and the nested case.

Finally, we verify (8). Let $X\in\mathbf X^n$ and $Y\in\mathbf X^{n+k}$ such that $Y\subset X$. By Lemma \ref{lem:koebe2} and the finite model estimates above, there exist $C_1\ge1$ and $\rho\in(0,1)$ such that
\begin{equation}\label{eq:relative-shrinking}
\diam Y\le C_1\rho^k\diam X.
\end{equation}
To see this, first suppose that $f_c^\ell$ maps a compact subtree $Q$ homeomorphically onto a normalized model $R$. Choose $r>0$, $C>0$, and $\lambda\in(0,1)$ as in Lemma \ref{lem:koebe2}. Each model admits a finite cover by compact subtrees $K_i\subset B(z_i,r)$, with $z_i\in R$. Indeed, if $C_b$ is a bounded-turning constant, choose a finite $\delta$-net $\{z_i\}$ in $R$, with $C_b\delta<r$, and set
\[K_i=\overline{\bigcup_{x\in R\cap\overline B(z_i,\delta)}J(f_c)[z_i,x]}.\]
These sets cover $R$ and lie in $\overline B(z_i,C_b\delta)\subset B(z_i,r)$. Since the model family is finite, their number is bounded by a common $L$.

Set $Q_i:=(f_c^\ell|_Q)^{-1}(K_i)$. Each $Q_i$ is compact, connected, and contained in a component of $f_c^{-\ell}(B(z_i,r))$, so Lemma \ref{lem:koebe2} gives $\diam Q_i\le C\lambda^\ell$. Their intersection graph is connected, since they form a finite closed cover of $Q$, and therefore $\diam Q\le LC\lambda^\ell$.

Now let $X\in\mathbf X^n$, $Y\in\mathbf X^{n+k}$, and $Y\subset X$, with $n\ge1$. Put $Q=f_c^{m_X}(Y)$ and $\ell=m_Y-m_X\ge k$. Then $f_c^\ell$ maps $Q$ homeomorphically onto a normalized model, so $\diam Q\lesssim_f\lambda^k$. In ordinary coordinates, Lemma \ref{lem:koebe} gives $\diam Y/\diam X\lesssim_c\diam Q$.

In square-root coordinates, let $W\subset Q_t$ be the connected lift of $Q$, so that $Y=h_X(W)$ and $\pi_t(W)=Q$. Set $\eta=\diam Q$ and fix $\zeta_0\in W$. For $\zeta\in W$,
\[|\zeta-\zeta_0|\,|\zeta+\zeta_0|=|\pi_t(\zeta)-\pi_t(\zeta_0)|\le\eta.\]
Thus $W\subset\overline B(\zeta_0,\sqrt\eta)\cup\overline B(-\zeta_0,\sqrt\eta)$. If these disks are disjoint, connectedness places $W$ in one of them; otherwise their union has diameter at most $4\sqrt\eta$. Hence
\[\diam W\le4\sqrt{\diam Q},\qquad\frac{\diam Y}{\diam X}\lesssim_f\diam W\lesssim_f\lambda^{k/2}.\]
Taking $\rho=\sqrt\lambda$ proves \eqref{eq:relative-shrinking}. The case $X=J(f_c)$ follows by first passing to a level-one tile and enlarging $C_1$.

Let $C_0\ge1$ be the comparability constant in (5), and choose $K\in\mathbb N$ with $2C_0C_1\rho^K\le1$. Suppose that $X\in\mathbf X^n$, $Y\in\mathbf X^m$, $m\ge n+K$, and $X\cap Y\ne\emptyset$. Let $X'\in\mathbf X^n$ contain $Y$. Then $X\cap X'\ne\emptyset$, so (5) and \eqref{eq:relative-shrinking} give
\[\diam Y\le C_1\rho^{m-n}\diam X'\le C_0C_1\rho^K\diam X\le\tfrac12\diam X.\]
Enlarging the common constant in (5)--(8) to at least $K$ proves (8).
\end{proof}

\bibliographystyle{alpha}
\bibliography{quasitrees-ref}

\begin{thebibliography}{CGIV26}

\bibitem[Ahl63]{Ahl63}
Lars~V. Ahlfors.
\newblock Quasiconformal reflections.
\newblock {\em Acta Math.}, 109:291--301, 1963.

\bibitem[AM40]{ADMaclane}
V.~W. Adkisson and Saunders MacLane.
\newblock Extending maps of plane {P}eano continua.
\newblock {\em Duke Math. J.}, 6:216--228, 1940.

\bibitem[Azz15]{Azzam}
Jonas Azzam.
\newblock Hausdorff dimension of wiggly metric spaces.
\newblock {\em Ark. Mat.}, 53(1):1--36, 2015.

\bibitem[BA56]{BA56}
A.~Beurling and L.~Ahlfors.
\newblock The boundary correspondence under quasiconformal mappings.
\newblock {\em Acta Math.}, 96:125--142, 1956.

\bibitem[BC23]{BaudoinChen1}
Fabrice Baudoin and Li~Chen.
\newblock Sobolev spaces and {P}oincar\'{e} inequalities on the {V}icsek fractal.
\newblock {\em Ann. Fenn. Math.}, 48(1):3--26, 2023.

\bibitem[BC24]{BaudoinChen2}
Fabrice Baudoin and Li~Chen.
\newblock Heat kernel gradient estimates for the {V}icsek set.
\newblock {\em Mathematische Nachrichten}, 297(12):4450--4477, 2024.

\bibitem[Bea91]{Beardon}
Alan~F. Beardon.
\newblock {\em Iteration of {R}ational {F}unctions: {C}omplex {A}nalytic {D}ynamical {S}ystems}, volume 132 of {\em Graduate Texts in Mathematics}.
\newblock Springer-Verlag, New York, 1991.

\bibitem[Ber11]{Berg}
Walter Bergweiler.
\newblock Iteration of {Q}uasiregular {M}appings.
\newblock {\em Comput. Methods Funct. Theory}, 10:455--481, 2011.

\bibitem[Bis14]{Bishop}
Christopher~J. Bishop.
\newblock True trees are dense.
\newblock {\em Invent. Math.}, 197(2):433--452, 2014.

\bibitem[BM20]{BM20}
Mario Bonk and Daniel Meyer.
\newblock Quasiconformal and geodesic trees.
\newblock {\em Fund. Math.}, 250(3):253--299, 2020.

\bibitem[BM22]{BM22}
Mario Bonk and Daniel Meyer.
\newblock Uniformly branching trees.
\newblock {\em Trans. Amer. Math. Soc.}, 375(6):3841--3897, 2022.

\bibitem[Bou97]{Bourdon97}
M.~Bourdon.
\newblock Immeubles hyperboliques, dimension conforme et rigidit\'e{} de {M}ostow.
\newblock {\em Geom. Funct. Anal.}, 7(2):245--268, 1997.

\bibitem[BT01]{BishopTyson}
Christopher~J. Bishop and Jeremy~T. Tyson.
\newblock Conformal dimension of the antenna set.
\newblock {\em Proc. Amer. Math. Soc.}, 129(12):3631--3636, 2001.

\bibitem[BT21]{BT21}
Mario Bonk and Huy Tran.
\newblock The continuum self-similar tree.
\newblock In {\em Fractal geometry and stochastics {VI}}, volume~76 of {\em Progr. Probab.}, pages 143--189. Birkh\"auser/Springer, Cham, [2021] \copyright 2021.

\bibitem[Car62]{Car62}
Lennart Carleson.
\newblock Interpolations by bounded analytic functions and the corona problem.
\newblock {\em Ann. of Math. (2)}, 76:547--559, 1962.

\bibitem[Car67]{Car67}
Lennart Carleson.
\newblock {\em Selected problems on exceptional sets}, volume No. 13 of {\em Van Nostrand Mathematical Studies}.
\newblock D. Van Nostrand Co., Inc., Princeton, N.J.-Toronto, Ont.-London, 1967.

\bibitem[Car74]{Ca}
Lennart Carleson.
\newblock The extension problem for quasiconformal mappings.
\newblock In {\em Contributions to analysis (a collection of papers dedicated to {L}ipman {B}ers)}, pages 39--47. Academic Press, New York, 1974.

\bibitem[CG93]{CG-dynamics}
Lennart Carleson and Theodore~W. Gamelin.
\newblock {\em Complex dynamics}.
\newblock Universitext: Tracts in Mathematics. Springer-Verlag, New York, 1993.

\bibitem[CGIV26]{CGIV}
Efstathios-K. Chrontsios-Garitsis, Fotis Ioannidis, and Vyron Vellis.
\newblock Universal quasiconformal trees.
\newblock {\em Adv. Math.}, 501:Paper No. 111107, 78, 2026.

\bibitem[CGV26]{CGV}
Efstathios-K. Chrontsios-Garitsis and Vyron Vellis.
\newblock Sobolev spaces on snowtrees.
\newblock {\em preprint}, 2026.

\bibitem[Cha80]{Charatonik}
Janusz~J. Charatonik.
\newblock Open mappings of universal dendrites.
\newblock {\em Bull. Acad. Polon. Sci. S\'er. Sci. Math.}, 28(9-10):489--494, 1980.

\bibitem[CJY94]{CJY}
Lennart Carleson, Peter~W. Jones, and Jean-Christophe Yoccoz.
\newblock Julia and {J}ohn.
\newblock {\em Bol. Soc. Brasil. Mat. (N.S.)}, 25(1):1--30, 1994.

\bibitem[CSW11]{CSW}
Sarah Constantin, Robert~S. Strichartz, and Miles Wheeler.
\newblock Analysis of the {L}aplacian and spectral operators on the {V}icsek set.
\newblock {\em Commun. Pure Appl. Anal.}, 10(1):1--44, 2011.

\bibitem[FA48]{FA}
Ralph~H. Fox and Emil Artin.
\newblock Some wild cells and spheres in three-dimensional space.
\newblock {\em Ann. of Math. (2)}, 49:979--990, 1948.

\bibitem[FG23a]{FreeGart1}
David Freeman and Chris Gartland.
\newblock Lipschitz functions on quasiconformal trees.
\newblock {\em Fund. Math.}, 262(2):153--203, 2023.

\bibitem[FG23b]{FreeGart2}
David Freeman and Chris Gartland.
\newblock Lipschitz functions on unions and quotients of metric spaces.
\newblock {\em Studia Math.}, 273(1):29--61, 2023.

\bibitem[Go{\l}29]{golab}
Stanis{\l}aw Go{\l}ab.
\newblock Sur quelques points de la th{\'e}orie de la longueur.
\newblock {\em Ann. Soc. Polon. Math.}, 7:227--241, 1929.

\bibitem[Hei01]{Heinonen}
Juha Heinonen.
\newblock {\em Lectures on analysis on metric spaces}.
\newblock Universitext. Springer-Verlag, New York, 2001.

\bibitem[HM98]{HamblyMetz}
B.~M. Hambly and V.~Metz.
\newblock The homogenization problem for the {V}icsek set.
\newblock {\em Stochastic Process. Appl.}, 76(2):167--190, 1998.

\bibitem[HM25]{HeaM}
Vivian~Olsiewski Healey and Govind Menon.
\newblock Scaling limits of branching {L}oewner evolutions and the {D}yson superprocess.
\newblock {\em Pacific J. Math}, 338(1):87--137, 2025.

\bibitem[Hut81]{Hu81}
John~E. Hutchinson.
\newblock Fractals and self-similarity.
\newblock {\em Indiana Univ. Math. J.}, 30(5):713--747, 1981.

\bibitem[IM01]{IM}
Tadeusz Iwaniec and Gaven Martin.
\newblock {\em Geometric Function Theory and Non-linear Analysis}.
\newblock Oxford mathematical monographs. Clarendon Press, 2001.

\bibitem[JS00]{JonesPeter}
Peter~W. Jones and Stanislav~K. Smirnov.
\newblock Removability theorems for {S}obolev functions and quasiconformal maps.
\newblock {\em Ark. Mat.}, 38(2):263--279, 2000.

\bibitem[Kin17]{Kinneberg}
Kyle Kinneberg.
\newblock Conformal dimension and boundaries of planar domains.
\newblock {\em Trans. Amer. Math. Soc.}, 369(9):6511--6536, 2017.

\bibitem[LR12]{LR}
Joan Lind and Steffen Rohde.
\newblock Space-filling curves and phases of the {L}oewner equation.
\newblock {\em Indiana Univ. Math. J.}, 61(6):2231--2249, 2012.

\bibitem[LR18]{RohdeLin}
Peter Lin and Steffen Rohde.
\newblock Conformal welding of dendrites.
\newblock {\em preprint}, 2018.

\bibitem[Mac99]{Mac99}
Paul MacManus.
\newblock Catching sets with quasicircles.
\newblock {\em Rev. Mat. Iberoamericana}, 15(2):267--277, 1999.

\bibitem[McM94]{McM94}
Curtis~T. McMullen.
\newblock {\em Complex Dynamics and Renormalization}, volume 135 of {\em Annals of Mathematics Studies}.
\newblock Princeton University Press, Princeton, NJ, 1994.

\bibitem[McM98]{McMullen}
Curtis~T. McMullen.
\newblock Kleinian groups and {J}ohn domains.
\newblock {\em Topology}, 37(3):485--496, 1998.

\bibitem[Met93]{Metz}
Volker Metz.
\newblock How many diffusions exist on the {V}icsek snowflake?
\newblock {\em Acta Appl. Math.}, 32(3):227--241, 1993.

\bibitem[Mih11]{Miha}
Nicolae Mihalache.
\newblock Julia and {J}ohn revisited.
\newblock {\em Fund. Math.}, 215(1):67--86, 2011.

\bibitem[MP10]{MaPe}
Gaven Martin and Kirsi Peltonen.
\newblock Sto\"ilow factorization for quasiregular mappings in all dimensions.
\newblock {\em Proc. Amer. Math. Soc.}, 138(1):147--151, 2010.

\bibitem[MR05]{MR}
Donald~E. Marshall and Steffen Rohde.
\newblock The {L}oewner differential equation and slit mappings.
\newblock {\em J. Amer. Math. Soc.}, 18(4):763--778, 2005.

\bibitem[MT10]{MT10}
John~M. Mackay and Jeremy~T. Tyson.
\newblock {\em Conformal dimension}, volume~54 of {\em University Lecture Series}.
\newblock American Mathematical Society, Providence, RI, 2010.
\newblock Theory and application.

\bibitem[NV91]{NV91}
Raimo N\"akki and Jussi V\"ais\"al\"a.
\newblock John disks.
\newblock {\em Exposition. Math.}, 9(1):3--43, 1991.

\bibitem[Pan89]{Pan89}
Pierre Pansu.
\newblock Dimension conforme et sph\`ere \`a{} l'infini des vari\'et\'es \`a{} courbure n\'egative.
\newblock {\em Ann. Acad. Sci. Fenn. Ser. A I Math.}, 14(2):177--212, 1989.

\bibitem[Ric66]{Rickman}
Seppo Rickman.
\newblock Characterization of quasiconformal arcs.
\newblock {\em Ann. Acad. Sci. Fenn. Ser. A I}, 395:30, 1966.

\bibitem[Ste70]{Stein}
Elias~M. Stein.
\newblock {\em Singular integrals and differentiability properties of functions}, volume No. 30 of {\em Princeton Mathematical Series}.
\newblock Princeton University Press, Princeton, NJ, 1970.

\bibitem[Tuk80]{Tukia}
Pekka Tukia.
\newblock The planar {S}ch\"onflies theorem for {L}ipschitz maps.
\newblock {\em Ann. Acad. Sci. Fenn. Ser. A I Math.}, 5(1):49--72, 1980.

\bibitem[TV80]{TV80}
P.~Tukia and J.~V\"{a}is\"{a}l\"{a}.
\newblock Quasisymmetric embeddings of metric spaces.
\newblock {\em Ann. Acad. Sci. Fenn. Ser. A I Math.}, 5(1):97--114, 1980.

\bibitem[TV82]{TukVais}
Pekka Tukia and Jussi V{\"a}is{\"a}l{\"a}.
\newblock Quasiconformal extension from dimension {$n$} to {$n+1$}.
\newblock {\em Ann. of Math. (2)}, 115(2):331--348, 1982.

\bibitem[Vic83]{Vicsek}
T~Vicsek.
\newblock Fractal models for diffusion controlled aggregation.
\newblock {\em Journal of Physics A: Mathematical and General}, 16(17):L647, dec 1983.

\bibitem[Zho09]{Zhou}
Denglin Zhou.
\newblock Spectral analysis of {L}aplacians on the {V}icsek set.
\newblock {\em Pacific J. Math.}, 241(2):369--398, 2009.

\end{thebibliography}

\end{document}